\NeedsTeXFormat{LaTeX2e}
\documentclass{amsart}
\usepackage[utf8]{inputenc}
\usepackage{amsfonts, amsmath, amsthm, amssymb}
\usepackage{graphicx}

\usepackage{a4wide}
\usepackage{rotating}
\usepackage{bbm}
\usepackage[all,cmtip]{xy}

\usepackage{dsfont}
\usepackage{color}
\usepackage{hyperref}
\usepackage{verbatim}
\usepackage{mathtools}
\usepackage{enumitem}
\usepackage{epstopdf}
\usepackage{booktabs}

\usepackage{tikz}
\usepackage[titletoc]{appendix}
\usetikzlibrary{patterns}
\usetikzlibrary{decorations.pathreplacing}
\usetikzlibrary{arrows.meta}
\usetikzlibrary{cd}
\usepackage[margin=1.0in]{geometry}
\usepackage{caption}
\usepackage{float}
\usepackage{mathtools}
\usepackage{pgfplots}
\usepackage{tkz-euclide}

\setlength{\parindent}{0pt}
\setlength{\parskip}{5pt}

\newcommand{\Z}{\mathbb{Z}}
\newcommand{\R}{\mathbb{R}}
\newcommand{\Q}{\mathbb{Q}}
\newcommand{\C}{\mathbb{C}}

\newcommand{\GL}{\mathrm{GL}}

\newcommand{\U}{\mathrm{U}}

\DeclareMathOperator{\im}{im}

\DeclareMathOperator{\id}{id}

\numberwithin{equation}{section}

\newtheorem{thm}{Theorem}[section]

\newtheorem{lemma}[thm]{Lemma}
\newtheorem{proposition}[thm]{Proposition}

\newtheorem{defn}[thm]{Definition}
\newtheorem{remark}[thm]{Remark}
\newtheorem{corollary}[thm]{Corollary}

\captionsetup[figure]{labelfont={bf},name={Figure},labelsep=period}

\begin{document}

\title[Simple homotopy theory for Fukaya categories]{Simple homotopy theory for Fukaya categories}
\author{Yonghwan Kim}
\email{yonghkim@mit.edu}

\maketitle
\vspace{-1.5em}
\begin{abstract}
We develop a categorical framework for simple homotopy theory in Fukaya categories, based on the fundamental group of the ambient symplectic manifold. When the first Chern class vanishes, we show that any isomorphism in the Fukaya category of a Weinstein manifold has trivial Whitehead torsion. As an application, we prove that any pair of closed connected Lagrangians that are isomorphic in the Fukaya category of such Weinstein manifolds are simple homotopy equivalent, provided one of the Lagrangians is homotopy equivalent to the ambient symplectic manifold and their fundamental groups are isomorphic.
\end{abstract}

\section{Introduction\label{sec:intro}}

\subsection{Results}

Lens spaces provide examples of closed manifolds that are homotopy equivalent but not homeomorphic; a well-known case is the pair $L(7,1)$ and $L(7,2)$. This example is particularly striking in symplectic geometry because their cotangent bundles $T^*L(7,1)$ and $T^*L(7,2)$ are diffeomorphic \cite{MilnorLens}.

Abouzaid--Kragh \cite{AbouzaidKragh} showed that the cotangent bundles of two 3-dimensional lens spaces are symplectomorphic if and only if the two lens spaces themselves are diffeomorphic. In contrast, Karabas--Lee \cite{KarabasLee} computed that the wrapped Fukaya categories of $T^*L(7,1)$ and $T^*L(7,2)$ are quasi-equivalent. This raises the question: can Fukaya categories distinguish the simple homotopy types of Lagrangian submanifolds?

In this paper, we develop a categorical framework for simple homotopy theory in the setting of Fukaya categories, based on the fundamental group of the ambient symplectic manifold. This allows us to extract information about the simple homotopy types of closed exact Lagrangian submanifolds.

Before turning to the technical aspects, we illustrate several applications of this framework. These examples are built around a key geometric construction: the Weinstein 1-handle connect sum. Given two Liouville manifolds $X$ and $Y$, one can construct their \emph{Weinstein 1-handle connect sum}, which we denote by $X\natural Y$. Topologically, this operation corresponds to forming a connected sum along a 1-handle. 

\begin{thm}[Theorem \ref{thm:auteq-reprise}] \label{thm:auteq}
    Let $X=T^*L(7,1)\natural T^*L(7,2)$ be the Weinstein 1-handle connect sum of cotangent bundles. Then any exact symplectomorphism $\phi:X\to X$ acts on middle-dimensional homology as one of the four possibilities $(\pm\id,\pm\id)$. In particular, it preserves the direct sum decomposition
    \begin{equation}
        H_3(X)\cong H_3(T^*L(7,1))\oplus H_3(T^*L(7,2)).
    \end{equation}
\end{thm}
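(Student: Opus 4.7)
The plan is to lift $\phi$ to an autoequivalence of the wrapped Fukaya category $\mathcal{W}(X)$ and then use the paper's Whitehead-torsion vanishing theorem, together with fundamental-group constraints, to pin down its action on the natural basis of $H_3(X)$. Writing $L_1,L_2$ for the zero sections of the two factors, they are closed exact Lagrangians in $X$; and since $X$ deformation retracts onto $L(7,1)\vee L(7,2)$ (through the core of the 1-handle), their fundamental classes form a basis of $H_3(X)\cong\Z\oplus\Z$ realizing the stated decomposition.

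The exact symplectomorphism $\phi$ induces an $A_\infty$-autoequivalence $\Phi$ of $\mathcal{W}(X)$ with $\Phi(L_i)\simeq\phi(L_i)$, and it suffices to show $\phi_*[L_i]=\pm[L_i]$. First I would constrain the possible homology classes of $\phi(L_i)$. Since $\phi(L_i)$ is diffeomorphic to $L(7,i)$, it has $\pi_1\cong\Z/7$, and its image in $\pi_1(X)=\Z/7*\Z/7$ is a finite cyclic subgroup; by Kurosh's subgroup theorem this image lies, up to conjugacy, inside one of the two $\Z/7$ factors, and the classifying map $\phi(L_i)\to L(7,1)\vee L(7,2)$ factors through one wedge summand. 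It follows that $[\phi(L_i)]=\pm[L_{\sigma(i)}]$ for some permutation $\sigma$ of $\{1,2\}$.

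The decisive step is to rule out the swap $\sigma=(1\;2)$. A swap would produce an isomorphism in $\mathcal{W}(X)$ between $L_1$ and a closed Lagrangian diffeomorphic to $L(7,1)$ but occupying the $L_2$-position; chaining with the similar swap of $L_2$ yields a Fukaya isomorphism whose classical shadow is a homotopy equivalence between $L(7,1)$ and $L(7,2)$. By the paper's main theorem its Whitehead torsion in $\mathrm{Wh}(\pi_1(X))=\mathrm{Wh}(\Z/7*\Z/7)$ vanishes. However, $L(7,1)$ and $L(7,2)$ are homotopy equivalent (since $1\cdot 2\equiv 3^2\pmod 7$) but not simple homotopy equivalent (since $1\not\equiv\pm 2^{\pm1}\pmod 7$), and their Reidemeister-torsion obstruction in $\mathrm{Wh}(\Z/7)$ maps nontrivially into $\mathrm{Wh}(\Z/7*\Z/7)$ via the factor inclusion. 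Matching the categorical torsion with the classical one then produces a contradiction, forcing $\sigma=\id$ and hence $\phi_*[L_i]=\pm[L_i]$, giving the four possibilities $(\pm\id,\pm\id)$.

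The main obstacle is the matching in the previous step: rigorously identifying the categorical Whitehead torsion of a Fukaya isomorphism with the classical Whitehead torsion of the induced homotopy equivalence of the underlying Lagrangians. The simple-homotopy corollary stated in the abstract does not apply directly, because neither $L_i$ is homotopy equivalent to $X$, so the likely route is either to localize $\mathcal{W}(X)$ onto the subcategory generated by each zero section (bringing $L_i$ and the localized ambient category into the hypothesis of the corollary) or to extract the torsion obstruction directly from the $A_\infty$-structure without passing through a classical homotopy equivalence.
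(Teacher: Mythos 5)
Your overall strategy---reduce to showing that $\phi$ cannot interchange the two factors, and rule out the swap by comparing torsions of $L(7,1)$ and $L(7,2)$ over $\Z[\Z/7*\Z/7]$---matches the paper's. But there are two genuine gaps. The decisive step needs an isomorphism \emph{in the Fukaya category} between $\phi(L_1)$ and the actual zero section $L_2=L(7,2)$; ``occupying the $L_2$-position'' homologically (or even on $\pi_1$) produces no such isomorphism, and without one the torsion machinery has nothing to compare. The paper supplies it through Proposition \ref{prop:classification-Lagrangians}: every connected closed exact Maslov zero Spin Lagrangian in $X$ is isomorphic in $\mathcal{F}(X)$ to one of the two zero sections, up to shift. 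That classification is the technical heart of the application, resting on generation of $\mathcal{W}(T^*Q)$ by a cotangent fiber, the loop-space computation of $HW^*(T^*_qQ,T^*_qQ)$, passage to the $7$-fold cover, and the classification of minimal $A_\infty$-modules in Appendix B---none of which appears in your outline. (A smaller issue: factoring the classifying map through one wedge summand on $\pi_1$ does not by itself give $[\phi(L_i)]=\pm[L_{\sigma(i)}]$; a map $L(7,1)\to L(7,j)$ that is trivial on $\pi_1$ can have degree any multiple of $7$ via the universal cover, so you need invertibility of $\phi_*$, or better the Floer classification plus intersection numbers with cotangent fibers, which is how the paper pins down the multiplicity $\pm1$.)

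The torsion matching you correctly flag as ``the main obstacle'' is also not resolved by either of your proposed routes. The paper's mechanism is the $\Z\pi_1(X)$-enhanced bimodule $\underline{CF^*(K,L)}$: Theorem \ref{thm:Weinstein-equivalence-simple-reprise} makes the categorical isomorphism simple, Proposition \ref{prop:R-torsions-agree} converts this into equality of Reidemeister torsions of $\underline{CF^*(K,K)}$ and $\underline{CF^*(L,L)}$ for representations of $\pi_1(X)$, and Proposition \ref{prop:Morse=CW} identifies these complexes with the cellular cochain complexes of $K$ and $L$ over $\Z\pi_1(X)$---no localization or appeal to the ambient homotopy type is needed. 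The final contradiction is Proposition \ref{prop:R-torsion-7*7}, which must (and does) quantify over all embeddings $t\mapsto t^l$ of $\Z/7$ into the relevant free factor; your appeal to ``the factor inclusion'' elides this ambiguity. As written, the proposal identifies the right obstruction but does not contain the two ingredients (Lagrangian classification and the enhanced-complex torsion transfer) that make the argument work.
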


By Lemma \ref{lem:swap-diffeo}, there exists a self-diffeomorphism of $X$ which swaps the two summands of $H_3(X)$. Thus, the obstruction exhibited above is purely symplectic, not smooth.

Our second application determines the diffeomorphism types of certain Lagrangian submanifolds:

\begin{thm}[Theorem \ref{thm:lens-space-reprise}] \label{thm:lens-space}
    Let $X$ be a simply-connected Weinstein manifold of dimension 6 with $c_1(X)=0$, and let $L(p,q)$ be a 3-dimensional lens space. Then any connected closed exact Maslov zero Spin Lagrangian submanifold $L$ in $M=T^*L(p,q)\natural X$, for which the induced map $\pi_1(L)\to\pi_1(M)$ is an isomorphism, must be diffeomorphic to $L(p,q)$.
\end{thm}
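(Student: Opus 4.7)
The plan is to combine the paper's Fukaya-categorical Whitehead torsion theorem with classical 3-manifold topology. Take $L_0 := L(p,q)$ as the zero section in $T^*L(p,q) \subset M$; it is a closed, exact, Maslov zero, Spin Lagrangian, and Van Kampen gives $\pi_1(M) \cong \pi_1(L(p,q)) * \pi_1(X) \cong \Z/p$ using simple-connectedness of $X$. Because the Weinstein 1-handle connect sum preserves $c_1 = 0$, the Fukaya category $\mathcal{F}(M)$ satisfies the hypothesis of the main torsion theorem.

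The first step is a purely topological reduction. Since $L$ is Spin (hence orientable) and a closed 3-manifold with $\pi_1(L) \cong \Z/p$, the elliptization part of geometrization forces $L \cong L(p, q')$ for some $q'$. By the Reidemeister--Franz--Whitehead classification, $L(p,q')$ is simple homotopy equivalent to $L(p,q)$ iff $q' \equiv \pm q^{\pm 1} \pmod p$; this is exactly the homeomorphism condition, and Moise upgrades homeomorphism to diffeomorphism in dimension 3. Hence it suffices to exhibit a simple homotopy equivalence $L \simeq L_0$.

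The second, and main, step is to show that $L \cong L_0$ in $\mathcal{F}(M)$; the main theorem of the paper then forces the Whitehead torsion of this isomorphism to vanish, producing the required simple homotopy equivalence. I would construct this Fukaya-categorical isomorphism by exploiting the sectorial structure of $M = T^*L(p,q) \natural X$. Because $X$ is simply connected, the hypothesis that $\pi_1(L) \to \pi_1(M)$ is an isomorphism concentrates the ``$\pi_1$-content'' of $L$ in the $T^*L(p,q)$ summand. Passing to the $\Z/p$-cover of $M$, the $T^*L(p,q)$ piece lifts to $T^*S^3$ while the simply-connected part contributes $p$ disjoint copies of $X$ attached by lifted 1-handles; the lifted comparison problem then lives in a simply-connected Weinstein 6-manifold, where nearby-Lagrangian-type results (in the spirit of Abouzaid--Kragh) should identify the lifts of $L$ and $L_0$. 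Descending equivariantly yields the desired isomorphism in $\mathcal{F}(M)$. This Fukaya-categorical comparison is the principal obstacle: it combines a sectorial-localization step for the 1-handle connect sum with a nearby-Lagrangian-type classification on the cover, since the bare $\pi_1$-hypothesis on $L$ must be promoted to full Floer-theoretic equivalence with the zero section.
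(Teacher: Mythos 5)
Your overall architecture coincides with the paper's: pass to the $\Z/p$-cover, identify $L$ with the zero section in the Fukaya category, invoke Theorem \ref{thm:Weinstein-equivalence-simple-reprise} to get a simple homotopy equivalence, and finish with the Reidemeister classification of lens spaces. Your reorganization of the topology is fine and arguably cleaner: front-loading elliptization to get $L\cong L(p,q')$ reduces everything to a simple homotopy equivalence, whereas the paper extracts the diffeomorphism type of $\widetilde{L}$ from the Floer data first and then applies elliptization to the free $\Z/p$-action on $S^3$; both routes use the same input from geometrization.

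The genuine gap is in the step you yourself flag as the principal obstacle, and your sketch of it would not go through as stated. First, the appeal to ``nearby-Lagrangian-type results in the spirit of Abouzaid--Kragh'' on the cover fails: $\widetilde{M}=T^*S^3\natural X\natural\cdots\natural X$ is not a cotangent bundle, and $\widetilde{L}$ is not an exact Lagrangian in $T^*S^3$, so no nearby-Lagrangian theorem applies. What the paper actually uses is an algebraic classification (Lemma \ref{lem:compact-objects-T^*S^n}): after knowing that $\widetilde{L}$ is orthogonal to the $\mathcal{W}(X)$ summands, it is an abstract compact object of $\mathcal{W}(T^*S^3)$ with endomorphism algebra supported in nonnegative degrees and $HW^0\cong\Z$ (connectedness of $\widetilde{L}$, which follows from $\pi_1(L)\to\pi_1(M)$ being an isomorphism, is needed here to pin down the multiplicity), and one classifies such objects via minimal $A_\infty$-modules over $HW^*(T^*_xS^3,T^*_xS^3)\cong H_{-*}(\Omega S^3)$. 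Second, your ``sectorial-localization step'' --- the claim that the $\pi_1$-hypothesis forces $L$ into the $\mathcal{W}(T^*L(p,q))$ summand --- is asserted but not argued, and it is not automatic: a priori $L$ could be isomorphic to an object of the $\mathcal{W}(X)$ summand even though $\pi_1(L)\neq 1$. The paper rules this out (Proposition \ref{prop:classify-lagrangians-wein1}) by tensoring with the local system on $M$ given by the regular representation of $\pi_1(M)$ and comparing ranks of $HF^0$: for $L$ connected with $\pi_1(L)\cong\pi_1(M)$ one gets rank $1$, while for any object supported in the simply-connected summand the local system is trivial and one gets rank $p$. Without these two ingredients your second step does not close, so you should supply them (or equivalent arguments) rather than cite nearby-Lagrangian results.
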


The Maslov zero and Spin conditions ensure that these Lagrangians admit brane structures and hence define objects in the Fukaya category.

Our main application shows that any two closed exact Lagrangians in a Weinstein manifold that are isomorphic in the Fukaya category are simple homotopy equivalent, assuming one of them is homotopy equivalent to the ambient symplectic manifold and their fundamental groups are isomorphic. This extends the result of Abouzaid--Kragh \cite{AbouzaidKragh} that any closed exact Lagrangian submanifold in the cotangent bundle of a closed smooth manifold is simple homotopy equivalent to the zero section to general Weinstein manifolds.

\begin{thm}[Theorem \ref{thm:simple-he-Lagrangians-reprise}] \label{thm:simple-he-Lagrangian}
    Let $X$ be a Weinstein manifold with $c_1(X)=0$, and let $L$ be a connected closed exact Maslov zero Spin Lagrangian such that the inclusion $L\xhookrightarrow{}X$ is a homotopy equivalence. Suppose that $K$ is another connected closed exact Maslov zero Spin Lagrangian for which the induced map $\pi_1(K)\to\pi_1(X)$ is an isomorphism, and assume that there exist brane structures on $K$ and $L$ such that they define isomorphic objects in the compact Fukaya category $\mathcal{F}(X)$. Then the inclusion $K\xhookrightarrow{}X$ is also a homotopy equivalence, and the composition with any homotopy inverse of $L\xhookrightarrow{}X$
    \begin{equation}
        K\xhookrightarrow{}X\to L
    \end{equation}
    is a simple homotopy equivalence.
\end{thm}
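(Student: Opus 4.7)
The plan is to reduce the theorem to showing that a single geometric map $f: K \to L$ — the composition $K \xhookrightarrow{} X \xrightarrow{r} L$ for some chosen homotopy inverse $r$ of $L \xhookrightarrow{} X$ — is a simple homotopy equivalence. Since $r$ is a homotopy equivalence, this automatically implies that $K \xhookrightarrow{} X$ is one as well, so both conclusions of the theorem follow. The hypotheses on fundamental groups make $f_*: \pi_1(K) \to \pi_1(L)$ an isomorphism, and I will identify all three fundamental groups with a single group $\pi$.

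The first substantive step is to show $f$ is a homotopy equivalence. By Whitehead's theorem for CW complexes, combined with the $\pi_1$-isomorphism above, it suffices to verify that the induced chain map $f_*: C_*(\widetilde{K}) \to C_*(\widetilde{L})$ is a quasi-isomorphism of $\Z[\pi]$-modules. The Fukaya-category isomorphism $\alpha: K \cong L$ produces, after tensoring with local systems and invoking a PSS-type identification of the endomorphism algebra of a closed exact Lagrangian brane with the twisted singular cochains on that Lagrangian, an algebraic chain equivalence $C_*(\widetilde{K}) \simeq C_*(\widetilde{L})$ as $\Z[\pi]$-complexes. The crucial comparison step is to identify this algebraic equivalence with the one induced by $f$ itself: here the hypothesis that $L \xhookrightarrow{} X$ is a homotopy equivalence is essential, since it allows $\mathcal{F}(X)$-level data on $L$ to be matched with topological data on $L$ via the retraction $r$. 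I expect this identification to be the main technical obstacle.

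The simplicity of $f$ then comes directly from the main torsion-vanishing theorem of this paper: any isomorphism in $\mathcal{F}(X)$ has trivial Whitehead torsion in $\mathrm{Wh}(\pi_1(X))$. Under the chain-level comparison above, this torsion matches the Whitehead torsion $\tau(f) \in \mathrm{Wh}(\pi)$ of the homotopy equivalence $f$, so $\tau(f) = 0$ and $f$ is simple. The bulk of the work lies in the second paragraph's comparison — carefully tracking how the $A_\infty$-module equivalence coming from $\alpha$ descends, through the PSS isomorphism and the deformation retraction $r$, to the chain map induced by the geometric composition $f$ on universal covers.
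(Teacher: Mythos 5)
Your high-level strategy matches the paper's: transfer the simple isomorphism $K\cong L$ in $\mathcal{F}(X)$ to a simple homotopy equivalence of (enhanced) cellular chain complexes, identify it with the map induced by the geometric composition $K\xhookrightarrow{}X\to L$, and then invoke Hurewicz/Whitehead plus Theorem \ref{thm:Weinstein-equivalence-simple-reprise}. However, the step you flag as ``the main technical obstacle'' --- identifying the Floer-theoretic equivalence with the map induced by $f$ --- is precisely where the entire content of the proof lives, and your proposal gives no argument for it. It is not a formal or routine identification. The paper's resolution consists of three concrete ingredients: (i) a Morse function $H$ on $X$ adapted to the Lagrangian (Lemma \ref{lem:Morse-function}), whose descending manifolds from $L$ stay in $L$, so that the topological inclusion $\underline{C^{cell}_*(L)}\to\underline{C^{cell}_*(X)}$ is realized as an honest inclusion of enhanced Morse complexes $j_*:\underline{CM_*(L;h)}\hookrightarrow\underline{CM_*(X;H)}$; (ii) the Abouzaid--Kragh collapse argument showing $j_*$ is chain homotopic to $\iota_*$; and (iii) Proposition \ref{prop:L-M-torsion-agree}, proved by constructing a one-parameter moduli space of pearly trajectories (holomorphic discs with boundary insertions of $\alpha$ and $\beta$ connected by gradient flow lines of $H$), which shows
\begin{equation}
    \psi_K\simeq\psi_L\circ\mu^2(~,\beta)\circ\mu^2(\alpha,~):\underline{CF_*(K,K)}\to\underline{CM_*(X;H)}.
\end{equation}
Without an argument of this kind, nothing connects the algebraic equivalence coming from $\alpha$ to the topology of the inclusions.

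There is also a structural issue with how you set up the comparison. You propose to compare chain maps $C_*(\widetilde{K})\to C_*(\widetilde{L})$ directly, routing ``through the PSS isomorphism and the deformation retraction $r$.'' But $r$ is merely an abstract homotopy inverse of $L\xhookrightarrow{}X$; it is not a geometric map compatible with any holomorphic-curve counts, and there is no moduli space whose boundary degeneration produces $r$. The paper avoids this by never comparing maps into $L$ at all: both $\psi_K$ and $\psi_L$ land in $\underline{CM_*(X)}$, the hypothesis that $L\xhookrightarrow{}X$ is a homotopy equivalence is used only to invert $\psi_L$ \emph{after} the moduli-space comparison is done, and the composite $K\xhookrightarrow{}X\to L$ only appears at the very end as the map whose cellular torsion equals that of $\psi_L^{-1}\circ\psi_K$. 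If you insist on comparing with $f$ directly, you would need to explain how the homotopy inverse $r$ enters a Floer-theoretic moduli problem, which I do not see how to do. So the gap is twofold: the comparison step is asserted rather than proved, and the particular formulation you chose for it makes it harder, not easier, to prove.
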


As an application, we consider \textit{exotic Weinstein balls} $\Sigma^{2n}$, which are nonequivalent Weinstein manifolds whose underlying topological spaces are homeomorphic to the standard ball $B^{2n}\cong T^*\R^n$. Examples were constructed by Seidel--Smith \cite{SeidelSmith}, McLean \cite{McLean}, and Abouzaid--Seidel \cite{AbouzaidSeidelLefschetz}.

\begin{thm}[Theorem \ref{thm:exotic-ball-reprise}] \label{thm:exotic-ball}
    Let $X=T^*Q\natural\Sigma$ be the Weinstein 1-handle connect sum of an exotic Weinstein ball $\Sigma$ with the cotangent bundle of a non-simply connected Spin manifold $Q$. Then any connected closed exact Maslov zero Lagrangian submanifold $L\subset X$ with $\pi_1(L) \cong \pi_1(X)$ is simple homotopy equivalent to the zero section of $T^*Q$.
\end{thm}

The proof also shows that any such Lagrangian $L$ admits a brane structure for which it defines an object isomorphic to $Q$ in $\mathcal{F}(X)$, which allows us to apply Theorem \ref{thm:simple-he-Lagrangians-reprise}.

These applications all rely on the main outcome of our study of simple homotopy theory in Fukaya categories, which shows that the simple homotopy types of certain closed Lagrangian submanifolds can be recovered from their isomorphism class in the Fukaya category.

\begin{thm}[Theorem \ref{thm:Weinstein-equivalence-simple-reprise}] \label{thm:Weinstein-equivalence-simple}
    Let $X$ be a Weinstein manifold with $c_1(X)=0$, and let $K$ and $L$ be two connected closed exact Maslov zero Lagrangians in $X$, equipped with brane structures. If $K$ and $L$ define isomorphic objects in the Fukaya category of $X$ with $\Z$-coefficients, then the isomorphism has trivial Whitehead torsion. In particular, if the induced maps $\pi_1(K)\to\pi_1(X)$, $\pi_1(L)\to\pi_1(X)$ are injective, the Reidemeister torsions of the cellular cochain complexes of $K$ and $L$, computed using any representation $\pi_1(X)\to\C$ via inclusions, must agree when defined.
\end{thm}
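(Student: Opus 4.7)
The framework developed in earlier sections lifts the morphism spaces of $\mathcal{F}(X)$ to chain complexes over $\Z[\pi_1(X)]$: for Lagrangians $K,L$ equipped with chosen lifts to the universal cover $\widetilde X\to X$, the Floer complex $\widetilde{CF}^*(K,L)$ is a finitely generated free $\Z[\pi_1(X)]$-complex with a canonical basis indexed by intersection points of $K$ with a single lift of $L$. The hypothesis $c_1(X)=0$ together with the Maslov zero and Spin conditions makes this $\Z$-graded and defined over $\Z$. The Whitehead torsion of an isomorphism $[\phi]\in HF^0(K,L)$ is the class in $\mathrm{Wh}(\pi_1(X))$ of the chain equivalence $\widetilde{CF}^*(L,L)\to\widetilde{CF}^*(K,L)$ given by $\mu^2$-multiplication with a cocycle representative of $[\phi]$, with respect to these canonical bases. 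The plan is to show this class is trivial.

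The first step is a \emph{simple} PSS identification. For each closed Lagrangian $L$, I would fix a CW-structure and compatible Morse-Smale data and identify $\widetilde{CF}^*(L,L)$ with the cellular cochain complex $C^*(\tilde L_X;\Z)$ of the cover $\tilde L_X\to L$ classified by $\ker(\pi_1(L)\to\pi_1(X))$, viewed as a free $\Z[\pi_1(X)]$-complex via coset representatives. The crucial point is that this PSS identification can be arranged to have vanishing Whitehead torsion: it is realized by interpolating a small Morse-type Hamiltonian to a generic Floer Hamiltonian through a 1-parameter family of continuation maps, and a standard $K$-theoretic fact ensures that a 1-parameter family of chain equivalences between a fixed pair of free finitely generated complexes has trivial Whitehead torsion.

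The second step reduces the problem to a simple chain equivalence between these cellular models. Cocycle representatives $\phi\in\widetilde{CF}^0(K,L)$ and $\psi\in\widetilde{CF}^0(L,K)$ of the isomorphism and its inverse yield, via $\mu^2$, chain equivalences fitting into diagrams that are homotopy commutative via higher $A_\infty$-operations with the PSS identifications of $K$ and $L$. Using the additivity of Whitehead torsion under composition and the vanishing from the first step, the torsion of the Floer equivalence equals that of the induced equivalence $C^*(\tilde K_X)\to C^*(\tilde L_X)$. The latter I would kill by a deformation argument: enlarging the moduli problem to a parameterized family (for instance, by varying an almost complex structure or a bounding cochain interpolating $\phi$ to a geometric continuation) exhibits the equivalence as one member of a 1-parameter family between fixed free complexes, and hence as simple.

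The main obstacle is precisely this deformation step, since a general Fukaya-categorical isomorphism is not realized by any Hamiltonian isotopy or geometric cobordism of pairs; the simplicity must be extracted from $A_\infty$-structure alone. My plan is to handle this by choosing a strictly unital minimal model in which the unit $e_L$ lifts the fundamental class under PSS, and then verify combinatorially that the chain homotopy relating $\mu^2(\psi,\phi)$ to $e_L$ can be assembled into a 1-parameter family of self-equivalences of $C^*(\tilde L_X)$, forcing vanishing torsion. For the final Reidemeister torsion statement, injectivity of $\pi_1(K)\to\pi_1(X)$ and $\pi_1(L)\to\pi_1(X)$ identifies $\tilde K_X$ and $\tilde L_X$ with the universal covers of $K$ and $L$; tensoring the simple chain equivalence of step two with a representation $\rho:\pi_1(X)\to\C$ (restricted along the inclusions) produces a chain equivalence whose image in $\C^\times/\{\pm 1\}$ under the Whitehead map is trivial, and the classical invariance of Reidemeister torsion under simple chain equivalences then yields the equality.
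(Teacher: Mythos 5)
There is a genuine gap, and it sits exactly where you flag ``the main obstacle.'' Knowing that $\mu^2(\psi,\phi)$ is chain homotopic to the unit $e_K$ only tells you that the \emph{composite} $\mu^2(\psi,\cdot)\circ\mu^2(\phi,\cdot)$ is a simple self-equivalence of $\underline{CF^*(K,K)}$; since Whitehead torsion is additive under composition, this forces $\tau(\mu^2(\phi,\cdot))=-\tau(\mu^2(\psi,\cdot))$ but does not force either to vanish. No amount of combinatorial manipulation of the strictly unital minimal model can rule out the case $\tau(\mu^2(\phi,\cdot))=\tau\neq 0$: this is precisely the phenomenon exhibited by $L(7,1)$ and $L(7,2)$, which admit mutually inverse homotopy equivalences that are not simple. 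So simplicity genuinely cannot be ``extracted from the $A_\infty$-structure alone''; a global geometric input is required. The input the paper uses is the Giroux--Pardon presentation of $X$ as a Lefschetz fibration, the fact that the Lefschetz thimbles \emph{simply} generate $\mathcal{F}(\pi)$ (which itself rests on a simple Dehn twist exact triangle proved with monotonicity estimates), and the ``automatic simplicity'' lemma: against a simply connected generator $B$, the lifted complex $\underline{CF^*(\mathcal{K},B)}$ is literally $CF^*(\mathcal{K},B)\otimes_\Z\Z\pi_1(X)$, whose torsion vanishes by functoriality $Wh(\{e\})\to Wh(\pi_1(X))$; a filtration argument then propagates this to all objects. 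None of this is present in your outline, and your proposed substitute (``interpolating $\phi$ to a geometric continuation'') is unavailable because a Fukaya-categorical isomorphism between $K$ and $L$ need not come from any Hamiltonian isotopy.

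A secondary but real error: the ``standard $K$-theoretic fact'' that a $1$-parameter family of chain equivalences between fixed free complexes has trivial Whitehead torsion is false. A continuous family gives chain homotopic maps, and chain homotopic equivalences have \emph{equal} torsion --- but that common value is unconstrained. The correct argument for continuation/PSS maps (used in the paper) is an action filtration: the low-energy part of the continuation map is $\pm\mathrm{id}$ on each filtration quotient, so the mapping cone is filtered with acyclic torsion-free graded pieces, whence simplicity. Your first step is salvageable by this route, but the second step requires the Lefschetz fibration machinery (or some equivalent source of simply connected simple generators) and cannot be completed as written.
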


An interesting class of Lagrangians $L$ in a Weinstein manifold $X$ with injective $\pi_1(L)\to\pi_1(X)$ is provided by connected closed exact Lagrangians in Weinstein manifolds constructed by attaching subcritical handles to a cotangent bundle over a small contractible patch of the base \cite[Corollary 14.2]{HusinKragh}.

In the remainder of the introduction, we explain how to associate a Whitehead torsion element to each pair of isomorphic Lagrangians in the Fukaya category, and outline the strategy to proving Theorem \ref{thm:Weinstein-equivalence-simple}.

\subsection{Whitehead torsion in Fukaya categories}

Our study of simple homotopy theory in Fukaya categories builds on the $A_\infty$-bimodule $\underline{CF^*(K,L)}$, originally introduced in \cite{AbouzaidKragh}. For expository purposes, we restrict to the case when the given pair of Lagrangians $K$ and $L$ meet transversely in a symplectic manifold $X$. This bimodule is a free $\Z$-module generated by all lifts $\Tilde{x}$ of intersection points $x\in K\cap L$ to the universal cover $\Tilde{X}$ of $X$. The differential on $\underline{CF^*(K,L)}$ counts all lifts of pseudoholomorphic strips in $X$ with boundary on $K$ and $L$. After choosing a lift $\Tilde{x}$ for each $x$, this bimodule as a cochain complex can be identified with a free left $\Z\pi_1(X)$-module generated by the chosen lifts $\Tilde{x}$, with differential given by summing over rigid holomorphic strips $u$, each weighted by an element $g(u)\in\pi_1(X)$ representing the homotopy class of a path traced by $u$ in $X$.

Cochain complexes equipped with a chosen basis admit a refined classification up to simple homotopy equivalence. In particular, to any acyclic cochain complex over a group ring $\Z G$ equipped with a basis, one can associate a \emph{Whitehead torsion} element in the Whitehead group $Wh(G)$ of the group $G$. The Whitehead torsion of the mapping cone of a homotopy equivalence between based cochain complexes vanishes if and only if the map is a simple homotopy equivalence.

Since Floer and Morse theoretic cochain complexes come with natural geometric bases, it is compelling to consider their simple homotopy types. Related approaches to Whitehead torsion and Reidemeister torsion in Floer-theoretic and Morse-theoretic contexts have appeared in the work of Hutchings--Lee \cite{HutchingsLee}, Sullivan \cite{Sullivan}, Suarez \cite{Suarez}, Abouzaid--Kragh \cite{AbouzaidKragh}, Charette \cite{Charette}, Alvarez-Gavela--Igusa \cite{AlvarezGavelaIgusa}, Kenigsberg--Porcelli \cite{KenigsbergPorcelli}, and Courte--Porcelli \cite{CourtePorcelli}.

To incorporate the above bimodule into a categorical framework, we extend the bimodule to twisted complexes of Lagrangians, which we denote by $\underline{CF^*(\mathcal{K},\mathcal{L})}$. For technical reasons, we work with twisted complexes twisted by cochain complexes over $\Z$, rather than free $\Z$-modules. Accordingly, we introduce the $A_\infty$-category $Tw_{Ch}\mathcal{A}$ of such twisted complexes. When the original $A_\infty$-category $\mathcal{A}$ is cohomologically unital, $Tw_{Ch}\mathcal{A}$ is also cohomologically unital and triangulated.

The simple homotopy type of the $A_\infty$-bimodule $\underline{CF^*(\mathcal{K},\mathcal{L})}$, viewed as a cochain complex over $\Z\pi_1(X)$, is independent of the choice of lifts $\Tilde{x}$. In particular, if $\underline{CF^*(\mathcal{K},\mathcal{L})}$ is acyclic, its Whitehead torsion is well-defined. This observation leads us to introduce the following simple homotopy-theoretic refinement of standard categorical notions. 

Recall that an object $K$ in a category $\mathcal{C}$ is defined to be left acyclic if $\hom^*_\mathcal{C}(K,L)$ is acyclic for every object $L$. We define an object $\mathcal{K}$ in the category of twisted complexes $Tw_{Ch}\mathcal{F}(X)$ over a Fukaya category $\mathcal{F}$ to be \emph{left simply acyclic}, if for any other twisted complex $\mathcal{L}$, the $A_\infty$-bimodule $\underline{CF^*(\mathcal{K},\mathcal{L})}$, regarded as a based cochain complex, is acyclic with trivial Whitehead torsion. The notion of \emph{right simply acyclic} objects is defined analogously.

Many categorical notions, such as isomorphisms and generation, can be reformulated in terms of acyclicity of certain twisted complexes. This allows us to define \emph{simple} analogues of these notions. For example, a collection of Lagrangians $\{L_i\}$ is said to \emph{generate} the Fukaya category $\mathcal{F}(X)$ if for every object $K$, there exists a twisted complex $\mathcal{K}$ built from $\{L_i\}$ such that $K\cong\mathcal{K}$, or equivalently the mapping cone $[K\to\mathcal{K}]$ is acyclic. We say $\{L_i\}$ \emph{simply generate} $\mathcal{F}(X)$ if for every $K$, the twisted complex $[K\to\mathcal{K}]$ can be chosen to be both left and right simply acyclic.

A key structural result about these refined notions is the following ``automatic simplicity lemma'', which states that under certain assumptions on the generators, any isomorphism in the Fukaya category is automatically simple.

\begin{lemma}[Lemma \ref{prop:automaticsimplicity}] \label{lem:autsimp-intro}
    Let $X$ be an exact symplectic manifold, and let $\mathcal{F}$ be a version of a Fukaya category of $X$. If $\mathcal{F}$ admits simple generators $\{L_i\}$ such that each $L_i$ is simply connected, then every isomorphism in $\mathcal{F}$ is a simple isomorphism.
\end{lemma}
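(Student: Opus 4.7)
Let $f \colon K \to L$ be the given isomorphism in $\mathcal{F}$, with mapping cone $C(f) = [K \to L]$; the goal is to show $C(f)$ is both left and right simply acyclic. My plan has two stages: first, use simple generation to reduce the question to a statement about the cone of a quasi-iso between twisted complexes of the generators $\{L_i\}$; second, use that each $L_i$ is simply connected to argue that any such cone has trivial Whitehead torsion.

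For the reduction stage, I would apply simple generation to $K$ and $L$ separately, obtaining quasi-isomorphisms $r_K \colon K \to \mathcal{K}$ and $r_L \colon L \to \mathcal{L}$ to twisted complexes of $\{L_i\}$ whose cones $C(r_K), C(r_L)$ are both-sided simply acyclic. Since $\{L_i\}$ generate, the iso $f$ lifts through the resolutions to a quasi-iso $h \colon \mathcal{K} \to \mathcal{L}$ satisfying $h \circ r_K \simeq r_L \circ f$. The $3 \times 3$ mapping cone of this homotopy-commutative square fits $C(f)$ and $C(h)$ into a distinguished triangle whose third term is an extension of $C(r_K)$ by $C(r_L)$. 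By the additivity of Whitehead torsion along distinguished triangles of acyclic complexes, $C(f)$ and $C(h)$ carry the same left and right torsion, reducing the problem to showing $C(h) = [\mathcal{K} \to \mathcal{L}]$ is simply acyclic.

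For the simply-connected stage, I would exploit that each $L_i$ lifts to a base $\widetilde{L}_i \subset \widetilde{X}$ and $\pi^{-1}(L_i) = \bigsqcup_g g \cdot \widetilde{L}_i$. Choosing such base lifts canonically lifts $\mathcal{K}$ and $\mathcal{L}$ to twisted complexes $\widetilde{\mathcal{K}}, \widetilde{\mathcal{L}}$ in $\widetilde{X}$, and identifies $\underline{CF^*(\mathcal{K}, \mathcal{L})}$ with the Floer complex $CF^*(\widetilde{\mathcal{K}}, \pi^{-1}(\widetilde{\mathcal{L}}))_{\widetilde{X}}$, which is free over $\Z\pi_1(X)$ on a basis of preferred intersection-point lifts. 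I would then argue that in this preferred basis, the mapping cone $[\mathcal{K} \to \mathcal{L}]$ is a $\Z\pi_1(X)$-complex arising from the $\Z$-complex $[\widetilde{\mathcal{K}} \to \widetilde{\mathcal{L}}]$ in $\widetilde{X}$ by $\pi_1(X)$-equivariant extension of scalars, modulo basis changes by trivial units. Consequently, the Whitehead torsion of $[\mathcal{K} \to \mathcal{L}]$ equals the image of the $\Z$-torsion of $[\widetilde{\mathcal{K}} \to \widetilde{\mathcal{L}}]$ under $K_1(\Z) \to Wh(\pi_1(X))$, which is trivial since $K_1(\Z)/\{\pm 1\} = 0$.

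The main obstacle is making the last argument precise. Although simple-connectedness of the $L_i$ gives canonical base lifts, the Floer differentials carry genuine $\pi_1(X)$-weights arising from the topological classes of holomorphic strips in $X$. One must verify that these weights match, term by term, the deck-transformation discrepancies between the preferred lifts of the strips' endpoints, so that the $\Z\pi_1(X)$-matrix of the cone is related by simple row and column operations to the $\Z$-matrix of its universal-cover counterpart. This compatibility is expected because simply-connectedness forces the boundary paths of holomorphic strips to be homotopically trivial on each Lagrangian, so the $\pi_1(X)$-weight of a strip is determined by its interior together with the endpoints' lift ambiguities; but carrying this out carefully for the higher $A_\infty$-operations used to form $C(h)$ is the delicate part of the argument.
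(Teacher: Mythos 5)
Your stage 2 contains the essential gap, and it is exactly the point you flag as ``delicate'': the claim that $\underline{CF^*(\mathcal{K},\mathcal{L})}$ (or the cone $[\mathcal{K}\to\mathcal{L}]$ tested against an object) arises from a $\Z$-complex by extension of scalars is false in general, even though every constituent $L_i$ is simply connected. The problem is the twisted-complex differentials $\delta_{\alpha\alpha'}\in CF^1(L_\alpha,L_{\alpha'})$ and the components of $h$ itself: a disc contributing to the differential of $\underline{CF^*(\mathcal{K},M)}$ has its input and output on the $M$-boundary arc, and its other boundary passes through corners at intersection points $\delta\in L_\alpha\cap L_{\alpha'}$. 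Fixing a base sheet $\widetilde{L}_\alpha\subset\widetilde{X}$ for each generator, the unique lift of $\delta$ lying in $\widetilde{L}_{\alpha'}$ sits in $g_\delta\cdot\widetilde{L}_\alpha$ for a group element $g_\delta\in\pi_1(X)$ that depends only on $\delta$ but is genuinely nontrivial in general (it records the class of a loop through $\delta$ built from arcs in the two simply connected Lagrangians). These $g_\delta$ appear as honest $\Z\pi_1(X)$-coefficients in the off-diagonal blocks of the differential, so there is no choice of preferred lifts making the matrix integral, and your ``canonical lift'' $\widetilde{\mathcal{K}}$ of a twisted complex to $\widetilde{X}$ does not exist as a finite object built from single sheets. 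Relatedly, if you instead try to filter $\underline{CF^*([\mathcal{K}\to\mathcal{L}],M)}$ by the twisted-complex structure of $[\mathcal{K}\to\mathcal{L}]$ to isolate the integral pieces, the graded pieces $\underline{CF^*(L_\alpha,M)}$ are not acyclic, so Proposition \ref{prop:Whitehead-torsion-filtration} does not apply.

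The paper avoids this by resolving the \emph{other} slot. It first reduces to showing that every acyclic object $\mathcal{K}\in Tw_{Ch}\mathcal{F}$ (in particular the cone of the given isomorphism, left untouched) is simply acyclic, and then resolves the \emph{test} object $L$ by a twisted complex $\mathcal{L}$ of generators. Filtering $\underline{CF^*(\mathcal{K},\mathcal{L})}$ by the twisted-complex structure of $\mathcal{L}$ produces graded pieces $\underline{CF^*(\mathcal{K},D^*_\beta\otimes L_\beta)}$ with a \emph{single} simply connected generator $L_\beta$ in the test slot. For such a piece, every contributing disc has both its input and its output on the one boundary arc lying on $L_\beta$, which lifts into a single sheet; hence all weights are trivial and the piece is literally $\hom_{Tw_{Ch}}(\mathcal{K},L_\beta)\otimes_\Z\Z\pi_1(X)$ (up to tensoring with $D^*_\beta$, handled by Lemma \ref{lem:zero-torsion-general}). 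Each graded piece is acyclic precisely because $\mathcal{K}$ is an acyclic object, so the filtration lemma applies. Your stage 1 (resolving $K$ and $L$ and chasing torsion through a $3\times 3$ diagram) is therefore not just an extra detour requiring its own bookkeeping of compatible bases --- it steers the argument into a configuration where the simply-connectedness of the generators no longer kills the $\pi_1(X)$-weights.
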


We state the lemma for a general Fukaya category without specifying a particular model, as the result applies to any version in which the morphism spaces are proper, i.e. finite-dimensional in the chain level so that Whitehead torsion is well-defined. 

Now let $K$ and $L$ be two closed exact Lagrangians which define isomorphic objects in the compact Fukaya category $\mathcal{F}(X)$ when equipped with brane structures. To apply Lemma \ref{lem:autsimp-intro}, we need a larger Fukaya category which contains the category of closed exact Lagrangians as a full subcategory, and admits simply connected simple generators. The version used in this paper is the Fukaya category of Lefschetz fibrations as in \cite{Seidel18}:

\begin{proposition}[Proposition \ref{prop:simple-gen-fuk-lef}] \label{prop:simple-gen-Lef-fib}
    Let $X$ be the total space of a Lefschetz fibration $\pi:X\to\C$. Then the Lefschetz thimbles associated to $\pi$ simply generate $\mathcal{F}(\pi)$, the Fukaya category of the Lefschetz fibration $\pi$.
\end{proposition}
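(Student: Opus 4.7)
The plan is to strengthen the standard generation theorem for Lefschetz thimbles from \cite{Seidel18} to a simple generation statement, using the contractibility of the thimbles as the key new input. I proceed in three steps.

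First, I would recall the geometric construction showing that the thimbles generate $\mathcal{F}(\pi)$. Given any admissible Lagrangian $K$, one performs parallel transport of $K$ along a family of paths in the base $\C$ sweeping past the critical values, deforming $K$ into a Lagrangian disjoint from a chosen generic fiber at infinity. Each time the deformed Lagrangian crosses a critical value $c_i$, it picks up a wall-crossing contribution corresponding to a copy of the Lefschetz thimble $T_i$, with multiplicity recorded by the local intersection. These contributions assemble into a twisted complex $\mathcal{K}$ built from thimbles, together with a morphism $f:K\to\mathcal{K}$ whose mapping cone is acyclic. Crucially, the resulting twisted complex $\mathcal{K}$ is naturally filtered by the order in which critical values are crossed, yielding a Postnikov-type structure.

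Second, I would upgrade the quasi-isomorphism $K\simeq\mathcal{K}$ to a simple one, i.e., show that for any twisted complex $\mathcal{L}$, the induced chain map
\begin{equation*}
f^*:\underline{CF^*(\mathcal{K},\mathcal{L})}\to\underline{CF^*(K,\mathcal{L})}
\end{equation*}
has trivial Whitehead torsion, and likewise for its right-sided analogue $\underline{CF^*(\mathcal{L},K)}\to\underline{CF^*(\mathcal{L},\mathcal{K})}$. The crucial input is the contractibility of each thimble $T_i$: its inclusion into $X$ lifts, uniquely up to the deck action of $\pi_1(X)$, to the universal cover $\Tilde{X}$, so the $\Z\pi_1(X)$-basis of $\underline{CF^*(\mathcal{K},\mathcal{L})}$ is given directly by intersections of the thimbles with $\mathcal{L}$ with no additional lifting ambiguity. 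Using the Postnikov filtration from Step 1, the continuation map $f^*$ can be arranged to be block-triangular, with identity blocks on the diagonal corresponding to intersection points preserved under parallel transport and off-diagonal entries recording the wall-crossing contributions. Such a matrix is elementary over $\Z\pi_1(X)$ and represents the trivial element in the Whitehead group. The right-sided analogue follows by the symmetric argument, applying parallel transport to the test object $\mathcal{L}$ rather than to $K$ and using the same contractibility input to control the basis. Combining both sides, we conclude that $[K\to\mathcal{K}]$ is both left and right simply acyclic.

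The main obstacle lies in Step 2, in making the block-triangular structure of the continuation map precise. This requires careful bookkeeping of which intersection points of $K_t\cap\mathcal{L}$ persist across the parallel transport and which are born or killed at each wall-crossing, together with verifying that the higher $A_\infty$-order terms entering the twisted complex differential on $\mathcal{K}$ do not obstruct the triangularity. A combination of the Postnikov filtration by critical-value crossings and a suitable action filtration on the Floer complexes, set up with generic perturbation data, should suffice to control these contributions and reduce the computation of the Whitehead torsion to a product of trivially torsioned elementary factors.
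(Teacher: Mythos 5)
Your high-level architecture (produce a thimble-built twisted complex $\mathcal{K}$ quasi-isomorphic to $K$, then kill the Whitehead torsion of the comparison map using a filtration and the contractibility of the thimbles) points in the right direction, but the critical step is asserted rather than proved, and the justification you offer does not suffice. The claim that $f^*:\underline{CF^*(\mathcal{K},\mathcal{L})}\to\underline{CF^*(K,\mathcal{L})}$ is ``block-triangular with identity blocks on the diagonal'' over $\Z\pi_1(X)$ is precisely the hard content. Contractibility of the thimbles only trivializes the $\pi_1(X)$-weights of curves that carry a boundary component on a thimble; the diagonal entries of your comparison map pair up generators of $K\cap L'$ with generators of $\mathcal{K}\cap L'$ via the wall-crossing geometry, and the strips realizing that correspondence have boundary on $K$, on $L'$, and near the vanishing cycles --- none of which need be simply connected. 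A priori such a strip contributes $\pm g$ for a nontrivial $g\in\pi_1(X)$, and an upper-triangular matrix with units $\pm g$ on the diagonal represents a nontrivial class in $Wh(\pi_1(X))$. The paper controls exactly this via the simplicity of the Dehn twist exact triangle (Theorem \ref{thm:Dehn-twist-exact-triangle-simple}): one shrinks the support of each Dehn twist, chooses an almost complex structure integrable near the intersection points, and applies a monotonicity lemma for curves with switching Lagrangian boundary conditions (Lemma \ref{thm:monotonicity-lemma-CEL}) to force every low-action curve contributing to the diagonal of the action filtration to stay in a simply connected Weinstein neighborhood, hence to have $g(u)=1$. No ingredient of this kind appears in your proposal, and your ``suitable action filtration \ldots should suffice'' defers exactly the point at issue.

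There is a second, related gap: your Step 1 treats the wall-crossing decomposition as given, but its rigorous form is the iterated (algebraic versus geometric) Dehn twist comparison, so your argument would in any case have to establish that $T_VL\to\tau_VL$ is a \emph{simple} isomorphism and that algebraic twists preserve simple isomorphisms (Proposition \ref{prop: algebraic-twist-simple-twisted-cpx}) --- the two results that occupy most of the paper's Section 4.3. The paper then sidesteps chain-level control of the generation quasi-isomorphism entirely by passing to a $4{:}1$ branched cover (following Bai--Seidel), where the fully twisted lift $\bar N$ is \emph{geometrically disjoint} from the test Lagrangian $N'$, so $\underline{CF^*(\bar N,N')}=0$ is simply acyclic for free; acyclicity and triviality of torsion are obtained simultaneously from this disjointness rather than from a matrix computation. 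Your route, which tries to compute the torsion of the comparison map directly in $X$, would require substantially more chain-level information than the standard generation theorem provides.
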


By the result of Giroux and Pardon \cite{GirouxPardon}, any Weinstein manifold can be presented as the total space of a Lefschetz fibration after a Weinstein deformation. Thus, given a Weinstein manifold $X$, we may deform it to one admitting a Lefschetz fibration $\pi:X\to\C$. By Lemma \ref{lem:autsimp-intro} and Proposition \ref{prop:simple-gen-Lef-fib}, it follows that any isomorphism between two closed exact Lagrangians $K$, $L$ in the Fukaya category $\mathcal{F}(X)$ must be a simple isomorphism. In particular, the homotopy equivalences of cochain complexes
\begin{equation}
    \underline{CF^*(K,K)}\simeq\underline{CF^*(K,L)}\simeq\underline{CF^*(L,L)}
\end{equation}
induced from the module actions of the isomorphism elements from $K\cong L$ are simple homotopy equivalences. When the maps $\pi_1(K)\to\pi_1(X)$ and $\pi_1(L)\to\pi_1(X)$ are injective, and the representations $\rho:\pi_1(K)\to\C$ and $\rho':\pi_1(L)\to\C$ both factor through a common representation $\psi:\pi_1(X)\to\C$, the cochain complexes $\underline{CF^*(K,K)}$ and $\underline{CF^*(L,L)}$ recover the Reidemeister torsions of the cellular cochain complexes of $K$ and $L$. Since Reidemeister torsion is invariant under simple homotopy equivalences, this proves Theorem \ref{thm:Weinstein-equivalence-simple}.

\subsection{Organization of the paper}

In Section 2, we review Whitehead torsion and Reidemeister torsion, including an explicit computation of Reidemeister torsion for lens spaces $L(p,q)$. Section 3 introduces the category of twisted complexes twisted by cochain complexes, denoted by $Tw_{Ch}\mathcal{C}$, and recalls the Fukaya category of the Lefschetz fibration as constructed in \cite{Seidel18}. Section 4 forms the core of the paper: we define the $A_\infty$-bimodule $\underline{CF^*(K,L)}$ and its extension to twisted complexes $\underline{CF^*(\mathcal{K},\mathcal{L})}$. In Subsection 4.2, we use this framework to define ``simple'' versions of categorical notions such as acyclic objects, isomorphism, and generation. In Subsection 4.3, we show that the Fukaya category of a Lefschetz fibration is simply generated by the Lefschetz thimbles. Finally, Section 5 presents applications of the theory developed above.

\subsection{Acknowledgements}

First and foremost, I would like to thank my advisor Paul Seidel for his continued support and guidance throughout this project. I am especially grateful to Zihong Chen and Thomas Massoni for many helpful discussions. I would also like to thank Mohammed Abouzaid, Shaoyun Bai, Kenny Blakey, Thomas Kragh, Jae Hee Lee, Gheehyun Nahm, Noah Porcelli, and Jonathan Zung for helpful comments and discussions at different stages of this project. I would also like to thank the anonymous referee for helpful feedback. This work was partially supported by an MIT Peterson Fellowship award.

\section{Whitehead torsion and Reidemeister torsion} \label{sec:torsion}

In this section, we review the notions of Whitehead torsion and Reidemeister torsion. The exposition will mostly follow Milnor's survey \cite{MilnorSurvey}. For further details, the reader may consult Turaev \cite{Turaev}, or see Abouzaid--Kragh \cite[Section 2]{AbouzaidKragh} for a quick overview of the key definitions and properties.

\subsection{Whitehead torsion}\label{ssec:Whitehead-torsion}

Whitehead torsion is defined for chain complexes and cochain complexes equipped with a basis that freely generates each degree component. With respect to such a basis, the differentials can be represented by matrices with entries in the coefficient ring. To make this precise, we introduce the definition of a based (co)chain complex. For our purposes of defining Whitehead torsion, we will restrict to complexes which are finitely generated in each degree, and supported in finitely many degrees.

\begin{defn}\label{def:based-complex}
    A \emph{based (co)chain complex} over a ring $R$ is a (co)chain complex of free modules $(C_*,\partial_*)$ equipped with a choice of finite ordered basis $b_i=\{b_i^\alpha\}$ for each degree component $C_i$. We also require that the degree components $C_i$ are nonzero for only finitely many degrees.
\end{defn}

To match our constructions in later sections using Floer cohomology, we will focus on based \emph{cochain} complexes for the remainder of this subsection.

For any ring $R$, we define $\GL_n(R)$ to be the multiplicative group of invertible $n\times n$ matrices with entries in $R$. We also define an inclusion $\GL_n(R)\xhookrightarrow{}\GL_{n+1}(R)$ by mapping a matrix $A\in\GL_n(R)$ to
\begin{equation}
\begin{pmatrix}
    A &0\\
    0 &1_R
\end{pmatrix} \in \GL_{n+1}(R).    
\end{equation}

We define the general linear group $\GL(R)$ as the direct limit of the direct system of inclusions given by
\begin{equation}
    \GL_1(R)\xhookrightarrow{}\GL_2(R)\xhookrightarrow{}\cdots.
\end{equation}

Inside the general linear group, there is a subgroup $E(R)$ generated by the elementary matrices, which are the matrices that agree with the identity matrix up to a possible difference only at exactly one non-diagonal entry. It is known (\cite[Lemma 1.1]{MilnorSurvey}) that this subgroup $E(R)$ is the commutator subgroup of $\GL(R)$.

\begin{defn}\label{def:Whitehead-group-K_1}
    For a ring $R$, we define the group \emph{$K_1(R)$} to be the quotient
    \begin{equation}
        K_1(R) = \GL(R)/E(R).
    \end{equation}
\end{defn}

By the previous discussion, $K_1(R)$ is an abelian group. For example, when $R$ is a field $\mathbb{F}$, the group $K_1(\mathbb{F})$ can be identified with the group of units of $\mathbb{F}$
\begin{equation}
    K_1(\mathbb{F}) \cong \mathbb{F}^\times,
\end{equation}
and the isomorphism is induced by the determinant homomorphism $\det:K_1(\mathbb{F})\to \mathbb{F}^\times$.

As another example, one can verify that $K_1(\Z)$ is isomorphic to $\{\pm1\}$ using the Euclidean algorithm. In practice, we will want to ignore the sign ambiguity in the matrices representing the differentials, which arises from the ordering of the basis elements. Thus, we define a reduced version of $K_1(R)$ that quotients out this sign indeterminacy.

\begin{defn}\label{def:Reduced-Whitehead-group}
    For a ring $R$, we define the \emph{reduced $K_1(R)$} to be the quotient
    \begin{equation}
        \overline{K_1}(R) = K_1(R)/\{\pm1\}.
    \end{equation}
\end{defn}

From the above discussion, it follows that $\overline{K_1}(\Z)$ is the trivial group, and likewise $\overline{K_1}(\mathbb{F})\cong\mathbb{F}^\times/\pm 1$.

Now we define the Whitehead torsion of a based cochain complex. The assumption on our ring $R$ is that it satisfies the \emph{invariant basis number} property:
\
\begin{equation} \label{eqn: IBN_property}
    \mathrm{For~each~nonnegative~integer~}m\neq n,~ R^m \not\cong R^n.
\end{equation}

This condition ensures that the rank of a free $R$-module is independent of the choice of basis. Examples of rings which do not satisfy this assumption arise as Leavitt algebras, introduced in \cite{Leavitt}.

Suppose we have a free $R$-module $R^n$ equipped with two different bases $b=\{b_1,\cdots,b_n\}$ and $c=\{c_1,\cdots,c_n\}$. Then there exists a matrix $A$ representing the change of basis, whose entries $\{a_{ij}\}$ are determined by the formula $c_i=\sum a_{ij}b_j$. We denote the corresponding equivalence class of the matrix $A$ in $\overline{K_1}(R)$ as $[c/b]$.

Now suppose that we have a short exact sequence of finitely generated free $R$-modules
\begin{equation}
\begin{tikzcd}
    0 \arrow[r] &F_0 \arrow[r,"\iota"] &F_1 \arrow[r] &F_2 \arrow[r] &0,
\end{tikzcd}    
\end{equation}
and bases $b=\{b_1,\cdots,b_k\}$ for $F_0$ and $c=\{c_1,\cdots,c_l\}$ for $F_2$. By choosing lifts $\Tilde{c_i}\in F_1$, we can construct a basis $bc$ for $F_1$ as
\begin{equation}
    bc = \{\iota(b_1),\cdots,\iota(b_k),\Tilde{c_1},\cdots,\Tilde{c_l}\}.
\end{equation}
For different choices of lifts $c'=\{\Tilde{c_i}'\}$ for the basis $c$, one can show that the class $[bc'/bc]\in \overline{K_1}(R)$ is trivial. To connect this to cochain complexes, suppose that $(C_*,\partial_*,\{c_i\})$ is a based cochain complex which is acyclic. Then for the submodule $B_i=\im(\partial_{i-1})=\ker(\partial_i)$ of $C_i$, we have a short exact sequence
\begin{equation}\label{eqn-chain-cpx-SES}
\begin{tikzcd}
    0 \arrow[r] &B_i \arrow[r] &C_i \arrow[r, "\partial_i"] &B_{i+1} \arrow[r] &0.
\end{tikzcd}
\end{equation}
From the above discussion, we can construct a basis $q_iq_{i+1}$ for $C_i$ from a preferred basis $q_i$ of $B_i$ and $q_{i+1}$ of $B_{i+1}$, when both $B_i$ and $B_{i+1}$ are free. Moreover, the change of basis matrix $[q_iq_{i+1}/c_i]$ with respect to the given preferred basis $c_i$ of $C_i$ will be independent of choices of the lifts. 

In \cite[Section 4]{MilnorSurvey}, Milnor explains how to generalize the above definition to the case where the modules $B_i$ are only stably free. This condition is always satisfied for cochain complexes supported in finitely many degrees, since the short exact sequence (\ref{eqn-chain-cpx-SES}) implies that each $B_i$ is stably free. Our definition of based cochain complexes incorporates this assumption. When comparing two stable bases $b$, $c$ of different sizes, the symbol $[b/c]$ is extended to denote the change of basis matrix obtained after appropriately enlarging the smaller basis. With this in place, we can now state the definition of torsion for based cochain complexes:

\begin{defn}\label{def:whitehead-torsion-K1}
    Let $(C_i,\partial_i,\{c_i\})$ be a based acyclic cochain complex over a ring $R$. Choose stable bases $q_i$ for each $B_i=\partial_{i-1}(C_{i-1})$. The \emph{Milnor torsion} of the based cochain complex $(C_i,\partial_i,\{c_i\})$ is defined as
    \begin{equation}\label{eqn:def-torsion}
        \eta(C_i,\partial_i,\{c_i\}) = \prod_i ~[q_iq_{i+1}/c_i]^{(-1)^{i-1}}\in \overline{K_1}(R),
    \end{equation}
    where $q_iq_{i+1}$ denotes the concatenated basis of $C_i$. This definition is independent of the choice of basis $q_i$.
\end{defn}

Although this definition a priori depends on the choice of basis $q_i$ for each $B_i$, one can see that when we pick a different basis $q_i'$, the contribution of the change of basis matrix $[q_i'/q_i]$ cancels in the product. Thus the definition of the Milnor torsion only depends on the choice of bases for each $C_i$.

We now discuss some properties of Milnor torsion.

\begin{proposition}[Theorem 3.1, \cite{MilnorSurvey}]\label{prop:torsion-additive}
    Suppose that we have a short exact sequence $ 0 \to C' \to C \to C'' \to 0 $ of based acyclic cochain complexes over $R$, with bases $\{c_i\},\{c_i'\},\{c_i''\}$ that are compatible in the sense that $[c_i'c_i''/c_i]=[\id]\in\overline{K_1}(R)$ for all $i$. Then the Milnor torsion is multiplicative under extensions:
    \begin{equation}
        \eta(C) = \eta(C')\eta(C'')\in \overline{K_1}(R).
    \end{equation}
\end{proposition}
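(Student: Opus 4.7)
The plan is to reduce the multiplicativity of $\eta$ to the multiplicativity of block-diagonal change-of-basis classes in $\overline{K_1}(R)$, together with the cancellation of permutation signs. The first step is to verify that the short exact sequence $0 \to C'_* \to C_* \to C''_* \to 0$ of acyclic based cochain complexes induces, for each degree $i$, a short exact sequence
\begin{equation*}
    0 \to B'_i \to B_i \to B''_i \to 0
\end{equation*}
on boundary submodules $B_i = \im \partial_{i-1}$. Surjectivity of $B_i \to B''_i$ follows by lifting a preimage and taking its boundary; exactness in the middle uses acyclicity of $C''_*$ to modify a chain $c_{i-1} \in C_{i-1}$ whose boundary lies in $C'_i$ by a boundary from $C_{i-2}$, producing a corrected chain in $C'_{i-1}$ with the same image under $\partial_{i-1}$.

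Next, I would choose stable bases $q'_i$ for $B'_i$ and $q''_i$ for $B''_i$, lift $q''_i$ through the surjection $B_i \to B''_i$, and form the stable basis $q_i := q'_i q''_i$ for $B_i$, to be used in computing $\eta(C_*)$. The two stable bases $(q'_i q''_i)(q'_{i+1} q''_{i+1})$ and $(q'_i q'_{i+1})(q''_i q''_{i+1})$ of $C_i$ differ by a block permutation, whose determinant lies in $\{\pm 1\}$ and therefore represents the trivial class in $\overline{K_1}(R)$. Combined with the compatibility hypothesis $[c'_i c''_i / c_i] = [\id]$ and the fact that the change-of-basis matrix from $(q'_i q'_{i+1})(q''_i q''_{i+1})$ to $c'_i c''_i$ is block diagonal with blocks $[q'_i q'_{i+1}/c'_i]$ and $[q''_i q''_{i+1}/c''_i]$, this yields in $\overline{K_1}(R)$ the identity
\begin{equation*}
    [q_i q_{i+1} / c_i] = [q'_i q'_{i+1} / c'_i] \cdot [q''_i q''_{i+1} / c''_i].
\end{equation*}
Taking the alternating product over $i$ and separating the two factors gives the desired identity $\eta(C_*) = \eta(C'_*) \eta(C''_*)$.

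I expect the main obstacle to be the bookkeeping of signs and stabilization: one must verify that every permutation sign encountered is genuinely absorbed in the quotient $\overline{K_1}(R) = K_1(R)/\{\pm 1\}$, and that all change-of-basis matrices are stabilized to a common size before being multiplied in $\GL(R)$, since the $q_i$ are only stable bases of the $B_i$ rather than honest bases of free modules of the correct rank. Once this bookkeeping is handled, the argument reduces to the elementary facts that block diagonal matrices decompose multiplicatively in $K_1(R)$ and that permutation matrices represent signs, both of which are routine.
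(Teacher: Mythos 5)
Your proof is correct and follows the standard argument for \cite[Theorem 3.1]{MilnorSurvey}, which the paper simply cites rather than reproving: induce the short exact sequence on the boundary submodules $B_i$, take concatenated stable bases, and reduce to multiplicativity of block matrices in $\overline{K_1}(R)$ together with the vanishing of permutation signs. The only imprecision is that the change-of-basis matrix from $(q'_i q'_{i+1})(q''_i q''_{i+1})$ to $c'_i c''_i$ is in general only block \emph{triangular} rather than block diagonal (the lifts of $q''_i$ and $q''_{i+1}$ into $C_i$ are well-defined only modulo $C'_i$), but since the class in $K_1(R)$ of an invertible block-triangular matrix equals the product of the classes of its diagonal blocks, your conclusion is unaffected.
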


We generalize the above situation to the case where we have a filtered cochain complex, equipped with a basis for each degree term that descends to the associated graded complexes.

\begin{proposition}[Theorem 5.2, \cite{MilnorSurvey}]\label{prop:Whitehead-torsion-filtration}
    Let $(C^*, \partial, \{c^i\})$ be a based acyclic cochain complex over a ring $R$, equipped with a finite filtration by subcomplexes
    \[
        0 = F^{-1}C^* \subset F^0C^* \subset F^1C^* \subset \cdots \subset F^nC^* = C^*.
    \]
    Suppose the following hold:
    \begin{itemize}
        \item Each subquotient $F^kC^* / F^{k-1}C^*$ is an acyclic cochain complex.
        \item The chosen basis $\{c^i\}$ of $C^i$ decomposes as a disjoint union $\{c^i\} = \bigsqcup_k \{c^i_k\}$, and the image of $\{c^i_k\}$ freely generates $F^kC^i / F^{k-1}C^i$.
        \item The Milnor torsion of each graded piece $F^kC^* / F^{k-1}C^*$, computed with respect to the induced basis $\{c^i_k\}$, is trivial in $\overline{K_1}(R)$.
    \end{itemize}
    Then the total cochain complex $(C^*, \partial, \{c^i\})$ has trivial Milnor torsion with respect to the basis $\{c^i\}$.
\end{proposition}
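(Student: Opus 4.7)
The plan is to prove the proposition by induction on the length $n$ of the filtration, using the additivity of Milnor torsion under short exact sequences (Proposition \ref{prop:torsion-additive}) as the main engine. The base case $n=0$ is vacuous, since then $C^* = F^0 C^*$ is itself a graded piece with trivial torsion by hypothesis. For the inductive step, I would isolate the top stratum and reduce to a short exact sequence of two based acyclic cochain complexes, each of which has trivial torsion.

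More precisely, given the filtration of length $n$, I would form the short exact sequence
\begin{equation}
    0 \to F^{n-1}C^* \to C^* \to C^*/F^{n-1}C^* \to 0.
\end{equation}
The subcomplex $F^{n-1}C^*$ inherits a filtration of length $n-1$ with the induced bases $\bigsqcup_{k\leq n-1}\{c^i_k\}$ on each $F^{n-1}C^i$, and every hypothesis of the proposition is preserved: each subquotient is still acyclic with trivial torsion. Acyclicity of $F^{n-1}C^*$ itself follows by a repeated application of the long exact sequence in cohomology (or by iterated use of Proposition \ref{prop:torsion-additive} at the level of acyclicity), so the inductive hypothesis applies and yields $\eta(F^{n-1}C^*) = 1$. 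The quotient $C^*/F^{n-1}C^*$ equals the top graded piece $F^nC^*/F^{n-1}C^*$, whose torsion is trivial by assumption.

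The remaining point is to check that the bases are compatible in the precise sense demanded by Proposition \ref{prop:torsion-additive}, namely that $[c_i'c_i''/c_i] = [\id] \in \overline{K_1}(R)$ for each $i$, where $c_i' = \bigsqcup_{k\leq n-1}\{c^i_k\}$ and $c_i'' = \{c^i_n\}$. Since the assumption of the proposition is that the union $\bigsqcup_k\{c^i_k\}$ equals the preferred basis $\{c^i\}$ of $C^i$ itself (viewed as an ordered basis by concatenating in order of filtration degree), the concatenated basis $c_i'c_i''$ obtained by lifting $c_i''$ tautologically along the splitting provided by $\{c^i_n\}\subset C^i$ coincides with $\{c^i\}$ as an ordered set, so the change-of-basis matrix is the identity. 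Applying Proposition \ref{prop:torsion-additive} then gives
\begin{equation}
    \eta(C^*) = \eta(F^{n-1}C^*)\cdot\eta(C^*/F^{n-1}C^*) = 1\cdot 1 = 1,
\end{equation}
completing the induction.

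The only subtle point I anticipate is the basis bookkeeping: one must fix a convention for ordering $\bigsqcup_k\{c^i_k\}$ (say by increasing filtration degree) and verify that the same convention makes the inductive splitting of $C^i$ into $F^{n-1}C^i$ and $F^nC^i/F^{n-1}C^i$ tautologically compatible with the chosen lift, so that no permutation matrix (whose class in $\overline{K_1}(R)$ could nevertheless be nontrivial in general) enters the computation. Once this ordering convention is pinned down at the outset, the rest is a direct inductive application of additivity.
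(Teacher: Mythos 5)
Your proof is correct and is essentially the standard argument behind the cited result (the paper gives no proof of its own, deferring to Milnor's Theorem 5.2, whose proof is exactly this induction on filtration length combined with additivity of torsion for short exact sequences with compatible bases). One minor remark: your worry that a permutation matrix could be nontrivial in $\overline{K_1}(R)$ is unfounded, since permutation matrices lie in the subgroup generated by elementary matrices and $\mathrm{diag}(-1,1,\dots,1)$, whose class is killed in passing from $K_1(R)$ to $\overline{K_1}(R)$; your ordering convention nonetheless sidesteps the issue cleanly.
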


Now consider the following situation: let $(X,Y)$ be a CW pair such that the relative cellular chain complex $C_*^{cell}(X,Y)$ is acyclic. Pick a universal cover $\pi:\Tilde{X}\to X$, and define $\Tilde{Y}=\pi^{-1}(Y)$. After choosing lifts $\Tilde{e}$ of the cells $e$ generating $C_*^{cell}(X,Y)$ to $\Tilde{X}$, we identify the other cells generating $C_*^{cell}(\Tilde{X},\Tilde{Y})$ as $g\cdot \Tilde{e}$ for some $g\in\pi_1(X)$. This gives $C_*^{cell}(\Tilde{X},\Tilde{Y};\Z)$ the structure of a based acyclic chain complex over the group ring $\Z\pi_1(X)$, its Milnor torsion is well-defined.

However, this Milnor torsion depends on the initial choice of lifts $\{\Tilde{e}\}$. To obtain a torsion independent of such choices, we must quotient by the ambiguity arising from the action of $\pi_1(X)$, which parametrizes the possible choices of lifts.

This motivates us to consider the case when our coefficient ring $R$ is a group ring $\Z G$ for some group $G$. These rings satisfy (\ref{eqn: IBN_property}), because they admit a surjective map $\Z G\to \Z$, and the invariant basis number property pulls back under surjective maps: any isomorphism of $\Z G$-modules $(\Z G)^m\cong (\Z G)^n$ induces an isomorphism $\Z^m\cong\Z^n$ as $\Z$-modules, implying $m=n$.

Since $K_1(\Z G)$ is defined as the quotient of the general linear group $\GL(\Z G)$ by its commutator subgroup, there is a group homomorphism $G/[G,G]\to K_1(\Z G)$ induced by the inclusion $G\xhookrightarrow{}\GL_1(\Z G)\to \GL(\Z G)$. 

\begin{defn} \label{def:Whitehead-group-Wh}
    Define the \emph{Whitehead group $Wh(G)$ of a group $G$} to be the cokernel of the map $G/[G,G]\to \overline{K_1}(\Z G)$. This can alternatively be described as the quotient of $K_1(\Z G)$ by the trivial units $\pm G$, understood as $1\times1$ matrices.
\end{defn}

For example, since $\overline{K_1}(\Z)$ is the trivial group, it follows that the Whitehead group $Wh(\{e\})$ of the trivial group $\{e\}$ is trivial.





We now define the Whitehead torsion of an acyclic based cochain complex over the group ring $\Z G$.

\begin{defn}\label{def:Whitehead-torsion-Wh}
    Let $(C_i,\partial_i,\{c_i\})$ be an acyclic based cochain complex over the group ring $\Z G$. We define its \emph{Whitehead torsion} by the image of its Milnor torsion under the canonical projection $\pi:\overline{K_1}(\Z G)\to Wh(G)$:
    \begin{equation}\label{eqn:def-whitehead-torsion}
        \tau(C_i,\partial_i,\{c_i\})=\pi(\eta(C_i,\partial_i,\{c_i\}))\in Wh(G).
    \end{equation}
\end{defn}

Since the Whitehead torsion is defined as the image of Milnor torsion under the projection $\overline{K_1}(\Z G)\to Wh(G)$, it is invariant under replacing a basis element $c_i^\alpha$ by $g\cdot c_i^\alpha$ for any $g\in G$. Moreover, both the multiplicativity of torsion for short exact sequences (Proposition \ref{prop:torsion-additive}) and its vanishing for filtered complexes with acyclic graded pieces with trivial torsion (Proposition \ref{prop:Whitehead-torsion-filtration}) hold for Whitehead torsion.

We also introduce the following definition:

\begin{defn}\label{def:simply-acyclic-chaincpx}
    An acyclic based cochain complex $(C_*,\partial_*,\{c_i\})$ is \emph{simply acyclic} if its associated Whitehead torsion $\tau(C_*,\partial_*,\{c_i\})$ is trivial.
\end{defn}

We now define the Whitehead torsion of a quasi-isomorphism between based complexes. Suppose $f:(C_*,\{c_i\})\to (D_*,\{d_i\})$ is a quasi-isomorphism between based cochain complexes over a group ring $\Z G$. The mapping cone of $f$ is defined as 
\begin{equation}
    cone(f)=C[1]\oplus D,~ \partial_{cone}=\begin{pmatrix}
    -\partial_C &0\\
    -f &\partial_D
\end{pmatrix}.
\end{equation}
Since $f$ is a quasi-isomorphism, $cone(f)$ is acyclic. The bases of $C_*$ and $D_*$ together naturally induce a basis for $cone(f)$, which we denote as $c_{cone}$.

\begin{defn} \label{def:simple-homotopy}
    The \emph{Whitehead torsion $\tau(f)$ of a quasi-isomorphism} $f:(C_*,\{c_i\})\to (D_*,\{d_i\})$ between two based cochain complexes over $\Z G$ is defined to be the Whitehead torsion $\tau(cone(f),c_{cone})$ of the mapping cone $cone(f)$. If $f$ is a chain homotopy equivalence such that $\tau(f)$ is trivial, we say that $f$ is a \emph{simple homotopy equivalence}.
\end{defn}

We now extend this definition to the topological setting, by defining the Whitehead torsion of a homotopy equivalence between CW complexes.

\begin{defn} \label{def:torsion-CW-cpx}
    Let $X$ be a finite connected CW complex, and let $Y \subset X$ be a subcomplex such that $X$ deformation retracts onto $Y$. Choose a universal cover $\pi \colon \widetilde{X} \to X$, equip $\widetilde{X}$ with the induced CW structure, and define $\tilde{Y}\coloneqq\pi^{-1}(Y)$. Fix a lift $\widetilde{e}_k$ of each cell $e_k$ in $X \setminus Y$, and view the relative cellular chain complex $C_*^{\mathrm{cell}}(\widetilde{X}, \widetilde{Y})$ as a based acyclic chain complex over the group ring $\mathbb{Z}\pi_1(X)$.

    The \emph{Whitehead torsion} of the CW pair $(X, Y)$ is defined by
    \[
        \tau(X, Y) := \tau(C_*^{\mathrm{cell}}(\widetilde{X}, \widetilde{Y}), \{\widetilde{e}_k\}) \in Wh(\pi_1(X)).
    \]

    Given a cellular homotopy equivalence $f \colon X \to Y$ between finite connected CW complexes, its \emph{Whitehead torsion} is defined as
    \[
        \tau(f) := \tau(M_f, X),
    \]
    where $M_f$ is the mapping cylinder of $f$. We say that $f$ is a \emph{simple homotopy equivalence} if $\tau(f)$ is trivial.
\end{defn}

Chapman (\cite[Theorem 1]{Chapman}) showed that any homeomorphism between finite connected $CW$ complexes induces a simple homotopy equivalence between the cellular chain complexes. Also, the Whitehead torsion of a chain homotopy equivalence $f$ is invariant under chain homotopies of $f$. The following two propositions, which formalize this invariance and multiplicativity of torsion, can be found in \cite[Lemma 7.2]{Turaev}.

\begin{proposition} \label{prop:homotopy-invariance-torsion}
Let $f$ and $g$ be chain homotopy equivalences between based cochain complexes over a group ring $\Z G$. If $f$ and $g$ are chain homotopic, their Whitehead torsions agree:
\begin{equation}
    \tau(f) = \tau(g).
\end{equation}
\end{proposition}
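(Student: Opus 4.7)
The plan is to exhibit a canonical cochain isomorphism $\Phi:cone(g)\to cone(f)$ built from the given chain homotopy, and then argue that its action on the natural bases contributes trivially in $Wh(G)$ because its degree-wise matrix is unipotent.

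First, I would fix a cochain homotopy $h:C^*\to D^{*-1}$ satisfying $f-g=\partial_D h+h\partial_C$, and define $\Phi:cone(g)\to cone(f)$ in block form by
\[
    \Phi = \begin{pmatrix} \id_C & 0 \\ h & \id_D \end{pmatrix}.
\]
A short direct calculation using the homotopy relation verifies that $\Phi\,\partial_{cone(g)} = \partial_{cone(f)}\,\Phi$, and $\Phi$ is manifestly invertible with inverse obtained by replacing $h$ with $-h$. Hence $\Phi$ is an isomorphism of based acyclic cochain complexes over $\Z G$, since both cones carry the canonical basis induced from the bases $\{c_i\}$ of $C^*$ and $\{d_i\}$ of $D^*$.

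Next, I would invoke the change of basis formula for Milnor torsion: an isomorphism $\Phi$ between based acyclic cochain complexes alters the Milnor torsion by an alternating product of the classes $[\Phi_i]\in\overline{K_1}(\Z G)$, where $\Phi_i$ denotes the degree-$i$ matrix of $\Phi$ with respect to the common basis. In our situation each $\Phi_i$ has block form $\begin{pmatrix} I & 0 \\ H_i & I \end{pmatrix}$, and any such unipotent block matrix is a product of elementary matrices, hence represents the trivial class in $\overline{K_1}(\Z G)$ and therefore in $Wh(G)$. Consequently the alternating product vanishes, so $\tau(cone(f))=\tau(cone(g))$ in $Wh(G)$, which by Definition \ref{def:simple-homotopy} is exactly $\tau(f)=\tau(g)$.

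The only substantive input is the decomposition of a block unipotent matrix as a product of elementary matrices; this follows by successive row reductions in the same manner as the argument that $E(R)$ is the commutator subgroup of $\GL(R)$ used in \cite[Lemma 1.1]{MilnorSurvey}. The main conceptual step is choosing the right isomorphism $\Phi$; once $\Phi$ is written down the rest is bookkeeping in $K_1$, and I expect no other subtle obstacles.
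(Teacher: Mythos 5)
Your proof is correct and is essentially the standard argument behind the reference the paper cites for this statement (\cite[Lemma 7.2]{Turaev}): the homotopy $h$ yields the unipotent chain isomorphism $\Phi=\begin{pmatrix}\id & 0\\ h & \id\end{pmatrix}:cone(g)\to cone(f)$, whose degree-wise matrices are products of elementary matrices (equivalently, compositions of handle slides in the sense of Definition \ref{def:simple-operations}) and hence do not change the Whitehead torsion. The paper itself does not spell out a proof, so there is nothing further to compare; your verification of the chain-map identity against the paper's sign convention for $\partial_{cone}$ is the only place where care is needed, and it checks out.
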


Whitehead torsion is also multiplicative under composition of chain homotopy equivalences:

\begin{proposition} \label{prop:Whitehead-torsion-additive}
    Let $f:C\to C'$, $g:C'\to C''$ be chain homotopy equivalences between based cochain complexes $C,C',C''$ over a group ring $\Z G$. Then
\begin{equation}
    \tau(g\circ f)=\tau(g)\cdot \tau(f).
\end{equation}
\end{proposition}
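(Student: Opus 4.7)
The plan is to reduce multiplicativity under composition to multiplicativity under short exact sequences, namely Proposition \ref{prop:torsion-additive}. By Definition \ref{def:simple-homotopy}, $\tau(h) = \tau(cone(h))$ for any chain homotopy equivalence $h$, so the goal becomes $\tau(cone(gf)) = \tau(cone(f)) \cdot \tau(cone(g))$. The strategy is to construct an auxiliary based acyclic cochain complex $E$ admitting two compatible computations of its Whitehead torsion, one recovering each side.

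Concretely, I would take the ``algebraic double mapping cone''
$$E = C[1] \oplus C' \oplus C'[1] \oplus C''$$
with its natural basis inherited from $C$, $C'$, $C''$ and with differential
$$\partial_E(c, c', \tilde{c}', c'') = \bigl(-\partial_C c,\ -f(c) + \partial_{C'}c' - \tilde{c}',\ -\partial_{C'}\tilde{c}',\ -g(\tilde{c}') + \partial_{C''}c''\bigr),$$
for which $\partial_E^2 = 0$ follows from $f$ and $g$ being chain maps. The subspace $\{(c,c',0,0)\}$ is a subcomplex identified with $cone(f)$, and the quotient is identified with $cone(g)$. Acyclicity of $E$ follows from the long exact sequence in cohomology, since $cone(f)$ and $cone(g)$ are acyclic. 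The natural basis of $E$ is, up to reordering, the concatenation of the basis of $cone(f)$ with a natural lift of the basis of $cone(g)$, so the compatibility hypothesis of Proposition \ref{prop:torsion-additive} holds and yields
$$\tau(E) = \tau(cone(f)) \cdot \tau(cone(g)) = \tau(f) \cdot \tau(g).$$

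The second step is to show $\tau(E) = \tau(gf)$. The idea is that the acyclic ``cylinder'' segment $C' \xrightarrow{\id} C'[1]$ sitting inside $E$ plays the role of $cone(\id_{C'})$, which is simply acyclic, while the remaining content of $E$ captures $cone(gf)$. I would make this precise by exhibiting $E$ as isomorphic, after a change of basis represented by a block lower-triangular matrix with identity on the diagonal, to the direct sum $cone(gf) \oplus cone(\id_{C'})$. Such block matrices belong to $E(\Z G)$ and hence carry trivial Whitehead torsion; if minor adjustments to $f$ or $g$ by null-homotopies are needed to make this work cleanly, Proposition \ref{prop:homotopy-invariance-torsion} guarantees that the relevant torsions are unaffected. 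Since $\tau(cone(\id_{C'})) = 1$ and torsion is additive under direct sums of based acyclic complexes, this produces $\tau(E) = \tau(cone(gf)) = \tau(gf)$. Comparing with the first step yields $\tau(gf) = \tau(g) \cdot \tau(f)$, as claimed.

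The main obstacle is the second step: executing the explicit change of basis that disentangles $E$ into $cone(gf) \oplus cone(\id_{C'})$ without introducing extra torsion. The subtlety is that the off-diagonal $-f(c)$ and $-g(\tilde{c}')$ entries of $\partial_E$ must be simultaneously rearranged by elementary basis operations to produce the combined term $-gf(c)$ appearing in $cone(gf)$, while the ``cross'' contributions in the other slots cancel cleanly. This is the substantive bookkeeping underlying the classical argument in \cite[Lemma 7.2]{Turaev}.
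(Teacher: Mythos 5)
Your argument is correct; note that the paper itself gives no proof of this proposition but simply cites \cite[Lemma 7.2]{Turaev}, and what you have written is essentially the standard textbook argument made self-contained. Both halves check out: the complex $E$ does satisfy $\partial_E^2=0$, it sits in a short exact sequence $0\to cone(f)\to E\to cone(g)\to 0$ with compatible bases, and Proposition \ref{prop:torsion-additive} gives $\tau(E)=\tau(f)\tau(g)$. For the step you flagged as the substantive bookkeeping, the explicit change of basis exists and no null-homotopy adjustment is needed, only a sign: with columns ordered $(C[1],C',C'[1],C'')$, the unipotent lower-triangular matrix
\begin{equation*}
\Phi=\begin{pmatrix}1&0&0&0\\0&1&0&0\\ f&0&1&0\\ 0&-g&0&1\end{pmatrix}
\end{equation*}
satisfies $\Phi\,\partial_E=\partial_{F}\,\Phi$ where $\partial_F$ is block diagonal with blocks $\bigl(\begin{smallmatrix}-\partial&0\\ +gf&\partial\end{smallmatrix}\bigr)$ on $(C[1],C'')$ and $cone(\id_{C'})$ on $(C',C'[1])$; the $+gf$ is converted to $-gf$ by the further diagonal change of basis $\mathrm{diag}(-1,1)$, which is trivial in $Wh(G)$. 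Since $\Phi$ is unipotent triangular in each degree it lies in $E(\Z G)$, so $\tau(E)=\tau(cone(gf))\cdot\tau(cone(\id_{C'}))=\tau(gf)$, as you claimed (triviality of $\tau(cone(\id_{C'}))$ follows from the degree filtration, whose graded pieces are two-term complexes with differential $\pm\id$).
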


There is an alternate definition of a simple homotopy equivalence, motivated from the ``simple expansion'' and ``simple collapse'' operations for CW complexes. We first introduce its algebraic counterpart.

\begin{defn} \label{def:simple-operations}
For based cochain complexes $(C_*,\{c_*^i\},\partial_*)$ over a group ring $\Z G$, we define three types of \emph{elementary simple operations}:
\begin{enumerate}
    \item Elementary expansion/retraction: we may take a direct sum with the short exact sequence 
    \begin{equation*}
        \begin{tikzcd}
        0 \arrow[r] &\Z G \arrow[r, "\id"] &\Z G \arrow[r] &0,
    \end{tikzcd}
    \end{equation*}
    shifted in any degree, and add a basis element at each according degree. The inverse operation of deleting such a direct summand is also allowed.
    \item Handle Slide: We may replace a basis element $c_i^\alpha$ by adding a linear sum of basis elements $c_i^\beta$ in the same degree to obtain a new basis element $c_i^\alpha+\sum_\beta g_\beta c_i^\beta$.
    \item Deck transformation: We may replace a basis element $c_\lambda^i$ with $g\cdot c_\lambda^i$, for some element $g\in G$.
\end{enumerate}
\end{defn}

We define the Whitehead group $Wh(G)$ to be the equivalence classes of acyclic based cochain complexes over $\Z G$ under the above three relations. A classic result in simple homotopy theory is that these two definitions of the Whitehead group agree (\cite[Theorem 8.7]{Turaev}). 

Now we turn to the geometric counterpart of the above discussion. The following is the geometric description of elementary expansions and retractions for CW complexes.

\begin{defn}
    Let $X$ be a finite CW complex, and suppose that $f:\partial D^k\to X$ is a continuous cellular map. Then we add a $k$-cell $e^k$ by the attaching map $f$, and further attach a $k+1$-cell by a map $g:\partial D^{k+1}\to X\cup e^k$ such that $g^{-1}(e^k)$ is the upper hemisphere of $S^k=\partial D^{k+1}$ and $g$ restricted to $g^{-1}(e^k)$ is a homeomorphism. Call the resulting CW complex $Y$, and define the inclusion $X\xhookrightarrow{}Y$ to be an \emph{elementary expansion}. The homotopy inverse of the above operation is called an \emph{elementary collapse}.
\end{defn}

One can check that elementary expansions and collapses are simple homotopy equivalences. The converse also holds:

\begin{proposition} [22.2, \cite{Cohen}] \label{prop:CW_cpx_he_simple}
    A cellular homotopy equivalence $f:X\to Y$ between finite CW complexes has trivial Whitehead torsion if and only if it can be decomposed into a finite composition of elementary expansions and collapses.
\end{proposition}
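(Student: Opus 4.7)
The plan is to prove both directions using the equivalence between the algebraic definition of $Wh(G)$ (via the three elementary simple operations of Definition \ref{def:simple-operations}) and the geometric definition via Whitehead torsion of CW pairs. The key tool is \cite[Theorem 8.7]{Turaev}, already cited in the excerpt, which identifies these two descriptions of $Wh(G)$.

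For the forward implication, suppose that $f: X \to Y$ is a composition of elementary expansions and collapses. By the multiplicativity of Whitehead torsion (Proposition \ref{prop:Whitehead-torsion-additive}) it suffices to treat a single elementary expansion $X \hookrightarrow Y$ where $Y = X \cup e^k \cup e^{k+1}$ with $e^{k+1}$ attached so that its boundary sphere has upper hemisphere mapped homeomorphically onto $e^k$. Passing to the universal cover and choosing a lift of $e^k$ together with the induced lift of $e^{k+1}$, the relative cellular cochain complex $C^*_{\mathrm{cell}}(\widetilde{Y}, \widetilde{X})$ over $\Z\pi_1(Y)$ is precisely the two-term complex $\Z\pi_1(Y) \xrightarrow{\mathrm{id}} \Z\pi_1(Y)$ concentrated in degrees $k$ and $k{+}1$. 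This is the algebraic elementary expansion in Definition \ref{def:simple-operations}(1), and its Whitehead torsion is trivial; hence so is $\tau(f)$.

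For the reverse implication, reduce first to the case of a subcomplex inclusion by replacing $f$ with the inclusion $X \hookrightarrow M_f$ into its mapping cylinder: $Y \hookrightarrow M_f$ is a deformation retract built from elementary collapses, so by multiplicativity we may assume $f$ is the inclusion of a subcomplex $X \subset Y$ with $Y$ deformation retracting to $X$ and $\tau(Y,X) = 0 \in Wh(\pi_1(Y))$. By the algebraic description of $Wh(G)$, the acyclic based cochain complex $C^*_{\mathrm{cell}}(\widetilde{Y},\widetilde{X})$ can be reduced to the zero complex through a finite sequence of the three elementary simple operations (elementary expansions/retractions, handle slides, deck transformations). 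The task is then to realize each such algebraic operation by a geometric modification of the CW pair $(Y,X)$: algebraic elementary expansions and retractions correspond directly to attaching or removing a canceling pair of cells; deck transformations amount to changing the chosen lift of a cell in $\widetilde{Y}$ and do not alter the underlying CW pair; and handle slides are realized geometrically by homotoping the attaching map of one cell across the boundary of a higher-dimensional cell, a move which is well-known to be achievable up to an elementary expansion/collapse of the pair (this is the content of the ``Whitehead lemma'' for CW complexes).

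The main obstacle is this realization step for handle slides: one must justify that an algebraic change of basis by a single elementary matrix can always be implemented by an honest homotopy of attaching maps combined with a bounded number of expansions and collapses. This is the technical heart of simple homotopy theory and is carried out in \cite[Chapter II]{Cohen} using the regular neighborhood theorem for subcomplexes; given that input, combining it with the preceding algebraic reduction yields a finite sequence of elementary expansions and collapses realizing $f$ up to cellular homotopy, and Proposition \ref{prop:homotopy-invariance-torsion} shows that the homotopy class of $f$ is unchanged, completing the proof.
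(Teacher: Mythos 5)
The paper offers no proof of this statement --- it is quoted directly from Cohen \cite[22.2]{Cohen} --- so there is no internal argument to compare against; your sketch is an outline of Cohen's proof, and like the paper it ultimately defers the substantive step to that reference. The forward direction is fine: a single elementary expansion contributes the based complex $\Z\pi_1(Y)\xrightarrow{\id}\Z\pi_1(Y)$, whose torsion vanishes, and multiplicativity finishes it. For the reverse direction, one caveat is worth flagging: the standard proof runs in the opposite order from what you propose. Cohen first modifies the pair $(Y,X)$ \emph{geometrically}, by expansions and collapses, into simplified form --- all relative cells concentrated in two adjacent dimensions $n$, $n+1$ with $n\geq 2$ --- and only then reads off an invertible matrix over $\Z\pi_1(Y)$, reduces it to the identity by stabilizations and elementary operations (possible exactly because the torsion vanishes), and realizes each matrix move geometrically. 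Your plan of doing the algebraic reduction of $C_*^{\mathrm{cell}}(\widetilde Y,\widetilde X)$ first and then realizing each elementary simple operation is not directly workable: an algebraic elementary retraction deletes a summand $\Z G\xrightarrow{\id}\Z G$, but the corresponding pair of cells of $(Y,X)$ need not be geometrically cancellable (the attaching map of the $(k+1)$-cell need not run homeomorphically over the $k$-cell), and the homotopies of attaching maps needed for handle slides require the dimension restrictions that the normalization step supplies. Finally, the correct conclusion is that $f$ is \emph{homotopic} to a finite composition of expansions and collapses, which is how the statement should be read; your appeal to homotopy invariance of torsion is the right bookkeeping for that. Since the hard realization step is exactly what the citation to Cohen is for, I would call this an organizational inaccuracy rather than a fatal gap.
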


To end the subsection, we prove three lemmata about the Whitehead torsion of a cochain complex of the form $(C_*,\partial_*)\otimes_\Z D_*$, where $D_*$ is a based cochain complex over $\Z G$.

\begin{lemma} \label{lem:zero-torsion}
    Let $(C_*,\partial_*,\{c_i\})$ be an acyclic based cochain complex over $\Z$, and let $G$ be any group. Then the base-changed cochain complex $(\Tilde{C}_*,\Tilde{\partial}_*,\{\Tilde{c_i}\})=C_*\otimes_\Z\Z G$ over $\Z G$ is acyclic and has trivial Whitehead torsion.
\end{lemma}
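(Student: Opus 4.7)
The plan is to separate the statement into the acyclicity assertion and the vanishing of Whitehead torsion, and to exploit the naturality of Milnor torsion under ring homomorphisms together with the triviality of $\overline{K_1}(\Z)$.

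First, I would establish acyclicity of $(\Tilde{C}_*,\Tilde{\partial}_*)$ by noting that $\Z G$ is a free, hence flat, $\Z$-module; tensoring the acyclic complex $(C_*,\partial_*)$ with $\Z G$ over $\Z$ then preserves exactness.

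Second, for the Whitehead torsion, the main step is to show that the Milnor torsion of the base-changed complex is the image of the Milnor torsion of the original complex under the homomorphism $\overline{K_1}(\Z)\to\overline{K_1}(\Z G)$ induced by the ring inclusion $\Z\hookrightarrow\Z G$. Since every stably free $\Z$-module is free, I can pick honest bases $q_i$ for the coboundary submodules $B_i=\partial_{i-1}(C_{i-1})\subset C_i$ over $\Z$. Tensoring with $\Z G$ produces bases $q_i\otimes 1$ of the corresponding submodules of $\Tilde{C}_*$, and the change-of-basis matrices $[q_iq_{i+1}/c_i]$ are the same integer matrices, now viewed as elements of $\GL(\Z G)$ via the ring inclusion. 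Formula (\ref{eqn:def-torsion}) then shows directly that $\eta(\Tilde{C}_*,\Tilde{\partial}_*,\{\Tilde{c_i}\})$ is the image of $\eta(C_*,\partial_*,\{c_i\})$ under the induced map on reduced $K_1$-groups.

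Since $\overline{K_1}(\Z)$ is trivial, $\eta(C_*,\partial_*,\{c_i\})$ is trivial, and hence its image $\eta(\Tilde{C}_*,\Tilde{\partial}_*,\{\Tilde{c_i}\})$ in $\overline{K_1}(\Z G)$ is also trivial. Composing with the canonical projection $\overline{K_1}(\Z G)\to Wh(G)$ then gives $\tau(\Tilde{C}_*,\Tilde{\partial}_*,\{\Tilde{c_i}\})=0$, as required. The only subtlety is to verify that the independence of $\eta$ from the choice of stable bases $q_i$ is preserved under base change, but this is automatic: the cancellation already takes place inside $\overline{K_1}(\Z)$ before the base-change map is applied, and so remains trivial in $\overline{K_1}(\Z G)$. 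I do not anticipate any serious obstacle beyond careful bookkeeping.
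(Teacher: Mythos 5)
Your proposal is correct and follows essentially the same route as the paper: functoriality of the torsion under the base change $\Z\to\Z G$ together with the triviality of $\overline{K_1}(\Z)$ (equivalently $Wh(\{e\})$). You merely spell out the details the paper leaves implicit — acyclicity via flatness of $\Z G$ over $\Z$, and the explicit identification of the change-of-basis matrices as integer matrices — which is fine.
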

\begin{proof}
    The Whitehead torsion is functorial under base change: the map $\{e\}\to G$ induces a group homomorphism $Wh(\{e\})\to Wh(G)$. Since $Wh(\{e\})$ is trivial, the torsion of $C_*$ vanishes, and therefore the torsion of $\Tilde{C}_*$ also vanishes. In other words, the matrix representing the differentials $\Tilde{\partial}_i$ with respect to the basis $\{\Tilde{c_i}\}$ has entries in $\Z\subset\Z G$.
\end{proof}

The argument generalizes to complexes of the form $C_*\bigotimes D_*$, where $C_*$ is an acyclic based cochain complex over $\Z$, and $D_*$ is any based cochain complex over $\Z G$.

\begin{lemma} \label{lem:zero-torsion-general}
        Let $(C_*,\partial^C_*,\{c_i\})$ be an acyclic based cochain complex over $\Z$, and let $(D_*,\partial^D_*,\{d_i\})$ be a based cochain complex over $\Z G$ for some group $G$. Then the based cochain complex
        \begin{equation}
            (C_*\otimes D_*, \partial^C_*\otimes\id+(-1)^{\deg}\id\otimes\partial^D_*,\{c_i\otimes d_j\})
        \end{equation}
        is acyclic and has trivial Whitehead torsion.
    \end{lemma}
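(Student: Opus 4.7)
The plan is to establish acyclicity by extending a contracting homotopy of $C^*$, and then to prove vanishing of the Whitehead torsion by filtering $D^*$ via its stupid truncations, which reduces the claim to Lemma \ref{lem:zero-torsion}. For acyclicity, I would first note that $(C^*, \partial^C)$, being a bounded acyclic complex of finitely generated free $\Z$-modules, admits a $\Z$-linear contracting homotopy $h \colon C^* \to C^{*-1}$ with $\partial^C h + h \partial^C = \id$. A direct Koszul-sign check shows that $H \coloneqq h \otimes \id_D$ is a contracting homotopy for $C^* \otimes D^*$: the terms involving $\id \otimes \partial^D$ cancel because $h$ shifts $C$-degree by $-1$ and hence flips one of the sign factors $(-1)^{|c|}$, while the remaining terms reduce to $(\partial^C h + h \partial^C) \otimes \id = \id$.

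For the torsion vanishing, I would assume $D^*$ is supported in degrees $[a,b]$ and introduce the ascending filtration
\[
F^k \coloneqq C^* \otimes \sigma^{\geq b-k} D^*, \qquad k = -1, 0, \ldots, b-a,
\]
where $\sigma^{\geq m} D^* \coloneqq \bigoplus_{i \geq m} D^i$ is the stupid truncation, a subcomplex since the cochain differential raises degree. The given basis $\{c_i^\alpha \otimes d_j^\beta\}$ splits across this filtration according to the value of $j$, which addresses the basis-compatibility hypothesis of Proposition \ref{prop:Whitehead-torsion-filtration}. The associated subquotient $F^k / F^{k-1}$ identifies with $C^* \otimes D^{b-k}$, where $D^{b-k}$ is placed in a single cohomological degree and carries the zero differential inherited from the quotient.

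Writing the free $\Z G$-module $D^{b-k}$ as $\bigoplus_\beta \Z G \cdot d^\beta_{b-k}$, the subquotient further decomposes as a direct sum over $\beta$ of shifted copies of the base-changed complex $C^* \otimes_\Z \Z G$. Each such summand is acyclic with trivial Whitehead torsion by Lemma \ref{lem:zero-torsion} applied to the shifted acyclic $\Z$-complex $C^*[b-k]$, with any sign ambiguity introduced by the shift killed in $\overline{K_1}(\Z G)$. Iterated application of short-exact-sequence multiplicativity (Proposition \ref{prop:torsion-additive}) then gives trivial torsion for each $F^k / F^{k-1}$, and Proposition \ref{prop:Whitehead-torsion-filtration} upgrades this to triviality of the Whitehead torsion of $C^* \otimes D^*$. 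The calculations are routine; the only mildly delicate points are the Koszul-sign tracking in the contracting-homotopy identity and verifying that the induced basis on each subquotient is precisely the one for which Lemma \ref{lem:zero-torsion} applies.
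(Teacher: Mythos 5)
Your proposal is correct, and for the general case it takes a genuinely more direct route than the paper. The paper first proves the lemma when $C_*$ is a two-step complex (using exactly your filtration by $D$-degree), and then handles the general case by a detour: it replaces the basis $\{c_i\}$ of $C_*$ by a basis $\{b_{i-1}b_i\}$ adapted to the boundaries $B_i=\im(\partial^C_{i-1})$, invokes the triviality of $Wh(\{e\})$ and Turaev's theorem to see that this basis change is a composition of elementary simple operations (which survive tensoring with $D_*$), and then filters in the $C$-direction so that the graded pieces are $[B_i\xrightarrow{\id}B_i]\otimes D_*$, each covered by the two-step case. You instead observe that the stupid-truncation filtration of $D_*$ works for arbitrary acyclic $C_*$: each subquotient $C_*\otimes D^{b-k}$ carries only the differential $\partial^C\otimes\id$, is a finite direct sum of shifted copies of the base change $C_*\otimes_\Z\Z G$, and hence is simply acyclic by Lemma \ref{lem:zero-torsion}; Proposition \ref{prop:Whitehead-torsion-filtration} then finishes the argument. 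This eliminates both the two-step reduction and the appeal to Turaev's characterization of $Wh$ via elementary operations, at the cost of nothing — the basis-compatibility hypothesis of the filtration lemma is satisfied because $\{c_i\otimes d_j\}$ splits according to the $D$-degree of $d_j$, exactly as you say. Your separate contracting-homotopy argument for acyclicity is also fine, though strictly redundant: acyclicity already follows from the finite filtration with acyclic graded pieces.
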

    \begin{proof}
        The idea is to first prove the statement for the case when $C_*$ is a two-step complex $C_0\to C_1$, then reduce the general case to this case. 

        For the two-step case, suppose $C_*$ is isomorphic to 
        \begin{equation}
        \begin{tikzcd}
            0 \arrow[r] &C_0\cong\Z^r \arrow[r,"A"] &C_1\cong\Z^r \arrow[r] &0,
        \end{tikzcd}
        \end{equation}
        with identifications given by the preferred basis of $C_*$. Then the tensor product cochain complex $C_*\otimes D_*$ admits a filtration whose graded pieces are also two-step based cochain complexes
    \begin{equation}
        \mathrm{Gr}\,\mathcal{F}^j(C_*\otimes D_*) \cong 
    \left[ C_0\otimes D_j \xrightarrow{A\otimes\mathrm{id}} C_1\otimes D_j \right].
    \end{equation}
    With respect to the basis $\{c_i \otimes d_j\}$, this map is represented by a matrix with entries in $\Z\subset\Z G$, which has trivial class in $\overline{K_1}(\Z G)$. Each graded piece $\mathrm{Gr}\,\mathcal{F}^j(C_*\otimes D_*)$ is acyclic with trivial Whitehead torsion, and thus the filtration lemma (Proposition \ref{prop:Whitehead-torsion-filtration}) shows that the same holds for $(C_*\otimes D_*,\{c_i\otimes d_j\})$.
    
    We now turn to the general case. Since $C_*$ is an acyclic cochain complex over $\Z$, one can see from the short exact sequences
    \begin{equation}
    \begin{tikzcd}
        0 \arrow[r] &B_i \arrow[r] &C_i \arrow[r] &B_{i+1} \arrow[r] &0
    \end{tikzcd}   
    \end{equation}
    that each $B_i=\im (\partial_{i-1})$ is a projective $\Z$-module and thus is free. Therefore we may choose a basis $b_i$ for each $B_i$, and obtain a new basis $\{b_{i-1}b_i\}$ for $C_*$. Since $Wh(*)$ is trivial, \cite[Theorem 8.7]{Turaev} and Definition \ref{def:simple-operations} provides a sequence of elementary simple operations relating the two based acyclic cochain complexes $(C_*,\{c_i\})$ and $(C_*,\{b_{i-1}b_i\})$. Tensoring with $D_*$ induces a sequence of elementary simple operations between $(C_*\otimes D_*,\{c_i\otimes d_j\})$ and $(C_*\otimes D_*,\{b_{i-1}b_i\otimes d_j\})$, and thus their Whitehead torsions agree.

    Therefore, it is enough for us to show that the based cochain complex
     \begin{equation}
            (C_*\otimes D_*, \partial^C_*\otimes\id+(-1)^{\deg}\id\otimes\partial^D_*,\{b_{i-1}b_i\otimes d_j\})
        \end{equation}
    is acyclic with trivial Whitehead torsion. By the previous argument, we can find a filtration whose graded pieces are isomorphic to
    \begin{equation}
        \mathrm{Gr}\,\mathcal{F}^i(C_*\otimes D_*) \cong \left[ B_i \xrightarrow{\mathrm{id}} B_i \right] \otimes D_*,
    \end{equation}
    each of which is an acyclic based cochain complex with trivial Whitehead torsion by the two-step case. The filtration lemma then implies that $(C_*\otimes D_*,\{b_{i-1}b_i\otimes d_j\})$ is acyclic with trivial Whitehead torsion.
\end{proof}

We also show that tensoring a simply acyclic based cochain complex with a cochain complex over $\Z$ preserves simple acyclicity.

\begin{lemma} \label{lem:zero-torsion-tensor-complex}
    Let $(C_*,\partial_*^C,\{c_i\})$ be a based cochain complex over $\Z$, and let $(D_*,\partial^D_*,\{d_i\})$ be a simply acyclic based complex over $\Z G$ for some group $G$. Then the tensor product complex
     \begin{equation}
        (C_*\otimes D_*, \partial^C_*+(-1)^{\deg}\partial^D_*,\{c_i\otimes d_j\})
    \end{equation}
    is acyclic and has trivial Whitehead torsion.
\end{lemma}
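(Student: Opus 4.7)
The plan is to filter $C_* \otimes D_*$ by the $C$-degree and apply Proposition \ref{prop:Whitehead-torsion-filtration}. Acyclicity is immediate: since $D_*$ is an acyclic bounded complex of free $\Z G$-modules, it admits a chain contraction, and tensoring with $C_*$ produces a chain contraction on $C_* \otimes D_*$. Only the triviality of the Whitehead torsion requires work.

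Define the decreasing filtration $F^{\ge k}(C_* \otimes D_*) := \bigoplus_{i \ge k} C_i \otimes D_*$. Since $\partial^C$ raises $C$-degree and $\partial^D$ preserves it, each $F^{\ge k}$ is a subcomplex, and the filtration is finite because $C_*$ is supported in finitely many degrees. Reindexing to match the increasing convention of Proposition \ref{prop:Whitehead-torsion-filtration}, the associated graded piece at $C$-degree $k$ is isomorphic, as a based $\Z G$-complex, to $C_k \otimes_\Z D_*$ with only the signed $D$-differential and basis $\{c_k^\alpha \otimes d_j\}_{\alpha, j}$. Because $C_k$ is a free $\Z$-module of some finite rank $r_k$, this graded piece is isomorphic to the direct sum $(D_*)^{\oplus r_k}$; any sign in the differential is absorbed into a change of basis by $\pm 1$, which is trivial in $\overline{K_1}(\Z G)$.

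Iterating the multiplicativity of Whitehead torsion under split short exact sequences (Proposition \ref{prop:torsion-additive}) shows that $(D_*)^{\oplus r_k}$ is simply acyclic whenever $D_*$ is. The chosen basis $\{c_i \otimes d_j\}$ decomposes compatibly with the filtration by construction, since $c_i \otimes d_j$ lies in $F^{\ge i} \setminus F^{\ge i+1}$. Proposition \ref{prop:Whitehead-torsion-filtration}, which as remarked in the text is equally valid for Whitehead torsion in $Wh(G)$, then yields the desired vanishing $\tau(C_* \otimes D_*, \{c_i \otimes d_j\}) = 0$. I do not anticipate any significant obstacle; the argument is a direct transcription of the filtration step in the proof of Lemma \ref{lem:zero-torsion-general} with the roles of the two tensor factors exchanged, so that here it is $D_*$ (rather than the two-step $C_*$ of the earlier lemma) that furnishes the simply acyclic graded pieces.
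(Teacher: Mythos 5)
Your proof is correct and follows essentially the same route as the paper: filter $C_*\otimes D_*$ by the $C$-degree, observe that each graded piece is $C_i\otimes D_*\cong (D_*)^{\oplus r_i}$ with only the (signed) $D$-differential and hence simply acyclic, and conclude by Proposition \ref{prop:Whitehead-torsion-filtration}. The extra details you supply (the explicit chain contraction for acyclicity and the remark that signs are absorbed by basis changes trivial in $\overline{K_1}(\Z G)$) are consistent with, and slightly more explicit than, the paper's argument.
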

\begin{proof}
    This time, filter $C_*$ by degree, such that the associated graded piece $F^iC_*/F^{i+1}C_*$ is concentrated in degree $i$ and isomorphic to $C_i$. Then the induced filtration on $C_*\otimes D_*$ has associated graded pieces
    \begin{equation}
        gr^i(C_*)\otimes D_*\cong C_i\otimes D_*
    \end{equation}
    with differential $\pm\partial^d_*$ and basis $\{c_i\otimes d_*\}$. Since $D_*$ is acyclic with trivial Whitehead torsion, so is each complex $C_i\otimes D_*$ with the induced basis. 
    
    Therefore, each associated graded piece of the filtered complex $C_*\otimes D_*$ is acyclic with trivial Whitehead torsion. The result now follows from the filtration lemma for Whitehead torsion (Proposition \ref{prop:Whitehead-torsion-filtration}).
\end{proof}

\subsection{Reidemeister torsion} \label{ssec:R-torsion}

In this subsection, we recall the definition of Reidemeister torsion. Like Whitehead torsion, it is defined for based cochain complexes. When the complex is acyclic, Reidemeister torsion coincides with the image of the corresponding Whitehead torsion under the natural functorial maps. However, unlike Whitehead torsion, Reidemeister torsion can also be defined for non-acyclic based complexes. This flexibility is useful in distinguishing simple homotopy types of based complexes.

Let $(C_i,\partial_i,\{c_i\})$ be a based cochain complex over a group ring $\Z G$, which need not be acyclic, and let $\rho: \Z G\to \mathbb{F}$ be a ring homomorphism to a field $\mathbb{F}$. The map $\rho$ induces a chain complex $(C_i\otimes_{\Z G}\mathbb{F},\partial_i\otimes\id)$, and the basis $\{c_i\}$ induces a basis $\{c_i\otimes 1\}$ of $C_i\otimes_{\Z G}\mathbb{F}$ as an $\mathbb{F}$-vector space.

\begin{defn}\label{def:R-torsion}
    Suppose that $(C_i,\partial_i,\{c_i\})$ is a based complex over $\Z G$ and there exists a ring homomorphism $\rho:\Z G\to\mathbb{F}$ such that $(C_i\otimes_{\Z G}\mathbb{F},\partial_i\otimes\id)$ is acyclic. In such a case, we pick $\mathbb{F}$-bases $q_i$ for the images of $\partial_{i-1}\otimes\id$, and define the \emph{Reidemeister torsion} $\Delta_\rho(C_i,\partial_i,\{c_i\})$ as
    \begin{equation}
        \Delta_\rho(C_i,\partial_i,\{c_i\})=\prod_i
        ~\det([q_iq_{i-1}/(c_i\otimes 1)])^{(-1)^{i-1}}\in\mathbb{F}^\times/\{\pm\rho(G)\}.
    \end{equation}
    This definition is independent of the choices of bases $q_i$, as one can see that for a different choice of basis $q_i'$, the determinant of the change of basis matrix $[q_i'/q_i]$ is canceled in the alternating product.
\end{defn}

The Reidemeister torsion $\Delta_\rho$ depends on the choice of the ring homomorphism $\rho:\Z G\to\mathbb{F}$.

\begin{proposition} \label{prop:R-torsion-agrees-with-Wh}
    Suppose that $(C_i,\partial_i,\{c_i\})$ is an acyclic based complex over $\Z G$. Then for any ring homomorphism $\rho: \Z G\to \mathbb{F}$, the identity
\begin{equation}
    \Delta_\rho(C_i,\partial_i,\{c_i\}) = \rho_*(\tau(C_i,\partial_i,\{c_i\}))
\end{equation}
    holds, where $\rho_*: Wh(G) \to \mathbb{F}^\times/\{\pm\rho(G)\}$ denotes the map which first applies $\rho$ to each entry, then takes the determinant.
\end{proposition}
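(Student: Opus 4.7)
The plan is to unpack Definitions 2.7 and 2.13 side by side and observe that the Reidemeister torsion formula is obtained from the Milnor torsion formula by applying $\rho$ entrywise and then $\det$, followed by projection to $\mathbb{F}^\times/\{\pm\rho(G)\}$. First, I would verify that the tensored complex $(C_i\otimes_{\Z G}\mathbb{F},\partial_i\otimes\id)$ is acyclic. This follows because the acyclicity of $(C_*,\partial_*)$ over $\Z G$ implies that the short exact sequences $0\to B_i\to C_i\to B_{i+1}\to 0$ split after stabilization by free $\Z G$-modules (by Milnor's extension discussed above), and tensoring over $\Z G$ with $\mathbb{F}$ preserves these split short exact sequences. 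Consequently, the image of $\partial_{i-1}\otimes\id$ in the tensored complex is exactly $B_i\otimes_{\Z G}\mathbb{F}$.

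Next, a stable basis $q_i$ for $B_i$, i.e. a basis for $B_i\oplus(\Z G)^{n_i}$ for some $n_i$, induces a stable basis $q_i\otimes 1$ for $(B_i\otimes_{\Z G}\mathbb{F})\oplus\mathbb{F}^{n_i}$ over $\mathbb{F}$, which is a valid choice for computing Reidemeister torsion because stabilization by identity blocks leaves the determinant unchanged in the alternating product. For each degree $i$, the change of basis matrix $[q_iq_{i+1}/c_i]\in\GL(\Z G)$ maps under the entrywise application of $\rho$ to the change of basis matrix $[(q_i\otimes 1)(q_{i+1}\otimes 1)/(c_i\otimes 1)]\in\GL(\mathbb{F})$. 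Taking determinants gives the identity
\begin{equation}
\det\bigl(\rho([q_iq_{i+1}/c_i])\bigr)=\det\bigl([(q_i\otimes 1)(q_{i+1}\otimes 1)/(c_i\otimes 1)]\bigr)\in\mathbb{F}^\times,
\end{equation}
and forming the alternating product over $i$ shows that $\rho_*$ applied to the Milnor torsion $\eta(C_*,\partial_*,\{c_i\})$ recovers $\Delta_\rho(C_*,\partial_*,\{c_i\})$.

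Finally, to descend from $\overline{K_1}(\Z G)$ to $Wh(G)$, it suffices to check that the trivial units $\pm g$ for $g\in G$ map under $\det\circ\rho$ to $\pm\rho(g)\in\pm\rho(G)$, which is precisely the subgroup by which $\Delta_\rho$ is taken modulo. Hence $\rho_*$ descends to a well-defined map $Wh(G)\to\mathbb{F}^\times/\{\pm\rho(G)\}$, completing the identification $\Delta_\rho(C_*,\partial_*,\{c_i\})=\rho_*(\tau(C_*,\partial_*,\{c_i\}))$. The proof is largely a bookkeeping exercise once the definitions are unpacked; the only technical point requiring care is the handling of stable bases, which is straightforward because stabilization by identity blocks commutes with the tensor product and does not alter determinants.
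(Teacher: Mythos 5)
Your proof is correct; note that the paper actually states Proposition \ref{prop:R-torsion-agrees-with-Wh} without proof, so there is no argument in the text to compare against. Your write-up is the standard unwinding of Definitions \ref{def:whitehead-torsion-K1} and \ref{def:R-torsion}: the acyclicity of $C_*\otimes_{\Z G}\mathbb{F}$ via the split short exact sequences $0\to B_i\to C_i\to B_{i+1}\to 0$ (which in fact split even before stabilization, since each $B_{i+1}$ is stably free and hence projective), the compatibility of stable bases with base change and determinants, and the observation that $\det\circ\rho$ sends trivial units $\pm g$ into $\pm\rho(G)$, are exactly the points that need checking, and you handle each correctly.
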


Though Reidemeister torsion is a weaker invariant than Whitehead torsion, we note in the following proposition that Reidemeister torsion is also a simple homotopy invariant of based cochain complexes.

\begin{proposition}[Corollary 9.2, \cite{Turaev}] \label{prop:R-torsion-simple-invariant}
    Suppose that $f:(C_*,\{c_i\})\to (D_*,\{d_j\})$ is a chain homotopy equivalence between two based cochain complexes over $\Z G$, where $C_*$ and $D_*$ need not be acyclic. Moreover, assume that $f$ is a simple homotopy equivalence. Now suppose that for a ring homomorphism $\rho:\Z G\to\mathbb{F}$ as before, the Reidemeister torsion $\Delta_\rho(C_*,\{c_i\})$ is well-defined. Then the chain complex $(D_*\otimes_{\Z G} \mathbb{F})$ is also acyclic, and the Reidemeister torsions of $C_*$ and $D_*$ agree:
    \begin{equation}
        \Delta_\rho(C_*,\{c_i\})=\Delta_\rho(D_*,\{d_j\}).
    \end{equation}
\end{proposition}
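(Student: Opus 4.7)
The plan is to reduce the statement about the (non-acyclic) complexes $C_*$ and $D_*$ to a statement about the acyclic based complex $\mathrm{cone}(f)$, and then invoke Proposition \ref{prop:R-torsion-agrees-with-Wh} together with a multiplicativity property for Reidemeister torsion analogous to Proposition \ref{prop:torsion-additive}.

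First I would address the acyclicity of $D_*\otimes_{\Z G}\mathbb{F}$. Since $f$ is a chain homotopy equivalence of cochain complexes over $\Z G$, the base change $f\otimes\id_\mathbb{F}:C_*\otimes_{\Z G}\mathbb{F}\to D_*\otimes_{\Z G}\mathbb{F}$ is a chain homotopy equivalence of $\mathbb{F}$-cochain complexes (any chain homotopy for $f$ base-changes to one for $f\otimes\id_\mathbb{F}$). Since $C_*\otimes_{\Z G}\mathbb{F}$ is acyclic by hypothesis, so is $D_*\otimes_{\Z G}\mathbb{F}$, which makes $\Delta_\rho(D_*,\{d_j\})$ well-defined.

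Next I would exploit the mapping cone. The based cochain complex $\mathrm{cone}(f)=C_*[1]\oplus D_*$, equipped with its natural basis $c_{\mathrm{cone}}$ coming from $\{c_i\}$ and $\{d_j\}$, is acyclic over $\Z G$ (as $f$ is a quasi-isomorphism) and has trivial Whitehead torsion $\tau(f)=0\in Wh(G)$ by the simple homotopy equivalence hypothesis. Applying Proposition \ref{prop:R-torsion-agrees-with-Wh} after base change along $\rho$, we deduce that $\mathrm{cone}(f)\otimes_{\Z G}\mathbb{F}$ is acyclic with trivial Reidemeister torsion, i.e.\ $\Delta_\rho(\mathrm{cone}(f),c_{\mathrm{cone}})=1\in\mathbb{F}^\times/\{\pm\rho(G)\}$.

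The final step is the multiplicativity argument. There is a short exact sequence of based cochain complexes
\begin{equation}
    0\to D_*\to\mathrm{cone}(f)\to C_*[1]\to 0
\end{equation}
whose bases $\{d_j\}$, $c_{\mathrm{cone}}$, and $\{c_i\}$ are compatible by construction of the mapping cone. Exactly as in the proof of Proposition \ref{prop:torsion-additive}, one verifies directly from the determinantal formula in Definition \ref{def:R-torsion} that Reidemeister torsion is multiplicative across such a short exact sequence (this is Turaev's additivity, \cite[Theorem 3.4]{Turaev}, reproduced in \cite[Corollary 9.2]{Turaev}). Combined with the easy identity $\Delta_\rho(C_*[1],\{c_i\})=\Delta_\rho(C_*,\{c_i\})^{-1}$ (or its inverse, depending on sign conventions, absorbed in the target quotient), we obtain
\begin{equation}
    1=\Delta_\rho(\mathrm{cone}(f),c_{\mathrm{cone}})=\Delta_\rho(D_*,\{d_j\})\cdot\Delta_\rho(C_*,\{c_i\})^{-1},
\end{equation}
and the equality $\Delta_\rho(C_*,\{c_i\})=\Delta_\rho(D_*,\{d_j\})$ follows. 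The main technical subtlety is the multiplicativity step, since the definition of $\Delta_\rho$ only requires acyclicity after base change; however, once one observes that short exact sequences of based complexes over $\Z G$ remain short exact after tensoring with $\mathbb{F}$ (flatness is automatic as $\mathbb{F}$ is over a field and the modules involved are free), the argument of Proposition \ref{prop:torsion-additive} applies verbatim at the level of $\mathbb{F}$-vector spaces.
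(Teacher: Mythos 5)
Your proposal is correct and follows essentially the same route as the paper: establish acyclicity of $D_*\otimes_{\Z G}\mathbb{F}$ via base change of the homotopy equivalence, use the vanishing Whitehead torsion of $\mathrm{cone}(f)$ together with Proposition \ref{prop:R-torsion-agrees-with-Wh} to get $\Delta_\rho(\mathrm{cone}(f))=1$, and then apply multiplicativity across the two-step filtration of the cone (your short exact sequence) plus the shift identity. Your explicit remark that the multiplicativity must be checked after base change to $\mathbb{F}$, where the graded pieces actually become acyclic, is a point the paper passes over more quickly, but the substance of the argument is identical.
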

\begin{proof}
    Since $f:C_*\to D_*$ is a chain homotopy equivalence, the map
    \begin{equation}
        f\otimes_{\Z G}\mathbb{F}: C_*\otimes_{\Z G}\mathbb{F}\to D_*\otimes_{\Z G}\mathbb{F}
    \end{equation}
    is a chain homotopy, and thus induces an isomorphism on cohomology. Since the Reidemeister torsion $\Delta_\rho(C_*,\{c_i\})$ is well-defined, $C_*\otimes_{\Z G}\mathbb{F}$ is acyclic, and it follows that $D_*\otimes_{\Z G}\mathbb{F}$ is also acyclic.
    
    Now consider the mapping cone $cone(f)=(C[1]\oplus D, \partial_{cone})$ of $f$. Since $f$ was assumed to be a simple homotopy equivalence, $cone(f)$ is a based acyclic chain complex over $\Z G$ that has trivial Whitehead torsion. Then after taking the tensor product, $cone(f)\otimes_{\Z G}\mathbb{F}$ will again be acyclic, and have trivial Reidemeister torsion
    \begin{equation}
        \Delta_\rho(cone(f)) = 1.
    \end{equation}
    But since the mapping cone has a natural two-step filtration whose graded pieces are $C[1]$ and $D$ which are both based acyclic, we apply Proposition \ref{prop:torsion-additive} to conclude that
    \begin{equation}
        \Delta_\rho(C_*[1],\{c_i[1]\})\cdot\Delta_\rho(D_*,\{d_j\}) = \Delta_\rho(cone(f))=1.
    \end{equation}
    Since the shift operation to the chain complex satisfies 
    \begin{equation}
        \Delta_\rho(C_*[1],\{c_i[1]\})=\Delta_\rho(C_*,\{c_i\})^{-1},
    \end{equation}
    comparing these two equations, our proof is complete.
\end{proof}

\subsection{Reidemeister torsion for lens spaces}\label{ssec:lens-spaces}

Reidemeister torsion provides a complete classification of lens spaces up to diffeomorphism. In this subsection, we recall the computation of the Reidemeister torsion for three-dimensional lens spaces. 

A couple of remarks are in order before we begin the computation. We will use cellular chain complexes rather than cochain complexes, justified by Milnor's duality theorem for Reidemeister torsion \cite[Theorem 1]{MilnorDuality}. By results of Moise \cite[Theorems 3, 4]{Moise}, every 3-manifold admits a triangulation, and any two such triangulations admit a common simplicial subdivision. Thus we may choose a convenient CW structure for the computation. More generally, Chapman \cite[Theorem 1]{Chapman} shows that different CW structures on the same manifold of any dimension give cellular chain complexes that are simple homotopy equivalent, and therefore have the same Reidemeister torsion. For CW structures arising from Morse functions, the invariance of Reidemeister torsion can be seen by following a generic one-parameter family of Morse functions, where only handle slides and birth-death bifurcations occur (see \cite[Chapter 8]{KreckLück}).

\begin{defn} \label{def:lens-space}
    The \emph{three-dimensional lens spaces} $L(p,q)$ are defined for $(p,q)$ coprime as quotients of $S^3=\{(z_1,z_2)\mid \lvert z_1\rvert^2+\lvert z_2\rvert^2=1\} \subset \C^2$ under the group action of $\Z/p\subset \U(1)$ given by
    \begin{equation}
        \xi\cdot(z_1,z_2)=(e^{2\pi i/p}z_1,e^{2\pi iq/p}z_2),
    \end{equation}
    where $\xi$ is the generator of the group $\Z/p$.
\end{defn}


The following classification of lens spaces is due to Reidemeister \cite{Reidemeister}:
 
\begin{thm}[Classification of lens spaces] \label{thm:lens-space-classification}
\begin{enumerate}
    \item The lens spaces $L(p,q)$ and $L(p,q')$ are homotopy equivalent iff $qq'\equiv \pm m^2~(mod~p)$ for some integer $m$.
    \item The lens spaces $L(p,q)$ and $L(p,q')$ are simple homotopy equivalent iff $q'\equiv \pm q^{\pm1}~(mod~p)$. In this case, $L(p,q)$ and $L(p,q')$ are actually diffeomorphic.
\end{enumerate}    
\end{thm}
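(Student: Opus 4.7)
The plan is to classify lens spaces via Reidemeister torsion for part (2), and via the linking form on $H_1$ for part (1). First I would equip $L(p,q)$ with its standard CW structure having one cell $e^i$ in each dimension $i \in \{0,1,2,3\}$, obtained from a $\Z/p$-equivariant decomposition of $S^3 = \{(z_1,z_2) : |z_1|^2+|z_2|^2=1\}$. Writing $\xi$ for the generator of $\pi_1(L(p,q)) = \Z/p$ and lifting each cell to the universal cover, the cellular chain complex of $S^3$ takes the form
\begin{equation*}
    \Z[\Z/p] \xrightarrow{\xi^q - 1} \Z[\Z/p] \xrightarrow{N} \Z[\Z/p] \xrightarrow{\xi - 1} \Z[\Z/p],
\end{equation*}
where $N = 1 + \xi + \cdots + \xi^{p-1}$ is the norm element. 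The differentials $\xi - 1$ and $\xi^q - 1$ reflect the rotational weights in the $z_1$- and $z_2$-directions of the $\Z/p$-action.

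For part (2), for each $k \in \{1, \ldots, p-1\}$, let $\rho_k: \Z[\Z/p] \to \C$ be the ring homomorphism sending $\xi$ to $\zeta^k$, where $\zeta = e^{2\pi i/p}$. Since $\rho_k(N) = 0$ while $\rho_k(\xi - 1) \neq 0$ and $\rho_k(\xi^q - 1) \neq 0$, the tensored complex over $\C$ is acyclic, and applying Definition \ref{def:R-torsion} directly yields
\begin{equation*}
    \Delta_{\rho_k}(L(p,q)) = (\zeta^k - 1)(\zeta^{kq} - 1) \in \C^\times/\{\pm \zeta^j\}.
\end{equation*}
If $L(p,q)$ and $L(p,q')$ are simple homotopy equivalent, Proposition \ref{prop:R-torsion-simple-invariant} forces these torsions to agree for every $k$, giving the system of equalities
\begin{equation*}
    (\zeta^{kq} - 1) \equiv \pm \zeta^{j(k)}(\zeta^{kq'} - 1) \pmod{\pm \zeta^j}
\end{equation*}
for every primitive $p$-th root of unity. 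The main obstacle of the proof is the number-theoretic step of deducing $q' \equiv \pm q^{\pm 1} \pmod p$ from this system; this is exactly Franz's independence theorem for cyclotomic units, which asserts that the only multiplicative relations among $\{(\zeta^a - 1)/(\zeta^b - 1)\}$ are the obvious ones. For the converse direction and the final diffeomorphism claim, the maps $(z_1, z_2) \mapsto (z_2, z_1)$ and $(z_1, z_2) \mapsto (z_1, \bar{z}_2)$ on $S^3$ descend to diffeomorphisms $L(p,q) \cong L(p, q^{-1})$ and $L(p,q) \cong L(p, -q)$ respectively; by Chapman's theorem these are in particular simple homotopy equivalences.

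For part (1), I would use the nondegenerate linking pairing on $H_1(L(p,q); \Z) \cong \Z/p$. From the CW structure above one computes $\lambda(1,1) \equiv q/p \pmod \Z$ (up to a fixed sign convention). A homotopy equivalence $L(p,q) \simeq L(p,q')$ induces an automorphism $\phi$ of $\Z/p$, say $\phi(1) = m$, and a sign from orientation; compatibility with $\lambda$ forces $q' \equiv \pm m^2 q \pmod p$. Conversely, given $qq' \equiv \pm m^2 \pmod p$, one constructs a homotopy equivalence cell by cell: define the map on the 1-skeleton using multiplication by $m$ on $\pi_1$, extend over the 2-cell using a degree-$m^2$ map after tracking the attaching map $z \mapsto z^p$, and then verify by an obstruction-theoretic argument in $H^3(L(p,q); \pi_3(L(p,q')))$ that the map extends over the 3-cell. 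The combination of these two directions completes the homotopy classification.
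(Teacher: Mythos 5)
Your overall route for part (2) is the same as the paper's: an equivariant cell decomposition of $S^3$ with one cell in each degree, the resulting $\Z[\Z/p]$-complex with differentials $(\xi^{q^{\pm1}}-1,\ N,\ \xi-1)$, the torsion $(\zeta^k-1)(\zeta^{kq}-1)$, and Franz's independence lemma as the number-theoretic engine (the paper compresses this last step into ``comparing real parts,'' but Franz is the honest input). The discrepancy between your top differential $\xi^{q}-1$ and the paper's $1-\xi^{r}$ with $qr\equiv1$ is a harmless convention difference, since the classification is symmetric under $q\mapsto q^{-1}$.

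There is, however, one genuine gap in your part (2) argument as written. A simple homotopy equivalence $L(p,q)\to L(p,q')$ induces an isomorphism $\pi_1(L(p,q))\to\pi_1(L(p,q'))$ that need not match your chosen generators; it sends $\xi\mapsto\xi^{d}$ for some $d$ prime to $p$. Proposition \ref{prop:R-torsion-simple-invariant} therefore equates $\Delta_{\rho_k}(L(p,q))$ with $\Delta_{\rho_{kd}}(L(p,q'))$, not with $\Delta_{\rho_k}(L(p,q'))$. Your displayed system, in which you have cancelled the common factor $(\zeta^k-1)$ on both sides, implicitly sets $d=1$; running Franz on that system only yields $q'\equiv\pm q$, and you would never see the $q'\equiv\pm q^{-1}$ case (which genuinely occurs, e.g.\ via your own coordinate-swap diffeomorphism). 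The fix is to keep $d$ as an unknown: Franz applied to $(\zeta^{k}-1)(\zeta^{kq}-1)=\pm\zeta^{j}(\zeta^{kd}-1)(\zeta^{kdq'}-1)$ forces $\{1,q\}=\{\pm d,\pm dq'\}$ in $(\Z/p)^\times/\{\pm1\}$, which gives exactly $q'\equiv\pm q^{\pm1}$. The paper flags precisely this subtlety. For part (1), your linking-form argument is a legitimate alternative to the paper's citation of Cohen (29.6); the forward direction is fine, while the converse (extending over the 3-cell) is only sketched, which is comparable to the level of detail the paper itself provides.
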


We will briefly review the proof of the second statement by computing the Reidemeister torsion of the lens space $L(p,q)$. Details can be found in \cite[Section 10]{Turaev}.

First, we construct a CW structure on $L(p,q)$ by constructing a cell structure on $S^3$ that is $\Z/p$-equivariant with respect to the $\Z/p$-action defining $L(p,q)$. We regard $S^3=\{(z_1,z_2)\mid \lvert z_1\rvert^2+\lvert z_2\rvert^2=1\}$ as a subset of $\C^2$, and define two subsets
\begin{equation}
    T_0=\{(z_1,z_2)\mid z_2=0\},~T_1=\{(z_1,z_2)\mid z_1=0\}.
\end{equation}
Let $\zeta=\exp(\frac{2\pi i}{p})$ be a $p$th root of unity, and for each $0\leq j\leq p-1$, we define $I_j$ to be the shorter segment in $T_0$ connecting $(\zeta^j,0)$ and $(\zeta^{j+1},0)$ in $T_0$, and similarly define $I_j'$ to be the segment connecting $(0,\zeta^j)$ and $(0,\zeta^{j+1})$ in $T_1$. Now we define the cells
\begin{align}
    e_j^0 &= \{(\zeta^j,0)\}\in T_0, \\ 
    e_j^1 &= \{(z_1,0)\mid z_1\in I_j\}\subset T_0,\\
    e_j^2 &= \{(z_1,t\zeta^j)\mid 0\leq t\leq 1, ~\lvert z_1\rvert^2+t^2=1\},\\
    e_j^3 &= \{(z_1,z_2)\mid z_2\in I_j', ~\lvert z_1\rvert^2+\lvert z_2\rvert^2=1\}.
\end{align}

This cell decomposition descends to a cell decomposition of $L(p,q)$ with 1 cell for each dimension.


The boundary maps in the cellular complex can be computed for each $0\leq j\leq p-1$ to be
\begin{align}
    \partial e_j^0 &= 0,\\
    \partial e_j^1 &= e_{j+1}^0-e_j^0,\\
    \partial e_j^2 &= e_0^1+e_1^1+\cdots+e_{p-1}^1,\\
    \partial e_j^3 &= e_{j+1}^2-e_j^2,
\end{align}
when we consider orientations.

After fixing lifts of the cells $e_0,e_1,e_2,e_3$ in $L(p,q)$ to the universal covers to be $e_0^0,e_1^0,e_2^0,e_3^0$, the cellular chain complex $C^{cell}_*(S^3)$ of $S^3$ can be identified with a based chain complex over the group ring $\Z[\Z/p]$ given by
\begin{equation} \label{eqn:R-torsion-L(p,q)}
\begin{tikzcd}
    \Z[\Z/p]\langle e_3^0\rangle \arrow[r,"1-\xi^r"] &\Z[\Z/p]\langle e_2^0\rangle \arrow[rr,"1+\xi+\cdots+\xi^{p-1}"] &&\Z[\Z/p]\langle e_1^0\rangle \arrow[r,"1-\xi"] &\Z[\Z/p]\langle e_0^0\rangle,
\end{tikzcd}
\end{equation}
where $r$ is an integer that satisfies $qr\equiv 1~(mod~p)$. This chain complex is \emph{not} acyclic over $\Z[\Z/p]$, but becomes acyclic after base change. Consider the ring homomorphism $\rho:\Z[\Z/p]\to\C$ sending the generator $\xi$ of $\Z/p$ to $\zeta$, the $p$th root of unity $e^{2\pi i/p}$. Then, $C^{cell}_*(S^3)\otimes_\rho\C$ can be written as 
\begin{equation}
\begin{tikzcd}
    \C\langle e_3^0\rangle \arrow[rr,"1-\zeta^r"] &&\C\langle e_2^0\rangle \arrow[r,"0"] &\C\langle e_1^0\rangle \arrow[rr,"1-\zeta"] &&\C\langle e_0^0\rangle
\end{tikzcd}
\end{equation}
which is acyclic, and the Reidemeister torsion of this acyclic based complex can be computed to be 
\begin{equation}
    \Delta_\rho(L(p,q))=(1-\zeta^r)(1-\zeta)\in\C^\times/(\pm \zeta^k).
\end{equation}
Since a homeomorphism is always a simple homotopy equivalence (\cite[Theorem 1]{Chapman}), and simple homotopy equivalences preserve Reidemeister torsion, we can compare the Reidemeister torsions of $L(p,q)$ and $L(p,q')$ to detect whether they are homeomorphic.

One subtlety is that the computation of Reidemeister torsion depends on a choice of fixed isomorphism $\pi_1(L(p,q))\cong\Z/p$. Even if we fix identifications $\pi_1(L(p,q))\cong\pi_1(L(p,q'))\cong\Z/p$, a homeomorphism $\phi:L(p,q)\to L(p,q')$ may induce the map $t\mapsto t^d$ on fundamental groups for some $d\in\Z$.

This affects the induced representation: a ring homomorphism $\rho:\Z[\pi_1(L(p,q))]\to\C$ sending $t\mapsto\zeta$ corresponds under $\phi$ to a new representation $\rho':\Z[\pi_1(L(p,q'))]\to\C$ under $\phi$, sending $t\mapsto \zeta^d$. Hence, to distinguish $L(p,q)$ from $L(p,q')$, we must compare the Reidemeister torsions $\Delta_\rho(L(p,q))$ and $\Delta_{\rho'}(L(p,q'))$ for all such $d$.

Although this adds a minor complication to the argument, one can still show that 
\begin{align}
    \Delta_\rho(L(p,q))=\Delta_{\rho'}(L(p,q')) &\iff (1-\zeta^{dr)}(1-\zeta^d)=\pm\zeta^k(1-\zeta^{r'})(1-\zeta)\in\C,~~\exists k,d\in\Z\\
    &\iff q'\equiv\pm q^{\pm1}
\end{align}
by comparing the real parts of the possible values.

The homotopy classification of lens spaces is a little more involved: we refer to \cite[(29.6)]{Cohen} for the proof, which actually gives a homotopy classification for lens spaces in all dimensions. As a corollary, we obtain examples of lens spaces that are homotopy equivalent but not simply so.

\begin{corollary} \label{cor:lens-spaces-homotopic-not-simply}\begin{enumerate}
    \item The two lens spaces $L(7,1)$ and $L(7,2)$ are homotopy equivalent, but not simple homotopy equivalent.
    \item The three lens spaces $L(17,1)$, $L(17,2)$, and $L(17,4)$ are all homotopy equivalent, but no two of these are simple homotopy equivalent.
\end{enumerate}
\end{corollary}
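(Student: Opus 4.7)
The plan is to deduce the corollary directly from Theorem \ref{thm:lens-space-classification}, by checking the stated congruence conditions on $q,q'$ modulo $p$ for each pair or triple of lens spaces. No new ideas are needed; the content is a small arithmetic verification.

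For part (1), I would set $p=7$ and compare $q=1$ with $q'=2$. For homotopy equivalence I must exhibit $m$ with $qq' = 2 \equiv \pm m^2 \pmod 7$; this is satisfied by $m=3$, since $3^2 = 9 \equiv 2 \pmod 7$. For simple homotopy equivalence I must check whether $q' \equiv \pm q^{\pm 1} \pmod 7$; here the right-hand side ranges over $\{\pm 1\} = \{1,6\}$ modulo $7$, and since $2 \notin \{1,6\}$, the two lens spaces are not simple homotopy equivalent.

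For part (2), I take $p=17$ and consider the three pairs $(1,2),(1,4),(2,4)$. The quadratic residues modulo $17$ are $\{1,2,4,8,9,13,15,16\}$; in particular the three products $1\cdot 2=2$, $1\cdot 4=4$, $2\cdot 4=8$ all lie in this set (explicit square roots being $6^2\equiv 2$, $2^2=4$, $5^2\equiv 8$), so by the first clause of Theorem \ref{thm:lens-space-classification} all three spaces are pairwise homotopy equivalent. For the simple homotopy obstruction, I record that $2^{-1}\equiv 9\pmod{17}$ (since $2\cdot 9 = 18 \equiv 1$) and $4^{-1}\equiv 13\pmod{17}$. Thus for the pairs $(1,2)$ and $(1,4)$ the admissible set $\{\pm 1^{\pm 1}\} = \{1,16\}$ contains neither $2$ nor $4$, and for the pair $(2,4)$ the admissible set $\{\pm 2^{\pm 1}\} = \{2,8,9,15\}$ does not contain $4$. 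By the second clause of Theorem \ref{thm:lens-space-classification}, no two of these lens spaces are simple homotopy equivalent.

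There is no real obstacle here: the corollary is a direct arithmetic consequence of the classification theorem, and the ``hard part'' (computing Reidemeister torsion and verifying that $q' \equiv \pm q^{\pm 1}\pmod p$ is the correct simple homotopy invariant) has already been carried out in the preceding discussion of equation \eqref{eqn:R-torsion-L(p,q)}. The only care needed is to remember that the comparison of Reidemeister torsions must be done up to the change of representation $t\mapsto t^d$ induced by reidentifying fundamental groups, which is precisely what allows both $\pm$ signs and both $\pm 1$ exponents in the condition $q'\equiv \pm q^{\pm 1}\pmod p$.
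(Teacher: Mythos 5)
Your proposal is correct and is exactly the argument the paper intends: the corollary is stated as an immediate consequence of Theorem \ref{thm:lens-space-classification}, and your arithmetic verification of the congruence conditions (quadratic residues for homotopy equivalence, the sets $\{\pm q^{\pm 1}\}$ for simple homotopy equivalence) checks out in every case.
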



Finally, we consider the following situation, where two manifolds $Q_1\cong L(7,1)$ and $Q_2\cong L(7,2)$ are embedded as submanifolds in a manifold $X$ whose fundamental group $\pi_1(X)$ is isomorphic to the free product $\Z/7*\Z/7$. Denote the inclusions by $\iota_1:Q_1\xhookrightarrow{}X$, $\iota_2:Q_2\xhookrightarrow{} X$, and assume that the inclusion $\iota_2$ induces an injection $\pi_1(L(7,2))\cong\Z/7\xhookrightarrow{}\pi_1(X)$ into the second factor of the free product.

Endow $X$ with a CW structure such that both $Q_1$, $Q_2$ are subcomplexes. We choose a universal cover $\pi:\Tilde{X}\to X$, and denote by $\Tilde{Q_1}, \Tilde{Q_2}$ the lifts of $Q_1,Q_2$. After fixing preferred lifts of cells, we may identify the cellular cochain complexes of $\Tilde{Q_1},\Tilde{Q_2}$ with the cellular cochain complexes of $Q_1,Q_2$ with $\Z\pi_1(X)$-coefficients, using the bases given by these lifts. 

\begin{proposition}\label{prop:R-torsion-7*7}
    In the setup as above, the based cochain complexes $C^*_{cell}(Q_1;\Z\pi_1(X))$, $C^*_{cell}(Q_2;\Z\pi_1(X))$ are not simple homotopy equivalent. In particular, there exists a ring homomorphism $\rho:\Z\pi_1(X)\to\C$ such that the associated Reidemeister torsions
    \begin{equation}
        \Delta_\rho(C^*_{cell}(Q_1;\Z\pi_1(X))), \Delta_\rho(C_{cell}^*(Q_2;\Z\pi_1(X)))
    \end{equation}
    are distinct.
\end{proposition}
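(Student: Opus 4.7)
The plan is to construct a ring homomorphism $\rho:\Z\pi_1(X)\to\C$ under which both base-changed complexes $C^*_{cell}(Q_i;\Z\pi_1(X))\otimes_\rho\C$ are acyclic but have distinct Reidemeister torsions in $\C^\times/\{\pm\rho(\pi_1(X))\}$; Proposition \ref{prop:R-torsion-simple-invariant} will then obstruct any simple homotopy equivalence between the two based cochain complexes. The natural choice is to let $\rho:\pi_1(X)\cong\Z/7*\Z/7\to\C^\times$ send the generator of each free factor to $\zeta:=e^{2\pi i/7}$, so that $\rho(\pi_1(X))=\mu_7$.

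Invoking the injectivity of $(\iota_2)_*$ (and analogously of $(\iota_1)_*$, which is implicit in the setup), the preimage $\Tilde{Q}_i=\pi^{-1}(Q_i)\subset\Tilde{X}$ decomposes as a disjoint union of copies of the universal cover of $Q_i$ indexed by cosets $\pi_1(X)/\pi_1(Q_i)$. Picking a lift of each cell of $Q_i$ to its universal cover, and then extending to $\Tilde{X}$ along a fixed set of coset representatives, produces an identification of based $\Z\pi_1(X)$-complexes
\[
C^*_{cell}(Q_i;\Z\pi_1(X)) \;\cong\; \Z\pi_1(X)\otimes_{\Z\pi_1(Q_i)}C^*_{cell}(Q_i;\Z\pi_1(Q_i)),
\]
and base-changing by $\rho$ then gives
\[
C^*_{cell}(Q_i;\Z\pi_1(X))\otimes_\rho\C \;\cong\; C^*_{cell}(Q_i)\otimes_{\Z\pi_1(Q_i)}\C,
\]
where $\Z\pi_1(Q_i)$ acts on $\C$ through $\rho\circ(\iota_i)_*$. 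This is precisely the complex analyzed in Subsection \ref{ssec:lens-spaces} under the representation of $\pi_1(Q_i)\cong\Z/7$ sending the generator to $\zeta$. Importing the formula derived there (with $r=1$ for $Q_1\cong L(7,1)$ and $r=4$ for $Q_2\cong L(7,2)$) yields
\[
\Delta_\rho\bigl(C^*_{cell}(Q_1;\Z\pi_1(X))\bigr)=(1-\zeta)^2, \qquad \Delta_\rho\bigl(C^*_{cell}(Q_2;\Z\pi_1(X))\bigr)=(1-\zeta^4)(1-\zeta).
\]

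It remains to verify that these classes disagree in $\C^\times/\{\pm\mu_7\}$. Their ratio equals
\[
\frac{(1-\zeta)^2}{(1-\zeta^4)(1-\zeta)} = \frac{1-\zeta}{1-\zeta^4} = \frac{1}{1+\zeta+\zeta^2+\zeta^3},
\]
whose modulus is $\sin(\pi/7)/\sin(3\pi/7)\neq 1$, so this quantity is not a root of unity and in particular does not lie in $\pm\mu_7$. The main point requiring care is the bookkeeping in the preceding paragraph: the preferred basis of $C^*_{cell}(Q_i;\Z\pi_1(X))$ coming from cell lifts into $\Tilde{X}$ must be matched with the induced basis under $\Z\pi_1(X)\otimes_{\Z\pi_1(Q_i)}(-)$, with any residual ambiguity absorbed into multiplication by elements of $\pi_1(X)$, which affects neither Whitehead nor Reidemeister torsion.
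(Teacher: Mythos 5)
Your overall strategy is the same as the paper's: send both generators of $\pi_1(X)\cong\Z/7*\Z/7$ to $\zeta=e^{2\pi i/7}$, reduce each complex after base change to the standard lens-space complex under the induced representation of $\pi_1(Q_i)$, and compare the resulting torsions in $\C^\times/\{\pm\zeta^k\}$. The bookkeeping you flag at the end (matching the cell-lift basis with the induced basis, up to multiplication by elements of $\pi_1(X)$) is fine.

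There is, however, a genuine gap in the computation of $\Delta_\rho(Q_1)$. The hypotheses fix nothing about $(\iota_1)_*$ beyond the fact that $Q_1\cong L(7,1)$ is embedded; in particular they do not tell you which element of $\pi_1(X)$ the chosen generator $\gamma$ of $\pi_1(Q_1)$ is sent to. Writing $\rho(\iota_{1*}(\gamma))=\zeta^l$, the correct formula is $\Delta_\rho(Q_1)=(1-\zeta^l)^2$, and $l$ can a priori be any of $1,\dots,6$ (and you must also rule out $l=0$, i.e.\ $\iota_{1*}$ trivial, which is \emph{not} implicit in the setup --- the setup only asserts injectivity for $\iota_2$; the paper excludes $l=0$ by observing that the assumed simple homotopy equivalence forces the $Q_1$ complex to become acyclic under $\rho$, whence $\zeta^l\neq1$). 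Your ratio computation only treats $l=1$, so as written the argument does not cover all embeddings allowed by the hypotheses. The needed statement is that
\begin{equation*}
(1-\zeta)(1-\zeta^4)\neq\pm\zeta^k(1-\zeta^l)^2\quad\text{for all }k\in\Z,\ l\in\{1,\dots,6\},
\end{equation*}
which is exactly the content of the simple homotopy classification of $L(7,1)$ versus $L(7,2)$ cited in the paper. Your modulus criterion does in fact extend --- one checks $4\sin^2(l\pi/7)\neq 4\sin(\pi/7)\sin(3\pi/7)$ for each $l$ --- but you have to actually run it for every $l$, not just $l=1$. With that quantifier restored (and the trivial-representation case for $\iota_1$ excluded), the proof is complete and matches the paper's.
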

\begin{proof}
Suppose for contradiction that there exists a simple homotopy equivalence
\begin{equation}\label{eqn:7*7-contradiction}
    \Phi:C^*_{cell}(Q_1;\Z\pi_1(X))\to C^*_{cell}(Q_2;\Z\pi_1(X)).
\end{equation}
Let $\pi_1(X) = \langle \eta \rangle * \langle \nu \rangle \cong \Z/7 * \Z/7$ for the two generators $\eta,\nu$, and define a ring homomorphism $\rho: \Z\pi_1(X) \to \C$ by setting $\rho(\eta) = \rho(\nu) = \zeta = e^{2\pi i/7}$. Then $\rho$ maps reduced words in $\pi_1(X)$ to powers of $\zeta$, and restricts to the standard representation on each $\Z/7$ factor.

We first compute the Reidemeister torsion of $Q_2$ with respect to $\rho$. The cochain complex $C^*_{cell}(Q_2;\Z\pi_1(X))$ corresponds to the standard cell structure on the lens space $L(7,2)$, with $\pi_1(Q_2) \subset \pi_1(X)$ identified with the subgroup generated by $\nu$. Therefore, we may identify the cochain complex $C^*_{cell}(Q_2;\Z\pi_1(X))\otimes_\rho\C$ with the corresponding one for $L(7,2)$:
\begin{equation} \label{eqn:R-torsion-L(7,2)}
    \begin{tikzcd}
        \C \arrow[r, "1-\zeta"] &\C \arrow[r, "0"] &\C \arrow[r,"1-\zeta^4"] &\C
    \end{tikzcd}
\end{equation}
whose Reidemeister torsion is $\Delta_\rho(Q_2) = (1 - \zeta)(1 - \zeta^4)$.

To compute the Reidemeister torsion of $Q_1 \cong L(7,1)$, let the embedding $\iota:Q_1\xhookrightarrow{}X$ send a fixed generator $\gamma \in \pi_1(Q_1)$ to $\rho(\iota_*(\gamma)) = \zeta^l$ for some integer $l$. The resulting cochain complex $C^*_{cell}(Q_1;\Z\pi_1(X))\otimes_\rho\C$ is:
\begin{equation} \label{eqn:R-torsion-L(7,1)}
    \begin{tikzcd}
        \C \arrow[r, "1-\zeta^l"] &\C \arrow[rr, "1+\zeta^l+\cdots+\zeta^{6l}"] &&\C \arrow[r,"1-\zeta^l"] &\C
    \end{tikzcd},
\end{equation}
coming from the cell structure of $L(7,1)$. Since we assumed (\ref{eqn:7*7-contradiction}) to be a simple homotopy equivalence, the cohomology of the cochain complex (\ref{eqn:R-torsion-L(7,1)}) must vanish, and so it follows that $\zeta^l\neq 1$ and $1+\zeta^l+\cdots+\zeta^{6l}=0$. Therefore its Reidemeister torsion can be computed as $\Delta_\rho(Q_1) = (1 - \zeta^l)^2$.

If the complexes were simple homotopy equivalent, their Reidemeister torsions would agree in $\C^\times/(\pm\zeta^k)$. But by the classification of lens spaces up to simple homotopy equivalence, for all $l = 1,\dots,6$, we have:
\begin{equation}
    (1 - \zeta)(1 - \zeta^4) \neq \pm \zeta^k (1 - \zeta^l)^2 \in \C^\times.
\end{equation}
Hence, $\Delta_\rho(Q_1) \neq \Delta_\rho(Q_2)$, contradicting the existence of a simple homotopy equivalence. 
\end{proof}

\section{Fukaya categories}

\subsection{Categorical notions} \label{ssec:category-theory}

In this subsection, we briefly review $A_\infty$-categorical notions, following the notation of \cite[Chapter 1]{Seidel08}. Our main goal is to define the category of twisted complexes twisted by cochain complexes $Tw_{Ch}\mathcal{C}$, and describe some of its properties.

Our setup is an $A_\infty$-category $\mathcal{C}$, whose morphism spaces are $\Z$-graded cochain complexes of free $R$-modules of finite rank, bounded in both directions. We assume that $\mathcal{C}$ is cohomologically unital, meaning that for every object $L\in\mathcal{C}$, there exists an identity element $e_L$ in $H^*\hom_\mathcal{C}(L,L)$.

\begin{defn} \label{def:acyclic-obj}
    We define an object $L\in\mathcal{C}$ to be \emph{left acyclic} if for any other object $K\in\mathcal{C}$, the cochain complex $\hom_\mathcal{C}(L,K)$ is acyclic. Right acyclic objects are similarly defined. 
\end{defn}


\begin{defn} \label{def:equivalence}
    An \emph{isomorphism element} in an $A_\infty$-category $\mathcal{C}$ is a cocycle $\alpha\in\hom^0_{\mathcal{C}}(K,L)$ for which there exists a cocycle $\beta\in\hom^0_\mathcal{C}(L,K)$ such that the identities
    \begin{equation}
        \mu^2([\beta],[\alpha])=e_K, \mu^2([\alpha],[\beta])=e_L
    \end{equation}
    hold in the cohomological category $H^*(\mathcal{C})$. When such two elements $\alpha,\beta$ exist, we define $K$ and $L$ to be \emph{isomorphic objects} in $\mathcal{C}$.
\end{defn}

Since an isomorphism element $\alpha\in\hom^0_\mathcal{C}(K,L)$ induces isomorphisms between the cohomology groups
\begin{equation}
    H^0\hom_\mathcal{C}(L,K)\cong H^0\hom_\mathcal{C}(K,K) \cong H^0\hom_\mathcal{C}(L,L),
\end{equation}
if either $H^0\hom_\mathcal{C}(K,K)$ or $H^0\hom_\mathcal{C}(L,L)$ is a free $R$-module of rank 1, the cohomology classes of the isomorphism elements $\alpha$ and $\beta$ are uniquely determined up to multiplication by a unit in $R$. In particular, if $R=\Z$, such isomorphisms are uniquely determined up to sign.

Definitions of $A_\infty$-(bi)modules, $A_\infty$-(bi)module homomorphisms and pre-homomorphisms, Yoneda modules, as well as triangulated $A_\infty$-categories and related notions can be found in \cite[Chapter 1]{Seidel08}. We now recall the definition of a twisted complex.

\begin{defn} \label{def:twisted-cpx}
    A \emph{twisted complex} $\mathcal{K}=(I,\{K_i\},\{V_i\})$ in an $A_\infty$-category $\mathcal{C}$ consists of a finite ordered index set $I$, a collection of objects $K_i\in\mathcal{C}$ indexed by $i\in I$, graded free $R$-modules $V_i$ of finite rank indexed by $i\in I$, and a collection of morphisms $\delta_\mathcal{K}=\{\delta_{ij}\in\hom^1(V_i\otimes K_i,V_j\otimes K_j)\}$ such that $\delta_{ij}=0$ if $i\leq j$, and satisfy the following Maurer-Cartan equation:
    \begin{equation}
        \sum_{m\geq1} \mu^m(\delta_{\mathcal{K}},\cdots,\delta_{\mathcal{K}},\delta_{\mathcal{K}})=0,
    \end{equation}
    which is shorthand for the equation
    \begin{equation}
        \sum_{m\geq1}\sum_{p=i_0<\cdots<i_m=q}\mu^m(\delta_{i_{m-1}i_m},\delta_{i_{m-2}i_{m-1}},\cdots,\delta_{i_0i_1})=0
    \end{equation}
    for all $(p,q)$. Because of the lower triangular condition for $\delta_\mathcal{K}$, the above sum is finite.
\end{defn}

When all the $R$-modules $V_i$ are rank $1$, in which case they simply record the degree shift of the objects $K_i$, the twisted complex may be written more concisely as:
\begin{equation}
    \mathcal{K}=[K_0[d_0]\to K_1[d_1]\to\cdots\to K_n[d_n]].
\end{equation}

To define the $A_\infty$-category $Tw\mkern3mu\mathcal{C}$ of twisted complexes in $\mathcal{C}$, we describe the morphisms between twisted complexes and their $A_\infty$-operations. 

\begin{defn} \label{def:twisted-cpx-hom}
    Given two twisted complexes $\mathcal{K}=\bigoplus_i V_i\otimes K_i$, $\mathcal{L}=\bigoplus_j W_j\otimes L_j$, the morphism space is defined as
    \begin{equation}
    \hom_{Tw\mkern2mu\mathcal{C}}~(\bigoplus_i V_i\otimes K_i,\bigoplus_j W_j\otimes L_j) = \bigoplus_{i,j} \hom_R(V_i,W_j)\otimes\hom_\mathcal{C}(K_i,L_j).
\end{equation}
    The $A_\infty$-structure is defined by inserting the differentials $\delta$ wherever possible:
    \begin{equation}
    \mu^k_{Tw\mathcal{C}}(x_k,\dots,x_1)
    = \sum_{i_0,\dots,i_k}
    \mu^{k+i_0+\cdots+i_k}\big(
        \overbrace{\delta_{k},\dots,\delta_{k}}^{i_k},
        x_k,
        \overbrace{\delta_{k-1},\dots,\delta_{k-1}}^{i_{k-1}},
        \dots,
        x_1,
        \overbrace{\delta_{0},\dots,\delta_{0}}^{i_0}
    \big).
\end{equation}
\end{defn}

In the simplified case where all $V_i$ and $W_j$ are rank 1 and record degree shifts $[d_i]$ and $[e_j]$, the morphism space becomes
\begin{equation}
    \hom_{Tw\mkern2mu\mathcal{C}}(\bigoplus_i K_i[d_i],\bigoplus_j L_j[e_j])=\bigoplus_{i,j} \hom_\mathcal{C}(K_i,L_j)[e_j-d_i].
\end{equation}



We note that the notions of left and right acyclicity naturally extend to twisted complexes. Importantly, to check whether a twisted complex $\mathcal{K}$ is left acyclic, it suffices to verify that
\begin{equation}
    \hom_{Tw\mkern2mu\mathcal{C}}(\mathcal{K},L)
\end{equation}
is acyclic for all $L\in \mathcal{C}$, rather than all twisted complexes $\mathcal{L}\in Tw\mathcal{C}$. Similarly, isomorphism elements between twisted complexes are defined analogously to those in $\mathcal{C}$. 

Recall that if $\mathcal{C}$ is cohomologically unital, so is $Tw\mathcal{C}$.

\begin{defn}
    An \emph{isomorphism element} $\alpha\in \hom^0_{Tw\mathcal{C}}(\mathcal{K},\mathcal{L})$ is a degree 0 cocycle such that there exists a cocycle $\beta\in \hom^0_{Tw\mathcal{C}}(\mathcal{L},\mathcal{K})$ satisfying
    \begin{equation}
        [\mu^2_{Tw\mathcal{C}}(\beta,\alpha)]=[e_\mathcal{K}],~[\mu^2_{Tw\mathcal{C}}(\alpha,\beta)]=[e_\mathcal{L}]
    \end{equation}
    holds in the cohomological category $H^*Tw\mathcal{C}$.
\end{defn}


We now introduce a variant of the category of twisted complexes for an $A_\infty$-category $\mathcal{F}$, under the standing assumptions that $\mathcal{F}$ is c-unital and its morphism spaces are $\Z$-graded cochain complexes over $\Z$, free in each degree. In this variant, objects are twisted complexes twisted by cochain complexes over $\Z$ (rather than free $\Z$-modules), and we denote the resulting $A_\infty$-category by $Tw_{Ch}\mathcal{F}$. Unless otherwise specified, all the twisted complexes in the subsequent sections are understood to lie in the category $Tw_{Ch}\mathcal{F}$.

To begin, we construct an enlarged category $\Sigma_{Ch}\mathcal{F}$. The objects of $\Sigma_{Ch}\mathcal{F}$ are formal direct sums of the form
\begin{equation}
    Ob(\Sigma_{Ch}\mathcal{F})=\{\bigoplus_{\alpha\in A} C_\alpha^*\otimes L_\alpha\},
\end{equation}
where each $L_\alpha$ is an object in $\mathcal{F}$, each $C_\alpha^*$ is a cochain complex of finitely generated $\Z$-modules supported in finitely many degrees, and the index set $A$ is finite. The morphism spaces are defined as
\begin{equation}
    \hom_{\Sigma_{Ch}\mathcal{F}} (\bigoplus_\alpha C^*_\alpha\otimes L_\alpha,\bigoplus_\beta D^*_\beta\otimes K_\beta)=\bigoplus_{\alpha,\beta} \hom_\Z(C^*_\alpha,D^*_\beta)\otimes \hom_\mathcal{F}(L_\alpha,K_\beta)
\end{equation}
where $\hom_\Z(C^*_\alpha,D^*_\beta)$ denotes the complex of all $\Z$-linear maps of graded degree between $C^*_\alpha$ and $D^*_\beta$. 

Each morphism space naturally carries a differential defined on pure tensors as
\begin{equation}
    \mu^1(\phi\otimes x)= (-1)^{\lvert x\rvert-1}\partial_{Ch}(\phi)\otimes x + \phi\otimes \mu^1_{\mathcal{F}}(x),
\end{equation}
where $\partial_{Ch}(\phi)=\partial_D\circ\phi-(-1)^{\deg(\phi)}\phi\circ\partial_C$ is the differential from the DG category of cochain complexes. 

Higher $A_\infty$-operations are defined analogously to those in the additive enlargement from the definition of $Tw\mathcal{C}$, and can be written explicitly as
\begin{equation}\label{eqn-signconvention}
    \mu^k(\phi_k\otimes x_k,\cdots,\phi_1\otimes x_1)= (-1)^{\bowtie} (\phi_k\circ\cdots\circ\phi_1) \otimes \mu^k_\mathcal{F}(x_k,\cdots,x_1).
\end{equation}
where $\bowtie=\sum_{i<j}\deg(\phi_i)(\deg(x_j)-1)$ as in \cite[Equation 3.17]{Seidel08}.

We now equip each object of $\Sigma_{Ch}\mathcal{F}$ with a differential. Define $\Xi \mathcal{F}$ to be the category whose objects are pairs
\begin{equation}
    (\mathcal{L}=\bigoplus_\alpha C^*_\alpha\otimes L_\alpha, \delta_\mathcal{L}),
\end{equation}
and $\delta_\mathcal{L}$ is a degree 1 homomorphism in $\hom_{\Sigma_{Ch}\mathcal{F}}(\mathcal{L},\mathcal{L})$. Each such object is also equipped with a finite length decreasing filtration
\begin{equation}
    \mathcal{L}=F^0\mathcal{L}\supset F^1\mathcal{L} \supset\cdots\supset F^n\mathcal{L}=0
\end{equation}
such that the induced map of $\delta_\mathcal{L}$ on each graded piece $F^i\mathcal{L}/F^{i+1}\mathcal{L}$ vanishes.

This category $\Xi\mathcal{F}$ admits the structure of a curved $A_\infty$-category as follows: first, the curvature of an object is defined as
\begin{equation}
    \mu^0(\mathcal{L},\delta_\mathcal{L})=\sum_{k\geq1} \mu^k_{\Sigma_{Ch}\mathcal{F}}(\delta_\mathcal{L},\cdots,\delta_\mathcal{L}).
\end{equation}
The morphism spaces are inherited from $\Sigma_{Ch}\mathcal{F}$:
\begin{equation}
    \hom_{\Xi\mathcal{F}}(\mathcal{L}_0,\mathcal{L}_1)=\hom_{\Sigma_{Ch}\mathcal{F}}(\mathcal{L}_0,\mathcal{L}_1).
\end{equation}
The $A_\infty$-structure maps are defined by inserting $\delta_\mathcal{L}$ whenever possible:
\begin{equation}
    \mu^k_{\Xi\mathcal{F}}(z_k,\cdots,z_1)=\sum_{i_0,\cdots,i_k}\mu_{\Sigma_{Ch}\mathcal{F}}^{k+i_0+\cdots+i_k}\big(~\overbrace{\delta_{\mathcal{L}_k},\cdots,\delta_{\mathcal{L}_k}}^{i_k},x_k,\overbrace{\delta_{\mathcal{L}_{k-1}},\delta_{\mathcal{L}_{k-1}},\cdots,\delta_{\mathcal{L}_{k-1}}}^{i_{k-1}},\cdots,x_1,\overbrace{\delta_{\mathcal{L}_0},\cdots,\delta_{\mathcal{L}_0}}^{i_0}~\big).
\end{equation}

\begin{defn}
    The \emph{$A_\infty$-category of twisted complexes twisted by cochain complexes}, denoted by $Tw_{Ch}\mathcal{F}$, is defined to be the full subcategory of $\Xi\mathcal{F}$ consisting of all objects $(\mathcal{L},\delta_\mathcal{L})$ whose curvature term $\mu^0$ vanishes:
    \begin{equation}
        \sum_{k\geq1}\mu^k(\delta_\mathcal{L},\cdots,\delta_\mathcal{L})=0.
    \end{equation}
\end{defn}

We claim that $Tw_{Ch}\mathcal{F}$ is cohomologically unital.

\begin{proposition}
    If $\mathcal{F}$ is c-unital, then so is $Tw_{Ch}\mathcal{F}$.
\end{proposition}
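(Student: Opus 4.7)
The plan is to construct, for each object $\mathcal{L} = (\bigoplus_\alpha C^*_\alpha \otimes L_\alpha, \delta_\mathcal{L})$ of $Tw_{Ch}\mathcal{F}$, a cocycle $e_\mathcal{L} \in \hom^0_{Tw_{Ch}\mathcal{F}}(\mathcal{L},\mathcal{L})$ whose cohomology class is a two-sided identity. The starting point is the naive candidate
\begin{equation}
    e_\mathcal{L}^{(0)} = \sum_\alpha \id_{C^*_\alpha} \otimes e_{L_\alpha} \in \hom^0_{\Sigma_{Ch}\mathcal{F}}(\mathcal{L},\mathcal{L}),
\end{equation}
where each $e_{L_\alpha}$ is a cocycle representing the cohomological unit of $L_\alpha$ in $\mathcal{F}$. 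I would then correct $e_\mathcal{L}^{(0)}$ inductively using the filtration $F^\bullet \mathcal{L}$ built into every object of $\Xi\mathcal{F}$. The argument is modeled on the proof that $Tw\mathcal{C}$ is $c$-unital whenever $\mathcal{C}$ is, adapted to the enlargement by cochain complexes.

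The filtration $F^\bullet\mathcal{L}$ induces a finite-length decreasing filtration on $\hom_{Tw_{Ch}\mathcal{F}}(\mathcal{L}_0,\mathcal{L}_1)$, with $F^p\hom$ consisting of morphisms raising filtration degree by at least $p$. Since $\delta_\mathcal{L}$ acts trivially on the associated graded of $\mathcal{L}$, each insertion of $\delta_\mathcal{L}$ into an $A_\infty$ operation strictly raises filtration. Consequently $\mu^1_{Tw_{Ch}\mathcal{F}}$ preserves the filtration and its associated graded coincides with $\mu^1_{\Sigma_{Ch}\mathcal{F}}$, and similarly for $\mu^2_{Tw_{Ch}\mathcal{F}}$. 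A direct calculation gives $\mu^1_{\Sigma_{Ch}\mathcal{F}}(e_\mathcal{L}^{(0)}) = 0$, since both $\partial_{Ch}(\id_{C^*_\alpha})$ and $\mu^1_\mathcal{F}(e_{L_\alpha})$ vanish.

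I would then run an inductive obstruction argument to promote $e_\mathcal{L}^{(0)}$ to an honest cocycle: supposing $e_\mathcal{L}^{(k)} = e_\mathcal{L}^{(0)} + \phi_1 + \cdots + \phi_k$ with $\phi_j \in F^j\hom$ and $\mu^1_{Tw_{Ch}\mathcal{F}}(e_\mathcal{L}^{(k)}) \in F^{k+1}\hom$, the leading-order image in $F^{k+1}/F^{k+2}$ is a $\mu^1_{\Sigma_{Ch}\mathcal{F}}$-cocycle. By $c$-unitality of $\mathcal{F}$ applied summand-by-summand, this class is a coboundary, which lets us choose $\phi_{k+1}\in F^{k+1}\hom$ cancelling the leading obstruction. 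Finite length of $F^\bullet\mathcal{L}$ terminates the induction. To verify that $[e_\mathcal{L}]$ is a two-sided unit in $H^*Tw_{Ch}\mathcal{F}$, I would apply the same filtration trick to $\hom_{Tw_{Ch}\mathcal{F}}(\mathcal{L},\mathcal{L}')$: on the associated graded, left and right composition with $e_\mathcal{L}^{(0)}$ reduce to composition with $\id\otimes e_{L_\alpha}$, which is chain homotopic to the identity by $c$-unitality of $\mathcal{F}$, and a filtration spectral sequence comparison propagates this to $H^*\hom_{Tw_{Ch}\mathcal{F}}$. The main obstacle is the careful bookkeeping of signs and filtration degrees at each inductive step, and in particular checking that the obstruction cocycle has the correct form so that $c$-unitality of $\mathcal{F}$ can be invoked factor by factor to kill it.
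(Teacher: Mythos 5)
Your strategy --- building the cohomological unit of a twisted complex directly by an inductive obstruction argument over the filtration --- is genuinely different from the paper's proof, which sidesteps all of this by first embedding $\mathcal{F}$ cohomologically fully faithfully into a \emph{strictly} unital $A_\infty$-category $\mathcal{A}$ over $\Z$ (the Pomerleano--Seidel strictification), observing that $Tw_{Ch}\mathcal{A}$ then has honest strict units $\sum_\alpha \id_{C^*_\alpha}\otimes e_{L_\alpha}$, and transporting c-unitality back along the induced cohomologically fully faithful functor $Tw_{Ch}\mathcal{F}\to Tw_{Ch}\mathcal{A}$. The reason the paper takes this detour is that the two key assertions in your argument are problematic over $\Z$, which is the coefficient ring in force here.

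First, the inductive step. The vanishing of the first obstruction in $F^1/F^2$ does follow from c-unitality (both $\mu^2(\delta,e^{(0)}_\mathcal{L})$ and $\mu^2(e^{(0)}_\mathcal{L},\delta)$ are cohomologous to $\pm\delta$ in the associated graded and cancel), but the higher obstructions involve the corrections $\phi_j$, which are not cocycles, together with operations $\mu^d$ carrying several $\delta$-insertions; ``c-unitality applied summand-by-summand'' does not show these classes are exact. To make this work one needs the multiplicative structure of the filtration spectral sequence (a Leibniz-rule argument showing that the class of $e^{(0)}_\mathcal{L}$, being the unit of the $E_1$-page, is automatically a permanent cycle), and this is absent from your sketch. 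Second, the claim that composition with $\id\otimes e_{L_\alpha}$ is ``chain homotopic to the identity by c-unitality of $\mathcal{F}$'' fails over $\Z$: c-unitality only guarantees that this map induces the identity on cohomology, and a self-map of a bounded complex of free $\Z$-modules inducing the identity on cohomology need not be chain homotopic to the identity (the discrepancy is measured by $\mathrm{Ext}^1$ groups between cohomologies in adjacent degrees). This is exactly the $\Z$-coefficient subtlety that forces the strictification route. What you can legitimately conclude is that composition with $\id\otimes e_{L_\alpha}$ is a quasi-isomorphism, hence that multiplication by $[e_\mathcal{L}]$ is bijective on each $H^*\hom_{Tw_{Ch}\mathcal{F}}(\mathcal{L},\mathcal{L}')$; you would then still need the separate (easy but necessary) observation that bijectivity of left and right multiplication by a fixed degree-zero class forces the existence of a genuine identity morphism.
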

\begin{proof}
    Our first step is to adapt the usual Yoneda embedding arguments for field coefficients to the setting with $\Z$-coefficients. As shown in \cite{PomerleanoSeidel}, there exists a cohomologically fully faithful embedding of $\mathcal{F}$ into a strictly unital $A_\infty$-category $\mathcal{A}$ defined over $\Z$. Then $Tw_{Ch}\mathcal{A}$ has strict units, given for an object $\mathcal{L}=\bigoplus_\alpha C^*_\alpha\otimes L_\alpha$ by
    \begin{equation}
        e_\mathcal{L}=(\id_{C^*_\alpha}\otimes e_{L_\alpha})\in\hom_{Tw_{Ch}\mathcal{A}}(\mathcal{L},\mathcal{L}).
    \end{equation}
    Furthermore, the filtration argument in \cite[Lemma 3.23]{Seidel08} extends to $Tw_{Ch}$, so the embedding $\mathcal{F}\xhookrightarrow{}\mathcal{A}$ induces a cohomologically fully faithful functor $Tw_{Ch}\mathcal{F}\to Tw_{Ch}\mathcal{A}$. Since $Tw_{Ch}\mathcal{A}$ is strictly unital, it follows that $Tw_{Ch}\mathcal{F}$ is c-unital.
\end{proof}

Having established cohomological unitality of $Tw_{Ch}\mathcal{F}$, we now show that $Tw_{Ch}\mathcal{F}$ is triangulated. 

First, define the shift of an object by
\begin{equation}
    (C^*\otimes L)[1]\coloneqq C^*[1]\otimes L,
\end{equation}
which extends to a shift functor $S:Tw_{Ch}\mathcal{F}\to Tw_{Ch}\mathcal{F}$. Given a degree zero cocycle $c\in\hom_{Tw_{Ch}\mathcal{F}}(\mathcal{K},\mathcal{L})$, we define its mapping cone as the twisted complex
\begin{equation}
    (S\mathcal{K}\oplus\mathcal{L}, \delta=\begin{pmatrix}
        S(\delta_\mathcal{K}) &0\\
        -S(c) &\delta_\mathcal{L}
    \end{pmatrix}),
\end{equation}
where $S$ denotes the shift functor. Combining the filtrations of $\mathcal{
K}$ and $\mathcal{L}$, first from $\mathcal{K}\oplus\mathcal{L}$ to $0\oplus\mathcal{L}$, then from $0\oplus\mathcal{L}$ to $0\oplus 0$, we obtain a finite length filtration verifying that the mapping cone is an object of $Tw_{Ch}\mathcal{F}$. Moreover, its image under the Yoneda embedding fits into an exact triangle involving the Yoneda modules of $\mathcal{K}$ and $\mathcal{L}$. It follows that $Tw_{Ch}\mathcal{F}$ carries the structure of a triangulated category.

Finally, we review how the \emph{algebraic twist} construction from \cite{Seidel08} is defined in this framework.

The $A_\infty$-category $\mathcal{F}$ admits a natural embedding into $Tw_{Ch}\mathcal{F}$, by sending an object $L\in\mathcal{F}$ to $\Z\otimes L\in Tw_{Ch}\mathcal{F}$, where $\Z$ is the cochain complex concentrated in degree zero with trivial differential. Under this inclusion, we have canonical identifications of cochain complexes
\begin{align} \label{eqn-evaluation_map}
    \hom(\hom_\mathcal{F}^*(V,L)\otimes V,\Z\otimes L) &= \hom_\Z(\hom_\mathcal{F}^*(V,L),\Z)\otimes\hom^*_\mathcal{F}(V,L)\\
    &\cong \hom_\Z(\hom^*_\mathcal{F}(V,L),\hom^*_\mathcal{F}(V,L)).
\end{align}
We define the \emph{evaluation map} $ev\in\hom(\hom_\mathcal{F}^*(V,L)\otimes V,\Z\otimes L)$ to be the image of $\id\in \hom_\Z(\hom^*_\mathcal{F}(V,L),\hom^*_\mathcal{F}(V,L))$ under the above isomorphism. Since the above isomorphism is compatible with the differentials, it follows that the evaluation map $ev$ is a degree zero cocycle.

\begin{defn}
    The \emph{algebraic twist} of an object $L$ with respect to $V$ is the twisted complex
    \begin{equation}
        T_VL=\hom_\mathcal{F}^*(V,L)[1]\otimes V\oplus L,
    \end{equation}
    equipped with the filtration whose graded pieces are $\hom_\mathcal{F}^*(V,L)[1]\otimes V$ and $L$, and with the differential $\delta=ev[1]$, the shift of the evaluation map. 
\end{defn}

Using the identification in (\ref{eqn-evaluation_map}) above, and given a basis $\{x_i\}$ of $\hom^*_\mathcal{F}(V,L)$, the evaluation map can be written explicitly as
\begin{equation}
    ev=\sum_i (x_i)^\vee\otimes x_i \in \hom^*_\mathcal{F}(V,L)^\vee\otimes \hom(V,L).
\end{equation}

This construction extends to twisted complexes as follows. Consider a twisted complex twisted by cochain complexes
\begin{equation}
    (\mathcal{L}=\bigoplus_\alpha C^*_\alpha\otimes L_\alpha, \delta_\mathcal{L}).
\end{equation}
We aim to define the evaluation homomorphism
\begin{equation}
    ev\in\hom_{Tw_{Ch}\mathcal{F}}(\hom_{Tw_{Ch}\mathcal{F}}(\Z\otimes V,\mathcal{L})\otimes V,\mathcal{L})
\end{equation}
similarly to the case when $\mathcal{L}$ is a single object $L$. To do this, we define the component maps
\begin{equation}
    (ev)^{\alpha\beta}\in\hom(\hom^*_{Tw_{Ch}\mathcal{F}}(\Z\otimes V,C^*_\alpha\otimes L_\alpha)\otimes V,C^*_\beta\otimes L_\beta)
\end{equation}
for each $\alpha,\beta$ in the index set $A$. 

When $\alpha=\beta$, this recovers the evaluation homomorphism we defined in the previous discussion. For $\alpha\neq\beta$, we analyze the morphism space as
\begin{align}
    \hom_{Tw_{Ch}\mathcal{F}}(\hom^*_{Tw_{Ch}\mathcal{F}}(\Z\otimes V,C^*_\alpha\otimes L_\alpha)\otimes V,C^*_\beta\otimes L_\beta) &\cong \hom_{Tw_{Ch}\mathcal{F}}(\hom_\Z(\Z,C^*_\alpha)\otimes\hom^*_\mathcal{F}(V,L_\alpha)\otimes V,C^*_\beta\otimes L_\beta)\notag \\
    &\cong \hom_\Z(C^*_\alpha\otimes\hom^*_\mathcal{F}(V,L_\alpha),C^*_\beta)\otimes\hom^*_\mathcal{F}(V,L_\beta),
\end{align}
which naturally embeds into
\begin{equation}
    \hom_\Z^*(C^*_\alpha\otimes \hom^*_\mathcal{F}(V,L_\alpha),C^*_\beta\otimes\hom^*_\mathcal{F}(V,L_\beta)).
\end{equation}

Given each off-diagonal component of the differential $\delta_\mathcal{L}^{\alpha\beta}\in\hom(C^*_\alpha,C^*_\beta)\otimes\hom(L_\alpha,L_\beta)$ coming from the twisted complex $\mathcal{L}$, we define the component $(ev)^{\alpha\beta}$ by taking the image of $\delta_\mathcal{L}^{\alpha\beta}$ under the map
\begin{equation}
\hom^*_\mathcal{F}(L_\alpha, L_\beta) \to \hom^*_\Z(\hom^*_\mathcal{F}(V, L_\alpha), \hom^*_\mathcal{F}(V, L_\beta))
\end{equation}
induced by precomposition with $\mu^2$,
\begin{equation}
\mu^2: \hom^*_\mathcal{F}(L_\alpha, L_\beta) \otimes \hom^*_\mathcal{F}(V, L_\alpha) \to \hom^*_\mathcal{F}(V, L_\beta).
\end{equation}

One can check that the differential defined this way on $T_V\mathcal{L}$ satisfies the Maurer-Cartan relation, and thus $T_V\mathcal{L}$ is an object in $Tw_{Ch}\mathcal{F}$.

We conclude this subsection with the following lemma.

\begin{lemma} \label{lem:mu^2-with-ev}
    Let $V$, $L$, $K$ be objects in $\mathcal{F}$, and let $\alpha\in\hom^0_\mathcal{F}(L,K)$. Then,
    \begin{equation}
        \mu^2_{Tw_{Ch}}(\alpha,ev[1])\in\hom^1_\Z(\hom^*_\mathcal{F}(V,L),\Z)\otimes\hom^*_\mathcal{F}(V,K)
    \end{equation}
    corresponds, under the identification (\ref{eqn-evaluation_map}), to the map
    \begin{equation}
        (-1)^{\deg-1}\mu^2_\mathcal{F}(\alpha,~)[1]:\hom^*_\mathcal{F}(V,L)\to\hom^{*+1}_\mathcal{F}(V,K),
    \end{equation}
    where the shift arises from the shifted evaluation map $ev[1]$.
\end{lemma}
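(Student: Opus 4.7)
The plan is to expand the evaluation map in a basis, apply the Koszul-sign formula (\ref{eqn-signconvention}) for $\mu^2$ in $\Sigma_{Ch}\mathcal{F}$ directly to the one-term product, and translate the resulting element of $\hom^1_\Z(\hom^*_\mathcal{F}(V,L),\Z)\otimes\hom^*_\mathcal{F}(V,K)$ back into a $\Z$-linear map via the identification (\ref{eqn-evaluation_map}). Since $\alpha$ enters $Tw_{Ch}\mathcal{F}$ as the simple tensor $\mathrm{id}_\Z\otimes\alpha$ under the embedding $\mathcal{F}\hookrightarrow Tw_{Ch}\mathcal{F}$, the claim reduces to a purely combinatorial sign check.

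First I would fix a homogeneous basis $\{x_i\}$ of $\hom^*_\mathcal{F}(V,L)$ and the dual basis $\{(x_i)^\vee\}$ in $\hom_\Z(\hom^*_\mathcal{F}(V,L),\Z)$, so that $ev=\sum_i (x_i)^\vee\otimes x_i$ with each summand of total degree $0$. Shifting the source by $[1]$ turns $(x_i)^\vee$ into an element of $\hom_\Z(\hom^*_\mathcal{F}(V,L)[1],\Z)$ of internal degree $1-\deg(x_i)$, so that $ev[1]$ is a degree $1$ morphism in $\Sigma_{Ch}\mathcal{F}$ as required for a twisted-complex differential.

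Next, applying (\ref{eqn-signconvention}) to each summand $\mu^2_{Tw_{Ch}}(\mathrm{id}_\Z\otimes\alpha,\,(x_i)^\vee[1]\otimes x_i)$, the only nontrivial contribution to the Koszul sign $\bowtie$ is the cross term $\deg((x_i)^\vee[1])\cdot(\deg(\alpha)-1)=(1-\deg(x_i))(-1)=\deg(x_i)-1$. This yields
\begin{equation*}
\mu^2_{Tw_{Ch}}(\alpha,ev[1])=\sum_i (-1)^{\deg(x_i)-1}\,(x_i)^\vee[1]\otimes\mu^2_\mathcal{F}(\alpha,x_i).
\end{equation*}
Under (\ref{eqn-evaluation_map}) the unsigned sum $\sum_i (x_i)^\vee[1]\otimes\mu^2_\mathcal{F}(\alpha,x_i)$ corresponds to the linear map $x\mapsto\mu^2_\mathcal{F}(\alpha,x)$ from $\hom^*_\mathcal{F}(V,L)[1]$ to $\hom^*_\mathcal{F}(V,K)$; incorporating the sign recovers exactly $(-1)^{\deg-1}\mu^2_\mathcal{F}(\alpha,\,\cdot\,)[1]$ as stated.

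There is no substantial obstacle here; the only subtlety worth flagging is the origin of the sign $(-1)^{\deg-1}$. It comes entirely from the grading shift carried by the dual basis on passing from $ev$ to $ev[1]$, and not from any contribution of the internal differential on $\hom^*_\mathcal{F}(V,L)$. Once the shifted degrees are substituted correctly into the formula (\ref{eqn-signconvention}), the computation is immediate.
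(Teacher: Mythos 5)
Your proof is correct and follows essentially the same route as the paper's: expand $ev$ in a basis $\{x_i\}$, apply the Koszul sign convention (\ref{eqn-signconvention}) to each summand $\mu^2_{Tw_{Ch}}(\alpha,(x_i)^\vee\otimes x_i)$ with the shifted degree of the dual element, and read off the sign $(-1)^{\deg(x_i)-1}$. Your write-up actually makes explicit the sign bookkeeping that the paper leaves implicit, so there is nothing to correct.
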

\begin{proof}
    Choose a basis $\{x_i\}$ for $\hom^*_\mathcal{F}(V,L)$. Then since $ev[1]=\sum_i (x_i)^\vee[-1]\otimes x_i$, it follows that
    \begin{equation}
        \mu^2_{Tw_{Ch}}(\alpha,ev[1])=\sum_i (-1)^{\lvert x_i\rvert-1} x_i^\vee[-1]\otimes \mu^2_\mathcal{F}(\alpha,x_i),
    \end{equation}
    which corresponds to $(-1)^{\deg-1}\mu^2_\mathcal{F}(\alpha,~)[1]$ under the identification (\ref{eqn-evaluation_map}).
\end{proof}

\subsection{Floer cohomology for exact Lagrangians}

The geometric setup throughout the remainder of this paper will be as follows: we consider an exact symplectic manifold $(X,\omega=d\lambda)$ with vanishing first Chern class, $c_1(X)=0$. This allows us to fix a choice of a fiber bundle $Gr_\Lambda(X)$, whose fiber over $x\in X$ is the fiberwise universal cover $\widetilde{Gr}_\Lambda(T_xX)$ of the oriented Lagrangian Grassmannian $Gr_\Lambda (T_xX)$. 

The Lagrangians we consider are \textit{exact Lagrangian submanifolds}, meaning that each $L$ is equipped with a function $f_L:L\to\R$ such that $df_L=\lambda\vert_L$. To define $\Z$-graded Floer cohomology groups with $\Z$-coefficients, we also equip our Lagrangians with extra data. We define a \textit{Lagrangian brane} to be an exact Lagrangian $(L,f_L)$ together with a Spin structure and a grading structure, where the latter means a consistent choice of lift of the Lagrangian Gauss map $T_xL\xhookrightarrow{}Gr_\Lambda X\vert_x$ to the universal cover $\widetilde{Gr}_\Lambda(X)\vert_x$. Such a structure exists if our underlying Lagrangian $L$ is Maslov zero, meaning the Maslov class $\mu_L\in H^1(L;\Z)$ is zero. From now on, all Lagrangians are assumed to be exact, Maslov zero, and Spin. 

We will also impose further geometric conditions on the symplectic manifolds and Lagrangians. As in \cite{AbouzaidSeidel}, we assume $X$ is Liouville and the Lagrangians are cylindrical at infinity. The almost complex structures $J$ used in defining the Floer cochain complex are assumed to be cylindrical, meaning it is compatible with the structure of $X$ at infinity.

In this setting, we define the Floer cochain complex of two exact Lagrangians $K,L$ as follows. We first assume that $K$ and $L$ meet transversely: in particular, all the intersection points of $K$ and $L$ are contained in a compact subset of $X$. Then the generators of the cochain complex are given by
\begin{equation}
    CF^*(K,L;\Z)=\bigoplus_{x\in K\cap L} \langle\mathfrak{o}_x\rangle,
\end{equation}
where $\langle\mathfrak{o}_x\rangle$ denotes the free $\Z$-module of rank 1 generated by the two possible orientations of the orientation line associated to the intersection point $x$, subject to the relation that their sum is zero. The degrees of the generators of $\mathfrak{o}_x$ are defined to be the Maslov class of the loop given by the path connecting the graded lifts of $T_xK$ and $T_xL$ in the universal cover of the Lagrangian Grassmannian, composed with the canonical short path connecting the lifts of $T_xL$ and $T_xK$. To define a differential $\partial$, we consider the following moduli space of $J$-holomorphic maps
\begin{align}
\begin{split}
    \hat{\mathcal{M}}(y;x)=\{u:\R\times[0,1]\to X\mid ~&\frac{\partial u}{\partial s}+J_{t}\frac{\partial u}{\partial t}=0,u(s,0)\in K, u(s,1)\in L,\\
    &\lim_{s\to-\infty}u(s,t)=y, \lim_{s\to\infty}u(s,t)=x\},
\end{split}
\end{align}
for $x\neq y$ and define $\mathcal{M}(y;x)=\hat{\mathcal{M}}(y;x)/\R$ to be the quotient by the $\R$-action acting by translation. For a Baire set of $\omega$-compatible almost complex structures $\{J_t\}$, this moduli space $\mathcal{M}(y;x)$ is transversely cut out, and has the expected dimension $\deg(y)-\deg(x)-1$. If we restrict to the case when $\deg(y)-\deg(x)=1$, then $\mathcal{M}(y;x)$ is a compact manifold of dimension $0$, i.e. a finite set of points. For each rigid strip $[u]$, the determinant line of the Fredholm operator associated to the Cauchy-Riemann equation determines a map between the orientation lines $\psi_u:\mathfrak{o}_x\to\mathfrak{o}_y$. We now define the Floer differential as the sum
\begin{equation}
    \partial \mathfrak{o}_x = \sum_{\lvert y\rvert=\lvert x\rvert+1,~ u\in\mathcal{M}(y;x)} \psi_u(\mathfrak{o}_x). 
\end{equation}

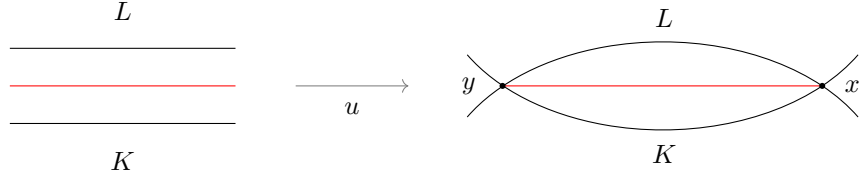
\begin{figure}[h]
\centering
\begin{tikzpicture}

\draw (-4.5,-0.5) -- (-1.5,-0.5);
\draw (-4.5,0.5) -- (-1.5,0.5);
\draw [draw=red] (-4.5,0) -- (-1.5,0);

\draw[->, draw=gray] (-0.7, 0) -- (0.8, 0);

\draw (6.3,0) arc [start angle=45, end angle=150, x radius=3, y radius=2];
\draw (6.3,0) arc [start angle=45, end angle=30, x radius=3, y radius=2];
\draw (6.3,0) arc [start angle=-45, end angle=-150, x radius=3, y radius=2];
\draw (6.3,0) arc [start angle=-45, end angle=-30, x radius=3, y radius=2];
\draw [draw=red] (2.05,0) -- (6.3,0);

\draw [draw=none, fill=black] (6.3,0) circle (0.04 and 0.04);
\draw [draw=none, fill=black] (2.05,0) circle (0.04 and 0.04);

\draw (-3,-1) node {$K$};
\draw (-3,1) node {$L$};
\draw (0.05,-0.3) node {$u$};

\draw (4.2,0.9) node {$L$};
\draw (4.2,-0.9) node {$K$};

\draw (6.7,0) node {$x$};
\draw (1.6,0) node {$y$};

\end{tikzpicture}
\caption{The domain and image of a map $u$ in the moduli space $\mathcal{M}(y;x)$. The red line denotes the homotopy class rel endpoints of the curve $\gamma_t(s)=u(s,t)$ in $X$, which is independent of $t$.}
\label{fig:floer-differential}
\end{figure}

We similarly define the higher $A_\infty$-structure maps $\mu^k$ for $k\geq 2$. To construct the relevant moduli spaces, we first consider $\overline{\mathcal{R}}_{k,1}$, the compactified Deligne-Mumford moduli space of stable disks with $k+1$ boundary marked points, and its universal curve $\mathcal{S}_{k,1}$. Now let $L_0,\cdots,L_k$ be mutually transverse exact Lagrangians. For each subset $I$ of $\{0,1,\cdots,k\}$ of size $l$, we pick positive and negative strip-like ends
\begin{align}
    &\epsilon_I^-:(-\infty,0]\times[0,1]\times\overline{\mathcal{R}}_{l,1}\to\overline{\mathcal{S}}_{l,1}~,\\
    &\epsilon_{I,j}^+:[0,\infty)\times[0,1]\times\overline{\mathcal{R}}_{l,1}\to\overline{\mathcal{S}}_{l,1},~j=1,\cdots,l
\end{align}
which define neighborhoods around the marked point sections of the universal curve. We also equip a family of $\omega$-compatible almost complex structures that are cylindrical at infinity
\begin{equation}
    J_I:\mathcal{S}_{l,1}\to\mathcal{J}(X),
\end{equation}
and require both the strip-like ends and almost complex structures to be consistent with the boundary stratification of $\mathcal{S}_{k,1}$. The construction of these consistent choice of strip-like ends with compatible almost complex structures can be found in \cite[Section 9]{Seidel08}. Now given intersection points
\begin{equation}
    y\in L_0\cap L_k, x_0\in L_0\cap L_1,\cdots,x_{k-1}\in L_{k-1}\cap L_k
\end{equation}
we define the moduli space
\begin{equation}
    \mathcal{M}_k(y;x_0,\cdots,x_{k-1})
\end{equation}
to be the moduli space of stable J-holomorphic maps $u:S_k\to X$ with Lagrangian boundary conditions $L_0,\cdots,L_k$ which map the boundary marked points to $y,x_0,\cdots,x_{k-1}$. Here, $S_k\in\mathcal{R}_{k,1}$ is the unit disc with its standard complex structure, and $k$ positive marked points at the boundary and $1$ negative marked point at $-1$. To compactify this moduli space, we allow stable maps $u$ as well: then by $J$ being cylindrical and the energy of $u$ being equal to the differences of the actions (which we will soon define), a monotonicity argument as in \cite[Section 7]{AbouzaidSeidel} ensures that the image of $u$ stays in a compact region. Therefore by Gromov compactness, the moduli space $\overline{\mathcal{M}_k}(y;x_0,\cdots,x_{k-1})$ is compact, and by a dimension argument we may define a map
\begin{equation}
    \mu^k:CF^*(L_{k-1},L_k)\otimes\cdots\otimes CF^*(L_0,L_1)\to CF^*(L_0,L_k).
\end{equation}

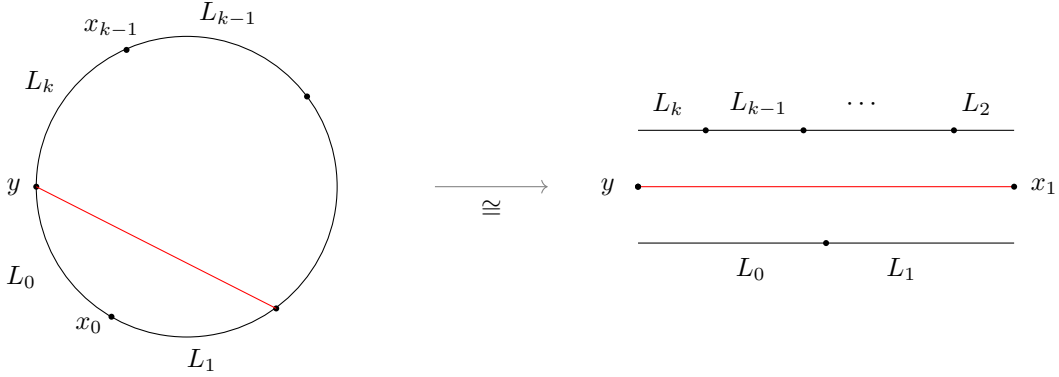
\begin{figure}[h]
\centering
\begin{tikzpicture}

\draw (2,-0.75) -- (7,-0.75);
\draw (2,0.75) -- (7,0.75);
\draw [draw=red] (2,0) -- (7,0);

\draw (-4,0) circle (2 and 2);
\draw[->, draw=gray] (-0.7, 0) -- (0.8, 0);

\draw [draw=none, fill=black] (-6,0) circle (0.04 and 0.04);
\draw [draw=none, fill=black] (-5.0,-1.73) circle (0.04 and 0.04);
\draw [draw=none, fill=black] (-4.8,1.82) circle (0.04 and 0.04);
\draw [draw=none, fill=black] (-2.81,-1.62) circle (0.04 and 0.04);
\draw [draw=none, fill=black] (-2.4,1.2) circle (0.04 and 0.04);
\draw [draw=none, fill=black] (7,0) circle (0.04 and 0.04);
\draw [draw=none, fill=black] (2,0) circle (0.04 and 0.04);

\draw [draw=red] (-6,0) -- (-2.81,-1.62);

\draw (-6.2,-1.2) node {$L_0$};
\draw (-3.8,-2.3) node {$L_1$};
\draw (-5.95,1.4) node {$L_k$};
\draw (-3.45,2.3) node {$L_{k-1}$};
\draw (0.05,-0.3) node {$\cong$};

\draw (-5.3,-1.85) node {$x_0$};
\draw (-5.0,2.1) node {$x_{k-1}$};
\draw (-6.3,0) node {$y$};

\draw (2.4,1.1) node {$L_k$};
\draw (3.6,1.1) node {$L_{k-1}$};
\draw (5.0,1.1) node {$\cdots$};
\draw (6.5,1.1) node {$L_2$};
\draw (3.5,-1.1) node {$L_0$};
\draw (5.5,-1.1) node {$L_1$};

\draw [draw=none, fill=black] (2.9,0.75) circle (0.04 and 0.04);
\draw [draw=none, fill=black] (4.2,0.75) circle (0.04 and 0.04);
\draw [draw=none, fill=black] (6.2,0.75) circle (0.04 and 0.04);
\draw [draw=none, fill=black] (4.5,-0.75) circle (0.04 and 0.04);
\draw [draw=none, fill=black] (2,0) circle (0.04 and 0.04);

\draw (7.4,0) node {$x_1$};
\draw (1.6,0) node {$y$};

\end{tikzpicture}
\caption{The domain of a stable map $u$ in the moduli space $\mathcal{M}_{k,1}(y;x_0,\cdots,x_{k-1})$. The right picture is a biholomorphic image, where the two ends of the strips are compactified at the marked points. The point of the right figure is to show that the homotopy class rel endpoints of the image of the red line in $X$ is again independent of $t$, when we puncture the right image at the points corresponding to $y$ and $x_1$ and consider it as a strip $\R\times[0,1]$.}
\label{fig:higher-moduli-space}
\end{figure}

Because we are in the setting where the symplectic manifold $X$ and Lagrangians $\{L_i\}$ are exact, we may define the \emph{action} of a transverse intersection point.

\begin{defn} \label{def:action}
    We define the \emph{action} associated to an intersection point $x$ of two exact Lagrangians $(K,f_K)$, $(L,f_L)$ to be
    \begin{equation}
        \mathcal{A}(x)=f_L(x)-f_K(x).
    \end{equation}
\end{defn}
The Floer differential increases action in our conventions. Until this point, we have defined $CF^*(K,L)$ for pairs of transversely meeting Lagrangians.

In general, one has to allow Hamiltonian perturbations to define the Floer cochain complexes and $\mu^k$ operations when the Lagrangians do not meet transversely. For the precise details in choosing the Floer datum, we refer to \cite[Chapter 2]{Seidel08}, and we may now define the Fukaya category.

\begin{defn}
    For a Liouville manifold $X$, we define the \emph{compact Fukaya category} $\mathcal{F}(X)$ to be the $A_\infty$-category whose objects are closed exact Lagrangian branes, with morphisms defined as
    \begin{equation}
        \hom(K,L)=CF^*(K,L).
    \end{equation}
    The maps $\mu^k$ defined above equip $\mathcal{F}(X)$ with the structure of a cohomologically unital $A_\infty$-category.
\end{defn}

There is one additional condition required in the construction of the Fukaya category. Recall that for a pair of transversely intersecting Lagrangians $K$ and $L$, it is possible to choose Floer data with vanishing Hamiltonian perturbation. When extending to consistent universal choices of perturbation data, as described in \cite[Lemma 9.5]{Seidel08}, we require that these choices agree with the Floer data on transverse pairs: specifically, the assigned Hamiltonian perturbation must remain zero for such pairs.

For twisted complexes of Lagrangians $\mathcal{K},\mathcal{L}\in Tw\mathcal{F}(X)$, we define
\begin{equation}
    CF^*(\mathcal{K},\mathcal{L})=\hom_{Tw\mathcal F(X)}(\mathcal{K},\mathcal
    L).
\end{equation}

In the Fukaya category $\mathcal{F}(X)$, compactly supported Hamiltonian isotopies of Lagrangians preserve the isomorphism class of the Lagrangian. The first step in the proof is the observation due to Floer (\cite[Theorem 2]{Floer89}) that for a graph of a $C^2$-small Morse function $h$ and a specific choice of $J_h$, the only $J_h$-holomorphic strips with boundary conditions on the zero section and the graph are gradient flowlines of $h$.

\begin{proposition} \label{prop:Floer-to-Morse-C2}
    Let $L$ be a closed exact Lagrangian equipped with a Riemannian metric $g$. Then there exists a constant $\epsilon>0$ such that the following holds: for every Morse function $h:L\to\R$ whose derivatives up to second order are bounded by $\epsilon$, there exists an almost complex structure $J_h$ in a neighborhood $U$ of $L$ containing the graph $L'=\Gamma(dh)$ such that any $J_h$-holomorphic strip with Lagrangian boundary conditions
    \begin{equation}
        u:\R\times[0,1]\to U, ~u(s,0)\in L, ~u(s,1)\in L'
    \end{equation}
    is a reparametrization of a gradient flowline of $h$. 
\end{proposition}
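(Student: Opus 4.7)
I would follow Floer's original argument from \cite{Floer89}, specialized to the present setting.

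First, apply the Weinstein neighborhood theorem to fix a symplectomorphism between a tubular neighborhood $U$ of $L$ in $X$ and a neighborhood of the zero section in $T^*L$ equipped with its canonical symplectic form. Under this identification, $L$ becomes the zero section, and for $\|h\|_{C^1}$ sufficiently small, $L'=\Gamma(dh)$ lies inside $U$ as the graph of $dh$. Define $J_h$ to be the $\omega$-compatible almost complex structure $J_g$ on $T^*L$ near the zero section constructed from the metric $g$ via the Levi-Civita horizontal--vertical splitting, together with the duality between horizontal and vertical subbundles induced by $g$.

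Second, confine the strips to a small neighborhood of $L$. Since $L$ and $L'$ are exact with primitives $0$ and $h$ respectively, the action of any intersection point is bounded in absolute value by $\|h\|_{C^0}$, and hence the energy of any $J_g$-holomorphic strip with boundary conditions on $L$ and $L'$ is bounded by $2\|h\|_{C^0}$. A monotonicity estimate then forces such strips into a small neighborhood of the zero section inside $U$, allowing one to write $u(s,t)=(x(s,t),\xi(s,t))$ in fiber coordinates with boundary data $\xi(s,0)=0$ and $\xi(s,1)=dh(x(s,1))$. The key observation, due to Floer, is that for each gradient trajectory $x:\R \to L$ of $h$ with respect to $g$, the pair $(x(s),\, t\cdot dh(x(s)))$ yields a $J_g$-holomorphic strip.

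The core step is a rescaling--uniqueness argument. Suppose, for contradiction, that for some sequence of Morse functions $h_n$ with $\|h_n\|_{C^2}\to 0$ there exist $J_g$-holomorphic strips $u_n$ which are not reparametrizations of gradient flow lines of $h_n$. Rescaling the fiber coordinate by $\|h_n\|_{C^2}^{-1}$ and passing to a subsequence, the rescaled strips converge in a weighted Sobolev norm to a limiting strip whose base component is a gradient trajectory and whose fiber component is uniquely determined by the boundary data. The implicit function theorem near this gradient trajectory, using surjectivity of the linearized Floer operator (which follows from Morse non-degeneracy), then forces $u_n$ to coincide with the corresponding gradient flow solution for $n$ large, contradicting the assumption.

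The main obstacle is making the rescaling--implicit function theorem step precise in appropriately weighted Sobolev spaces, and verifying the surjectivity of the linearized Floer operator at a gradient trajectory; ultimately this reduces to identifying that linearization with the Morse theoretic one, whose kernel and cokernel are controlled by non-degeneracy of the critical points.
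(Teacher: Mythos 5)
The paper does not actually prove this proposition: it is stated as a recollection of Floer's result (\cite[Theorem 2]{Floer89}) and used as a black box, so there is no in-paper argument to compare against. Your plan is essentially the standard route to Floer's theorem (Weinstein neighborhood, energy bound from exactness, monotonicity confinement, then a rescaling/implicit-function-theorem uniqueness argument near gradient trajectories), and as a strategy it is sound.

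One step as written is not correct, and it matters for how the statement is phrased. For a gradient trajectory $x(s)$ of $(h,g)$, the map $u(s,t)=(x(s),\,t\cdot dh(x(s)))$ is \emph{not} $J_g$-holomorphic for the fixed metric almost complex structure: computing $\partial_s u + J_g\,\partial_t u$ leaves the error term $t\,\nabla_{\dot x}(dh)$ in the fiber direction, which vanishes only to leading order in $\|h\|_{C^2}$. This is precisely why the proposition asserts the existence of an almost complex structure $J_h$ \emph{depending on $h$}; the usual fix is to conjugate by the fiberwise-translation Hamiltonian flow $\phi_t(q,p)=(q,p+t\,dh(q))$ (whose time-$1$ map carries the zero section onto $\Gamma(dh)$) and take $J_h=(\phi_t)_*J_g$, a $t$-dependent structure for which the correspondence with gradient flow lines is exact rather than approximate. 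With your fixed $J_g$ you could still conclude a \emph{bijection} between rigid strips and rigid flow lines via the rescaling and implicit function theorem, but not the literal statement that every strip \emph{is} a reparametrized flow line. Finally, note that your write-up defers the entire analytic core (weighted Sobolev setup, convergence of the rescaled strips, surjectivity of the linearized operator) to future work, so as it stands it is a plan rather than a proof; since the paper itself only cites Floer here, citing \cite{Floer89} for these steps would be the appropriate resolution.
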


Now for any graph $\Gamma(df)$ contained in a small Weinstein neighborhood of $T^*L$, we use the above lemma to show that $\Gamma(\epsilon df)$ is categorically isomorphic to the zero section for some small $\epsilon>0$, and repeat this process to eventually show that the original graph $\Gamma(df)$ is categorically isomorphic to the zero section as well.

\begin{corollary} \label{cor:graph}
    Let $L_0$ be a closed exact Lagrangian equipped with a Riemannian metric $g$, and let $L_1$ be a graph of a Morse-Smale function $h$ on $L_0$ that is contained in a neighborhood $D_{c(\epsilon)}^*L_0$, where $c(\epsilon)$ is a constant depending on the value $\epsilon$ determined from Proposition \ref{prop:Floer-to-Morse-C2}. Then the two Lagrangians $L_0$ and $L_1$ are isomorphic objects in the Fukaya category.
\end{corollary}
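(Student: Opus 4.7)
The plan is to reduce to Proposition \ref{prop:Floer-to-Morse-C2} by subdividing the obvious Hamiltonian isotopy from $L_0$ to $L_1 = \Gamma(dh)$ into many sufficiently small steps. Let $\epsilon > 0$ be the constant from Proposition \ref{prop:Floer-to-Morse-C2} applied to $L_0$ with the given metric $g$, and choose an integer $N$ so large that $\tfrac{1}{N}h$ has $C^2$-norm at most $\epsilon$. Set
\begin{equation*}
L^{(k)} := \Gamma\bigl(\tfrac{k}{N}\,dh\bigr), \qquad k = 0, 1, \ldots, N,
\end{equation*}
so that $L^{(0)} = L_0$, $L^{(N)} = L_1$, and each $L^{(k)}$ is a closed exact Lagrangian contained in $D^*_{c(\epsilon)}L_0$; here $c(\epsilon)$ is chosen, once and for all, to be small enough that a fixed Weinstein neighborhood of $L_0$ contains $D^*_{c(\epsilon)}L_0$.

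The first main step is the base case: each consecutive pair $L^{(k)}, L^{(k+1)}$ defines isomorphic objects in $\mathcal{F}(X)$. Using a Weinstein neighborhood of $L^{(k)}$ identified with a disk cotangent bundle $D^*L^{(k)}$, the Lagrangian $L^{(k+1)}$ appears as the graph of $\tfrac{1}{N}d\tilde h$ for a Morse-Smale function $\tilde h$ on $L^{(k)}$ corresponding to $h$ under the natural diffeomorphism $L^{(k)} \cong L_0$. By the choice of $N$, the function $\tfrac{1}{N}\tilde h$ satisfies the hypothesis of Proposition \ref{prop:Floer-to-Morse-C2}, yielding an almost complex structure $J_k$ near $L^{(k)}$ for which all Floer strips between $L^{(k)}$ and $L^{(k+1)}$ are reparametrizations of gradient trajectories of $\tfrac{1}{N}\tilde h$. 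Hence $CF^*(L^{(k)}, L^{(k+1)})$ is identified (with appropriate signs) with the Morse cochain complex of $\tfrac{1}{N}\tilde h$, whose cohomology is $H^*(L; \Z)$. The global minimum of $\tilde h$ then defines a degree-zero Floer cocycle $\alpha_k$ representing the unit class; reversing the roles of $L^{(k)}$ and $L^{(k+1)}$ yields a cocycle $\beta_k \in CF^0(L^{(k+1)}, L^{(k)})$.

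The part I expect to be the main obstacle is verifying that $[\alpha_k]$ and $[\beta_k]$ are mutually inverse, i.e.\ $\mu^2([\beta_k],[\alpha_k]) = [e_{L^{(k)}}]$ and $\mu^2([\alpha_k],[\beta_k]) = [e_{L^{(k+1)}}]$. The plan here is to extend the strip-to-gradient-flow identification to pseudoholomorphic triangles with boundary on three $C^2$-close Hamiltonian-perturbed copies of $L^{(k)}$ inside the Weinstein neighborhood, in the style of Fukaya-Oh: for sufficiently small perturbations, rigid triangles correspond bijectively to Y-shaped configurations of three gradient flow lines meeting at a common point, and a direct Morse-theoretic count shows that the product of the minimum classes recovers the unit in $H^*(L; \Z)$. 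Once the base case is established, composing the isomorphisms along the chain $L^{(0)} \cong L^{(1)} \cong \cdots \cong L^{(N)}$ produces the desired isomorphism $L_0 \cong L_1$ in $\mathcal{F}(X)$.
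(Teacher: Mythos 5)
Your overall strategy is the same as the paper's: subdivide into $C^2$-small steps, use Proposition~\ref{prop:Floer-to-Morse-C2} to identify $CF^*(L^{(k)},L^{(k+1)})$ with a Morse complex, exhibit explicit degree-zero cocycles, verify they are mutually inverse by a Morse-theoretic count of product configurations, and compose. However, there are two concrete problems. First, the single global minimum of $\tilde h$ is in general \emph{not} a cocycle in $CM^0(L;\tfrac{1}{N}\tilde h)$, let alone a representative of the unit: when $\tilde h$ has several local minima, the Morse codifferential of one minimum is a nonzero combination of index-one critical points (already visible for a height function on $S^1$ with two minima). The correct choice, and the one the paper makes, is $\alpha_k=$ the sum of \emph{all} local minima and $\beta_k=$ the sum of all local maxima (the latter being the degree-zero generators of $CM^*(L;-\tfrac{1}{N}\tilde h)$). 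With that correction your Fukaya--Oh triangle count matches the paper's verification of $\mu^2([\beta_k],[\alpha_k])=[e_{L^{(k)}}]$, which it phrases as: for each local minimum $x$ of an auxiliary generic Morse function $h'$, there is exactly one Y-configuration of gradient lines from a minimum of $h$ to a maximum of $h$ through $x$.

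Second, your argument produces isomorphism elements only for the special almost complex structure $J_k$ supplied by Proposition~\ref{prop:Floer-to-Morse-C2}, whereas the morphism spaces of $\mathcal{F}(X)$ are computed with the (independently fixed) universal Floer data. The paper closes this gap with a continuation-map argument combined with the action filtration: the continuation maps give chain isomorphisms $CF^*(L^{(k)},L^{(k+1)};J)\cong CF^*(L^{(k)},L^{(k+1)};J_k)$ (upper-triangular with invertible diagonal with respect to action), and one checks that the images of $\alpha_k,\beta_k$ under these isomorphisms still satisfy the $\mu^2$ identities in cohomology. You should add this transfer step; without it the cocycles you construct do not a priori live in the morphism complexes of the category in which the isomorphism is asserted.
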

\begin{proof}
    We first show that $L_0$ and $L_1$ are isomorphic objects for the case when the Morse function $h$ satisfies the $C^2$-boundedness conditions of Proposition \ref{prop:Floer-to-Morse-C2}. We have an isomorphism of cochain complexes
\begin{equation}
    CF^*(L_0,L_1;J_h)\cong CM^*(L_0;h,g),\quad CF^*(L_1,L_0;J_h)\cong CM^*(L_0;-h,g)
\end{equation}
    for the almost complex structure $J_h$ specified in Proposition \ref{prop:Floer-to-Morse-C2}. Now we define degree zero elements
\begin{equation}
    \alpha\in CM^*(L_0;h), ~\beta\in CM^*(L_0;-h)
\end{equation}    
    to each be the sum of the local minima, and local maxima of the Morse function $h$. Then $\alpha$ and $\beta$ can be shown to be cocycles, and satisfy
\begin{equation}
    \mu^2(\beta,\alpha)=e_{L_0}.
\end{equation}    
    This is because if we pick another generic Morse function $h'$ on $L_0$, then for each local minimum $x$ of $h'$, there is exactly one gradient flowline from a minimum of $h$ to a maximum of $h$ that passes through $x$. Therefore we may conclude that the equivalence elements $\alpha$ and $\beta$ define a categorical isomorphism between $L_0$ and $L_1$, when the chosen almost complex structure is $J_h$. For a general almost complex structure $J$, a continuation map argument together with the action filtration yields chain isomorphisms
\begin{equation} \label{eqn:Ham-isomorphism}
    CF^*(L_0,L_1;J)\cong CF^*(L_0,L_1;J_h),\quad CF^*(L_1,L_0;J)\cong CF^*(L_1,L_0;J_h),
\end{equation}
    allowing us to define the images of $\alpha$ and $\beta$ under these isomorphisms to be $\alpha'\in CF^*(L_0,L_1;J)$, $\beta'\in CF^*(L_1,L_0;J)$. Since (\ref{eqn:Ham-isomorphism}) is a chain isomorphism, $\alpha',\beta'$ are degree 0 cocycles. By a similar continuation map argument one can show that $[\mu^2(\beta',\alpha')]\in HF^*(L_0,L_0)$ is the cohomological identity, and so is $[\mu^2(\alpha',\beta')]\in HF^*(L_1,L_1)$. Therefore, the existence of isomorphism elements for $CF^*(L_0,L_1;J)$ does not depend on the choice of almost complex structure.

    For the general case, there exists some constant $\delta>0$ such that the function $\delta h$ satisfies the $C^2$-boundedness conditions of Proposition \ref{prop:Floer-to-Morse-C2} for any $h$ as above. Then by repeatedly applying the above argument to $\delta h$, $2\delta h$, $\cdots$, we obtain a sequence of equivalences of Lagrangians which compose to show that $L_0$ and $L_1$ are equivalent.
\end{proof}

The general case follows by decomposing the Hamiltonian isotopy into smaller pieces.

\begin{proposition} \label{prop:Ham-invariance}
    Let $L$ be a closed exact Lagrangian brane, and let $\phi_t$ be a Hamiltonian isotopy in time $t\in[0,1]$. Then the two Lagrangians $L$ and $\phi_1(L)$ define isomorphic objects in the Fukaya category, when $\phi_1(L)$ is equipped with the brane structure induced from $L$ and $\phi_t$.
\end{proposition}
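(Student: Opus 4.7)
The plan is to reduce to Corollary \ref{cor:graph} by subdividing the interval $[0,1]$ into small pieces so that each step fits into a Weinstein neighborhood of the previous Lagrangian. Concretely, I would first fix a smooth family of Riemannian metrics $g_t$ on $\phi_t(L)$ induced by pulling back a fixed metric along $\phi_t$, choose the constant $\epsilon > 0$ from Proposition \ref{prop:Floer-to-Morse-C2} uniformly in $t$ (which is possible since $t$ ranges over the compact interval $[0,1]$ and $\phi_t(L)$ varies continuously in the $C^\infty$-topology), and fix a family of Weinstein tubular neighborhood identifications of $\phi_t(L)$ with a neighborhood of the zero section in $T^*\phi_t(L)$. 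Then by uniform continuity I choose a subdivision $0 = t_0 < t_1 < \cdots < t_N = 1$ fine enough that for each $i$ the Lagrangian $\phi_{t_{i+1}}(L)$ lies in the Weinstein tubular neighborhood of $\phi_{t_i}(L)$ as a graph contained in the $c(\epsilon)$-disk subbundle.

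Next I would observe that, since $\phi_t$ is a Hamiltonian isotopy, each $\phi_{t_i}(L)$ is itself an exact Lagrangian, and the graph of $\phi_{t_{i+1}}(L)$ over $\phi_{t_i}(L)$ inside $T^*\phi_{t_i}(L)$ is therefore a graph of a closed 1-form $\alpha_i$ that must in fact be exact, say $\alpha_i = df_i$ for a smooth function $f_i : \phi_{t_i}(L) \to \mathbb{R}$. The brane structure on $\phi_{t_{i+1}}(L)$ induced by the isotopy matches, up to canonical identification, the brane structure induced on the graph of $df_i$ from that of $\phi_{t_i}(L)$, since grading and Spin lifts are transported continuously along the path of Lagrangians $\{\phi_t(L)\}_t$.

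To apply Corollary \ref{cor:graph} directly, each $f_i$ must be Morse--Smale, which is not automatic. To handle this, I would perturb the original isotopy $\phi_t$ within its Hamiltonian homotopy class, rel endpoints except for $t=1$, to a new isotopy $\widetilde{\phi}_t$ for which each intermediate graph $\widetilde{f}_i$ is Morse--Smale; this uses $C^2$-density of Morse--Smale functions, applied inductively at each $t_i$. Once the generic subdivision is chosen, Corollary \ref{cor:graph} furnishes an isomorphism $\widetilde{\phi}_{t_i}(L) \cong \widetilde{\phi}_{t_{i+1}}(L)$ in $\mathcal{F}(X)$ at each step, and composing these yields an isomorphism $L \cong \widetilde{\phi}_1(L)$. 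Finally, since $\widetilde{\phi}_1$ and $\phi_1$ can be taken arbitrarily $C^\infty$-close, the image $\widetilde{\phi}_1(L)$ sits inside the Weinstein neighborhood of $\phi_1(L)$ as the graph of a function we may likewise arrange to be Morse--Smale, and one more application of Corollary \ref{cor:graph} gives $\widetilde{\phi}_1(L) \cong \phi_1(L)$, completing the chain.

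The main obstacle I expect is the bookkeeping around the Morse--Smale perturbation: one must ensure that the perturbed isotopy still gives intermediate Lagrangians sitting as graphs in the relevant Weinstein neighborhoods, and that the brane structures transported along the perturbed path agree, up to the canonical identifications, with those transported along the original $\phi_t$. This is a standard but delicate inductive argument, and the uniform choice of $\epsilon$ and tubular neighborhood widths in the first step is what makes it work.
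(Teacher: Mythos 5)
Your proposal is correct and follows essentially the same route as the paper: subdivide the isotopy so that each step realizes the next Lagrangian as the graph of an exact one-form in a Weinstein neighborhood of the previous one, perturb (rel endpoints) to achieve the Morse--Smale condition, apply Corollary \ref{cor:graph} at each step, and compose the resulting isomorphisms. The paper packages the genericity step by citing \cite[Lemma 3.15]{AbouzaidKragh} for pairwise transversality of the $L_t$, but this is the same perturbation you perform by hand.
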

\begin{proof}
    We may perturb the Hamiltonian isotopy $\phi_t$ rel endpoints such that for each time $0\leq t\leq 1$, the image of $L$ under the Hamiltonian isotopy $\phi_t$ is transverse to $L$ except for a finite set of $t$, and each $L_t$ and $L_s$ are also pairwise transverse. The original argument goes back to Floer \cite{Floer}, but the exact statement above can be found in \cite[Lemma 3.15]{AbouzaidKragh}, where it is also shown that such a property holds for a generic $C^1$-small perturbation. Then it is enough for us to show that $L_{t-\varepsilon}$ and $L_{t+\varepsilon}$ are equivalent for each $t,\varepsilon$, where we may assume that $L_{t\pm\varepsilon}$ are transverse to each other.

    Thus we may reduce to a local Weinstein neighborhood of each $L_t$, and reduce to the case where $L_1$ is a graph in $D^*_{c(\epsilon)}L_0$ of a Morse function. Now Corollary \ref{cor:graph} applies, and by composing all the equivalences we conclude that $L_0$ and $L_1$ are equivalent.
\end{proof}

Given a Morse-Smale pair $(h,g)$, we obtain a CW structure on $L$ where the cells are given by the unstable manifolds of the gradient flow, and the attaching maps are likewise determined by the gradient flowlines. Since any two CW structures on $L$ induce chain homotopic cellular cochain complexes, we are free to choose $h$ to be a $C^2$-small Morse function. For such a choice and an almost complex structure $J_h$ determined by $h$, there is a canonical identification between holomorphic strips and gradient flowlines as in Proposition \ref{prop:Floer-to-Morse-C2}. Since continuation map methods give chain homotopies between Floer cochain complexes defined by different almost complex structures, we arrive at the following:

\begin{proposition}
    For any closed exact Lagrangian $L$ and a Morse function $h:L\to\R$, there exists a chain homotopy equivalence
    \begin{equation}
        CF^*(L,L)=CM^*(L;h,g)\cong C^*_{cell}(L)
    \end{equation}
    for some Morse-Smale metric $g$ on $L$.
\end{proposition}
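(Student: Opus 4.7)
The plan is to reduce to the $C^2$-small case handled by Proposition \ref{prop:Floer-to-Morse-C2}, then invoke the classical identification between Morse and cellular cochain complexes.

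First, I would fix a Morse-Smale metric $g$ for $h$. Since scaling $h$ by a small positive constant $\delta>0$ does not change the critical points or the gradient trajectories (up to reparametrization), there is a tautological chain isomorphism $CM^*(L;h,g)\cong CM^*(L;\delta h,g)$. I would then take $\delta$ small enough so that $\delta h$ satisfies the $C^2$-smallness hypothesis of Proposition \ref{prop:Floer-to-Morse-C2}, and also small enough that the graph $L'=\Gamma(d(\delta h))$ lies inside a Weinstein neighborhood of $L$ on which Floer's theorem applies. By Proposition \ref{prop:Floer-to-Morse-C2}, for the associated almost complex structure $J_{\delta h}$ the Floer strips with boundary on $L$ and $L'$ are exactly reparametrized gradient flowlines of $\delta h$, which yields a chain-level identification
\begin{equation}
    CF^*(L,L';J_{\delta h})\cong CM^*(L;\delta h,g)\cong CM^*(L;h,g).
\end{equation}

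Next, I would invoke the standard continuation map argument, together with Hamiltonian invariance (Proposition \ref{prop:Ham-invariance}), to show that $CF^*(L,L)$ defined using the Fukaya category's universal choice of perturbation data is chain homotopy equivalent to the model $CF^*(L,L';J_{\delta h})$ computed above. The compatibility condition imposed after the definition of the Fukaya category, that the perturbation data agrees with vanishing Hamiltonian perturbation on transverse pairs, is precisely what allows this comparison: the continuation map between any two admissible Floer data on the transverse pair $(L,L')$ is a chain homotopy equivalence, and Proposition \ref{prop:Ham-invariance} identifies $CF^*(L,L')$ with $CF^*(L,L)$ up to chain homotopy equivalence in the Fukaya category.

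Finally, I would appeal to the classical Thom--Smale--Milnor construction: the unstable manifolds of $-\nabla_g h$ equip $L$ with a CW structure whose cellular cochain complex coincides with $CM^*(L;h,g)$, and since any two CW structures on $L$ yield chain homotopy equivalent cellular cochain complexes, one obtains $CM^*(L;h,g)\simeq C^*_{cell}(L)$. Composing these three equivalences produces the desired chain homotopy equivalence. The main technical subtlety is the passage from the canonical Floer--Morse identification on the specific pair $(L,L';J_{\delta h})$ to the intrinsic complex $CF^*(L,L)$ in the Fukaya category; this hinges on the compatibility clause in the perturbation data, which ensures that the model with trivial Hamiltonian perturbation on transverse pairs is permitted within the Fukaya framework.
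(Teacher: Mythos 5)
Your proposal is correct and follows essentially the same route as the paper: reduce to a $C^2$-small Morse function (the paper does this by noting different Morse--Smale pairs give chain homotopy equivalent complexes, you do it via the tautological rescaling isomorphism), apply Proposition \ref{prop:Floer-to-Morse-C2} to identify strips with gradient flowlines, use continuation maps to compare with the intrinsic Floer datum, and finish with the Thom--Smale--Milnor identification of the Morse complex with the cellular cochain complex.
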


In Subsection \ref{ssec:bimodule}, we will upgrade this to a \emph{simple homotopy equivalence}, after upgrading the cochain complexes on both sides to $\Z\pi_1(X)$-coefficients.

\subsection{Lefschetz fibrations} \label{ssec:Lef-fib}

In this subsection, we review the symplectic geometry of Lefschetz fibrations following \cite{Seidel18}. For additional background and foundational results, see also \cite[Chapter 1]{Seidel03} and \cite[Section 16]{Seidel08}.

First, we define the fiber of the exact Lefschetz fibration to be a Liouville manifold $(M,\omega_M=d\theta_M)$ together with a cylindrical almost complex structure $J_M$. For the base, we use the open upper half-plane $(\mathbb{H},\omega_\mathbb{H},j)$ together with its standard symplectic form and complex structure. The total space of our exact Lefschetz fibration will be an exact symplectic manifold $(E,\omega_E=d\theta_E)$ together with a projection $\pi:E\to \mathbb{H}$. 

We define the above fibration $\pi:E\to\mathbb{H}$ to be \emph{symplectically trivial} in a neighborhood $U_x\subset E$ of a point $x\in E$ if the following conditions hold:
\begin{enumerate}
    \item $d\pi$ is surjective at the point $x$,
    \item The subspaces $TE^v\lvert_x$, $TE^h\lvert_x$ are symplectic subspaces of $(TE\lvert_x,\omega_E)$,
    \item $d\pi^*\omega_{\mathbb{H}}=\omega_E\lvert_{TE^h}$.
\end{enumerate}

In this case, such a map $\pi$ induces a decomposition of the tangent bundle
\begin{equation}
    TE=TE^v\oplus TE^h,
\end{equation}
where the two subbundles $TE^v$ and $TE^h$ are defined as
\begin{equation}
    TE^v=\ker(d\pi),TE^h=(TE^v)^{\omega_E},
\end{equation}
where $TE^h$ is the symplectic orthogonal complement of $TE^v$ with respect to the symplectic form $\omega_E$. The horizontal subspaces $TE^h$ define an Ehresmann connection, and so one can define \emph{symplectic parallel transport} with respect to this connection. This symplectic parallel transport induces exact symplectic isomorphisms between the fibers.

The definition of an \emph{exact Lefschetz fibration} mainly consists of two parts: we require the fibration to be symplectically trivial in the neighborhood of the horizontal and vertical boundaries of $E$, and the critical points to be complex nondegenerate. To state the first part, let $D\subset\mathbb{H}$ be a disc that contains all the critical values $z$ of the map $\pi$. Our assumptions on $E$ are the following: 
\begin{enumerate}
    \item In a neighborhood of $\bigcup \partial_\infty E_x$, the fibration $\pi$ is symplectically trivial and $TE^h\subset T(\partial E)$.
    \item On the complement $\mathbb{H}\setminus D$, the fibration $\pi$ is symplectically locally trivial.
\end{enumerate}
We also pick a basepoint $*\in\mathbb{H}\setminus D$ together with an isomorphism $(M,\omega_M=d\theta_M)\cong (E\lvert_*,\omega_E\lvert_s)$. By symplectic parallel transport, this extends to an identification
\begin{equation}
    (\pi^{-1}(U),\omega_E) \cong (U\times M,\omega_\mathbb{H}+\omega_M)
\end{equation}
over any contractible open subset $U$ of $\mathbb{H}\setminus D$.

Now we wish to compactify the base $\mathbb{H}$ to $D^2$, recovering the more familiar picture of a Lefschetz fibration. Ignoring symplectic forms, we construct a smooth map 
\begin{equation}
    \overline{\pi}: \overline{E}\to \overline{\mathbb{H}}\cong D^2,
\end{equation}
and pick a positive symplectic form $\beta$ on $D^2$. Then we define a symplectic form on $E$ as
\begin{equation}
    \omega_{\overline{E}}=\omega_E+\pi^*(\rho(\beta-\omega_\mathbb{H}))
\end{equation}
for some cutoff function $\rho$ that takes value $1$ near the boundary $\partial D^2$ and takes value $0$ in $D$, and check that this uniquely extends to an exact symplectic form on $\overline{E}$. Now define
\begin{equation*}
    \partial^vE=\overline{\pi}^{-1}(\partial D^2).
\end{equation*}
For the vertical boundary, we note that the conditions required for $\partial E$ define a symplectomorphism over some neighborhood $U$ of $\partial E$ and some neighborhood $V$ of $\partial M$:
\begin{equation}
    (U,\omega_E)\cong (\mathbb{H}\times V, \omega_\mathbb{H}+\omega_M).
\end{equation}

Now we require the ``Lefschetz'' conditions for $\pi: E\to \mathbb{H}$. We assume that $\pi$ only has finitely many critical values, and over each critical value there exists exactly one critical point. In each neighborhood of a critical point, we require that there exists some complex structure $J$ that is compatible with the symplectic form $\omega_{\overline{E}}$ defined earlier, such that $\pi$ is $(J,j_\mathbb{H})$-holomorphic, and each critical point is complex nondegenerate.

\begin{defn}
    The above data of $\pi:E\to \mathbb{H}$, together with all choices, is defined to be an \emph{exact Lefschetz fibration}.
\end{defn}

We now introduce a model for the Fukaya category of a Lefschetz fibration introduced in \cite{Seidel17}, \cite{Seidel18}. We first recall some hyperbolic geometry from \cite[Section 2]{Seidel18}. Define
\begin{equation}
    G=\R\rtimes\R^{>0}
\end{equation}
to be the group of affine transformations of the real line, and let $\mathfrak{g}$ be its Lie algebra. 

Every element $a$ in $\mathfrak{g}$ induces a vector field $X_a$ on the upper half plane $\mathbb{H}$ by the action of $G$. One can check that this vector field is Hamiltonian, and define
\begin{equation}
    H_a=\theta_\mathbb{H}(X_a)
\end{equation}
to be the Hamiltonian that induces $X_a$. Now given a connection
\begin{equation}
    A\in\Omega^1([0,1],\mathfrak{g}),
\end{equation}
there is an induced parallel transport map $\phi_A$ from $0$ to $1$, which can be regarded as an element of $G$.

\begin{defn}
    For any two real numbers $\lambda_0$, $\lambda_1$, we define the set
    \begin{equation}
        \mathcal{A}([0,1],\lambda_0,\lambda_1)
    \end{equation}
    to be the set of connections $A\in\Omega^1([0,1],\mathfrak{g})$ such that
    \begin{equation}
        \phi_A^{-1}(\lambda_1)<\lambda_0.
    \end{equation}
\end{defn}

For any pair of real numbers $(\lambda_0,\lambda_1)$, this set is shown to be weakly contractible in \cite[Section 2]{Seidel18}.

Now we return to our geometric setup. Let $\pi:E\to \mathbb{H}$ be an exact Lefschetz fibration. First, we will define the set of almost complex structures and Hamiltonians that we will use. Define $\mathcal{J}(E)$ to be the $\omega_E$-compatible almost complex structures $J$ such that $d\pi$ is $J$-holomorphic on $\mathbb{H}\setminus D$, and in a neighborhood of the horizontal boundary $J$ is equal to the product $j_\mathbb{H}\times J_M$. For each element $\gamma\in\mathfrak{g}$, define
\begin{equation}
    \mathcal{H}_\gamma(E)\subset C^\infty(E,\R)
\end{equation}
to be the Hamiltonians which agree with the pullback of $H_\gamma$ outside a compact subset of $E$.

Now we may begin to define our Fukaya category of a Lefschetz fibration $\mathcal{F}(\pi)$. First, we will specify the objects of this Fukaya category. Let $L$ be a connected exact Lagrangian submanifold of $E$, with a brane structure.
\begin{defn}
    The Lagrangian $L$ is \emph{admissible} if outside of a compact set, the image $\pi(L)$ is contained in a vertical line
    \begin{equation*}
        \{z\mid \mathrm{re}(z)=\lambda_L, \im(z)\ll 1\}
    \end{equation*}
    for some real number $\lambda_L$. We define $L$ to be a \emph{Lefschetz thimble} if $\pi(L)=\gamma$ for some vanishing path $\gamma$. Here a path $\gamma:[0,\infty)\to\mathbb{H}$ is a \emph{vanishing path} when $\gamma(0)$ is a critical value of $\pi$, and there are no other critical values in the image of $\gamma$.
\end{defn}

A picture of an admissible Lagrangian can be found in Figure \ref{fig:Lefschetz-half-plane}.

\begin{figure}[h]
\centering
\begin{tikzpicture}

\draw[dotted] (-6,-4) -- (-1,-4);
\draw (-4,-3) node {$\times$};
\draw (-3,-2.5) node {$\times$};
\draw (-2.4,-3) node {$\times$};
\draw (-5,-2) node {$\times$};

\draw [fill=lightgray!50] plot [smooth cycle] coordinates {(2.4,-2.5) (3.5,-1.8) (4.8,-3) (3.2,-3.3)};
\draw (3.8,-3.3) -- (3.8,-4);
\draw (2,-2.05) -- (2,-4);

\draw[dotted] (1,-4) -- (6,-4);
\draw (3,-3) node {$\times$};
\draw (4,-2.5) node {$\times$};
\draw (4.6,-3) node {$\times$};
\draw (2,-2) node {$\times$};

\draw[->, color=lightgray] (1,-3.9) -- (6,-3.9);

\draw (4.8,-2.3) node {$L$};
\draw (3.8,-4.3) node {$\lambda_L$};
\draw (1.7,-3.0) node {$B$};

\end{tikzpicture}
\caption{The left picture indicates the upper half-plane model for the base of the Lefschetz fibration, where the dotted line below is the boundary of $\mathbb{H}$. The points marked with a $\times$ are critical values of the projection $\pi$. The right picture shows an example of an admissible Lagrangian $L$ together with its limit value $\lambda_L$, and a Lefschetz thimble $B$. The gray arrow denotes the direction of the wrapping when we compute the Floer cochain complexes.}
\label{fig:Lefschetz-half-plane}
\end{figure}
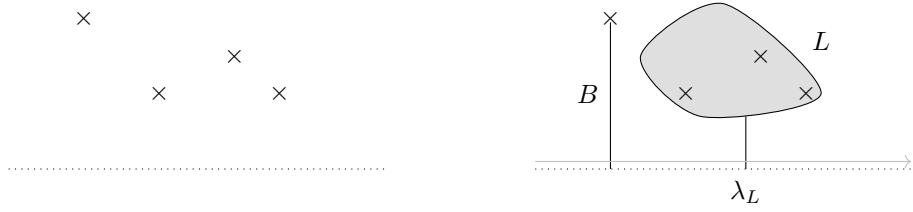

For each pair of admissible Lagrangians $(L_0,L_1)$, we fix the datum
\begin{equation}
    A_{L_0,L_1}\in\mathcal{A}([0,1],\lambda_{L_0},\lambda_{L_1}),~J_{01,t}\in\mathcal{J}(E),~H_{01,t}\in\mathcal{H}_{A_{L_0,L_1}}(E)
\end{equation}
of a connection $A_{L_0,L_1}$, and a generic $J_{01,t},H_{01,t}$ such that the Lagrangians $\phi(L_0)$ and $L_1$ are transverse for the time-1 map $\phi$ of the Hamiltonian $H_{01,t}$.

To define the $A_\infty$-structure maps, we first pick a disc with $d+1$ boundary punctures
\begin{equation}
    S_{d+1}=D^2\setminus\{p_0,\cdots,p_d\}
\end{equation}
and denote by $\partial_iS$ the boundary component of $S_{d+1}$ connecting the punctures $p_i$ and $p_{i+1}$. For each $s\in\partial_j S_{d+1}$, there is an associated Lagrangian boundary condition $L_j$ which is assumed to be admissible.

To this disc, we associate strip-like ends
\begin{align}
\begin{split}
    &\epsilon_0:(-\infty,0]\times[0,1]\to S_{d+1}\\
    &\epsilon_j:[0,\infty)\times[0,1]\to S_{d+1}\quad(j=1,\cdots,d)
\end{split}    
\end{align}
such that
\begin{equation}
    \lim_{s\to\pm\infty}\epsilon_j(s,t)=p_j,~\epsilon_j^{-1}(\partial S_{d+1})=\{(s,t)\mid t=0,1\}.
\end{equation}
Now remember that for each pair of admissible Lagrangians $(L_{j},L_{j+1})$, we fixed a connection
\begin{equation}
    A_j\in\mathcal{A}([0,1],\lambda_{j},\lambda_{j+1}).
\end{equation}
We first define a function
\begin{equation}
    \lambda\in C^\infty(\partial S_{d+1},\R)
\end{equation}
which on each boundary component $\partial_j S_{d+1}$ has the value $\lambda_{L_j}$ associated to the Lagrangian boundary condition $L_j$. Then \cite[Corollary 2.4]{Seidel18} ensures that we can pick a flat connection
\begin{equation}
    A\in\Omega^1(S_{d+1},\mathfrak{g})
\end{equation}
whose parallel transport along any boundary component $\partial_j S_{d+1}$ preserves $\lambda$, and pulls back to $A_j$ at each strip-like end $\epsilon_j$. Once we have chosen such $(A,\lambda)$, pick domain-dependent almost complex structures $(J_s)$ and a perturbation term $K$
\begin{equation} \label{eqn:perturbation-datum}
    J_s\in\mathcal{J}(E), K\in\Omega^1(S_{d+1},C^\infty(E,\R))
\end{equation}
to satisfy the compatibility conditions with the $(H_{j,t},J_{j,t})$ chosen before:
\begin{enumerate}
    \item The convergence $J_{\epsilon_j(s,t)}\to J_{j,t}$ is exponential in $s$ at each strip-like end $\epsilon_j$, and $J_{\epsilon_j(s,t)}$ agrees with $J_{j,t}$ outside a compact subset of $E$,
    \item $\epsilon_j^*K=H_{j,t}dt$,
    \item For each $\xi\in TS_{d+1}$, $K(\xi)\in\mathcal{H}_{A(\xi)}(E)$.
\end{enumerate}
After choosing all this datum to be compatible with the stratification of the Deligne-Mumford moduli space of stable disks with $d+1$ marked points, we may define the moduli space
\begin{align}
\begin{split}
    \mathcal{M}(L_0,\cdots,L_d)=\{u:S_{d+1}\to E\mid ~&(Du-X_K)^{0,1}=0,\\
    &~u(s)\in L_j, s\in\partial_j S_{d+1}\}.
\end{split}    
\end{align}
Then by the compactness results outlined in \cite[Section 4.3]{Seidel18}, the maps $u$ in the moduli space above cannot escape to $\partial E$ and infinity in the base and fiber directions. Therefore we have compactness, and our domain-dependent almost complex structures and perturbation terms ensures transversality. Now we may define the morphisms
\begin{equation}
    \hom_{\mathcal{F}(\pi)}(L_0,L_1)
\end{equation}
as a cochain complex, and the structure maps
\begin{equation}
    \mu^d:\hom(L_{d-1},L_d)\otimes\hom(L_{d-2},L_{d-1})\otimes\cdots\otimes\hom(L_0,L_1)\to\hom(L_0,L_d)
\end{equation}
to satisfy the $A_\infty$-relations. In a nutshell, to compute $\hom_{\mathcal{F}(\pi)}(L_0,L_1)$, we ``wrap'' $L_0$ by a Hamiltonian isotopy $\phi_H$ until $\lambda_{\phi(L_0)}>\lambda_{L_1}$, then compute their Floer cohomology
\begin{equation}
    \hom_{\mathcal{F}(\pi)}(L_0,L_1)=CF^*(\phi(L_0),L_1).
\end{equation}

\subsection{Wrapped Floer cohomology}

In this subsection, we recall the computation of the wrapped Floer cohomology of a cotangent fiber (Proposition \ref{prop:AS}), and the generation statement for the wrapped Fukaya category of a Weinstein 1-handle attachment of cotangent bundles (Corollary \ref{cor:handle-attaching-generators}).

The computation of the wrapped Floer cohomology of a cotangent fiber is due to Abbondandolo-Schwarz \cite{AbbondandoloSchwarz} and Abouzaid \cite{Abouzaid12a}:

\begin{proposition} \label{prop:AS}
    Let $Q$ be a closed smooth Spin manifold, and let $T^*_qQ$ be a cotangent fiber. Then we have an isomorphism
    \begin{equation}
        HW^*(T^*_qQ,T^*_qQ)\cong H_{-*}(\Omega_q Q).
    \end{equation}
    In particular, the wrapped Floer cohomology of a cotangent fiber is supported in non-positive degrees.
\end{proposition}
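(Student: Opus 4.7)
The plan is to follow the approach of Abbondandolo--Schwarz, constructing a chain-level isomorphism between $CW^*(T^*_qQ, T^*_qQ)$ and a Morse complex on the based path space $P_q(Q)=\{\gamma:[0,1]\to Q\mid \gamma(0)=\gamma(1)=q\}$ whose homology is $H_*(\Omega_qQ)$. To this end, I would fix a Riemannian metric $g$ on $Q$ and define the wrapped Floer cohomology using a Hamiltonian $H:T^*Q\to\R$ that is fiberwise quadratic at infinity, $H(q,p)=\tfrac{1}{2}|p|_g^2$, perturbed generically so that the time-$1$ Hamiltonian chords from $T^*_qQ$ to $T^*_qQ$ are nondegenerate. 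Standard maximum-principle arguments for quadratic-at-infinity Hamiltonians ensure that Floer strips remain in a compact subset of $T^*Q$, so $CW^*(T^*_qQ, T^*_qQ)$ is well-defined with this Hamiltonian.

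Next, I would identify Hamiltonian chords from $T^*_qQ$ to $T^*_qQ$ with the critical points of the Legendre-dual action functional
\begin{equation}
    \mathcal{S}_L(\gamma)=\int_0^1 L(\gamma(t),\dot\gamma(t))\,dt
\end{equation}
on $P_q(Q)$, where $L$ is the fiberwise Legendre transform of $H$. After perturbing $\mathcal{S}_L$ to be Morse, the resulting Morse complex computes $H_*(P_q(Q))\cong H_*(\Omega_qQ)$ by Milnor-type path-space Morse theory. The key grading compatibility is that the Maslov index of a Hamiltonian chord, computed with respect to the vertical Lagrangian distribution on $T^*Q$, equals the negative of the Morse index of the corresponding critical point of $\mathcal{S}_L$. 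Since the Morse index of a critical point on $P_q(Q)$ is always non-negative, this will immediately imply the support statement $HW^k=0$ for $k>0$.

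The comparison map $\Phi:CW^*(T^*_qQ, T^*_qQ)\to CM_{-*}(\mathcal{S}_L)$ is defined by counting pairs $(u,\gamma)$, where $u:[0,\infty)\times[0,1]\to T^*Q$ is a half-strip solving Floer's equation with boundary on $T^*_qQ$ and asymptotic at $s=+\infty$ to a Hamiltonian chord, glued at $s=0$ to a negative gradient flow line $\gamma$ of $\mathcal{S}_L$ in $P_q(Q)$ terminating at a chosen Morse critical point. Abbondandolo--Schwarz show that $\Phi$ is a chain homotopy equivalence by producing a left inverse via a symmetric hybrid moduli problem and verifying that both compositions are chain homotopic to the identity through parameterized moduli spaces interpolating between the Floer and Morse flows.

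The main obstacle is analytic: one must establish uniform $C^0$-bounds for the mixed Floer/Morse objects defining $\Phi$ in the non-compact manifold $T^*Q$, which requires adapting the convex-at-infinity maximum principle to this hybrid setting, and setting up compatible Banach manifold structures on the Floer moduli spaces and on the Morse moduli spaces of paths so that $\Phi$ is well-defined, smooth, and transverse after generic perturbation. Once these analytic foundations are in place, the isomorphism $HW^*(T^*_qQ, T^*_qQ)\cong H_{-*}(\Omega_qQ)$ follows, with support in non-positive degrees reflecting the non-negativity of the Morse index on the based path space.
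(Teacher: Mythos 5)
The paper does not prove this proposition itself; it quotes it as a known theorem of Abbondandolo--Schwarz and Abouzaid, and your sketch is a faithful outline of exactly that cited argument (the Legendre-dual action functional on the based path space, the hybrid half-strip/gradient-flow moduli spaces, and the index identity that forces non-positive degrees). Your proposal is therefore correct and takes essentially the same route as the source the paper relies on; the only point worth flagging is that the Spin hypothesis is what lets the isomorphism hold with untwisted $\Z$-coefficients, a detail your sketch does not address.
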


\cite[Theorem 1.1]{Abouzaid12a} proves an $A_\infty$-equivalence between the cochain complexes $CW^*(T^*_qQ,T^*_qQ)$ and $C_{-*}(\Omega_q Q)$. We also recall from \cite[Theorem 1.1]{Abouzaid11} that a cotangent fiber generates the wrapped Fukaya category of a cotangent bundle $T^*Q$, where $Q$ is a closed smooth Spin manifold $Q$. Every oriented 3-manifold is parallelizable, and hence is Spin. In particular, this applies to all three-dimensional lens spaces $L(p,q)$.

\begin{proposition}[{\cite[Theorem 1.1]{Abouzaid11}}]
    Let $Q$ be an oriented Spin closed smooth manifold. Then the wrapped Fukaya category $\mathcal{W}(T^*Q)$ of the cotangent bundle $T^*Q$ is generated by a cotangent fiber.
\end{proposition}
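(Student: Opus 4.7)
This proposition is Abouzaid's generation theorem, so the plan is to sketch the strategy of \cite{Abouzaid11}. The central claim to establish is that every closed exact Lagrangian brane $L$ in $T^*Q$ is quasi-isomorphic in $\mathcal{W}(T^*Q)$ to a twisted complex built from copies of the cotangent fiber $F = T^*_qQ$.

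The approach is Yoneda-theoretic. Set $A = CW^*(F, F)$, which by Proposition \ref{prop:AS} is quasi-isomorphic as an $A_\infty$-algebra to $C_{-*}(\Omega_q Q)$. For each Lagrangian $L$, the space $\mathcal{Y}_L = CW^*(F, L)$ carries the structure of a right $A_\infty$-module over $A$. The plan is to choose a free resolution of $\mathcal{Y}_L$ as an $A$-module and transport this resolution back to the geometric side, producing a twisted complex $\mathcal{T}_L$ in $\mathcal{W}(T^*Q)$ built from copies of $F$. An evaluation map $e_L \colon \mathcal{T}_L \to L$ is then constructed geometrically: its components count rigid holomorphic polygons with one distinguished boundary arc on $L$ and the remaining arcs on wrapped copies of $F$.

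To show $e_L$ is a quasi-isomorphism, the key input is that the Yoneda-type functor $\mathcal{Y}\colon \mathcal{W}(T^*Q) \to \mathrm{mod}\text{-}A$, $L \mapsto \mathcal{Y}_L$, is cohomologically fully faithful. Once this is known, the fact that $\mathcal{Y}(e_L)$ is by construction a quasi-isomorphism of $A_\infty$-modules forces $e_L$ itself to be an isomorphism in $\mathcal{W}(T^*Q)$. Full faithfulness is verified by analyzing a one-parameter family of moduli spaces of holomorphic curves with boundary on $F$ and $L$; the two boundary degenerations yield the identity on one side, and on the other side the composition of $e_L$ with an algebraically constructed right inverse, giving the desired chain homotopy.

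The main obstacle is geometric compactness. In the wrapped setting with quadratic Hamiltonians at infinity, one must rule out escape of pseudoholomorphic curves to the cylindrical end of $T^*Q$. The crucial input is an integrated maximum principle, together with action-energy estimates exploiting the exactness of the Liouville structure on $T^*Q$, which confine all relevant curves to a compact region. With compactness secured, the remaining argument reduces to standard homological algebra over $C_{-*}(\Omega_qQ)$: since $\mathcal{Y}_L$ lies in the triangulated envelope of the rank-one free $A$-module, pulling this resolution through the fully faithful functor $\mathcal{Y}$ places $L$ in the triangulated envelope of $F$.
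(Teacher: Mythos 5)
The paper offers no proof of this proposition: it is quoted directly from Abouzaid's work, so there is nothing internal to compare against, and your sketch has to be judged against the cited argument. It captures the right cast of characters --- the algebra $A = CW^*(F,F) \simeq C_{-*}(\Omega_q Q)$, the module $CW^*(F,L)$, a twisted complex of fibers with a geometric evaluation map --- but it has two genuine gaps.

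First, you ``choose a free resolution of $\mathcal{Y}_L$'' without explaining why $CW^*(F,L)$ is a perfect $A$-module; over $C_{-*}(\Omega_q Q)$ this is not formal, and it is exactly what separates honest generation by finite twisted complexes from an infinite bar-type resolution. The heart of Abouzaid's proof is the identification of $CW^*(F,L)$ with chains on the space of paths from the fiber to $L$, which fibers over $L$ with fiber $\Omega_q Q$; a cell (or Morse) decomposition of $L$ then produces a finite filtration with free subquotients, hence a finite resolution. Your sketch omits this identification, and with it the reason the construction terminates. Second, you deduce that $e_L$ is a quasi-isomorphism from cohomological full faithfulness of $K \mapsto CW^*(F,K)$ on all of $\mathcal{W}(T^*Q)$. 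That statement is essentially the Morita equivalence with $C_{-*}(\Omega_q Q)$-modules, which is standardly a consequence of generation rather than an input to it, so as written the logic is circular. The degeneration argument you invoke in its support --- a one-parameter family whose two ends give the identity and the composition of $e_L$ with a right inverse --- does not prove full faithfulness; it proves that $L$ is a homotopy retract of $\mathcal{T}_L$ (a Cardy-type relation), which is the statement you should use directly: combined with the finite resolution above it yields generation without any appeal to full faithfulness.
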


Another statement we use is that subcritical Weinstein handle attaching does not change the wrapped Fukaya category up to quasi-equivalence.

\begin{proposition}[{\cite[Corollary 1.29]{GPS}}]\label{prop:subcrit-handle}
    Suppose that $X$ is a Liouville manifold, and let $X'$ be a Liouville manifold constructed from $X$ by attaching a subcritical Weinstein handle. Then the wrapped Fukaya category of $X'$ is quasi-equivalent to the wrapped Fukaya category of $X$.
\end{proposition}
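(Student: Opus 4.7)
The plan is to follow the generation-plus-restriction strategy of Ganatra--Pardon--Shende. Their central structural result is that the wrapped Fukaya category of a Weinstein manifold $Y$ is split-generated by the cocores of the critical (index $n$) handles of any Weinstein presentation of $Y$. Since the handle attaching in the statement is subcritical, $X'$ admits a Weinstein presentation with exactly the same set of critical handles as $X$, so both $\mathcal{W}(X)$ and $\mathcal{W}(X')$ admit a common collection of generators $\{D_i\}$, namely the critical cocores of $X$, viewed either inside $X$ or inside $X'$ via the Liouville embedding $\iota \colon X \hookrightarrow X'$.

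First I would construct the pushforward $A_\infty$-functor $\iota_* \colon \mathcal{W}(X) \to \mathcal{W}(X')$, which sends $D_i \subset X$ to its image in $X'$. This is well-defined because cylindrical-at-infinity Lagrangians in $X$ remain cylindrical at infinity after inclusion into $X'$ (the subcritical surgery occurs in the interior, away from the ends of the $D_i$). Since $\{D_i\}$ already generates on both sides, it then suffices to prove that $\iota_*$ is cohomologically fully faithful on the generators, i.e.\ that the natural comparison map
\begin{equation}
    HW^*_X(D_i, D_j) \longrightarrow HW^*_{X'}(D_i, D_j)
\end{equation}
is an isomorphism for all $i,j$. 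Split-generation and essential surjectivity at the cohomological level then upgrade this to a quasi-equivalence.

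The main obstacle, and the technical heart of the proof, is establishing this fully faithfulness, and it amounts to a confinement argument for pseudoholomorphic curves. The strategy is to choose wrapping Hamiltonians and cylindrical almost complex structures on $X'$ that restrict to admissible Floer data on $X$ and that are adapted to the Liouville structure of the attached subcritical handle. Since the core of the subcritical handle is isotropic of dimension strictly less than $n$, and each $D_i$ is disjoint from a neighborhood of this core, an integrated maximum principle in the handle region (together with an action/energy estimate) should force every Floer trajectory with boundary on the $D_i$ to remain inside $X$. This makes the Floer complexes computing $HW^*_X(D_i,D_j)$ and $HW^*_{X'}(D_i,D_j)$ agree on the nose, yielding the desired quasi-equivalence. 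The hard part is setting up Floer data on $X'$ that are simultaneously compatible with the handle attachment and with the wrapping on $X$; this is precisely the setting for which the GPS formalism of partially wrapped Fukaya categories and stop removal was developed, and one would ultimately invoke it to rigorize the confinement step.
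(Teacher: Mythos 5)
The paper does not prove this statement at all: it is quoted verbatim as an external result, namely \cite[Corollary 1.29]{GPS}, so there is no internal argument to compare yours against. Your outline is a plausible reconstruction of one route through the Ganatra--Pardon--Shende machinery (generation by critical cocores on both sides, a pushforward functor, full faithfulness on the generators), and since you explicitly fall back on invoking their formalism for the hard step, your proposal is in effect the same citation with an added intermediate layer.

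That said, one step of your sketch as written would not go through in the form you state it. The claim that a maximum principle in the handle region makes the complexes computing $HW^*_X(D_i,D_j)$ and $HW^*_{X'}(D_i,D_j)$ ``agree on the nose'' is too strong: after the subcritical surgery the contact boundary of $X'$ is different, the wrapping Hamiltonians on $X'$ produce new Reeb chords that have no counterpart in $X$, and there is no chain-level identification of the two complexes. The actual mechanism in GPS is not curve confinement but stop removal: the inclusion $X\hookrightarrow X'$ realizes $\mathcal{W}(X')$ as the localization of a partially wrapped category of $X$ at the linking disks of a stop whose core is subcritical, and such linking disks are zero objects, so the localization functor is a quasi-equivalence. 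If you want a self-contained argument rather than a citation, you would need to carry out that localization step (or an equivalent comparison up to quasi-isomorphism), not assert equality of Floer data and moduli spaces on the two sides.
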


Combining these two statements, we obtain the following statement about the generators of the wrapped Fukaya category of two cotangent bundles joined by a Weinstein 1-handle attachment.

\begin{corollary} \label{cor:handle-attaching-generators}
    Let $X$ be a Liouville manifold obtained from attaching two cotangent bundles $T^*Q_1$ and $T^*Q_2$ by a Weinstein 1-handle, where $Q_1,Q_2$ are smooth closed Spin manifolds. Then the wrapped Fukaya category $\mathcal{W}(X)$ of $X$ is generated by the cotangent fibers $T^*_{q_1}Q_1$ and $T^*_{q_2}Q_2$.
\end{corollary}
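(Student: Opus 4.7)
The plan is to reduce to the disjoint union $X_0 := T^*Q_1 \sqcup T^*Q_2$ (viewed as a Liouville manifold) via the subcritical handle invariance of Proposition \ref{prop:subcrit-handle}, and then apply the generation theorem for cotangent bundles to each summand.

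Concretely, the first step is a dimension count: writing $n = \dim Q_i$, the Weinstein 1-handle joining $T^*Q_1$ and $T^*Q_2$ has index $1$, while the ambient symplectic dimension is $2n$; in the relevant cases $n \geq 2$, so $1 < n$ and the handle attachment is subcritical. Consequently, Proposition \ref{prop:subcrit-handle} applies to the pair $(X_0, X)$ and yields a quasi-equivalence
\begin{equation}
    \mathcal{W}(X_0) \simeq \mathcal{W}(X).
\end{equation}

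The second step is to decompose the left-hand side. Since no pseudoholomorphic strip can connect Lagrangians lying in distinct components of $X_0$, the wrapped Fukaya category of the disjoint union splits as a product
\begin{equation}
    \mathcal{W}(X_0) \simeq \mathcal{W}(T^*Q_1) \times \mathcal{W}(T^*Q_2).
\end{equation}
By Abouzaid's theorem (the proposition recalled just before the corollary), each factor $\mathcal{W}(T^*Q_i)$ is generated by the cotangent fiber $T^*_{q_i}Q_i$; this uses that $Q_i$ is closed, smooth, and Spin. Hence the pair $\{T^*_{q_1}Q_1, T^*_{q_2}Q_2\}$ generates $\mathcal{W}(X_0)$.

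The third step is to transport generators across the quasi-equivalence. We choose the basepoints $q_i$ away from the feet of the 1-handle, so that the fibers $T^*_{q_i}Q_i$ lie entirely inside the subdomain $X_0 \subset X$ and may be regarded simultaneously as objects of $\mathcal{W}(X_0)$ and $\mathcal{W}(X)$. Under the quasi-equivalence above (which on Lagrangians away from the handle region is induced by this inclusion), the generating pair in $\mathcal{W}(X_0)$ maps to $\{T^*_{q_1}Q_1, T^*_{q_2}Q_2\}$ in $\mathcal{W}(X)$; since generation is preserved under quasi-equivalences, these cotangent fibers generate $\mathcal{W}(X)$.

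The main technical point to verify is the compatibility of the inclusion $X_0 \hookrightarrow X$ with the quasi-equivalence of Proposition \ref{prop:subcrit-handle}, namely that cylindrical Lagrangians disjoint from the 1-handle go to the same Lagrangians under the equivalence. This is essentially built into the proof in \cite{GPS}, but it is the only place where we rely on more than a black-box application of the cited results.
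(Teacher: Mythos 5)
Your proposal is correct and follows essentially the same route as the paper, which simply combines Proposition \ref{prop:subcrit-handle} (subcritical handle invariance) with Abouzaid's generation theorem applied to each cotangent-bundle summand of the disjoint union. The extra care you take about the subcriticality dimension count and about the compatibility of the quasi-equivalence with the inclusion of Lagrangians away from the handle is a reasonable elaboration of details the paper leaves implicit.
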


\subsection{Floer cohomology with local systems} \label{ssec:locsys}

In this subsection, we recall the definition of Floer cohomology for Lagrangians equipped with local systems. We refer to \cite[Section 2]{Abouzaid12b} for more details.

Our setup is a Liouville domain $X$, and all Lagrangians are assumed to be equipped with brane structures. We will define the Floer cohomology of two Lagrangians $(K,\mathcal{E}_K)$, $(L,\mathcal{E}_L)$ equipped with local systems of finite rank free $R$-modules for a ring $R$. The generators in cohomological degree $i$ are defined as
\begin{equation}
    CF^i((K,\mathcal{E}_K),(L,\mathcal{E}_L))\coloneqq \bigoplus_{x\in K\cap L, \lvert x\rvert=i} \hom({\mathcal{E}_K}\vert_x,\mathcal{E}_L\vert_x)\otimes\langle\mathfrak{o}_x\rangle,
\end{equation}
where $\lvert x\rvert$ is the degree of $x$ in the usual Lagrangian Floer cochain complex $CF^*(K,L)$. The differential is defined as follows. For each holomorphic strip $u:\R\times[0,1]\to X$ that contributes to the Floer differential from $x$ to $y$, we define the parallel transport map $\gamma_u^0:\mathcal{E}_K\vert_y\to\mathcal{E}_K\vert_x$ to be the parallel transport along the path $u(s,0)$, and $\gamma_u^1:\mathcal{E}_L\vert_x\to\mathcal{E}_L\vert_y$ to be the parallel transport along the path $u(-s,1)$. Then we may define the differential $\mu^1$ to send an element $\phi\in\hom(\mathcal{E}_K\vert_x,\mathcal{E}_L\vert_x)$ to
\begin{equation}
    \mu^1(\phi)=\sum_u\gamma_u^1\circ\phi\circ\gamma_u^0\otimes\psi_u,
\end{equation}
where $\psi_u:\mathfrak{o}_x\to\mathfrak{o}_y$ is the induced map on orientation lines.

In the case where the two objects are the same underlying closed exact Lagrangian brane equipped with two different local systems, we have the following proposition from \cite[Lemma 2.16]{Abouzaid12b}:
\begin{proposition} \label{prop:same-lag-loc-sys}
    $HF^*((L,\mathcal{E}_0),(L,\mathcal{E}_1))\cong H^*(L,\hom_R(\mathcal{E}_0,\mathcal{E}_1))$.
\end{proposition}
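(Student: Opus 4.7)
The plan is to reduce the computation to Morse theory with local coefficients, in the spirit of Floer's original calculation for a single Lagrangian. I would compute $HF^*((L,\mathcal{E}_0),(L,\mathcal{E}_1))$ using the Floer datum coming from a $C^2$-small Morse function $h : L \to \R$ together with the almost complex structure $J_h$ provided by Proposition \ref{prop:Floer-to-Morse-C2}. Under this choice, the Floer complex is modeled on the graph $L' = \Gamma(dh)$ inside a Weinstein neighborhood of $L$, with $\mathcal{E}_1$ transported to a local system $\mathcal{E}_1'$ on $L'$ along the obvious Hamiltonian isotopy. Since intersection points of $L$ and $L'$ are exactly critical points of $h$, and $\mathcal{E}_1'|_p$ is canonically identified with $\mathcal{E}_1|_p$ at each critical point $p$ via the vertical fiber of the Weinstein neighborhood, the generators assemble into
\begin{equation}
    CF^*\big((L,\mathcal{E}_0),(L',\mathcal{E}_1')\big) \;=\; \bigoplus_{p \in \mathrm{Crit}(h)} \hom_R(\mathcal{E}_0|_p,\mathcal{E}_1|_p) \otimes \langle \mathfrak{o}_p \rangle.
\end{equation}

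Next I would identify the differentials. By Proposition \ref{prop:Floer-to-Morse-C2}, each rigid $J_h$-holomorphic strip $u$ from $x$ to $y$ is a reparametrization of the negative gradient flowline $\gamma$ of $h$ from $x$ to $y$, and under the projection of the Weinstein neighborhood onto $L$, both boundary paths $u(\cdot,0)$ and $u(-\cdot,1)$ project to $\gamma$. Consequently the parallel-transport factor $\gamma_u^1 \circ (-) \circ \gamma_u^0$ acting on $\hom_R(\mathcal{E}_0|_x,\mathcal{E}_1|_x)$ is precisely parallel transport along $\gamma$ in the local system $\hom_R(\mathcal{E}_0,\mathcal{E}_1)$ on $L$, with the orientation contribution $\psi_u$ matching the Morse sign. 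This identifies the Floer complex with the Morse complex $CM^*(L; h, g; \hom_R(\mathcal{E}_0,\mathcal{E}_1))$ with coefficients in the local system $\hom_R(\mathcal{E}_0,\mathcal{E}_1)$, whose cohomology is $H^*(L; \hom_R(\mathcal{E}_0,\mathcal{E}_1))$.

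To conclude, I would invoke a continuation-map argument (the local-system version of Proposition \ref{prop:Ham-invariance}) to identify $HF^*((L,\mathcal{E}_0),(L,\mathcal{E}_1))$ with $HF^*((L,\mathcal{E}_0),(L',\mathcal{E}_1'))$. The main technical point is verifying that the identification $\mathcal{E}_1 \leftrightarrow \mathcal{E}_1'$ induced by the Hamiltonian isotopy is compatible with the boundary parallel transports used in the differential; this reduces to the observation that, after projecting to $L$, the boundary of a strip on $L'$ traces out the same path as the gradient flowline it reparametrizes, so the transport along the $L'$-boundary in $\mathcal{E}_1'$ coincides with transport along $\gamma$ in $\mathcal{E}_1$. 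Once this compatibility is in hand, the rest of the argument is an essentially formal unpacking of definitions.
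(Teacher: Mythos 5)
Your argument is correct and is essentially the standard one: the paper does not prove this proposition itself but cites \cite[Lemma 2.16]{Abouzaid12b}, and the proof there proceeds exactly as you describe, by choosing a $C^2$-small Morse perturbation so that rigid strips are gradient flowlines and the boundary parallel transports combine into parallel transport in the local system $\hom_R(\mathcal{E}_0,\mathcal{E}_1)$, identifying the Floer complex with the Morse complex with local coefficients. Your attention to the compatibility of the transport along the two boundary components (both projecting to the same flowline) is the right technical point to isolate.
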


We now define the Fukaya category $\mathcal{S}(X)$ of Lagrangians equipped with local systems. The objects of $\mathcal{S}(X)$ are pairs $(L,\mathcal{E})$, where $L$ is an exact closed Maslov zero Lagrangian submanifold equipped with a brane structure as in \cite[Section 12]{Seidel08}, and $\mathcal{E}$ is a local system over $L$. To define the $A_\infty$-structure maps $\mu^k$, we consider a disc $(S,j)$ with $k+1$ punctures on the boundary, together with its standard almost complex structure. As in the usual Fukaya category, we consider maps $u:(S,j)\to (M,J)$ satisfying the perturbed Cauchy-Riemann equation with Lagrangian boundary conditions for $(L_0,\cdots,L_k)$. Each boundary component in the image of this disc will determine a path in $L_i$, and we define the parallel transport along this map to be $\varphi_i$. We define $\mu^k$ as the sum over such pseudoholomorphic maps $u$ which are rigid:
\begin{equation}
    \mu^k(\alpha_k,\cdots,\alpha_1)=\sum_{u:S\to X} \alpha_k\circ \varphi_{k-1}\circ\alpha_{k-1}\cdots\circ\varphi_1\circ\alpha_1\otimes\psi_u.
\end{equation}
One can check that this is an $A_\infty$-category, and that the full subcategory of objects whose associated local system is trivial of rank $1$ recovers the compact Fukaya category $\mathcal{F}(X)$.

\section{Simple categorical notions for the Fukaya category} \label{sec:simple-equivalences}

In this section, we study how the notion of simple homotopy can further refine $A_\infty$-categorical notions. The key definition we use is an $A_\infty$-bimodule $\underline{CF^*(K,L)}$ with coefficients in the group ring $\Z\pi_1(X)$ introduced in \cite{AbouzaidKragh}. In the following subsection, we first extend the above definition to twisted complexes twisted by cochain complexes over $\Z$, as introduced in Section \ref{ssec:category-theory}.

We remark that Whitehead torsion and Reidemeister torsion are only defined for (co)chain complexes that are finitely generated. Thus, we restrict our attention to Fukaya categories that are chain-level proper: the precise versions of the Fukaya categories we study in this section will be the compact Fukaya category $\mathcal{F}(X)$, and the Fukaya category of a Lefschetz fibration $\mathcal{F}(\pi)$ as defined in \cite{Seidel18}, both over $\Z$-coefficients.

\subsection{The $A_\infty$-bimodule $\underline{CF^*(K,L)}$} \label{ssec:bimodule}

Our setup is an exact symplectic manifold $(X,\omega=d\lambda)$, with exact Lagrangians $(L,f_L)$ equipped with brane structures. We fix a choice of a universal cover $p:\Tilde{X}\to X$. 

We will first define the $A_\infty$-bimodule $\underline{\mathcal{B}}(K,L)=\underline{CF^*(K,L)}$ for transverse Lagrangians $K$, $L$, then show that the simple homotopy type of the underlying cochain complex is invariant under Hamiltonian isotopies of $L$ that are transverse to $K$, and also for continuation maps induced from homotopies of the almost complex structure. Thus the simple homotopy type of the underlying cochain complex of $\underline{CF^*(K,L)}$ will be independent of the choice of Floer datum chosen for defining $CF^*(K,L)$ in the Fukaya category.

Suppose that we have two transverse Lagrangians $K$, $L$, and choose a regular almost complex structure $\{J_t\}$ on $X$. Let $u:\R\times[0,1]\to X$ be an element in $\hat{\mathcal{M}}(y;x)$, meaning that
\begin{equation}
    \frac{\partial u}{\partial s}+J_t\frac{\partial u}{\partial t}=0, \lim_{s\to-\infty}u(s,t)=y,\lim_{s\to\infty}u(s,t)=x.
\end{equation}
Since the domain of $u$ is contractible, once we fix a lift $\Tilde{x}$ of $x$, there exists a unique lift $\Tilde{u}:\R\times[0,1]\to \Tilde{X}$ of $u$ such that
\begin{equation}
    \lim_{s\to\infty}\Tilde{u}(s,t)=\Tilde{x}.
\end{equation}
Then the negative asymptotic of $\Tilde{u}$ will also be a lift of $y$, which we will call $\Tilde{y}(u,\Tilde{x})$.

\begin{defn} \label{defn:bimodule-without-lifts}
    Let $K,L\subset X$ be transversely intersecting exact Lagrangians, and let $J_t$ be a regular almost complex structure. We define the cochain complex $\underline{CF^*(K,L;J_t)}$ as follows. The generators are pairs $(\Tilde{x},\mathfrak{o}_x)$, where $\Tilde{x}$ can be any lift of an intersection point $x\in K\cap L$, and $\mathfrak{o}_x$ is the associated orientation line of $x$:
    \begin{equation}
        \underline{CF^*(K,L;J_t)}=\bigoplus_{x\in K\cap L} \Z\langle(\Tilde{x},\mathfrak{o}_x)\rangle.
    \end{equation}
    The differential is defined by summing over all lifts of rigid pseudoholomorphic strips $u\in\mathcal{M}(y;x)$:
    \begin{equation}
        \partial (\Tilde{x},\mathfrak{o}_x)=\sum_{u} \psi_u(\mathfrak{o}_x)(\Tilde{y}(u,\Tilde{x}),\mathfrak{o}_y),
    \end{equation}
    where $\psi_u:\mathfrak{o}_x\to\mathfrak{o}_y$ is the map induced on orientation lines by the pseudoholomorphic strip $u$, and $\Tilde{y}(u,\Tilde{x})$ denotes the lift of $y$ determined by $u$ and $\Tilde{x}$.
\end{defn}


The deck transformations of the universal cover $p:\Tilde{X}\to X$ induce an action of $\pi_1(X)$ on $\underline{CF^*(K,L;J_t)}$, thus endowing each degree component with the structure of a $\Z\pi_1(X)$-module. By choosing a preferred lift $\Tilde{x}$ for each intersection point $x\in K\cap L$, we obtain a distinguished basis, which identifies $\underline{CF^*(K,L;J_t)}$ as a based cochain complex over $\Z\pi_1(X)$. This identification enables us to define the simple homotopy type of $(\underline{CF^*(K,L;J_t)},\{\Tilde{x}\})$. Since choosing a different choice of basis $\{\Tilde{x}'\}$ corresponds to a simple base change, the simple homotopy type is independent of such choices. Thus the Whitehead torsion of $\underline{CF^*(K,L;J_t)}$ is well-defined without the choice of lifts, when it is acyclic.

We now provide a more concrete explanation of how a choice of basis $\{\Tilde{x}\}$ identifies $\underline{CF^*(K,L;J_t)}$ with a based cochain complex over $\Z\pi_1(X)$. While the previous definition is better suited for understanding the $A_\infty$-bimodule structure of $\underline{CF^*(K,L)}$ later on (as it solely relies on counting lifted pseudoholomorphic discs), this more explicit definition will be particularly useful for demonstrating that the resulting complex has trivial Whitehead torsion.

We begin by explaining how the choice of lifts $\{\Tilde{x}\}$ equips $\underline{CF^*(K,L;J_t)}$ with the structure of a based cochain complex over $\Z\pi_1(X)$. For any other lift $\Tilde{x}'$ of a point $x\in K\cap L$, there exists a unique element $g\in\pi_1(X)$ such that 
\begin{equation}
    \Tilde{x}'=g\cdot\Tilde{x},
\end{equation}
where $g$ acts by the corresponding deck transformation of $\Tilde{X}$. We will label the other lifts of $x$ as $g\cdot \Tilde{x}$. 

To describe the differential, consider a rigid holomorphic strip $u\in\hat{\mathcal{M}}(y;x)$, and let $\Tilde{u}:\R\times[0,1]\to \Tilde{X}$ be a lift of $u$ such that
\begin{equation}
    \lim_{s\to\infty}\Tilde{u}(s,t)=\Tilde{x}.
\end{equation}
Then the negative asymptotic of $\Tilde{u}$ will also be a lift of $y$, which we may write as
\begin{equation}
    \lim_{s\to-\infty}\Tilde{u}(s,t)=g(u)\cdot\Tilde{y}
\end{equation}
for a uniquely determined $g(u)\in\pi_1(X)$. This defines the group element $g(u)$ associated to any rigid holomorphic curve $u\in\Hat{\mathcal{M}}(y;x)$, as illustrated in Figure \ref{fig:lifting-holomorphic-curve}.

\begin{figure}[h]
\centering
\begin{tikzpicture}

\draw (5.5,0) arc [start angle=45, end angle=135, x radius=3, y radius=2];
\draw (5.5,0) arc [start angle=-45, end angle=-135, x radius=3, y radius=2];
\draw [draw=red] (5.5,0) arc [start angle=-50, end angle=-130, x radius=3.3, y radius=1.8];

\draw [draw=red] (5.5,3) arc [start angle=45, end angle=180, x radius=2.6, y radius=0.7];
\draw [draw=none, fill=white] (1.8,3) circle (0.2 and 0.1);
\draw [draw=red] (5.5,4) arc [start angle=45, end angle=315, x radius=2.6, y radius=0.7];

\draw [draw=none, fill=black] (5.5,4) circle (0.04 and 0.04);
\draw [draw=none, fill=black] (1.06,3.5) circle (0.04 and 0.04);
\draw [draw=none, fill=black] (1.06,2.5) circle (0.04 and 0.04);

\draw (3.5,-1) node {$K$};
\draw (3.5,1) node {$L$};
\draw (5.7,-0) node {$x$};
\draw (1.1,0) node {$y$};
\draw (5.7,4) node {$\Tilde{x}$};
\draw (0.7,3.5) node {$\Tilde{y}$};
\draw (1,2.2) node {$g(u)\cdot \Tilde{y}$};

\draw (7,-0) node {$X$};
\draw (7,3.5) node {$\Tilde{X}$};
\draw[->, draw=lightgray] (7, 3.1) -- (7, 0.4);

\end{tikzpicture}
\caption{The lift of the image of the pseudoholomorphic curve $u$ to the universal cover. The red curve is the image of $\gamma(s)=u(s,1/2)$. $\gamma$ lifts to a curve $\Tilde{\gamma}$ such that $\lim_{s\to\infty}\Tilde{\gamma}(s)=\Tilde{x}$, unique up to homotopy rel endpoints. The element $g(u)\in\pi_1(X)$ associated to $u$ is defined as the unique element such that $\lim_{s\to-\infty}\Tilde{\gamma}(s)=g(u)\cdot\Tilde{y}$, with respect to the chosen lift $\Tilde{y}$.}
\label{fig:lifting-holomorphic-curve}
\end{figure}
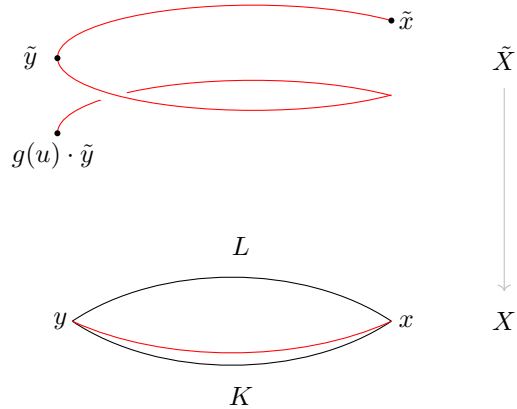

\begin{defn} \label{defn-bimodule-with-lifts}
    Let $K,L\subset X$ be exact Lagrangians intersecting transversely. Fix a choice of preferred lifts $\{\Tilde{x}\}$ of each intersection point $x\in K\cap L$ to the universal cover. We define the based cochain complex
    \begin{equation}
        \underline{CF^*(K,L;\{\Tilde{x}\},J_t)}
    \end{equation}
    as a cochain complex with generators given by the pairs $(\Tilde{x},\mathfrak{o}_x)$, where $\Tilde{x}$ is the chosen lift of $x$, and $\mathfrak{o}_x$ is the associated orientation line of $x$:
    \begin{equation}
        \underline{CF^*(K,L;\{\Tilde{x}\},J_t)}=\bigoplus_{x\in K\cap L} \Z\pi_1(X)\langle(\Tilde{x},\mathfrak{o}_x)\rangle.
    \end{equation}
    The differential is defined by summing over all rigid pseudoholomorphic strips $u\in\mathcal{M}(y;x)$ weighted by $g(u)$, the element of $\pi_1(X)$ associated to each $u$:
    \begin{equation}
        \partial (\Tilde{x},\mathfrak{o}_x)=\sum_{u} g(u) \psi_u(\mathfrak{o}_x)(\Tilde{y},\mathfrak{o}_y),
    \end{equation}
    where $\psi_u:\mathfrak{o}_x\to\mathfrak{o}_y$ is the map on orientation lines induced by $u$.
\end{defn}

Again, the Whitehead torsion of $\underline{CF^*(K,L;\{\Tilde{x}\},J_t)}$ is independent of the chosen lifts $\{\Tilde{x}\}$.

For the sake of completeness, we now explain why the differential of $\underline{CF^*(K,L;\{\Tilde{x}\},J_t)}$, as defined in Definition \ref{defn-bimodule-with-lifts}, squares to zero. In this setting, one must keep track of the group elements $g(u)$ associated to rigid pseudoholomorphic strips $u\in\mathcal{M}(y;x)$, especially when such a strip breaks into a pair $(u_1,u_2)\in\mathcal{M}(y;z)\times\mathcal{M}(z;x)$. As depicted in Figure \ref{fig:breaking-holomorphic-curve}, when a sequence of rigid pseudoholomorphic strips $\{u_k\}$ with fixed $g(u_k)\equiv g(u)$ breaks into $u_1\#u_2$ in the limit, we obtain a relation
\begin{equation}
    g(u)=g(u_2)\cdot g(u_1),
\end{equation}
which shows that $\partial^2=0$ for $\underline{CF^*(K,L;\{\Tilde{x}\},J_t)}$ as required.

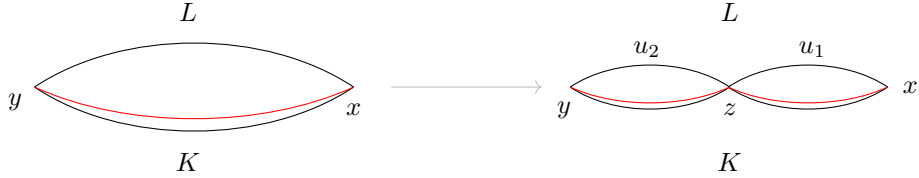
\begin{figure}[h]
\centering
\begin{tikzpicture}

\draw (10,0) arc [start angle=45, end angle=135, x radius=1.5, y radius=1];
\draw (10,0) arc [start angle=-45, end angle=-135, x radius=1.5, y radius=1];
\draw [draw=red] (10,0) arc [start angle=-50, end angle=-130, x radius=1.65, y radius=0.9];

\draw (12.1,0) arc [start angle=45, end angle=135, x radius=1.5, y radius=1];
\draw (12.1,0) arc [start angle=-45, end angle=-135, x radius=1.5, y radius=1];
\draw [draw=red] (12.1,0) arc [start angle=-50, end angle=-130, x radius=1.65, y radius=0.9];
\draw (5,0) arc [start angle=45, end angle=135, x radius=3, y radius=2];
\draw (5,0) arc [start angle=-45, end angle=-135, x radius=3, y radius=2];
\draw [draw=red] (5,0) arc [start angle=-50, end angle=-130, x radius=3.3, y radius=1.8];

\draw (2.8,-1) node {$K$};
\draw (2.8,1) node {$L$};
\draw (5,-0.3) node {$x$};
\draw (0.5,-0.2) node {$y$};

\draw (10,-1) node {$K$};
\draw (10,1) node {$L$};
\draw (12.4,0) node {$x$};
\draw (7.8,-0.3) node {$y$};
\draw (10,-0.3) node {$z$};

\draw (11.1,0.5) node {$u_1$};
\draw (8.9,0.5) node {$u_2$};

\draw[->, draw=lightgray] (5.5, 0) -- (7.5, 0);

\end{tikzpicture}
\caption{A configuration of holomorphic strips that appear in the analysis of $\partial^2=0$ in $\underline{CF^*(K,L;\{\Tilde{x}\})}$. The holomorphic strip $u$ breaks into $u_1\#u_2$, and the red lines represent the elements $g(u)$ and $g(u_1),g(u_2)$ associated to the lifts of the images of $u$ and $u_1,u_2$ in $\Tilde{X}$.}
\label{fig:breaking-holomorphic-curve}
\end{figure}

To compare with the formalism of \cite{AbouzaidKragh}, we may replace the choice of lifts $\Tilde{x}$ with the choice of paths in $X$ from each $x\in K\cap L$ to a fixed basepoint $*\in X$, together with a preferred lift $\Tilde{*}$ to the universal cover. These choices determine a lift $\Tilde{x}$ uniquely, and hence there is a 1:1 correspondence between such choices of paths and choice of lifts to the universal cover. 


As before, each generator $\Tilde{x}$ carries an action
\begin{equation}
    \mathcal{A}(\Tilde{x})=f_L(x)-f_K(x),
\end{equation}
and the differential increases action. Using the compatibility of Whitehead torsion with a filtration (Proposition \ref{prop:Whitehead-torsion-filtration}), we will show that the simple homotopy type of the based cochain complex $\underline{CF^*(K,L;\{\Tilde{x}\},J_t)}$ is independent of the almost complex structure.

\begin{proposition} \label{prop:J-indep}
        The simple homotopy type of the based $\Z\pi_1(X)$-cochain complex $\underline{CF^*(K,L,\{\Tilde{x}\},J_t)}$ is independent of the choice of compatible almost complex structure $J_t$.
\end{proposition}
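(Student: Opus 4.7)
The plan is to construct a continuation map
\begin{equation}
    \Phi: \underline{CF^*(K,L,\{\Tilde{x}\},J_t^0)} \to \underline{CF^*(K,L,\{\Tilde{x}\},J_t^1)}
\end{equation}
and verify that its mapping cone has trivial Whitehead torsion. Fix a generic homotopy $\{J^{s,t}\}_{s\in\R}$ of $\omega$-compatible almost complex structures with $J^{s,t}=J_t^0$ for $s\ll 0$ and $J^{s,t}=J_t^1$ for $s\gg 0$, and define $\Phi$ by counting rigid $J^{s,t}$-holomorphic strips with boundary on $K,L$, each weighted by the element of $\pi_1(X)$ obtained by lifting to the universal cover, in direct analogy with Definition \ref{defn-bimodule-with-lifts}.

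The key input is an energy estimate: for any $J^{s,t}$-holomorphic strip $u$ with $\lim_{s\to-\infty}u=x$ and $\lim_{s\to\infty}u=y$, the equation yields the pointwise identity
\begin{equation}
    \omega(\partial_s u,\partial_t u)=g^{s,t}(\partial_t u,\partial_t u)\geq 0,
\end{equation}
with equality iff $u$ is $t$-independent, forcing $u$ to be constant at an intersection point. Combined with $\int u^*\omega=\mathcal{A}(x)-\mathcal{A}(y)$ via Stokes' theorem, this yields $\mathcal{A}(x)\geq\mathcal{A}(y)$, with strict inequality unless $u$ is constant. Order the intersection points by increasing action as $x_1,\ldots,x_N$ (breaking ties arbitrarily) and extend this to the basis $\{\Tilde{x}_i\}$. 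Then $\Phi$, viewed as a matrix over $\Z\pi_1(X)$, is upper unipotent: the diagonal entries equal $1\in\Z\pi_1(X)$ (from the constant strip $u\equiv x_i$, which lifts with $g(u)=e$), and $\Phi_{j,i}$ is nonzero only when $j<i$. In particular, this rules out non-constant strips from $\Tilde{x}$ to $g\cdot\Tilde{x}$ for $g\neq e$, since any such strip would descend to a non-constant loop in $X$ of zero action difference, contradicting the strict inequality.

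Now consider the mapping cone $cone(\Phi)=C[1]\oplus D$, with $C=\underline{CF^*(J_t^0)}$ and $D=\underline{CF^*(J_t^1)}$, and replace the natural basis $\{\Tilde{x}_i^{C[1]},\Tilde{x}_i^D\}$ by $\{\Tilde{x}_i^{C[1]},\Phi(\Tilde{x}_i^{C[1]})\}$. The two bases differ on the $D$-side by the upper unipotent change $\Tilde{x}_i^D\mapsto\Tilde{x}_i^D+\sum_{j<i}\Phi_{j,i}\Tilde{x}_j^D$, which lies in $E(\Z\pi_1(X))$ and hence has trivial class in $Wh(\pi_1(X))$; the Whitehead torsion computed in either basis therefore agrees. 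In the new basis, using the chain map relation $\partial^1\Phi=\Phi\partial^0$, the cone differential becomes
\begin{equation}
    \partial_{cone}(\Tilde{x}_i^{C[1]})=-\partial^0(\Tilde{x}_i^{C[1]})-\Phi(\Tilde{x}_i^{C[1]}), \qquad \partial_{cone}(\Phi(\Tilde{x}_i^{C[1]}))=\Phi(\partial^0(\Tilde{x}_i^{C[1]})),
\end{equation}
and all contributions other than the $-\Phi(\Tilde{x}_i^{C[1]})$ diagonal term strictly raise the action index.

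Therefore the decreasing filtration $F^k=\mathrm{span}\{\Tilde{x}_i^{C[1]},\Phi(\Tilde{x}_i^{C[1]}):i\geq k\}$ is a subcomplex of $cone(\Phi)$, and each graded piece $F^k/F^{k+1}$ is the two-term acyclic complex $\Z\pi_1(X)\langle\Tilde{x}_k^{C[1]}\rangle\xrightarrow{-1}\Z\pi_1(X)\langle\Phi(\Tilde{x}_k^{C[1]})\rangle$, whose Whitehead torsion is trivial in $Wh(\pi_1(X))$. Proposition \ref{prop:Whitehead-torsion-filtration} then yields $\tau(cone(\Phi))=0$, so $\Phi$ is the desired simple homotopy equivalence between the two based cochain complexes. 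The principal technical hurdle is establishing the upper unipotent structure of $\Phi$, which relies simultaneously on the energy monotonicity for $J^{s,t}$-holomorphic strips and on the correct $\pi_1(X)$-bookkeeping for their lifts to $\Tilde{X}$.
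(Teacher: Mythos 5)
Your proposal is correct and follows essentially the same route as the paper: construct the continuation map, observe that nonconstant solutions strictly increase action while constant solutions contribute $\pm\id$ with trivial $\pi_1(X)$-weight, and then apply the filtration lemma (Proposition \ref{prop:Whitehead-torsion-filtration}) to the action-filtered mapping cone whose graded pieces are two-term acyclic complexes with trivial torsion. The explicit unipotent change of basis you insert is a harmless elaboration of the same argument.
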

\begin{proof}
    Suppose we are given two regular almost complex structures $J_t,J_t'$. We can choose a 1-parameter family of regular almost complex structures $J_\lambda$ interpolating between $J_t$ and $J_t'$, and consider the moduli space of solutions to the continuation map equation
    \begin{equation}
        \frac{\partial u}{\partial s}+J_\lambda(s,t)\frac{\partial u}{\partial t}=0
    \end{equation}
    to construct a chain homotopy equivalence
    \begin{equation}
        \psi:CF^*(K,L;J_t)\to CF^*(K,L;J_t').
    \end{equation}
    This lifts to a chain homotopy equivalence on the based complexes:
    \begin{equation}
        \Psi:\underline{CF^*(K,L,\{\Tilde{x}\};J_t})\to \underline{CF^*(K,L,\{\Tilde{x}\};J_t'}).
    \end{equation}
    Our goal is to show that $\Psi$ is a simple homotopy equivalence. To do so, consider the mapping cone
    \begin{equation}
        cone(\Psi)=\underline{CF^*(K,L,\{\Tilde{x}\};J_t})[1]\oplus \underline{CF^*(K,L,\{\Tilde{x}\};J_t'}).
    \end{equation}
    The generators of the above cochain complex consist of two copies of each generator $(\Tilde{x},\mathfrak{o}_x)$ in adjacent degrees. Because the contributions of the nonconstant solutions to the continuation map $\Psi$ strictly increase the action, the differential of $cone(\Psi)$ is upper triangular with respect to the action filtration. In particular, we can choose a finite length filtration such that the differentials in each associated graded piece arise only from constant solutions. Each graded piece is a direct sum of complexes of the form
    \begin{equation}
        [(\Tilde{x},\mathfrak{o}_x)\to(\Tilde{x},\mathfrak{o}_x)],
    \end{equation}
    with the map being $\pm\id$. Therefore each graded piece is acyclic with trivial Whitehead torsion, so by Proposition \ref{prop:Whitehead-torsion-filtration}, it follows that the total complex $cone(\Psi)$ is acyclic with trivial Whitehead torsion. Hence, $\Psi$ is a simple homotopy equivalence.
\end{proof}

The cochain complex $\underline{CF^*(K,L)}$ carries an $A_\infty$-bimodule structure over the compact Fukaya category $\mathcal{F}(X)$. Using Definition \ref{defn:bimodule-without-lifts}, this structure is straightforwardly defined: since the domains of the maps $u\in\mathcal{M}_k(y;x_0,\cdots,x_{k-1})$ are contractible, each map lifts uniquely to the universal cover once a lift $\Tilde{y}$ of the output asymptotic is chosen. Counting these lifted holomorphic discs defines the $A_\infty$-operations on $\underline{CF^*(K,L)}$, and the $A_\infty$-bimodule relations follow from the uniqueness of lifts together with the standard analysis of the codimension 1 boundary strata of the relevant moduli spaces.

Again for completeness, we will explain how the $A_\infty$-bimodule structure for $\underline{CF^*(K,L;\{\Tilde{x}\})}$ as a based cochain complex. For simplicity, we only describe the right $A_\infty$-module structure: the same argument generalizes to the whole $A_\infty$-bimodule structure. We count rigid holomorphic maps $u\in\mathcal{M}(m';m,x_{k-1},\cdots,x_1)$, as shown in the middle of Figure \ref{fig:breaking}. 

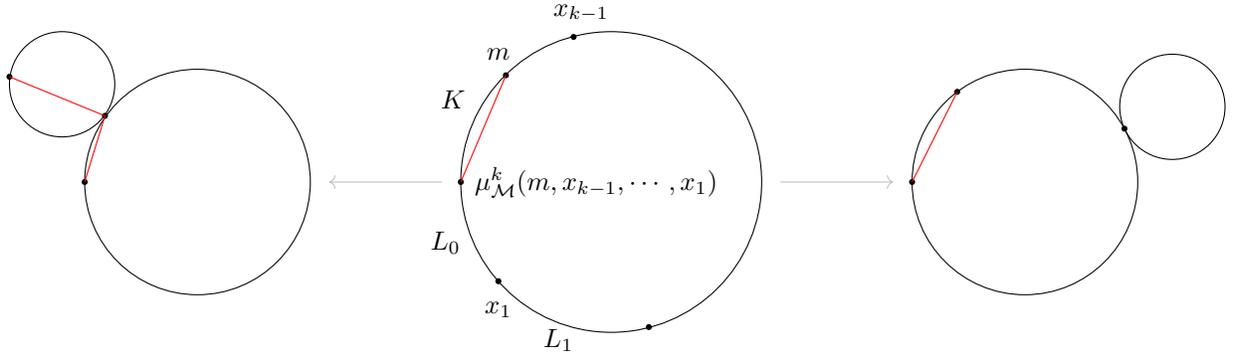
\begin{figure}[h]
\centering
\begin{tikzpicture}

\draw [draw=black] (0,0) circle (2 and 2);
\draw [draw=none, fill=black] (-2,0) circle (0.04 and 0.04);
\draw [draw=none, fill=black] (-1.5,-1.32) circle (0.04 and 0.04);
\draw [draw=none, fill=black] (-1.4,1.42) circle (0.04 and 0.04);
\draw [draw=none, fill=black] (-0.5,1.93) circle (0.04 and 0.04);
\draw [draw=none, fill=black] (0.5,-1.93) circle (0.04 and 0.04);
\draw [draw=red] (-2,-0) -- (-1.4,1.42);

\draw [draw=black] (5.5,0) circle (1.5 and 1.5);
\draw [draw=black] (7.46,1) circle (0.7 and 0.7);
\draw [draw=none, fill=black] (4,0) circle (0.04 and 0.04);
\draw [draw=none, fill=black] (6.823,0.71) circle (0.04 and 0.04);
\draw [draw=none, fill=black] (4.6,1.2) circle (0.04 and 0.04);
\draw [draw=red] (4,-0) -- (4.6,1.2);

\draw [draw=black] (-5.5,0) circle (1.5 and 1.5);
\draw [draw=black] (-7.3,1.3) circle (0.7 and 0.7);
\draw [draw=none, fill=black] (-7,0) circle (0.04 and 0.04);
\draw [draw=none, fill=black] (-6.73,0.88) circle (0.04 and 0.04);
\draw [draw=none, fill=black] (-8,1.4) circle (0.04 and 0.04);
\draw [draw=red] (-7,0) -- (-6.73,0.88);
\draw [draw=red] (-6.73,0.88) -- (-8,1.4);

\draw[->, draw=lightgray] (2.25, 0) -- (3.75, 0);
\draw[->, draw=lightgray] (-2.25, 0) -- (-3.75, 0);

\draw (-2.1,1.1) node {$K$};
\draw (-2.2,-0.8) node {$L_0$};
\draw (-0.7,-2.1) node {$L_1$};
\draw (-1.5,1.7) node {$m$};
\draw (-1.5,-1.7) node {$x_1$};
\draw (-0.4,2.25) node {$x_{k-1}$};
\draw (-0.2,-0) node {$\mu^k_\mathcal{M}(m,x_{k-1},\cdots,x_1)$};

\end{tikzpicture}
\caption{The moduli space of holomorphic disks that contributes to the right $A_\infty$-module structure equations of $\underline{CF^*(\quad,K)}$. The middle picture draws the case when there are $k$ inputs, and the red line depicts the homotopy class that will determine the $\Z\pi_1(X)$ term $g(u)$, after lifting to the universal cover. The left and right pictures depict two possible breakings of this holomorphic disc which determines $A_\infty$-module relations: one can check that the homotopy class of the red curve rel boundary is preserved in this process.}
\label{fig:breaking}
\end{figure}

To be precise, we define maps
\begin{equation}
    \mu^k:\underline{CF^*(L_{k-1},K)}\otimes CF^*(L_{k-2},L_{k-1})\otimes\cdots\otimes CF^*(L_0,L_1)\to\underline{CF^*(L_0,K)}
\end{equation}
which maps the element $(m,x_{k-1},\cdots,x_1)$ to the sum over all rigid holomorphic maps $u\in\mathcal{M}(m';m,x_{k-1},\cdots,x_1)$:
\begin{equation}
    \mu^k(m,x_{k-1},\cdots,x_1)=\sum_u \psi(\mathfrak{o}_m)g(u) (\Tilde{m}',\mathfrak{o}_{m'}),
\end{equation}
where $g(u)$ is defined by the lift of $u$ determined as follows: we consider the chosen lift $\Tilde{m}$ of $m\in K\cap L_{k-1}$, and pick the lift $\Tilde{u}$ such that the asymptotics of the strip-like boundary associated to $K\cap L_{k-1}$ converges to $\Tilde{m}$. By comparing the asymptotics of the unique negative strip-like end of $\Tilde{u}$ to the chosen lift $\Tilde{m}'$, we can associate an element $g(u)\in\pi_1(X)$ to each $u\in\mathcal{M}(m';m,x_{k-1},\cdots,x_1)$. By analyzing the codimension 1 boundary strata of the moduli space $\mathcal{M}(m';m,x_{k-1},\cdots,x_1)$, one can show that the $A_\infty$-module relations hold.

We now show the invariance of the simple homotopy type of $\underline{CF^*(K,L)}$ under compactly supported Hamiltonian isotopies of $K$ and $L$. Recall from Proposition \ref{prop:Ham-invariance} that if $L_0$ is an exact Lagrangian and $L_1$ is its image under a compactly supported Hamiltonian isotopy, there is a chain homotopy equivalence
\begin{equation}
    \mu^2(\alpha,~~):CF^*(K,L_0)\to CF^*(K,L_1)
\end{equation}
where $\alpha\in CF^*(L_0,L_1)$ is the isomorphism element determined by the Hamiltonian isotopy. 

In the following proposition, we will show that this chain homotopy lifts to a simple homotopy equivalence between the two associated based cochain complexes $\underline{CF^*(K,L_0)}$ and $\underline{CF^*(K,L_1)}$. Before stating the proposition, we explain how the bases are chosen. The lifts of the generators $K\cap L_1$ are determined by the lifts of $K\cap L_0$. Specifically, the Hamiltonian isotopy $\phi_H$ mapping $L_0$ to $L_1$ lifts to the universal cover $\Tilde{\phi}_H:\Tilde{X}\to\Tilde{X}$: we define the preferred lifts of $K\cap L_1$ to be the images under $\Tilde{\phi}_H$ of the preferred lifts of $K\cap L_0$.

\begin{proposition} \label{prop:Ham-isotopy-simple}
    Let $K$, $L$ be exact Lagrangians, and let $\phi_H$ be a compactly supported Hamiltonian isotopy which induces an isomorphism element $\alpha\in CF^0(L,\phi_H(L))$. Then the chain map
    \begin{equation}
       \mu^2(\alpha,~~): \underline{CF^*(K,L)}\to\underline{CF^*(K,\phi_H(L))}
    \end{equation}
    induced from the $A_\infty$-bimodule structure is a simple homotopy equivalence.
\end{proposition}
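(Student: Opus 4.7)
The plan is to reduce to the case of a $C^2$-small Hamiltonian isotopy, then apply an action-filtration argument on the mapping cone analogous to the one used in Proposition \ref{prop:J-indep}.

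First, I would decompose $\phi_H$ as a composition of finitely many small Hamiltonian isotopies, as in the proof of Proposition \ref{prop:Ham-invariance}, such that at each stage the image of $L$ is identified with the graph of a $C^2$-small Morse function inside a Weinstein neighborhood of the previous stage. By the $A_\infty$-bimodule relations, the chain map induced by $\alpha$ is chain-homotopic to the composition of the chain maps induced by the isomorphism elements $\alpha_i$ at each stage. Using the multiplicativity of Whitehead torsion under composition (Proposition \ref{prop:Whitehead-torsion-additive}) together with its invariance under chain homotopy (Proposition \ref{prop:homotopy-invariance-torsion}), it suffices to prove the statement for a single $C^2$-small Hamiltonian isotopy.

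In this reduced setting, select the almost complex structure $J_h$ provided by Proposition \ref{prop:Floer-to-Morse-C2}, and take $\alpha$ to be the sum of the local minima of $h$, exactly as in the proof of Corollary \ref{cor:graph}. For a sufficiently small perturbation, the intersection sets $K\cap L$ and $K\cap\phi_H(L)$ are in canonical bijection $x\leftrightarrow x'$ via the Weinstein tubular neighborhood, and the preferred lifts $\tilde{x}'$ can be chosen as the images of $\tilde{x}$ under the lifted Hamiltonian isotopy $\tilde{\phi}_H:\tilde{X}\to\tilde{X}$, so that the $\pi_1(X)$-element associated to each paired generator is trivial. The mapping cone
\[
\mathrm{cone}(\mu^2(\alpha,\cdot))=\underline{CF^*(K,L)}[1]\oplus\underline{CF^*(K,\phi_H(L))}
\]
then admits a decreasing filtration indexed by action, in which each generator $\tilde{x}$ on the $L$-side is grouped with its corresponding $\tilde{x}'$ at the same filtration level. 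The expected leading-term computation is that the only holomorphic triangles counted by $\mu^2(\alpha,\cdot)$ of vanishing limit area are the thin triangles localized at each paired intersection, each contributing $\pm 1$ in the chosen lift convention; all other triangles strictly increase action and live in higher filtration. Consequently every associated graded piece of $\mathrm{cone}(\mu^2(\alpha,\cdot))$ decomposes as a direct sum of two-term complexes
\[
\bigl[(\tilde{x},\mathfrak{o}_x)\xrightarrow{\pm\id}(\tilde{x}',\mathfrak{o}_{x'})\bigr],
\]
each of which is acyclic with trivial Whitehead torsion. Applying Proposition \ref{prop:Whitehead-torsion-filtration} to the total complex then yields trivial Whitehead torsion for the mapping cone, so $\mu^2(\alpha,\cdot)$ is a simple homotopy equivalence.

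The main obstacle is the leading-order computation: verifying that, for sufficiently $C^2$-small perturbations, the only triangle contributions of vanishing limit area are the thin triangles between canonically paired intersection points, each contributing $\pm 1$ while respecting the lift convention. This is essentially the triangle-counting counterpart of Floer's strip/gradient-flowline identification used in Proposition \ref{prop:Floer-to-Morse-C2}, and is the only place in the argument where geometric input beyond action-filtration bookkeeping is required; everything else is formal manipulation of Whitehead torsion under filtrations and compositions.
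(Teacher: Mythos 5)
Your overall architecture — reduce to small isotopies by multiplicativity of torsion, filter the mapping cone by action, show each graded piece is simply acyclic, and invoke Proposition \ref{prop:Whitehead-torsion-filtration} — is the same as the paper's. But there are two genuine gaps. First, you assume that after the reduction the intersection sets $K\cap L$ and $K\cap\phi_H(L)$ are in canonical bijection, so that every graded piece is a two-term complex $[(\tilde{x},\mathfrak{o}_x)\to(\tilde{x}',\mathfrak{o}_{x'})]$ with differential $\pm\id$. This fails in general: when you cut $\phi_H$ into small pieces, the intermediate Lagrangians $L_t$ are only transverse to $K$ away from finitely many times, and at those times birth--death bifurcations create or cancel pairs of intersection points of $K\cap L_t$. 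The paper treats this case separately, and its graded pieces are not two-term complexes there; instead they are acyclic $\Z$-complexes base-changed to $\Z\pi_1(X)$, which is the weaker (and more robust) statement that Lemma \ref{lem:zero-torsion} actually requires. Note also that the graph-of-a-Morse-function picture from Corollary \ref{cor:graph} controls the relation between $L_{t-\epsilon}$ and $L_{t+\epsilon}$, not their intersections with the auxiliary Lagrangian $K$, so it does not give you the pairing you want.

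Second, the step you flag as ``the main obstacle'' is precisely the content of the proof, and your proposed route to it (a triangle-counting analogue of Floer's strip/flowline identification) is not what makes the argument work and is not established anywhere in the paper. What is actually needed is weaker: one must show that every curve contributing to $\mu^2(\alpha,\cdot)$ within a small action window either has energy bounded below by a uniform $\delta>0$ (hence lands in a higher filtration level) or stays inside a fixed contractible neighborhood $U_x$ of a single point of $K\cap L_t$, so that its associated group element $g(u)\in\pi_1(X)$ is trivial. This is exactly the monotonicity lemma for curves near an intersection point (Lemma \ref{lem:monotonicity-AK}), applied both downstairs and to lifts in the universal cover, with the almost complex structure chosen constant on each $U_x$. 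Without this input your assertion that the low-energy terms ``respect the lift convention'' is unjustified: a small-area triangle connecting $\tilde{x}$ to $\tilde{x}'$ could a priori wander out of $U_x$ and pick up a nontrivial deck transformation. Once the monotonicity estimate is in place, one does not need to identify the low-energy differential as $\pm\id$ at all — acyclicity of the $\Z$-coefficient graded pieces plus Lemma \ref{lem:zero-torsion} suffices.
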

\begin{proof}
    Since compositions of simple homotopy equivalences are again simple homotopy equivalences, it suffices to prove the claim for sufficiently small Hamiltonian isotopies. By the argument in Proposition \ref{prop:Ham-invariance}, we may perturb the Hamiltonian isotopy relative to its endpoints such that $L_t=\phi_t(L)$ is transverse to $K$ for all but finitely many values of $t$, and each $L_t$ and $L_s$ are also pairwise transverse for $t\neq s$. Thus, we may reduce to the case where $L_{t-\epsilon}$, $L_t$ and $L_{t+\epsilon}$ are pairwise transverse, and both $L_{t\pm\epsilon}$ are transverse to $K$. The value of $\epsilon$ will be chosen small enough to satisfy the action estimates required in the argument below.

    For each intersection $x\in K\cap L_t$, choose disjoint open neighborhoods $U_x$ such that all intersection points $K\cap L_{t-\epsilon}$ and $K\cap L_{t+\epsilon}$ lie in the disjoint union $\sqcup U_x$. Now for $y,y'\in K\cap L_{t-\epsilon}$, $z,z'\in K\cap L_{t+\epsilon}$, $p\in L_{t-\epsilon}\cap L_{t+\epsilon}$, we consider the moduli space $\mathcal{M}(z;y,p)$. We assume the intersection points $z$ and $y$ to be contained in some neighborhood $U_x$ and $U_{x'}$ for some $x,x'\in K\cap L$.

    We claim that for sufficiently small $\epsilon$, there exists some constant $\delta>0$ such that
    \begin{enumerate}
        \item If $z$ and $y$ do not belong in the same neighborhood $U_x$, then any $u\in\mathcal{M}(z;y,p)$ has energy at least $\delta$.
        \item If $z,y$ both belong in the same neighborhood $U_x$, then the element $g(u)$ associated to any $u\in\mathcal{M}(z;y,p)$ with energy less than $\delta$ is the identity element $1\in\pi_1(X)$. 
        \item If $z,z'$ both belong in the same neighborhood $U_x$, then the element $g(u)$ associated to any $u\in\mathcal{M}(z;z')$ with energy less than $\delta$ is the identity element $1\in\pi_1(X)$. 
    \end{enumerate}

    This follows from the monotonicity lemma (Lemma \ref{lem:monotonicity-AK}). To apply this lemma, we fix our time-dependent almost complex structure $\{J_t\}$ for the moduli space $\mathcal{M}(z;y,p)$ to be constant on each open neighborhood $U_x$. For small enough $\epsilon$, both $L_{t-\epsilon}$ and $L_{t+\epsilon}$ belong in a $\delta$-neighborhood of $L_t$, and so the conditions of the monotonicity lemma are satisfied. For the second and third statements, we consider the lift of $u$ to the universal cover, and apply Lemma \ref{lem:monotonicity-AK}.

    We first consider the case when birth-death bifurcations do not happen at time $t$. Filter the enhanced mapping cone 
    \begin{equation}
        cone(\mu^2(\alpha,~~))=\underline{CF^*(K,L_{t-\epsilon})}[1]\oplus\underline{CF^*(K,L_{t+\epsilon})}
    \end{equation}
    by action, so that all generators corresponding to the intersection points in the same neighborhood $U_x$ belong in the same graded piece. Thus, the graded pieces consist of direct sums of the form
    \begin{equation}\label{eqn:Ham-isotopy-graded-piece}
    \bigoplus_{y\in U_x}\Z\pi_1(X)\langle\mathfrak{o}_y\rangle[1]\bigoplus_{z\in U_x}\Z\pi_1(X)\langle\mathfrak{o}_z\rangle,
    \end{equation}
    possibly for different $x\in K\cap L_t$ with small action difference. Furthermore, by the above choice of $\delta$, any holomorphic curve $u$ contributing to $\mu^2(\alpha,~~)$ restricted to this graded piece has $g(u)=1$. Thus, the graded piece (\ref{eqn:Ham-isotopy-graded-piece}) is isomorphic to direct sums of
    \begin{equation}\label{eqn:Ham-graded-piece}
         (\bigoplus_{y\in U_x}\Z\langle\mathfrak{o}_y\rangle[1]\bigoplus_{z\in U_x}\Z\langle\mathfrak{o}_z\rangle)\otimes_\Z \Z\pi_1(X),
    \end{equation}
    possibly for distinct $x\in K\cap L_t$ with small action difference. Now the left factor of the tensor product (\ref{eqn:Ham-graded-piece}) is a graded piece of the mapping cone $cone(\mu^2(\alpha,~~))$, which is acyclic. Therefore by Lemma \ref{lem:zero-torsion}, each graded piece (\ref{eqn:Ham-graded-piece}) of $cone(\mu^2(\alpha,~~))$ is acyclic with trivial Whitehead torsion, and thus the total mapping cone $cone(\mu^2(\alpha,~~))$ is acyclic with trivial Whitehead torsion.

    Now consider the case when a unique birth-death bifurcation happens for $K\cap L_t$ at time $t$. Assume that for some open neighborhood $U_x$ of an intersection point $x\in K\cap L_t$, $U_x\cap K\cap L_{t-\epsilon}$ is empty, and $U_x\cap K\cap L_{t+\epsilon}$ consists of two points $z,z'$ in adjacent degrees with action difference smaller than $\epsilon$ chosen above. Then by the monotonicity lemma again, the graded piece of the action filtration of $\underline{CF^*(K,L_{t-\epsilon})}[1]\oplus\underline{CF^*(K,L_{t+\epsilon})}$ has a direct summand
    \begin{equation}
    (\Z\langle\mathfrak{o}_z\rangle\oplus\Z\langle\mathfrak{o}_{z'}\rangle)\otimes_\Z \Z\pi_1(X),
    \end{equation}
    which is acyclic. Therefore by Lemma \ref{lem:zero-torsion}, this piece has trivial Whitehead torsion. The complement of this direct summand can be identified as the graded pieces of the enhanced mapping cone of $\mu^2(\alpha,~)$ as before, so it is acyclic with trivial Whitehead torsion. Thus we conclude that again the homotopy equivalence is simple.
\end{proof}

With this invariance of simple homotopy type, we may define $\underline{CF^*(K,L)}$ for general pairs of Lagrangians.

\begin{defn} \label{def:bimodule-general}
    Let $K$, $L$ be exact Lagrangians. Given a Floer datum $(H,J)$ such that $CF^*(K,L)$ is defined, we define $\underline{CF^*(K,L)}$ as
    \begin{equation}
    \underline{CF^*(K,L;H,J)}\coloneqq\underline{CF^*(K,\phi_H(L);J}).
\end{equation}
\end{defn}
By our previous arguments, the simple homotopy type of $\underline{CF^*(K,L)}$ is independent of the Floer datum chosen for the pair $(K,L)$.

We now specialize to the case $K=L$. Since we proved that compactly supported Hamiltonian isotopies induce simple homotopy equivalences between $\underline{CF^*(K,\phi_H(K))}$ for different Hamiltonian isotopies $\phi_H$, we may use any such isotopy to perturb $K$ to be transverse to itself. The definition above then becomes
\begin{equation}
    \underline{CF^*(K,K)}=\underline{CF^*(K,\phi_H(K))},
\end{equation}
and the resulting simple homotopy type is independent of the choice of Hamiltonian $H$. In particular, we may choose $H$ to be $C^2$-small. Then by Floer's argument (Proposition \ref{prop:Floer-to-Morse-C2}), we can select an almost complex structure $J$ such that $J$-holomorphic strips with boundary on $K$ and $\phi_H(K)$ correspond to gradient flowlines of $H$. This gives a concrete model for $\underline{CF^*(K,K)}$ in terms of Morse theory.

\begin{defn}
    Let $K$ be a closed exact Lagrangian submanifold of $X$ equipped with a Morse function $h:K\to\R$ and a Morse-Smale metric $g$ on $K$. The \emph{enhanced Morse cochain complex}
    \begin{equation}
    \underline{CF^*(K,K)}=\underline{CM^*(K,h,g)}
    \end{equation}
    is defined as follows. Choose a preferred lift $\Tilde{x}$ in the universal cover $\Tilde{X}$ for each critical point $x$ of the Morse function $h$. For each gradient flowline $\gamma$ from $x$ to $y$, define an element $g(\gamma)$ of $\pi_1(X)$ such that the lift $\Tilde{\gamma}$ starts at $\Tilde{x}$ and ends at $g(\gamma)\cdot\Tilde{y}$. We also associate to each critical point $x$ an orientation line $\mathfrak{o}_x$ as before.
    
    With this data, the cochain complex is defined as
    \begin{equation}
        \underline{CM^*(K,h,g,\{\Tilde{x}\})}=\bigoplus_{x\in crit(h)} \Z\pi_1(X)\langle (\Tilde{x},\mathfrak{o}_x)\rangle,
    \end{equation}
    with the differential
    \begin{equation}
        \partial (\Tilde{x},\mathfrak{o}_x) = \sum_{\gamma\in\mathcal{M}(x,y)} g(\gamma) \psi_\gamma (\Tilde{y},\mathfrak{o}_y),
    \end{equation}
    where $\psi_\gamma:\mathfrak{o}_x\to\mathfrak{o}_y$ is the map on orientation lines induced by $\gamma$.
\end{defn}

We first remark that the enhanced Morse complex is defined over $\Z\pi_1(X)$-coefficients, rather than $\Z\pi_1(K)$: this is because we seek a simple homotopy equivalence 
\begin{equation}
    \underline{CF^*(K,K)}\simeq\underline{CM^*(K)}.
\end{equation}
As before, one may also define the enhanced Morse complex without explicitly choosing lifts; the simple homotopy type is independent of this choice. Moreover, continuation maps between complexes defined from different Morse-Smale pairs again induce simple homotopy equivalences. Therefore, we may omit the choice of Morse-Smale pair and write $\underline{CM^*(K)}$.

Since the Morse-Smale pair $(h,g)$ defines a cellular decomposition of $K$, the enhanced Morse complex $\underline{CM^*(K,h,g)}$ may be used to define the cellular cochain complex of $K$ with chosen lifts to $\Tilde{X}$. We abuse notation, and write the simple homotopy equivalence as
\begin{equation} \label{eqn:morse-CW}
    \underline{CM^*(K,h,g)}\simeq\underline{C^*_{cell}(K)}.
\end{equation}
We emphasize again that the right-hand side is also a cochain complex over $\Z\pi_1(X)$. In the case where the inclusion $K\xhookrightarrow{}X$ induces an injection on fundamental groups, this cellular cochain complex can also be used to compute the Reidemeister torsion of $K$.

We summarize the above discussion in the following proposition.
\begin{proposition} \label{prop:Morse=CW}
    For any closed exact Lagrangian $K$, there is a simple homotopy equivalence
    \begin{equation}
        \underline{CF^*(K,K)}\simeq\underline{C^*_{cell}(K)}
    \end{equation}
    as based cochain complexes over $\Z\pi_1(X)$.
\end{proposition}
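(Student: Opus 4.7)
The plan is to pass from the Floer side to the Morse side via a $C^2$-small Hamiltonian perturbation, and then from the Morse side to the cellular side via the standard unstable-manifold CW structure, checking at each step that the identifications respect the choice of lifts to $\widetilde{X}$ and hence the basing over $\Z\pi_1(X)$.

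First, pick a Morse function $h:K\to\R$ and a Morse–Smale metric $g$ on $K$, and let $\phi_H$ be the Hamiltonian isotopy whose time-1 image is the graph $\Gamma(dh)\subset T^*K$, viewed inside a Weinstein neighborhood of $K\subset X$. By Proposition~\ref{prop:Ham-isotopy-simple}, the based cochain complex $\underline{CF^*(K,K)}$ is simple homotopy equivalent to $\underline{CF^*(K,\phi_H(K))}$, and by Proposition~\ref{prop:J-indep} we may choose the almost complex structure freely. Rescaling $h$ by a small constant if necessary to satisfy the $C^2$-bound, Proposition~\ref{prop:Floer-to-Morse-C2} provides an almost complex structure $J_h$ for which every $J_h$-holomorphic strip with boundary on $K$ and $\phi_H(K)$ is a reparametrization of a gradient flowline of $h$.

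Next, I would check that this Floer–Morse correspondence is basis-preserving over $\Z\pi_1(X)$. The intersection points $K\cap\phi_H(K)$ are precisely the critical points of $h$, and choosing a preferred lift $\widetilde{x}\in\widetilde{X}$ of each critical point $x$ simultaneously determines the bases of $\underline{CF^*(K,\phi_H(K);\{\widetilde{x}\},J_h)}$ and of $\underline{CM^*(K,h,g,\{\widetilde{x}\})}$. For any rigid $J_h$-holomorphic strip $u\in\mathcal{M}(y;x)$, the bijection with a rigid gradient flowline $\gamma$ from $x$ to $y$ sends the lift $\widetilde{u}$ with positive asymptote $\widetilde{x}$ to the unique lift $\widetilde{\gamma}$ with starting point $\widetilde{x}$, so the elements $g(u),g(\gamma)\in\pi_1(X)$ agree. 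A standard coherent-orientation comparison identifies the sign $\psi_u$ with $\psi_\gamma$. Hence the correspondence is a strict isomorphism of based $\Z\pi_1(X)$-cochain complexes
\begin{equation}
\underline{CF^*(K,\phi_H(K);\{\widetilde{x}\},J_h)}\;\cong\;\underline{CM^*(K,h,g,\{\widetilde{x}\})},
\end{equation}
which is in particular a simple homotopy equivalence.

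Finally, to reach $\underline{C^*_{cell}(K)}$, use the standard fact that the unstable manifolds of $-\nabla_g h$ define a CW decomposition of $K$, whose cellular cochain complex (with cells ordered by $h$-value, oriented compatibly with the Morse orientations, and lifted via the chosen $\widetilde{x}$) coincides \emph{on the nose}, as a based $\Z\pi_1(X)$-cochain complex, with $\underline{CM^*(K,h,g,\{\widetilde{x}\})}$; the attaching degrees are precisely counts of broken flowlines, which assemble into the Morse differential, and the $\pi_1(X)$-weights read off from the attaching maps match the $g(\gamma)$ by construction. Any other CW structure on $K$ is simple homotopy equivalent to this Morse-theoretic one (by Chapman's theorem, \cite[Theorem~1]{Chapman}, applied after lifting to $\widetilde{X}$), and the induced equivalence descends to $\Z\pi_1(X)$-coefficients by Lemma~\ref{lem:zero-torsion}. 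Composing the three simple homotopy equivalences gives the claim.

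The main obstacle I anticipate is the bookkeeping in the second step: one must verify that lifting $J_h$-holomorphic strips and lifting gradient flowlines yield the same element of $\pi_1(X)$, not merely of $\pi_1(K)$. This is where the identification of the preferred lifts of $K\cap\phi_H(K)$ with those of $\mathrm{crit}(h)$—made via the lift $\widetilde{\phi}_H$ of the Hamiltonian isotopy as in the discussion preceding Proposition~\ref{prop:Ham-isotopy-simple}—is essential, since the Floer strip is a tiny perturbation of the gradient flowline inside the Weinstein neighborhood and therefore traces out a path homotopic rel endpoints in $X$ to the flowline.
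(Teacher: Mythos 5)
Your proposal is correct and follows essentially the same route as the paper: perturb $K$ by a $C^2$-small Hamiltonian so that Proposition~\ref{prop:Floer-to-Morse-C2} identifies lifted strips with lifted gradient flowlines, then pass to the CW structure given by the unstable manifolds, which is exactly how the paper (implicitly) defines $\underline{C^*_{cell}(K)}$. Your extra care about comparing $\pi_1(X)$-weights and about other CW structures goes slightly beyond the paper's terse discussion; the only small imprecision is citing Lemma~\ref{lem:zero-torsion} for the base change $\Z\pi_1(K)\to\Z\pi_1(X)$, where general functoriality of Whitehead torsion under ring maps is the tool actually needed.
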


Now that we have established the properties of $\underline{CF^*(K,L)}$ for single Lagrangians, we extend the definition to twisted complexes $\mathcal{K}=\bigoplus_\alpha C^*_\alpha\otimes K_\alpha$, $\mathcal{L}=\bigoplus D^*_\beta\otimes L_\beta$ in $Tw_{Ch}\mathcal{F}(X)$. Before we define the most general case, we look at the toy case when all $C^*_\alpha$ and $D^*_\beta$ are just freely generated rank $1$ $\Z$-modules. Suppose that
\begin{equation}
    \mathcal{K}=K_1[d_1]\oplus\cdots\oplus K_m[d_m], ~\mathcal{L}=L_1[e_1]\oplus\cdots\oplus L_n[e_n]
\end{equation}
are twisted complexes of exact Lagrangians, equipped with differentials $\delta^\mathcal{K}_{ab}$, $\delta^\mathcal{L}_{ab}$. Then we may define the value of the $A_\infty$-bimodule over $Tw_{Ch}\mathcal{F}(X)$ associated to $\mathcal{K},\mathcal{L}$ as
\begin{equation}
    \underline{CF^*(\mathcal{K},\mathcal{L})}=\bigoplus_{x\in K_a\cap L_b} \Z\langle(\Tilde{x},\mathfrak{o}_x)\rangle
\end{equation}
for all choices of lifts $\Tilde{x}$ for each $x\in K_a\cap L_b$. To define the differential, we first recall that for an element $x\in CF^*(K_{i_0},L_{j_0})$, we define the differential of $\hom_{Tw}(K_{i_0},L_{j_0})$ by
\begin{equation}
    \mu^1_{Tw}(x)=\sum_{d\geq1} \mu^d(\delta^\mathcal{L},\cdots,\delta^\mathcal{L},x,\delta^\mathcal{K},\cdots,\delta^\mathcal{K}).
\end{equation}
Geometrically, this can be represented as the count of rigid holomorphic maps whose domain is $S_d$, the disc with $d+1$ boundary punctures. We consider holomorphic maps with Lagrangian boundary conditions
\begin{equation}
    u:(S_d,\partial S_d)\to(X;K_{i_k},\cdots,K_{i_0},L_{j_0},\cdots,L_{j_l}),
\end{equation}
and equip $S_d$ with choices of strip-like ends $\epsilon_k$ near each boundary puncture, such that there are $d$ positive strip-like ends, and 1 negative strip-like end equipped at the point $-1\in S_d$. At the strip-like end that corresponds to the intersection point of $K_{i_0}\cap L_{j_0}$, $u$ is assumed to converge to the intersection point $x$. On the other strip-like ends, $u$ is assumed to converge to a point that corresponds to elements in $CF^0(K_{i_a},K_{i_{a+1}})$ or $CF^0(L_{j_b},L_{j_{b+1}})$ that contribute to the differential of the twisted complex $\delta^\mathcal{K}_{a,a+1}$ and $\delta^\mathcal{L}_{b,b+1}$. See Figure \ref{fig:twisted-cpx} for a picture. Then we define the differential of the cochain complex as a sum over all possible lifts $\Tilde{u}$ of rigid holomorphic maps $u$:

\begin{equation}
    \underline{\mu^1_{Tw}(x)}=\sum_{d\geq1}\sum_{u:S_d\to M} g(u) \mu_{\Tilde{u}}^d(\delta^\mathcal{L},\delta^\mathcal{L},\cdots,\delta^\mathcal{L},x,\delta^\mathcal{K},\cdots,\delta^\mathcal{K}),
\end{equation}
where $\mu^d_u$ is the contribution of the map $\Tilde{u}$ to the count $\mu^d$ in $\Tilde{X}$.

\begin{figure}[h]
\centering
\begin{tikzpicture}

\draw (5.5,0) arc [start angle=45, end angle=135, x radius=6, y radius=4];
\draw (5.5,0) arc [start angle=-45, end angle=-135, x radius=6, y radius=4];
\draw [draw=red] (5.5,0) arc [start angle=-50, end angle=-130, x radius=6.6, y radius=3.6];
\draw [draw=none, fill=black] (-1.4,0.75) circle (0.04 and 0.04);
\draw [draw=none, fill=black] (3.95,0.75) circle (0.04 and 0.04);
\draw [draw=none, fill=black] (-1.15,-0.84) circle (0.04 and 0.04);
\draw [draw=none, fill=black] (4.15,-0.68) circle (0.04 and 0.04);

\draw (1.3,-1.8) node {$\mathcal{K}$};
\draw (1.3,1.8) node {$\mathcal{L}$};

\draw (5.2,-0.67) node {$K_{i_0}$};
\draw (-2.5,-0.75) node {$K_{i_k}$};
\draw (5.1,0.67) node {$L_{j_0}$};
\draw (-2.4,0.75) node {$L_{j_l}$};
\draw (3.95,1.2) node {$\delta^{\mathcal{L}}_{j_0j_1}$};
\draw (-1.4,1.4) node {$\delta^{\mathcal{L}}_{j_{l-1}j_l}$};
\draw (4.2,-1.2) node {$\delta^{\mathcal{K}}_{i_0i_1}$};
\draw (-1.1,-1.3) node {$\delta^{\mathcal{K}}_{i_0i_1}$};

\draw (5.7,0) node {$x$};
\draw (-3.2,0) node {$y$};

\end{tikzpicture}
\caption{A holomorphic curve with Lagrangian boundary conditions that contributes to the differential of the twisted complex $\underline{CF^*(\mathcal{K},\mathcal{L})}$. Here, $x\in K_{i_0}\cap L_{j_0}$, and $y\in K_{i_k}\cap L_{j_l}$. The Lagrangian boundary conditions for the holomorphic curve are $K_{i_0}\cup K_{i_1}\cup\cdots\cup K_{i_k}$ and vice versa for $L$, which must appear exactly in this order. Allowed intersection points of $K_{i_a}\cap K_{i_{a+1}}$ are the ones that contribute to the differential of the twisted complex $\delta^\mathcal{K}_{a,a+1}$.}
\label{fig:twisted-cpx}
\end{figure}
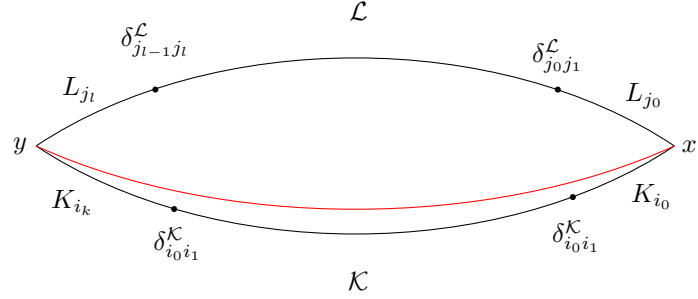

One can verify that this construction defines a differential, so $\underline{CF^*(\mathcal{K},\mathcal{L})}$ forms a cochain complex. As before, a choice of preferred lifts $\{\Tilde{x}\}$ equips $\underline{CF^*(\mathcal{K},\mathcal{L)}}$ with the structure of a based cochain complex over $\Z\pi_1(X)$, and its simple homotopy type is independent of the choice of lifts. Moreover, $\underline{CF^*(\mathcal{K},\mathcal{L})}$ inherits an $A_\infty$-bimodule structure over $Tw\mathcal{F}(X)$, defined by counting lifted holomorphic discs with appropriate boundary conditions.

The proof that the simple homotopy type of $\underline{CF^*(\mathcal{K},\mathcal{L},\{\Tilde{x}\})}$ is independent of the almost complex structure follows from the same argument as in Proposition \ref{prop:J-indep}. In particular, we again use that continuation maps count pseudoholomorphic curves with positive energy. 

We are now ready to move onto the general case. 

\begin{defn}
    Let $\mathcal{K}=\bigoplus_\alpha C^*_\alpha\otimes K_\alpha$ and $\mathcal{L}=\bigoplus D^*_\beta\otimes L_\beta$ be two objects in $Tw_{Ch}\mathcal{F}(X)$. We define the value of the $A_\infty$-bimodule $\underline{CF^*(\mathcal{K},\mathcal{L}})$ as
    \begin{equation}
         \underline{CF^*(\mathcal{K},\mathcal{L})}=\bigoplus_{x\in K_\alpha\cap L_\beta} \Z \langle\phi_{\alpha\beta}\otimes(\Tilde{x},\mathfrak{o}_x)\rangle,
    \end{equation}
    where $\Tilde{x}$ ranges over all possible lifts of each intersection point $x\in K_\alpha\cap L_\beta$, and $\{\phi_{\alpha\beta}\}$ is a chosen $\Z$-basis for $\hom_\Z(C^*_\alpha,D^*_\beta)$. The differential is given by
    \begin{equation}
        \underline{\mu^1_{Tw_{Ch}}(\phi\otimes (\Tilde{x},\mathfrak{o}_x))}=\partial_{Ch}\phi\otimes(\Tilde{x},\mathfrak{o}_x)+(-1)^{\deg\phi-1}\phi\otimes\underline{\mu^1_{Tw}(\Tilde{x})},
    \end{equation}
    where $\underline{\mu^1_{Tw}}$ is the lifted Floer differential previously defined, and $\partial_{Ch}(\phi)=\partial_D\circ\phi-(-1)^{\deg\phi}\phi\circ\partial_C$ is the differential in the DG category of cochain complexes.
\end{defn}

By fixing a preferred lift $\Tilde{x}$ of each intersection point $x\in K_\alpha\cap L_\beta$, we obtain a distinguished basis $\{\phi_{\alpha\beta}\otimes \Tilde{x}\}$, which identifies $\underline{CF^*(\mathcal{K},\mathcal{L})}$ as a based cochain complex over $\Z\pi_1(X)$. We now show that its simple homotopy type does not depend on these choices.

\begin{lemma} \label{lem:basis-invariance-bimodule}
    The simple homotopy type of the based cochain complex 
    \begin{equation}
        \bigl(\underline{CF^*(\mathcal{K},\mathcal{L})},\{\phi_{\alpha\beta}\otimes \Tilde{x}\}\bigr)
    \end{equation}
    is independent of the choice of bases $\{\phi_{\alpha\beta}\}$ for each $\hom_\Z(C^*_\alpha,D^*_\beta)$ and of the choice of lifts $\Tilde{x}$ for $x\in K_\alpha\cap L_\beta$.
\end{lemma}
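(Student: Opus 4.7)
The plan is to compare the two based cochain complexes via the identity map on the underlying cochain complex $\underline{CF^*(\mathcal{K},\mathcal{L})}$ and show that the corresponding change-of-basis matrix represents the trivial class in $Wh(\pi_1(X))$. It suffices to treat the two types of choices separately, since any overall change of basis factors into a composition of these two operations.

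First I would handle the choice of lifts. Replacing each preferred lift $\widetilde{x}$ with $g_x \cdot \widetilde{x}$ for some $g_x \in \pi_1(X)$ changes the basis element $\phi_{\alpha\beta} \otimes \widetilde{x}$ to $g_x \cdot (\phi_{\alpha\beta} \otimes \widetilde{x})$, since the deck transformation action of $\pi_1(X)$ is by left multiplication in the $\Z\pi_1(X)$-module structure. The resulting change-of-basis matrix is diagonal with entries $g_x \in \pi_1(X) \subset \GL_1(\Z\pi_1(X))$. These entries are trivial units, which vanish in $Wh(\pi_1(X))$ by Definition \ref{def:Whitehead-group-Wh}; equivalently, this is precisely the deck-transformation elementary simple operation from Definition \ref{def:simple-operations}.

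Next, for the choice of $\Z$-basis, replacing $\{\phi_{\alpha\beta}\}$ with another $\Z$-basis $\{\phi'_{\alpha\beta}\}$ of $\hom_\Z(C^*_\alpha, D^*_\beta)$ produces an invertible matrix $M$ with entries in $\Z$. Tensoring with the identity on the lifts, the induced change of basis on $\underline{CF^*(\mathcal{K},\mathcal{L})}$ is again represented by a block matrix whose entries lie in the subring $\Z \subset \Z\pi_1(X)$. By functoriality of $\overline{K_1}$ under the ring inclusion $\Z \hookrightarrow \Z\pi_1(X)$, the class of this matrix in $\overline{K_1}(\Z\pi_1(X))$ factors through $\overline{K_1}(\Z) = 0$, and hence is trivial in $Wh(\pi_1(X))$.

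Combining the two steps via the multiplicativity of Whitehead torsion under composition (Proposition \ref{prop:Whitehead-torsion-additive}), the identity map between any two choices of based cochain complex has trivial Whitehead torsion, establishing the claimed independence. I do not foresee any major obstacle here, as the argument reduces to the standard functoriality of $\overline{K_1}$ and the tautological triviality of trivial units in $Wh$; the only care required is in cleanly separating the two types of basis changes and recognizing that the $\phi$-part of the change of basis has matrix entries in the $\Z$-subring of $\Z\pi_1(X)$.
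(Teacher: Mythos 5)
Your proposal is correct and follows essentially the same two-step decomposition as the paper: deck-transformation changes of lift are trivial units in $Wh(\pi_1(X))$, and changes of $\Z$-basis for the $\hom_\Z(C^*_\alpha,D^*_\beta)$ factors have change-of-basis matrices with entries in $\Z\subset\Z\pi_1(X)$. The only cosmetic difference is that you invoke functoriality of $\overline{K_1}$ under $\Z\hookrightarrow\Z\pi_1(X)$ where the paper instead exhibits an explicit sequence of $\Z$-valued elementary row and column operations; these are the same underlying fact, since $\overline{K_1}(\Z)=0$ is proved by exactly such a reduction.
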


\begin{proof}
    We separate the two kinds of choices. First, suppose a different lift $g\cdot \Tilde{y}$ is chosen for some $y\in K_\alpha\cap L_\beta$ and $g\in\pi_1(X)$. Then for each $\alpha,\beta$, the basis element $\phi_{\alpha\beta}\otimes \Tilde{y}$ is replaced by $g\cdot(\phi_{\alpha\beta}\otimes \Tilde{y})$. In other words, a single change of lift from $\tilde{y}$ to $g\cdot\tilde{y}$ induces simultaneously $\sum_{\alpha,\beta}\mathrm{rk}(\hom_\Z(C^*_\alpha,D^*_\beta))$ many elementary basis changes to the induced basis $\{\phi_{\alpha\beta}\otimes \Tilde{x}\}$. Thus the two based complexes given by $\underline{CF^*(\mathcal{K},\mathcal{L})}$ with the two different bases are related by simple operations, and their simple homotopy types agree.

    Next, suppose we choose a different basis $\{\phi'_{\alpha\beta}\}$ for $\hom_\Z(C^*_\alpha,D^*_\beta)$. The two bases $\{\phi_{\alpha\beta}\}$ and $\{\phi'_{\alpha\beta}\}$ are related by a $\Z$-valued change-of-basis matrix. This matrix defines a sequence of $\Z$-valued row and column operations relating the two bases, which in turn induce elementary simple operations on the corresponding $\Z\pi_1(X)$-bases $\{\phi_{\alpha\beta}\otimes \Tilde{x}\}$ and $\{\phi'_{\alpha\beta}\otimes \Tilde{x}\}$ for $\underline{CF^*(\mathcal{K},\mathcal{L})}$. Hence the resulting based complexes have the same simple homotopy type.
\end{proof}


We now define the $A_\infty$-bimodule structure on $\underline{CF^*(\mathcal{K},\mathcal{L}})$. Recall that the $\mu^k$ operations in $Tw_{Ch}\mathcal{F}(X)$ are inherited from the additive enlargement $\Sigma_{Ch}\mathcal{F}$, then deformed by the Maurer-Cartan elements associated to each object. To simplify notation, we will explain how each $\mu^s_{\Sigma\mathcal{F}}$-term in the $A_\infty$-bimodule operations for $\underline{CF^*(\mathcal{K},\mathcal{L)}}$ is defined.

We begin with the case $s=1$, contributing to the operation $\mu^{0\mid1\mid0}$ from $\underline{CF^*(\mathcal{K},\mathcal{L}})$ to itself. This operation acts on a generator $\phi\otimes \Tilde{x}\in\hom(C^*_\alpha\otimes K_\alpha,D^*_\beta\otimes L_\beta)$ by
\begin{equation}
    \mu^1_{\Sigma\mathcal{F}}(\phi\otimes\Tilde{x})=(\partial_{Ch}\phi)\otimes\Tilde{x}+(-1)^{\deg{\phi}-1}\phi\otimes\underline{\mu^1(\Tilde{x})},
\end{equation}
where $\partial_{Ch}$ is the differential on $\hom_\Z(C^*_\alpha,D^*_\beta)$ induced from the two cochain complexes, and $\underline{\mu^1_{Tw}}$ is the $\mu^{0\mid1\mid0}$ operation defined for $\underline{CF^*(K_\alpha,L_\beta)}$ as an $A_\infty$-bimodule over $\mathcal{F}(X)$.

To define the higher order operations arising from $\mu^s_{\Sigma\mathcal{F}}$ for $s\geq 2$, we fix objects $\mathcal{K}_0,\cdots,\mathcal{K}_k$ and $\mathcal{L}_0,\cdots,\mathcal{L}_l$ in $Tw_{Ch}\mathcal{F}(X)$. We write
\begin{equation}
    \mathcal{K}_i=\bigoplus_{\alpha}C^*_{i,\alpha}\otimes K_{i,\alpha},~ \mathcal{L}_j=\bigoplus_\beta D^*_{j,\beta}\otimes L_{j,\beta}.
\end{equation}
Given generators
\begin{align}
    &\psi_i\otimes y_i\in\hom_{Tw}(C^*_{i,\alpha_i}\otimes K_{i,\alpha_i},C^*_{i-1,\alpha_{i-1}}\otimes K_{i-1,\alpha_{i-1}}),\\
    &\phi\otimes x\in\hom_{Tw}(C^*_{0,\alpha_0}\otimes K_{0,\alpha_0},D^*_{0,\beta_0}\otimes L_{0,\beta_0}),\\ &\nu_j\otimes z_j\in\hom_{Tw}(D^*_{j-1,\beta_{j-1}}\otimes L_{j-1,\beta_{j-1}},D^*_{j,\beta_j}\otimes L_{j,\beta_j}),
\end{align}
the undeformed $A_\infty$-bimodule operation $\mu^{k\mid1\mid l}_{\Sigma\mathcal{F}}$ is given by
\begin{align}
    \underline{\mu^{k\mid1\mid l}_{\Sigma\mathcal{F}}(\psi_k\otimes y_k,\cdots,\psi_1\otimes y_1,\phi\otimes \Tilde{x},\nu_1\otimes z_1,\cdots,\nu_l\otimes z_l)}\\
    =(-1)^{\bowtie} \psi_k\circ\cdots\circ\psi_1\circ\phi\circ\nu_1\cdots\circ\nu_l \otimes\underline{\mu^{k\mid 1\mid l}(y_k,\cdots,y_1,\Tilde{x},z_1,\cdots,z_l)}.
\end{align}
where again $\underline{\mu^{k\mid1\mid l}}$ is the $A_\infty$-bimodule operation from $\underline{CF^*(K_{0,\alpha_0},L_{0,\beta_0})}$ and $\bowtie$ is as in Equation \ref{eqn-signconvention}.

In general, the full $A_\infty$-bimodule structure over $Tw_{Ch}\mathcal{F}(X)$ involves a sum over all insertions of the differentials $\delta$. For brevity, we omit the full expression here. As before, a similar action filtration argument shows that the simple homotopy type of $\underline{CF^*(\mathcal{K},\mathcal{L})}$ is independent of the choice of almost complex structure. 

The issue with Hamiltonian invariance is a little more subtle. Instead of proving a general statement, we settle for the following weaker proposition, whose proof we defer to the following subsection.

\begin{proposition} \label{prop:algebraic-twist-Ham-isotopy}
    Let $V$ be a closed exact Lagrangian in $X$, and let $L$ be any exact Lagrangian. Then for any compactly supported Hamiltonian isotopy $\phi$, there is a simple isomorphism of objects in $Tw_{Ch}\mathcal{F}(X)$:
    \begin{equation}
        T_VL\simeq T_V\phi(L).
    \end{equation}
\end{proposition}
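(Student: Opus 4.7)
The plan is to construct an explicit isomorphism element $\Phi \in \hom^0_{Tw_{Ch}\mathcal{F}}(T_V L, T_V \phi(L))$ that lifts the Hamiltonian isomorphism on the underlying Lagrangians, and then verify simplicity by a filtration argument on the induced map between bimodules. Let $\alpha \in CF^0(L, \phi(L))$ and $\beta \in CF^0(\phi(L), L)$ be the isomorphism elements from Proposition \ref{prop:Ham-invariance}. Define $\Phi$ to be the block-diagonal morphism whose $L \to \phi(L)$ component is $\alpha$ and whose $\hom^*_\mathcal{F}(V,L)[1]\otimes V \to \hom^*_\mathcal{F}(V,\phi(L))[1]\otimes V$ component is the shift of $\mu^2_\mathcal{F}(\alpha,-)\otimes \id_V$, with vanishing off-diagonal terms. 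Closedness of $\Phi$ as a morphism of twisted complexes reduces to one cross-term identity relating $\mu^2_{Tw_{Ch}}(\alpha, ev_L[1])$ and $\mu^2_{Tw_{Ch}}(ev_{\phi(L)}[1], \mu^2_\mathcal{F}(\alpha,-)[1]\otimes \id_V)$; by Lemma \ref{lem:mu^2-with-ev} and the definition of the evaluation map, both expressions recover $\mu^2_\mathcal{F}(\alpha,-)[1]$ up to the sign convention, so they cancel. Constructing $\Psi$ analogously from $\beta$ and invoking the cohomological identities $[\mu^2(\beta,\alpha)] = e_L$, $[\mu^2(\alpha,\beta)] = e_{\phi(L)}$ shows that $\Phi$ is an isomorphism in $H^*(Tw_{Ch}\mathcal{F})$.

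For simplicity, the key observation is that $T_V L$ carries the two-step filtration with sub-object $L$ and quotient $\hom^*_\mathcal{F}(V,L)[1]\otimes V$ (and symmetrically for $T_V \phi(L)$), and $\Phi$ preserves this filtration because its off-diagonal components vanish. Applying $\underline{CF^*(\mathcal{M},-)}$ therefore yields a filtered map of based $\Z\pi_1(X)$-complexes
\begin{equation}
    \underline{\mu^2(\Phi,-)}:\underline{CF^*(\mathcal{M}, T_V L)} \longrightarrow \underline{CF^*(\mathcal{M}, T_V \phi(L))},
\end{equation}
whose associated graded map has two pieces. The first piece is the induced map $\underline{\mu^2(\alpha,-)}:\underline{CF^*(\mathcal{M}, L)} \to \underline{CF^*(\mathcal{M},\phi(L))}$, which is a simple homotopy equivalence by (the twisted-complex extension of) Proposition \ref{prop:Ham-isotopy-simple}. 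The second piece is
\begin{equation}
    \mu^2_\mathcal{F}(\alpha,-)[1]\otimes \id :\hom^*_\mathcal{F}(V,L)[1]\otimes \underline{CF^*(\mathcal{M},V)} \longrightarrow \hom^*_\mathcal{F}(V,\phi(L))[1]\otimes \underline{CF^*(\mathcal{M},V)};
\end{equation}
since $\alpha$ is an isomorphism element, $\mu^2_\mathcal{F}(\alpha,-):\hom^*_\mathcal{F}(V,L) \to \hom^*_\mathcal{F}(V,\phi(L))$ is a chain homotopy equivalence of finitely generated $\Z$-complexes with acyclic mapping cone over $\Z$, so Lemma \ref{lem:zero-torsion-general} (applied to the mapping cone of this tensor-product map) gives that this piece is also a simple homotopy equivalence. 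Applying Proposition \ref{prop:Whitehead-torsion-filtration} to the mapping cone of $\underline{\mu^2(\Phi,-)}$ equipped with the two-step filtration shows that the total cone is simply acyclic. The analogous argument with right multiplication establishes simplicity in the other variable, so $\Phi$ is a simple isomorphism.

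The main obstacle is Step 4, namely controlling the filtration on the bimodule $\underline{CF^*(\mathcal{M}, T_V L)}$ well enough to identify the associated graded pieces with the two complexes above. The higher $A_\infty$-bimodule structure maps on $\underline{CF^*(\mathcal{M}, T_V L)}$ are built by inserting arbitrarily many copies of the differentials $\delta_{T_V L}=ev_L[1]$ and $\delta_{\mathcal{M}}$, and one must verify that such insertions preserve the lower-triangular block structure used to define the filtration. This is essentially built into the curved-$A_\infty$ formalism from Section \ref{ssec:category-theory}: the ordering condition $\delta_{ij}=0$ for $i\leq j$ on twisted complexes ensures that insertions of $\delta_{T_V L}$ only map the $\hom^*_\mathcal{F}(V,L)[1]\otimes V$ factor into the $L$ factor and never the reverse. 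Once this compatibility is recorded, the filtration analysis reduces cleanly to the two building blocks above, and the remaining arguments are routine applications of Propositions \ref{prop:Ham-isotopy-simple}, \ref{prop:Whitehead-torsion-filtration} and Lemma \ref{lem:zero-torsion-general}.
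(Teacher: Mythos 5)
Your construction of the isomorphism element has a genuine gap. The component you want on the $V$-factor lives in $\hom_\Z(\hom^*_\mathcal{F}(V,L),\hom^*_\mathcal{F}(V,\phi(L)))\otimes\hom_\mathcal{F}(V,V)$, so "$\mu^2_\mathcal{F}(\alpha,-)\otimes\id_V$" requires an element of $CF^*(V,V)$ acting as a strict unit. The Fukaya category is only cohomologically unital: the best available is a cocycle $\epsilon$ representing the cohomological unit, and $\mu^2_\mathcal{F}(-,\epsilon)$ is only chain homotopic to the identity. Consequently the two cross-terms in the closedness equation do \emph{not} cancel: by Lemma \ref{lem:mu^2-with-ev} the term $\mu^2_{Tw_{Ch}}(\alpha,ev_L[1])$ recovers $\pm\mu^2_\mathcal{F}(\alpha,-)[1]$, but the term $\mu^2_{Tw_{Ch}}(ev_{\phi(L)}[1],\phi\otimes\epsilon)$ recovers $\pm\mu^2_\mathcal{F}(\alpha,\mu^2_\mathcal{F}(-,\epsilon))[1]$, and these differ by the homotopy between $\mu^2(-,\epsilon)$ and $\id$ (plus $A_\infty$ higher terms). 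So your block-diagonal $\Phi$ with vanishing off-diagonal component is not a cocycle. The paper's proof (of the generalization, Proposition \ref{prop: algebraic-twist-simple-twisted-cpx}) fixes exactly this by adding an off-diagonal component $\gamma=(-1)^{\deg}h+(-1)^{\deg}\mu^3(\alpha,-,\epsilon)$, where $h$ is the homotopy realizing $\mu^2(\mu^2(\alpha,-),\epsilon)\simeq\mu^2(\alpha,-)$, and verifying the Maurer--Cartan-type identity $\mu^1_{Tw_{Ch}}(\gamma)+\mu^2_{Tw_{Ch}}(\alpha,ev')+\mu^2_{Tw_{Ch}}(ev',\phi\otimes\epsilon)=0$ using the $A_\infty$-relation.

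The second half of your argument (filtering $\underline{CF^*(\mathcal{M},T_VL)}$ by the two-step structure of the twisted complex and checking simplicity graded-piece by graded-piece) is structurally the right strategy and matches the paper, but it also inherits the unit problem: the $V$-factor piece of the associated graded map is $\mu^2(\alpha,-)\otimes\mu^2(-,\epsilon)$ rather than $\mu^2(\alpha,-)\otimes\id$, so you must additionally show that $\id\otimes\mu^2(-,\epsilon)$ is a simple homotopy equivalence. The paper does this by writing its cone as $CF^*(V,\phi(L))\otimes\underline{cone(\mu^2(-,\epsilon))}$ and invoking Lemma \ref{lem:zero-torsion-tensor-complex} (simple acyclicity of $\underline{cone(\mu^2(-,\epsilon))}$ uses that $\epsilon$ is a simple isomorphism element, which follows from the Hamiltonian-invariance results of Subsection \ref{ssec:bimodule}). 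With the corrected morphism $(\phi\otimes\epsilon,\gamma,\alpha)$ the off-diagonal $\gamma$ is strictly lower-triangular, so it does not change the associated graded pieces and your filtration argument then goes through.
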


\subsection{Simple categorical notions}

Using the construction of the $A_\infty$-bimodule $\underline{CF^*(\mathcal{K},\mathcal{L})}$ from the previous subsection, we now define ``simple'' analogues of the $A_\infty$-categorical notions introduced in Section \ref{ssec:category-theory}. The central result of this subsection is Proposition \ref{prop:automaticsimplicity}, which states that any categorical isomorphism in a Fukaya category which admits simply-connected simple generators induces a simple homotopy equivalence. We note that some well-studied examples of generators for Fukaya categories, such as Lefschetz thimbles \cite{Seidel08} or cocores to critical Weinstein handles \cite{GPS} are simply connected. 

In this subsection, the Fukaya category under consideration may refer to either the compact Fukaya category $\mathcal{F}(X)$, or to the Fukaya category of a Lefschetz fibration $\mathcal{F}(\pi)$, as introduced in Section \ref{ssec:Lef-fib}. To treat both cases uniformly, we denote the underlying category by $\mathcal{F}$. In either case, the $A_\infty$-category $\mathcal{F}$ is proper and c-unital.

Recall that an acyclic based cochain complex $(C^*,\partial_*,\{c_i\})$ is defined to be \emph{simply acyclic} if it has trivial Whitehead torsion. More generally, we may define the Whitehead torsion of a simple homotopy type whenever it is represented by a based cochain complex that is acyclic. Given that the simple homotopy type of $\underline{CF^*(K,L)}$ is well-defined, we may similarly define simply acyclic objects in Fukaya categories.

\begin{defn} \label{def:simply-acyclic-obj}
    An object $K$, or more generally a twisted complex $\mathcal{K}$ in $Tw_{Ch}\mathcal{F}$ is \emph{left simply acyclic} if $\underline{CF^*(\mathcal{K},L)}$ is simply acyclic for every object $L$. Similarly, $\mathcal{K}$ is \emph{right simply acyclic} if $\underline{CF^*(L,\mathcal{K})}$ is simply acyclic for any object $L$. We say that $\mathcal{K}$ is \emph{simply acyclic} if it is both left and right simply acyclic.
\end{defn}

We likewise define a (left/right) $A_\infty$-module $\mathcal{M}$ to be a \emph{simply acyclic $A_\infty$-module} if it assigns to each object $L$ a simple homotopy type $\mathcal{M}(L)$ over $\Z\pi_1(X)$ that is simply acyclic. We may extend left/right Yoneda modules to $\Z\pi_1(X)$-valued simple homotopy types as follows:

\begin{defn} \label{def:enhanced-Yoneda-module}
    For any object $K$ or twisted complex $\mathcal{K}$ in the Fukaya category, its \emph{enhanced right Yoneda module} $\underline{\mathcal{Y}_\mathcal{K}^r}$ assigns to each Lagrangian $L$
    \begin{equation}
        \underline{\mathcal{Y}_\mathcal{K}^r}(L)=\underline{CF^*(L,\mathcal{K})},
    \end{equation}
    which is a $\Z$-graded cochain complex over $\Z$ with a well-defined simple homotopy type as a $\Z\pi_1(X)$-coefficient cochain complex. We similarly define the \emph{enhanced left Yoneda module} $\underline{\mathcal{Y}_\mathcal{K}^l}$.
\end{defn}

We will also write $\underline{CF^*(\quad,\mathcal{K})}$ instead of $\underline{\mathcal{Y}_\mathcal{K}^r}$ and $\underline{CF^*(\mathcal{K},\quad)}$ instead of $\underline{\mathcal{Y}_\mathcal{K}^l}$ for clarity. In this language, we may reformulate left simply acyclic objects to be the twisted complexes $\mathcal{K}$ whose enhanced left Yoneda module $\underline{CF^*(\mathcal{K},\quad)}$ is a simply acyclic $A_\infty$-module, and vice versa for right simply acyclic objects.

Now suppose that we have an isomorphism element $\alpha\in CF^0(\mathcal{K},\mathcal{L})$ between two twisted complexes $\mathcal{K}$, $\mathcal{L}$ in $Tw_{Ch}\mathcal{F}(X)$. Then $\alpha$ induces $A_\infty$-module homomorphisms between the enhanced Yoneda modules
\begin{align}
    &l^1(\alpha):\underline{\mathcal{Y}^r_{\mathcal{K}}}\to\underline{\mathcal{Y}^r_\mathcal{L}},\\
    &r^1(\alpha):\underline{\mathcal{Y}^l_\mathcal{L}}\to\underline{\mathcal{Y}^l_\mathcal{K}}.
\end{align}

Recall that the module homomorphism $l^1(\alpha)$ is defined as the collection of maps
\begin{align}
    \underline{CF^*(L_k,\mathcal{K})}\otimes CF^*(L_{k-1},L_k)\otimes\cdots&\otimes CF^*(L_0,L_1)\to\underline{CF^*(L_0,\mathcal{L})},\\
    (y,x_k,\cdots,x_1)&\mapsto \mu^{k+2}(\alpha,y,x_k,\cdots,x_1),
\end{align}
and $r^1(\alpha)$ is similarly defined by counting lifted holomorphic discs contributing to the $A_\infty$-relations. On the level of cochain complexes, we are only interested in the $\mu^2$ term
\begin{equation}
    \mu^2_{Tw_{Ch}}(\alpha,\quad): \underline{\mathcal{Y}^r_\mathcal{K}}\to\underline{\mathcal{Y}^r_\mathcal{L}},
\end{equation}
which is a chain homotopy equivalence. We take its mapping cone
\begin{equation}
    \underline{cone(\mu^2_{Tw_{Ch}}(\alpha,\quad))}=\underline{\mathcal{Y}^r_\mathcal{K}}[1]\oplus\underline{\mathcal{Y}^r_\mathcal{L}}
\end{equation}
and make the following definition:

\begin{defn} \label{def:simple-equivalence}
    An isomorphism element $\alpha$ between two objects $K,L$, or more generally between two twisted complexes $\mathcal{K}$, $\mathcal{L}$ in $Tw_{Ch}\mathcal{F}$ is a \emph{left simple isomorphism} if $\underline{cone(\mu^2_{Tw_{Ch}}(\alpha,\quad))}$ is a simply acyclic $A_\infty$-module. Similarly, $\alpha$ is a \emph{right simple isomorphism} if $\underline{cone(\mu^2_{Tw_{Ch}}(~~,\alpha))}$ is a simply acyclic $A_\infty$-module. We call $\alpha$ a \emph{simple isomorphism} if it is both left and right simple.
\end{defn}

Recall that all isomorphism elements between two objects $\mathcal{K},\mathcal{L}$ in $Tw_{Ch}\mathcal{F}$ lie in the same cohomology class, up to multiplication by a unit of the coefficient ring, provided that either $H^0\hom_{Tw_{Ch}}(\mathcal{K},\mathcal{K})$ or $H^0\hom_{Tw_{Ch}}(\mathcal{L},\mathcal{L})$ is free of rank 1.

\begin{proposition} \label{prop:simple-equivalence-indep-cohomology-class}
    Let $\alpha\in CF^*(\mathcal{K},\mathcal{L})$ be a simple isomorphism element. Then for any $\nu\in CF^*(\mathcal{K},\mathcal{L})$, the element $\alpha+\mu^1_{Tw_{Ch}}(\nu)$ is a simple isomorphism element.
\end{proposition}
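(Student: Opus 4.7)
The plan is to reduce the preservation of simplicity to the standard fact that two chain-homotopic chain maps between based cochain complexes have mapping cones related by a unitriangular chain isomorphism, which has trivial Whitehead torsion. First observe that $\alpha + \mu^1_{Tw_{Ch}}(\nu)$ is a degree zero cocycle cohomologous to $\alpha$, so it defines the same cohomology class and is automatically an isomorphism element; what requires work is checking that it remains simple.

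For each test object $L$, I would apply the two-input $A_\infty$-bimodule relation for $\underline{CF^*(\mathcal{K}, \mathcal{L})}$ to the pair $(\nu, x)$ with $x \in \underline{CF^*(L, \mathcal{K})}$. This yields
\[
\mu^1\bigl(\mu^2_{Tw_{Ch}}(\nu, x)\bigr) + \mu^2_{Tw_{Ch}}\bigl(\mu^1(\nu), x\bigr) + (-1)^{\deg(\nu) - 1} \mu^2_{Tw_{Ch}}\bigl(\nu, \mu^1(x)\bigr) = 0,
\]
which rewrites $\mu^2_{Tw_{Ch}}(\mu^1(\nu), \cdot)$ as the commutator of $\mu^1$ with $H := -\mu^2_{Tw_{Ch}}(\nu, \cdot)$ in the hom-complex of chain maps $\underline{\mathcal{Y}^r_\mathcal{K}}(L) \to \underline{\mathcal{Y}^r_\mathcal{L}}(L)$. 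Consequently $\mu^2_{Tw_{Ch}}(\alpha + \mu^1(\nu), \cdot) = \mu^2_{Tw_{Ch}}(\alpha, \cdot) + \partial H + H \partial$, so the two chain maps are chain-homotopic via an explicit null-homotopy.

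Next I would write down the canonical isomorphism
\[
\Phi = \begin{pmatrix} \id & 0 \\ H & \id \end{pmatrix} : \underline{cone\bigl(\mu^2_{Tw_{Ch}}(\alpha, \cdot)\bigr)}(L) \longrightarrow \underline{cone\bigl(\mu^2_{Tw_{Ch}}(\alpha + \mu^1(\nu), \cdot)\bigr)}(L),
\]
expressed in the natural basis inherited from the lifted generators of $\underline{\mathcal{Y}^r_\mathcal{K}}(L)$ and $\underline{\mathcal{Y}^r_\mathcal{L}}(L)$. Since $\Phi$ is unitriangular over $\Z\pi_1(X)$ it lies in the subgroup of $\GL(\Z\pi_1(X))$ generated by elementary matrices, so its class in $Wh(\pi_1(X))$ vanishes. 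Combined with the multiplicativity of Whitehead torsion (Proposition \ref{prop:Whitehead-torsion-additive}) and the left simplicity of $\alpha$, this shows that the cone of $\mu^2_{Tw_{Ch}}(\alpha + \mu^1(\nu), \cdot)$ is simply acyclic on every $L$. The symmetric argument using the enhanced left Yoneda module $\underline{\mathcal{Y}^l_\mathcal{L}}$ handles right simplicity, and together they give that $\alpha + \mu^1(\nu)$ is a simple isomorphism.

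The main obstacle is purely sign bookkeeping: in the conventions of Subsection \ref{ssec:category-theory} one must verify that the $A_\infty$-identity carries the signs displayed above, so that $H = -\mu^2_{Tw_{Ch}}(\nu, \cdot)$ is a genuine null-homotopy and $\Phi$ truly intertwines the two cone differentials. Once the signs are pinned down, the vanishing of the torsion is formal and depends only on the unitriangular shape of $\Phi$ over $\Z\pi_1(X)$.
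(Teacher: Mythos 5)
Your proposal is correct and follows essentially the same route as the paper: both arguments observe that the $A_\infty$-relations make $\mu^2_{Tw_{Ch}}(\alpha,\cdot)$ and $\mu^2_{Tw_{Ch}}(\alpha+\mu^1_{Tw_{Ch}}(\nu),\cdot)$ chain homotopic, and then conclude equality of Whitehead torsions. The only difference is that the paper simply cites Proposition \ref{prop:homotopy-invariance-torsion} at this point, whereas you re-prove that proposition inline via the unitriangular cone isomorphism $\Phi$; the content is the same.
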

\begin{proof}
    By the $A_\infty$-relations, $\mu^1_{Tw_{Ch}}(\nu)$ is a chain homotopy between the homotopy equivalences
\begin{equation}
    \mu^2_{Tw_{Ch}}(\alpha,~~),\mu^2_{Tw_{Ch}}(\alpha+\mu^1_{Tw}\nu,~~):\underline{CF^*(X,\mathcal{K})}\to \underline{CF^*(X,\mathcal{L})}
\end{equation}
    for any object $X$. Since chain homotopic homotopy equivalences between based cochain complexes have the same Whitehead torsion (Proposition \ref{prop:homotopy-invariance-torsion}), it follows that $\alpha+\mu^1_{Tw_{Ch}}\nu$ is a simple isomorphism.
\end{proof}

\begin{proposition} \label{prop:composition-of-simple-equivalences}
    Let $\alpha\in CF^*(\mathcal{K},\mathcal{L})$, $\beta\in CF^*(\mathcal{L},\mathcal{N})$ be simple isomorphism elements. Then the composition $\mu^2_{Tw_{Ch}}(\beta,\alpha)\in CF^*(\mathcal{K},\mathcal{N})$ is also a simple isomorphism element.
\end{proposition}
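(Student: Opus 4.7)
The plan is to reduce to the two algebraic facts about Whitehead torsion of chain maps collected in Section \ref{sec:torsion}: multiplicativity under composition (Proposition \ref{prop:Whitehead-torsion-additive}) and invariance under chain homotopy (Proposition \ref{prop:homotopy-invariance-torsion}). I will handle only left simplicity; right simplicity will follow from the entirely symmetric argument using $\underline{\mathcal{Y}^l}$ in place of $\underline{\mathcal{Y}^r}$.

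For an arbitrary test Lagrangian $L_0$, the three relevant chain maps on enhanced Yoneda modules will be
\begin{align*}
    f_\alpha &= \mu^2_{Tw_{Ch}}(\alpha,\quad):\underline{CF^*(L_0,\mathcal{K})}\to\underline{CF^*(L_0,\mathcal{L})},\\
    f_\beta &= \mu^2_{Tw_{Ch}}(\beta,\quad):\underline{CF^*(L_0,\mathcal{L})}\to\underline{CF^*(L_0,\mathcal{N})},\\
    f_{\beta\alpha} &= \mu^2_{Tw_{Ch}}(\mu^2_{Tw_{Ch}}(\beta,\alpha),\quad):\underline{CF^*(L_0,\mathcal{K})}\to\underline{CF^*(L_0,\mathcal{N})}.
\end{align*}
By the simplicity of $\alpha$ and $\beta$, the first two maps are simple homotopy equivalences, so the problem will reduce to showing $\tau(f_{\beta\alpha})=0$ in $Wh(\pi_1(X))$.

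The main step will be to invoke the $A_\infty$-bimodule relation on the input sequence $(\beta,\alpha,x)$ for an arbitrary $x\in\underline{CF^*(L_0,\mathcal{K})}$. Since $\alpha,\beta$ are cocycles, the $\mu^1\alpha$ and $\mu^1\beta$ contributions drop out, and the relation will collapse, up to signs, to an identity of the form
\begin{equation*}
    f_{\beta\alpha}(x)-f_\beta\circ f_\alpha(x)=\pm\mu^1\bigl(\mu^3_{Tw_{Ch}}(\beta,\alpha,x)\bigr)\pm\mu^3_{Tw_{Ch}}\bigl(\beta,\alpha,\mu^1 x\bigr),
\end{equation*}
which exhibits $\pm\mu^3_{Tw_{Ch}}(\beta,\alpha,\quad)$ as a chain homotopy between $f_{\beta\alpha}$ and the honest composition $f_\beta\circ f_\alpha$.

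Once the chain homotopy is in hand, the conclusion will be immediate: Proposition \ref{prop:homotopy-invariance-torsion} gives $\tau(f_{\beta\alpha})=\tau(f_\beta\circ f_\alpha)$, and Proposition \ref{prop:Whitehead-torsion-additive} gives $\tau(f_\beta\circ f_\alpha)=\tau(f_\beta)\cdot\tau(f_\alpha)$, which is trivial by hypothesis. The only real bookkeeping difficulty I anticipate is tracking the signs in the $A_\infty$-relation together with those coming from the mapping cone conventions of Section \ref{ssec:category-theory}; no new geometric or homotopical input should be required.
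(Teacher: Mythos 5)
Your proposal is correct and follows essentially the same route as the paper: the paper likewise observes that $\mu^2_{Tw_{Ch}}(\mu^2_{Tw_{Ch}}(\beta,\alpha),\,\cdot\,)$ and $\mu^2_{Tw_{Ch}}(\beta,\mu^2_{Tw_{Ch}}(\alpha,\,\cdot\,))$ are chain homotopic and then invokes multiplicativity (Proposition \ref{prop:Whitehead-torsion-additive}) and homotopy invariance (Proposition \ref{prop:homotopy-invariance-torsion}) of Whitehead torsion. Your write-up is if anything slightly more explicit than the paper's, since you identify $\pm\mu^3_{Tw_{Ch}}(\beta,\alpha,\,\cdot\,)$ as the chain homotopy via the $A_\infty$-relation, a step the paper simply asserts.
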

\begin{proof}
    Since Whitehead torsion is multiplicative under composition (Proposition \ref{prop:Whitehead-torsion-additive}), the homotopy equivalence
    \begin{equation}
        \mu^2_{Tw_{Ch}}(\beta,\mu^2_{Tw_{Ch}}(\alpha,~~)):\underline{CF^*(X,\mathcal{K})}\to \underline{CF^*(X,\mathcal{N})}
    \end{equation}
    has trivial Whitehead torsion for any object $X$. Since $\mu^2_{Tw_{Ch}}(\mu^2_{Tw_{Ch}}(\beta,\alpha),~)$ and $\mu^2_{Tw_{Ch}}(\beta,\mu^2_{Tw_{Ch}}(\alpha,~))$ are chain homotopic, their Whitehead torsions agree, and therefore $\mu^2_{Tw_{Ch}}(\beta,\alpha)$ is a simple isomorphism.
\end{proof}

\begin{proposition} \label{prop:composition-simple}
    Let $\alpha\in CF^*(\mathcal{K},\mathcal{L})$, $\beta\in CF^*(\mathcal{L},\mathcal{N})$ be simple isomorphism elements, and assume that either $H^0\hom^*_{Tw_{Ch}}(\mathcal{K},\mathcal{K})$ or $H^0\hom^*_{Tw_{Ch}}(\mathcal{N},\mathcal{N})$ is free of rank 1. Then any isomorphism element $\gamma\in CF^*(\mathcal{K},\mathcal{N})$ is a simple isomorphism.
\end{proposition}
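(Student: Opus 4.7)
The plan is to reduce the simplicity of an arbitrary isomorphism element $\gamma$ to that of the specific composition $\mu^2_{Tw_{Ch}}(\beta,\alpha)$, which is already known to be simple by Proposition \ref{prop:composition-of-simple-equivalences}. The key observation is that the hypothesis on $H^0\hom^*_{Tw_{Ch}}(\mathcal{K},\mathcal{K})$ or $H^0\hom^*_{Tw_{Ch}}(\mathcal{N},\mathcal{N})$ being free of rank $1$ gives a strong uniqueness statement for isomorphism classes.

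First, I would invoke the discussion following Definition \ref{def:equivalence}: since $\mu^2$ with a fixed isomorphism element induces an isomorphism $H^0\hom^*_{Tw_{Ch}}(\mathcal{K},\mathcal{N})\cong H^0\hom^*_{Tw_{Ch}}(\mathcal{K},\mathcal{K})$ or $\cong H^0\hom^*_{Tw_{Ch}}(\mathcal{N},\mathcal{N})$, the hypothesis forces $H^0\hom^*_{Tw_{Ch}}(\mathcal{K},\mathcal{N})$ to be a free $\Z$-module of rank $1$. Any two isomorphism elements between $\mathcal{K}$ and $\mathcal{N}$ therefore represent cohomology classes that differ by a unit of $\Z$, namely $\pm 1$. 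Applying this to $\gamma$ and $\mu^2_{Tw_{Ch}}(\beta,\alpha)$ yields an element $\nu\in CF^{-1}(\mathcal{K},\mathcal{N})$ such that
\begin{equation}
    \gamma=\pm\mu^2_{Tw_{Ch}}(\beta,\alpha)+\mu^1_{Tw_{Ch}}(\nu).
\end{equation}

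Next, by Proposition \ref{prop:composition-of-simple-equivalences}, the element $\mu^2_{Tw_{Ch}}(\beta,\alpha)$ is a simple isomorphism. I would then observe that negating an isomorphism element preserves simplicity: the mapping cone of $\mu^2_{Tw_{Ch}}(-\mu^2_{Tw_{Ch}}(\beta,\alpha),~)$ differs from that of $\mu^2_{Tw_{Ch}}(\mu^2_{Tw_{Ch}}(\beta,\alpha),~)$ only by negation of the off-diagonal block, which amounts to a sign change on a chosen basis. Such a sign change lies in the image of $\pm G\subset K_1(\Z\pi_1(X))$ and is therefore trivial in $Wh(\pi_1(X))$, so $-\mu^2_{Tw_{Ch}}(\beta,\alpha)$ is also a simple isomorphism. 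Finally, applying Proposition \ref{prop:simple-equivalence-indep-cohomology-class} to the identity displayed above, $\gamma$ is a simple isomorphism as well.

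I don't anticipate a serious obstacle; the main conceptual point is just to observe that the two previous propositions combine cleanly once the uniqueness of isomorphism classes up to a unit is extracted from the rank-one hypothesis. The only minor subtlety is the sign $\pm 1$, handled by the passage from $\overline{K_1}(\Z\pi_1(X))$ to $Wh(\pi_1(X))$, in which trivial units are quotiented out.
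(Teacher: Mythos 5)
Your proof is correct and follows essentially the same route as the paper: identify $\gamma$ with $\mu^2_{Tw_{Ch}}(\beta,\alpha)$ up to a unit $\pm 1$ and a coboundary using the rank-one hypothesis, then combine Proposition \ref{prop:composition-of-simple-equivalences} with Proposition \ref{prop:simple-equivalence-indep-cohomology-class}. Your explicit handling of the sign via triviality of $\pm 1$ in $Wh(\pi_1(X))$ is a small but welcome clarification of a point the paper leaves implicit.
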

\begin{proof}
    By Proposition \ref{prop:composition-of-simple-equivalences}, the product $\mu^2_{Tw_{Ch}}(\beta,\alpha)\in CF^*(\mathcal{K},\mathcal{N})$ is a simple isomorphism. Under our assumptions, any two isomorphism elements between $\mathcal{K}$ and $\mathcal{N}$ belong in the same cohomology class up to a unit. Therefore, there exists an element $\nu\in CF^*(\mathcal{K},\mathcal{N})$ and a unit $c\in \Z^\times$, which must be $\pm1$, such that
    \begin{equation}
        \beta\cdot\alpha=c\gamma+\mu^1_{Tw_{Ch}}(\nu).
    \end{equation}
    By Proposition \ref{prop:simple-equivalence-indep-cohomology-class}, $\pm\gamma$ is a simple isomorphism.
\end{proof}

\begin{defn} \label{def:simple-generators}
    A collection of objects $\{L_i\}$ that generate the Fukaya category $\mathcal{F}$, or in general the category of twisted complexes $Tw_{Ch}\mathcal{F}$ are \emph{simple generators} if for each object $K$, there exists a twisted complex $\mathcal{K}$ made out of objects in $\{L_i\}$ such that there is a simple isomorphism
    \begin{equation}
        \mathcal{K}\cong K.
    \end{equation}
\end{defn}

Recall that to check whether a twisted complex $\mathcal{K}$ is an acyclic object, it is enough to check that $\hom(\mathcal{K},L)$ is acyclic for each Lagrangian $L$, rather than for all twisted complexes $\mathcal{L}$. The same is true for simple categorical notions.

\begin{proposition} \label{prop:simple-notions-check-Lagrangian}
    Let $\mathcal{K}$ be a twisted complex in $Tw_{Ch}\mathcal{F}$ such that for any Lagrangian $L\in\mathcal
    F$, $\underline{\hom_{Tw_{Ch}}(\mathcal{K},L)}$ is simply acyclic. Then for any twisted complex $\mathcal{L}$ in $Tw_{Ch}\mathcal{F}$, $\underline{\hom_{Tw_{Ch}}(\mathcal{K},\mathcal{L})}$ is also simply acyclic.
\end{proposition}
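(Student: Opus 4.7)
The plan is to reduce the twisted-complex statement to the Lagrangian case by running the filtration built into the definition of $Tw_{Ch}\mathcal{F}$. Write $\mathcal{L}=\bigoplus_\beta D^*_\beta\otimes L_\beta$ together with its finite decreasing filtration $\mathcal{L}=F^0\mathcal{L}\supset\cdots\supset F^n\mathcal{L}=0$, whose associated graded pieces $F^i\mathcal{L}/F^{i+1}\mathcal{L}=\bigoplus_{\beta\in I_i}D^*_\beta\otimes L_\beta$ carry zero $\delta$. Since $\delta_\mathcal{L}$ strictly raises filtration level, the subspaces $F^i\hom_{Tw_{Ch}}(\mathcal{K},\mathcal{L}):=\hom_{Tw_{Ch}}(\mathcal{K},F^i\mathcal{L})$ give a finite decreasing filtration of the morphism complex that is preserved by $\mu^1_{Tw_{Ch}}$ (the $\delta_\mathcal{K}$-insertions leave the $\mathcal{L}$-filtration untouched, and the $\delta_\mathcal{L}$-insertions strictly raise it). This filtration clearly lifts to $\underline{\hom_{Tw_{Ch}}(\mathcal{K},\mathcal{L})}$ as a filtration of based cochain complexes over $\Z\pi_1(X)$, and the chosen basis $\{\phi_{\alpha\beta}\otimes\Tilde{x}\}$ descends to a basis of each graded piece.

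The first key step is to identify each graded piece. Since $\delta_\mathcal{L}$-insertions vanish on $F^i/F^{i+1}$, the graded piece decomposes as a direct sum over $\beta\in I_i$ of complexes of the form $\underline{\hom_{Tw_{Ch}}(\mathcal{K},D^*_\beta\otimes L_\beta)}$. Using the canonical identification $\hom_\Z(C^*_\alpha,D^*_\beta)\cong (C^*_\alpha)^\vee\otimes D^*_\beta$ and the fact that the $\delta_\mathcal{K}$-insertions act trivially on the $D^*_\beta$-factor, we obtain an isomorphism of based cochain complexes
\begin{equation}
    \underline{\hom_{Tw_{Ch}}(\mathcal{K},D^*_\beta\otimes L_\beta)}\cong D^*_\beta\otimes_\Z\underline{\hom_{Tw_{Ch}}(\mathcal{K},L_\beta)},
\end{equation}
where the right-hand side carries the tensor-product differential with the sign convention of Lemma \ref{lem:zero-torsion-tensor-complex}.

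The second key step is to invoke simple acyclicity on each factor. By hypothesis, $\underline{\hom_{Tw_{Ch}}(\mathcal{K},L_\beta)}$ is simply acyclic for every Lagrangian $L_\beta$ appearing in $\mathcal{L}$. Lemma \ref{lem:zero-torsion-tensor-complex} then implies that $D^*_\beta\otimes_\Z\underline{\hom_{Tw_{Ch}}(\mathcal{K},L_\beta)}$ is simply acyclic, so the graded piece $\underline{\hom_{Tw_{Ch}}(\mathcal{K},F^i\mathcal{L}/F^{i+1}\mathcal{L})}$, being a finite direct sum of such complexes, is acyclic with trivial Whitehead torsion. Finally, Proposition \ref{prop:Whitehead-torsion-filtration} applied to the filtration on $\underline{\hom_{Tw_{Ch}}(\mathcal{K},\mathcal{L})}$ yields that the total complex is simply acyclic, completing the proof.

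The main point requiring care — and what I would regard as the only real obstacle — is verifying that the filtration behaves correctly with respect to the chosen basis: one must check that the basis $\{\phi_{\alpha\beta}\otimes\Tilde{x}\}$ is compatible with the filtration (in the sense of Proposition \ref{prop:Whitehead-torsion-filtration}), and that the isomorphism with $D^*_\beta\otimes_\Z\underline{\hom_{Tw_{Ch}}(\mathcal{K},L_\beta)}$ respects bases up to a simple change of basis so that Lemma \ref{lem:zero-torsion-tensor-complex} applies verbatim. Once sign conventions and the $\delta_\mathcal{K}$-insertion terms are tracked correctly, this is a direct bookkeeping check.
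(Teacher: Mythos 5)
Your proposal is correct and follows essentially the same route as the paper: filter $\mathcal{L}$ by its twisted-complex structure to reduce to graded pieces $D^*_\beta\otimes L_\beta$, then handle those via the tensor-product structure with $\underline{\hom_{Tw_{Ch}}(\mathcal{K},L_\beta)}$ and the filtration lemma. The only cosmetic difference is that you cite Lemma \ref{lem:zero-torsion-tensor-complex} for the second step, whereas the paper re-runs the degree filtration on $D^*_\beta$ inline — but the proof of that lemma is exactly that filtration, so the two arguments coincide.
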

\begin{proof}
    Write $\mathcal{K}=\bigoplus_\alpha C^*_\alpha\otimes K_\alpha$. The twisted complex $\mathcal{L}$ comes with a natural filtration whose graded pieces are of the form $D^*_\beta\otimes L_\beta$, where $D^*_\beta$ is a cochain complex and $L_\beta\in\mathcal{F}$. By Proposition \ref{prop:Whitehead-torsion-filtration}, it suffices to show that each graded piece
    \begin{equation}
        \underline{\hom_{Tw_{Ch}}(\mathcal{K},D^*_\beta\otimes L_\beta)}=\bigoplus_\alpha \hom_\Z(C^*_\alpha,D^*_\beta)\otimes\underline{CF^*(K_\alpha,L_\beta)}
    \end{equation}
    is simply acyclic. We define a filtration on $D^*_\beta$ by degree:
    \begin{equation}
        F^iD^k_\beta=\begin{cases}
            D^k_\beta &(k\leq i) \\
            0 &\mathrm{(else)}.
        \end{cases}
    \end{equation}
    This induces a filtration on $ \underline{\hom_{Tw_{Ch}}(\mathcal{K},D^*_\beta\otimes L_\beta)}$, whose graded pieces are
    \begin{equation}
    \bigoplus_\alpha\hom_\Z(C^*_\alpha,\Z^{r_k})\otimes_{\Z}\underline{CF^*(K_\alpha,L_\beta)}\cong (\bigoplus_\alpha\hom_\Z(C^*_\alpha,\Z)\otimes_{\Z}\underline{CF^*(K_\alpha,L_\beta)})^{r_k},
    \end{equation}
    where $r_k$ is the rank of the free $\Z$-module $D^k_\beta$. By the assumption that $\mathcal{K}$ is left simply acyclic against any single Lagrangian, the complex
    \begin{equation}
        \bigoplus_\alpha\hom_\Z(C^*_\alpha,\Z)\otimes_{\Z}\underline{CF^*(K_\alpha,L_\beta)}
    \end{equation}
    is simply acyclic. Hence, each graded piece of $ \underline{\hom_{Tw_{Ch}}(\mathcal{K},D^*_\beta\otimes L_\beta)}$ is simply acyclic, and so by Proposition \ref{prop:Whitehead-torsion-filtration} again the total complex is simply acyclic. Therefore we conclude that $\mathcal{K}$ is a left simply acyclic object in $Tw_{Ch}\mathcal{F}$.
\end{proof}

The heart of these definitions is captured in the following ``automatic simplicity'' lemma.

First, suppose that $L$ is a simply connected Lagrangian submanifold. Then its lift to the universal cover $\Tilde{X}$ consists of disjoint copies of $L$, so any pseudoholomorphic curve $u:\Sigma\to X$ with a boundary component on $L$ lifts to a curve $\Tilde{u}:\Sigma\to\Tilde{X}$ whose boundary contained in a single sheet of $p^{-1}(L)$. Choosing lifts of the intersection points to lie in a single sheet, we obtain an isomorphism of based cochain complexes
\begin{equation}
    \underline{CF^*(K,L)}\cong CF^*(K,L)\otimes_\Z \Z\pi_1(X)
\end{equation}
for any other Lagrangian $K$. A similar argument applies for any twisted complex $\mathcal{K}\in Tw_{Ch}\mathcal{F}$, yielding
\begin{equation}
    \underline{CF^*(\mathcal{K},L)}\cong CF^*(\mathcal{K},L)\otimes_\Z\Z\pi_1(X).
\end{equation}
By Lemma \ref{lem:zero-torsion}, these based cochain complexes have trivial Whitehead torsion when acyclic.

For a general Lagrangian $L$, if the Fukaya category $\mathcal{F}(X)$ admits simply connected simple generators, we can find a twisted complex $\mathcal{L}$ simply isomorphic to $L$ built from such generators. The filtration on $\underline{CF^*(\mathcal{K},\mathcal{L})}$ induced from the filtration of the twisted complex $\mathcal{L}$ has graded pieces $\underline{CF^*(\mathcal{K},C^*_\beta\otimes L_\beta)}$, each of which has trivial Whitehead torsion when acyclic from above. Therefore by the filtration lemma (Proposition \ref{prop:Whitehead-torsion-filtration}), the total complex is acyclic with trivial Whitehead torsion.

We now state the automatic simplicity lemma, and provide the details of the above argument.

\begin{proposition} \label{prop:automaticsimplicity}
    Suppose that the Fukaya category $\mathcal{F}$ admits a collection of objects $\{L_i\}$ such that each $L_i$ is simply connected, and the collection $\{L_i\}$ simply generates $\mathcal{F}$. Then every isomorphism between two objects, or generally two twisted complexes in $Tw_{Ch}\mathcal{F}$ is automatically a simple isomorphism.
\end{proposition}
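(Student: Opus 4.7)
The plan is to deduce the claim from simple acyclicity of the mapping cone: an isomorphism $\alpha: \mathcal{X} \to \mathcal{Y}$ in $Tw_{Ch}\mathcal{F}$ is a simple isomorphism precisely when its mapping cone $\mathcal{N} = cone(\alpha)$ is both left and right simply acyclic, and since $\alpha$ is an isomorphism, $\mathcal{N}$ is an acyclic object of $Tw_{Ch}\mathcal{F}$. I would establish left simple acyclicity of $\mathcal{N}$; the right case would then follow by a symmetric argument.

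First, I would invoke Proposition \ref{prop:simple-notions-check-Lagrangian} to reduce to showing that $\underline{CF^*(\mathcal{N}, L)}$ is simply acyclic for every single Lagrangian $L \in \mathcal{F}$. Given such an $L$, the hypothesis that $\{L_i\}$ simply generates $\mathcal{F}$ furnishes a twisted complex $\mathcal{L}' = \bigoplus_\beta D^*_\beta \otimes L_{i_\beta}$ built from the simply connected generators, together with a simple isomorphism $\beta: L \to \mathcal{L}'$. The simpleness of $\beta$ then implies that the mapping cone
\[
cone\bigl(\mu^2_{Tw_{Ch}}(\beta, -): \underline{CF^*(\mathcal{N}, L)} \to \underline{CF^*(\mathcal{N}, \mathcal{L}')}\bigr) \cong \underline{CF^*(\mathcal{N}, cone(\beta))}
\]
is simply acyclic, so this chain map is a simple homotopy equivalence. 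Consequently, it suffices to show $\underline{CF^*(\mathcal{N}, \mathcal{L}')}$ is simply acyclic.

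The main tool for this final step is the filtration lemma, Proposition \ref{prop:Whitehead-torsion-filtration}. The twisted complex filtration of $\mathcal{L}'$ induces a finite filtration of $\underline{CF^*(\mathcal{N}, \mathcal{L}')}$ whose graded pieces identify with $D^*_\beta \otimes_\Z \underline{CF^*(\mathcal{N}, L_{i_\beta})}$. By the simple-connectedness of each $L_{i_\beta}$, the identification in the paragraph preceding the proposition statement gives
\[
\underline{CF^*(\mathcal{N}, L_{i_\beta})} \cong CF^*(\mathcal{N}, L_{i_\beta}) \otimes_\Z \Z\pi_1(X)
\]
as based cochain complexes over $\Z\pi_1(X)$. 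The acyclicity of $\mathcal{N}$ then yields acyclicity of $CF^*(\mathcal{N}, L_{i_\beta})$ over $\Z$, so Lemma \ref{lem:zero-torsion-general} shows each graded piece is simply acyclic, and Proposition \ref{prop:Whitehead-torsion-filtration} concludes the argument.

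The main subtlety I anticipate is the necessity of filtering on the target side $\mathcal{L}'$ rather than filtering $\mathcal{N}$ directly. Since the Lagrangians appearing in $\mathcal{N}$ need not be simply connected, a filtration of $\mathcal{N}$ would produce graded pieces whose lifted Floer differentials carry genuine $\pi_1(X)$-twists, obstructing their identification with $\Z$-base changes and forbidding a direct application of Lemma \ref{lem:zero-torsion-general}. It is precisely simple-connectedness of the generators in $\mathcal{L}'$ that allows each graded piece of the target-side filtration to appear as a $\Z$-base-changed complex, so that acyclicity of $\mathcal{N}$ upgrades to simple acyclicity of the graded pieces, which is exactly what the filtration lemma needs.
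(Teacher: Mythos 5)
Your proposal is correct and follows essentially the same route as the paper: reduce to showing acyclic objects are simply acyclic, handle the simply connected generators via the base-change identification $\underline{CF^*(\mathcal{N},L_i)}\cong CF^*(\mathcal{N},L_i)\otimes_\Z\Z\pi_1(X)$ together with Lemma \ref{lem:zero-torsion-general}, use simple generation to pass from a general Lagrangian $L$ to a twisted complex of generators, and finish with the filtration lemma. The only cosmetic difference is that you invoke Proposition \ref{prop:simple-notions-check-Lagrangian} explicitly for the reduction to single Lagrangians, which the paper carries out implicitly.
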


\begin{proof}
    Recall that an isomorphism element $\gamma$ is a simple isomorphism if its mapping cone is both left and right simply acyclic. Thus, it suffices to show that every acyclic object in $Tw_{Ch}\mathcal{F}$ is simply acyclic. So pick any acyclic object $\mathcal{K}=\bigoplus_\alpha C^*_\alpha\otimes K_\alpha$ in $Tw_{Ch}\mathcal{F}$: we will show that it is left simply acyclic, the right simple acyclicity is analogous.
    
    We first check this for the generators $L_i$. Since each $L_i$ is simply connected, the preimage $p^{-1}(L_i)$ in the universal cover $p:\Tilde{X}\to X$ is a disjoint union of copies of $L_i$. Choosing lifts of the intersection points $K\cap L_i$ to lie in a single sheet, we have an isomorphism of based cochain complexes
    \begin{equation}\label{eqn: autsimp-checkgenerators}
        \underline{CF^*(\mathcal{K},L_i)}\cong \hom_{Tw_{Ch}}^*(\mathcal{K},L_i)\otimes_\Z\Z\pi_1(X).
    \end{equation}
    The right complex has a basis $\{\phi_\alpha\otimes x_\alpha\}$,where $\{\phi_\alpha\}$ is a $\Z$-basis of $\hom_\Z(C^*_\alpha,\Z)$ and $x_\alpha\in K_\alpha\cap L_i$. Since $\mathcal{K}$ is an acyclic object in $Tw_{Ch}\mathcal{F}$, the right-hand side of (\ref{eqn: autsimp-checkgenerators}) is acyclic, and by Lemma \ref{lem:zero-torsion} it has trivial Whitehead torsion. Therefore $\underline{CF^*(\mathcal{K},L_i)}$ is acyclic with trivial Whitehead torsion, and by Lemma \ref{lem:zero-torsion-general} $\underline{CF^*(\mathcal{K},D^*\otimes L_i)}$ is acyclic with trivial Whitehead torsion for any finitely generated cochain complex $D^*$ over $\Z$.

    Now consider a general Lagrangian $L$. Since the collection $\{L_i\}$ simply generates $Tw_{Ch}\mathcal{F}$, there exists a twisted complex $\mathcal{L}\in Tw_{Ch}\mathcal{F}$ made out of the objects $L_i$ such that there is a simple isomorphism
    \begin{equation} \label{eqn:auto-simpleiso}
        L\cong \mathcal{L} = \bigoplus_\beta D^*_\beta\otimes L_\beta
    \end{equation}
    in $Tw_{Ch}\mathcal{F}$. This induces a simple homotopy equivalence
    \begin{equation}
        \underline{CF^*(\mathcal{K},L)}\simeq\underline{CF^*(\mathcal{K},\mathcal{L})}
    \end{equation}
    given by the module action of the isomorphism element of the simple isomorphism (\ref{eqn:auto-simpleiso}). Therefore, it is enough to show that the cochain complex $\underline{CF^*(\mathcal{K},\mathcal{L})}$ has trivial Whitehead torsion.

    As a twisted complex, $\mathcal{L}$ admits a filtration whose graded pieces are $D^*_\beta\otimes L_\beta$. This induces a filtration on $\underline{CF^*(\mathcal{K},\mathcal{L})}$ with associated graded pieces $\underline{CF^*(\mathcal{K},D^*_\beta\otimes L_\beta)}$, where $L_\beta\in\{L_i\}$ is a simply connected generator. From the earlier argument, these graded pieces are acyclic with trivial Whitehead torsion. Therefore by Proposition \ref{prop:Whitehead-torsion-filtration}, the total complex $\underline{CF^*(\mathcal{K},\mathcal{L})}$ is acyclic with trivial Whitehead torsion.
    
    Thus for any Lagrangian $L$, $\underline{CF^*(\mathcal{K},L)}$ is acyclic with trivial Whitehead torsion, and therefore we conclude that $\mathcal{K}$ is left simply acyclic. Now for any isomorphism element $\gamma\in\hom_{Tw_{Ch}}^0(\mathcal{K}_1,\mathcal{K}_2)$ the mapping cone $cone(\gamma)$ is an acyclic object. Therefore our result shows that the mapping cone of $\gamma$ is simply acyclic, and thus any isomorphism element $\gamma$ is a simple isomorphism element.
\end{proof}
Automatic simplicity (Proposition \ref{prop:automaticsimplicity}) can also be used to show that generators of an $A_\infty$-category $\mathcal{A}$ simply generate if $\mathcal{A}$ can be embedded as a full subcategory of a Fukaya category with simply connected simple generators.

\begin{proposition} \label{prop:subcat-simply-gen}
    Suppose that $\mathcal{F}$ is a Fukaya category that satisfies the conditions of Proposition \ref{prop:automaticsimplicity}, i.e. has simply connected simple generators. Then for any full subcategory $\mathcal{A}$ of $\mathcal{F}$ with generators $\{K_j\}$, the objects $\{K_j\}$ simply generate $\mathcal{A}$.
\end{proposition}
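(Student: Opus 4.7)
The plan is to use Proposition \ref{prop:automaticsimplicity} as a black box, observing that simple isomorphism in a Fukaya category passes to full subcategories essentially for free. Given any object $K\in\mathcal{A}$, the hypothesis that $\{K_j\}$ generates $\mathcal{A}$ provides a twisted complex $\mathcal{K}\in Tw_{Ch}\mathcal{A}$ built from the $\{K_j\}$ together with an isomorphism element $\alpha\in\hom^0_{Tw_{Ch}\mathcal{A}}(K,\mathcal{K})$.

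Since $\mathcal{A}$ sits as a full subcategory of $\mathcal{F}$, the induced functor $Tw_{Ch}\mathcal{A}\hookrightarrow Tw_{Ch}\mathcal{F}$ is also fully faithful, so both $K$ and $\mathcal{K}$ can be viewed as objects of $Tw_{Ch}\mathcal{F}$ and $\alpha$ as an isomorphism element there. By Proposition \ref{prop:automaticsimplicity}, applied using the simply-connected simple generators of $\mathcal{F}$, the element $\alpha$ is automatically a simple isomorphism in $Tw_{Ch}\mathcal{F}$. This means that the enhanced mapping cones $\underline{cone(\mu^2_{Tw_{Ch}}(\alpha,~))}$ and $\underline{cone(\mu^2_{Tw_{Ch}}(~,\alpha))}$ assign simply acyclic based cochain complexes over $\Z\pi_1(X)$ to every object of $\mathcal{F}$.

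To transfer this back to $\mathcal{A}$, we invoke Definition \ref{def:simple-equivalence}: verifying that $\alpha$ is a simple isomorphism in $Tw_{Ch}\mathcal{A}$ requires only the simple acyclicity of the same enhanced mapping cones against objects of $\mathcal{A}$, a subcollection of those in $\mathcal{F}$. Since the Floer-theoretic construction of $\underline{CF^*(\mathcal{L},\quad)}$ depends only on intersection-theoretic and holomorphic-curve data that are intrinsic to $X$, the values of the enhanced Yoneda module on objects of $\mathcal{A}$ agree whether computed inside $\mathcal{A}$ or inside $\mathcal{F}$. The simple acyclicity established in $\mathcal{F}$ therefore restricts to simple acyclicity in $\mathcal{A}$, showing that $\alpha$ is a simple isomorphism in $Tw_{Ch}\mathcal{A}$. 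Since $K$ was arbitrary, the collection $\{K_j\}$ simply generates $\mathcal{A}$.

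The main conceptual point is that the notion of simple isomorphism is defined by a universal property over the ambient category, and so becomes easier to satisfy as the ambient category shrinks; there is no genuine analytic or combinatorial obstacle to address, and in particular no moduli-space count needs to be revisited.
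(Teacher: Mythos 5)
Your proposal is correct and follows essentially the same route as the paper's proof: use generation of $\mathcal{A}$ to produce the isomorphism element, apply automatic simplicity (Proposition \ref{prop:automaticsimplicity}) in the ambient category $\mathcal{F}$, and observe that being a simple isomorphism restricts to the full subcategory because the simple acyclicity of the enhanced mapping cone need only be checked against the smaller collection of objects. The paper's proof is a two-line version of exactly this argument; your write-up merely makes the restriction step explicit.
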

\begin{proof}
    By Proposition \ref{prop:automaticsimplicity}, every isomorphism in $\mathcal{F}$ is automatically a simple isomorphism. Thus every isomorphism in the subcategory $\mathcal{A}$ is also a simple isomorphism, and it follows that the generators $\{K_j\}$ of $\mathcal{A}$ are automatically simple generators.
\end{proof}

We also show that isomorphic twisted complexes have the same Reidemeister torsion.

\begin{proposition} \label{prop:R-torsions-agree}
    Let $\mathcal{K}$, $\mathcal{L}$ be two left and right simply isomorphic twisted complexes in $Tw_{Ch}\mathcal{F}$ with simple isomorphism elements $\alpha\in CF^0(\mathcal{K},\mathcal{L})$, $\beta\in CF^0(\mathcal{L},\mathcal{K})$. Suppose that there exists a ring homomorphism $\rho:\Z\pi_1(X)\to\mathbb{F}$ to a field $\mathbb{F}$ such that 
    \begin{equation}
        \underline{CF^*(\mathcal{K},\mathcal{K})}\otimes_\rho \mathbb{F}
    \end{equation}
    is acyclic. Then $\underline{CF^*(\mathcal{L},\mathcal{L})}\otimes_\rho \mathbb{F}$ is also acyclic, and their Reidemeister torsions agree:
    \begin{equation}
        \Delta_\rho(\underline{CF^*(\mathcal{K},\mathcal{K})}\otimes_\rho \mathbb{F}) = \Delta_\rho(\underline{CF^*(\mathcal{L},\mathcal{L})}\otimes_\rho \mathbb{F}).
    \end{equation}
\end{proposition}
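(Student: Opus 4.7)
The plan is to produce a simple homotopy equivalence between the based cochain complexes $\underline{CF^*(\mathcal{K},\mathcal{K})}$ and $\underline{CF^*(\mathcal{L},\mathcal{L})}$ as $\Z\pi_1(X)$-modules, and then invoke Proposition \ref{prop:R-torsion-simple-invariant}. Both complexes will be compared through the intermediate complex $\underline{CF^*(\mathcal{K},\mathcal{L})}$, using the two module actions coming from $\alpha$.

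First I will unpack Definition \ref{def:simple-equivalence} on both sides. Since $\alpha \in CF^0(\mathcal{K},\mathcal{L})$ is a left simple isomorphism, the $A_\infty$-module homomorphism $l^1(\alpha):\underline{\mathcal{Y}^r_\mathcal{K}}\to\underline{\mathcal{Y}^r_\mathcal{L}}$ has a simply acyclic mapping cone. Evaluating at the object $\mathcal{K}$, this says that the chain map
\begin{equation}
    \phi_1\coloneqq \mu^2_{Tw_{Ch}}(\alpha,\cdot):\underline{CF^*(\mathcal{K},\mathcal{K})}\to\underline{CF^*(\mathcal{K},\mathcal{L})}
\end{equation}
is a simple homotopy equivalence. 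Dually, since $\alpha$ is a right simple isomorphism, $r^1(\alpha):\underline{\mathcal{Y}^l_\mathcal{L}}\to\underline{\mathcal{Y}^l_\mathcal{K}}$ has a simply acyclic cone, and evaluating at $\mathcal{L}$ yields that
\begin{equation}
    \phi_2\coloneqq \mu^2_{Tw_{Ch}}(\cdot,\alpha):\underline{CF^*(\mathcal{L},\mathcal{L})}\to\underline{CF^*(\mathcal{K},\mathcal{L})}
\end{equation}
is also a simple homotopy equivalence.

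Next I will combine $\phi_1$ and $\phi_2$ into a single simple homotopy equivalence $\underline{CF^*(\mathcal{K},\mathcal{K})}\to\underline{CF^*(\mathcal{L},\mathcal{L})}$. Choose any chain homotopy inverse $\psi_2$ of $\phi_2$; since Whitehead torsion is invariant under chain homotopy (Proposition \ref{prop:homotopy-invariance-torsion}) and multiplicative under composition (Proposition \ref{prop:Whitehead-torsion-additive}), the identity $\tau(\psi_2)\cdot\tau(\phi_2)=\tau(\id)$ forces $\tau(\psi_2)$ to be trivial. Then by Proposition \ref{prop:Whitehead-torsion-additive} again the composition
\begin{equation}
    \Phi\coloneqq \psi_2\circ\phi_1:\underline{CF^*(\mathcal{K},\mathcal{K})}\to\underline{CF^*(\mathcal{L},\mathcal{L})}
\end{equation}
satisfies $\tau(\Phi)=\tau(\psi_2)\cdot\tau(\phi_1)=0$, so $\Phi$ is a simple homotopy equivalence of based $\Z\pi_1(X)$-cochain complexes. (An equivalent direct construction would be $x\mapsto \mu^2(\mu^2(\alpha,x),\beta)$; one could verify directly that this is chain homotopic to $\Phi$, but the approach above avoids wrestling with the $A_\infty$-signs and $\mu^3$-corrections.)

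Finally I apply Proposition \ref{prop:R-torsion-simple-invariant} to the simple homotopy equivalence $\Phi$ and the ring homomorphism $\rho:\Z\pi_1(X)\to\mathbb{F}$. By hypothesis $\underline{CF^*(\mathcal{K},\mathcal{K})}\otimes_\rho\mathbb{F}$ is acyclic, so the proposition guarantees that $\underline{CF^*(\mathcal{L},\mathcal{L})}\otimes_\rho\mathbb{F}$ is also acyclic and that
\begin{equation}
    \Delta_\rho(\underline{CF^*(\mathcal{K},\mathcal{K})})=\Delta_\rho(\underline{CF^*(\mathcal{L},\mathcal{L})}),
\end{equation}
completing the proof. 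There is no genuine obstacle; the only care needed is the bookkeeping in Step 1, where one must correctly identify the two evaluations of the module homomorphisms $l^1(\alpha)$ and $r^1(\alpha)$ with the chain maps $\phi_1$ and $\phi_2$, matching the sign and composition conventions used in Section \ref{ssec:bimodule}.
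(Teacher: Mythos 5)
Your proposal is correct and follows essentially the same route as the paper: both pass through the intermediate complex $\underline{CF^*(\mathcal{K},\mathcal{L})}$ via the module actions of the isomorphism elements and then invoke Proposition \ref{prop:R-torsion-simple-invariant}. The only cosmetic difference is that the paper uses $\mu^2_{Tw_{Ch}}(\cdot,\beta)$ to map forward into $\underline{CF^*(\mathcal{L},\mathcal{L})}$, whereas you invert $\mu^2_{Tw_{Ch}}(\cdot,\alpha)$ and justify the triviality of the inverse's torsion by multiplicativity; both are valid.
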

\begin{proof}
    Since $\alpha$ and $\beta$ are simple isomorphisms, we have simple homotopy equivalences 
    \begin{equation}
        \underline{CF^*(\mathcal{K},\mathcal{K})}\simeq \underline{CF^*(\mathcal{K},\mathcal{L})}\simeq \underline{CF^*(\mathcal{L},\mathcal{L})}
    \end{equation}
    induced by the module actions of the simple isomorphism elements $\mu^2_{Tw_{Ch}}(\alpha,~~~)$ and $\mu^2_{Tw_{Ch}}(~~~,\beta)$. From Proposition \ref{prop:R-torsion-simple-invariant}, it follows that the Reidemeister torsions agree.
\end{proof}

To conclude the subsection, we prove that the algebraic twists of simply isomorphic twisted complexes are also simply isomorphic. This generalizes Proposition \ref{prop:algebraic-twist-Ham-isotopy}, and will be used to prove that the Dehn twist exact triangle is simple. Recall that the \emph{algebraic twist} $T_V$ of a twisted complex $\mathcal{L}$ is defined as
\begin{equation}
    T_V\mathcal{L}=CF^*(V,\mathcal{L})\otimes V[1]\oplus\mathcal{L},
\end{equation}
where the morphism is given by the evaluation map.

\begin{proposition} \label{prop: algebraic-twist-simple-twisted-cpx}
    Let $\mathcal{L}$, $\mathcal{L}'$ be two twisted complexes in $Tw_{Ch}\mathcal{F}$ that are simply isomorphic. Then for any object $V$, the objects $T_V\mathcal{L}$, $T_V\mathcal{L}'$ are simply isomorphic.
\end{proposition}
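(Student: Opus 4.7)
The plan is to construct an explicit cocycle $T_V\alpha \in \hom^0_{Tw_{Ch}\mathcal{F}}(T_V\mathcal{L}, T_V\mathcal{L}')$ representing the algebraic twist of the simple isomorphism element $\alpha \in CF^0(\mathcal{L}, \mathcal{L}')$, and then verify its simplicity via the filtration machinery already established. First, I would write down $T_V\alpha$ whose diagonal components are $\mu^2_\mathcal{F}(\alpha, -)$ acting on $CF^*(V,\mathcal{L})[1]$ tensored with $\mathrm{id}_V$ on the summand $CF^*(V,\mathcal{L})[1] \otimes V$, and $\alpha$ itself on the summand $\mathcal{L}$. Signs and any required off-diagonal components from $CF^*(V,\mathcal{L})[1] \otimes V$ to $\mathcal{L}'$ (arising from higher $A_\infty$-products involving $\alpha$ and $ev[1]$) are chosen so that $\mu^1_{Tw_{Ch}}(T_V\alpha) = 0$; this is a routine $A_\infty$-bookkeeping computation using Lemma \ref{lem:mu^2-with-ev} and the explicit formula $ev = \sum_i (x_i)^\vee \otimes x_i$. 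Since the algebraic twist descends to a functor on $H^* Tw_{Ch}\mathcal{F}$ and $[\alpha]$ is invertible there, $[T_V\alpha]$ is automatically an isomorphism class in $H^0\hom_{Tw_{Ch}\mathcal{F}}(T_V\mathcal{L}, T_V\mathcal{L}')$.

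Next, I would verify that $T_V\alpha$ is a left simple isomorphism by showing that for every Lagrangian $K$, the induced chain map
\begin{equation*}
    \mu^2_{Tw_{Ch}}(T_V\alpha, -) : \underline{CF^*(K, T_V\mathcal{L})} \to \underline{CF^*(K, T_V\mathcal{L}')}
\end{equation*}
is a simple homotopy equivalence. Each Yoneda module carries a two-step filtration inherited from the twisted-complex structure of $T_V\mathcal{L}$ (respectively $T_V\mathcal{L}'$), with graded pieces $CF^*(V,\mathcal{L})[1] \otimes \underline{CF^*(K,V)}$ and $\underline{CF^*(K,\mathcal{L})}$ (respectively their primed analogues). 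Since any off-diagonal component of $T_V\alpha$ goes from $CF^*(V,\mathcal{L})[1] \otimes V$ down to $\mathcal{L}'$ and is thus filtration-decreasing, the induced map respects the filtration, and its mapping cone inherits a two-step filtration. The bottom graded piece of this cone equals $\underline{CF^*(K, cone(\alpha))}$, which is simply acyclic because $\alpha$ is a left simple isomorphism. The top graded piece is isomorphic to $cone(\mu^2_\mathcal{F}(\alpha,-))[1] \otimes \underline{CF^*(K,V)}$, where $\mu^2_\mathcal{F}(\alpha,-) : CF^*(V,\mathcal{L}) \to CF^*(V,\mathcal{L}')$ is a chain homotopy equivalence of free $\Z$-complexes; its mapping cone is acyclic over $\Z$, and Lemma \ref{lem:zero-torsion-general} then shows that tensoring with $\underline{CF^*(K,V)}$ yields a simply acyclic complex over $\Z\pi_1(X)$.

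Applying the filtration lemma for Whitehead torsion (Proposition \ref{prop:Whitehead-torsion-filtration}) to these two simply acyclic graded pieces, the total mapping cone of $\mu^2_{Tw_{Ch}}(T_V\alpha, -)$ is simply acyclic, so $T_V\alpha$ is a left simple isomorphism. The symmetric argument using right module actions on $\underline{CF^*(T_V\mathcal{L}', L)} \to \underline{CF^*(T_V\mathcal{L}, L)}$, with the dual filtration and an analogous application of Lemma \ref{lem:zero-torsion-general}, shows that $T_V\alpha$ is also a right simple isomorphism. The main subtlety I anticipate is step one --- verifying closedness of $T_V\alpha$ and confirming that the induced maps on graded pieces really equal $\mu^2_\mathcal{F}(\alpha,-) \otimes \mathrm{id}$ and $\mu^2_{Tw_{Ch}}(\alpha,-)$ up to sign --- but once this is settled, the filtration machinery yields the result essentially automatically.
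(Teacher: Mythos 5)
Your proposal follows essentially the same route as the paper: construct an explicit cocycle in $\hom^0_{Tw_{Ch}}(T_V\mathcal{L},T_V\mathcal{L}')$ whose components are built from $\alpha$, then filter the mapping cone of the induced map on enhanced Yoneda modules and dispatch the graded pieces with the tensor-product torsion lemmas. The filtration step and the appeal to Lemma \ref{lem:zero-torsion-general} for the piece $cone(\mu^2(\alpha,-))\otimes\underline{CF^*(K,V)}$ match the paper's argument.

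There is, however, one concrete gap: you write the diagonal component on the summand $CF^*(V,\mathcal{L})[1]\otimes V$ as $\mu^2(\alpha,-)\otimes\mathrm{id}_V$, but the Fukaya category is only cohomologically unital, so no strict identity morphism $\mathrm{id}_V\in CF^0(V,V)$ exists; one must use a cocycle $\epsilon$ representing the cohomological unit. This is not mere bookkeeping. First, because $\mu^2(-,\epsilon)$ is only chain homotopic to the identity, the closedness equation for your cocycle forces the off-diagonal correction term to contain a chain homotopy $h$ between $\mu^2(-,\epsilon)$ and $\id$ (this is the map $\gamma$ in the paper's proof), not just higher products of $\alpha$ with $ev$. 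Second, and more importantly for the torsion computation, the induced map on the top graded piece of the cone is then not $\mu^2(\alpha,-)\otimes\id$ but the composition of $\mu^2(\alpha,-)\otimes\id$ with $\id\otimes\mu^2(-,\epsilon)$. The first factor is handled exactly as you say, but the second factor has enhanced mapping cone $CF^*(V,\mathcal{L}')\otimes\underline{cone(\mu^2(-,\epsilon))}$, whose simple acyclicity requires Lemma \ref{lem:zero-torsion-tensor-complex} (tensoring a simply acyclic based complex over $\Z\pi_1(X)$ with a $\Z$-complex) together with the fact that $\epsilon$ is a simple isomorphism element --- a lemma your argument never invokes. Once you replace $\mathrm{id}_V$ by $\epsilon$, add the homotopy correction, and factor the top graded piece map through these two simple homotopy equivalences, your proof coincides with the paper's.
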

\begin{proof}
    We prove the statement in the case where $\mathcal{L}=L$, $\mathcal{L}'=L'$ are single Lagrangians $L$, $L'$. The argument for general twisted complexes is analogous.

    First, we will show that $T_VL$ and $T_VL'$ are isomorphic by constructing an explicit isomorphism element. The relevant maps are summarized in the following commutative diagram:
    \begin{equation} \label{eqn:comm-diagram}
    \begin{tikzcd}
        CF^*(V,L)[1]\otimes V \arrow[r, "ev'"] \arrow[dr, "\gamma"] \arrow[d, "\phi\otimes \epsilon"'] &L \arrow[d, "\alpha"]\\
        CF^*(V,L')[1]\otimes V \arrow [r, "ev'"'] &L'
    \end{tikzcd}.    
    \end{equation}
    Here $\alpha\in CF^0(L,L')$ is a simple isomorphism element, $\phi=(-1)^{\deg}\mu^2(\alpha,~):CF^*(V,L)\to CF^*(V,L')$, $ev'$ is the shifted evaluation map with signs depending on the degree of the input, and $\epsilon\in CF^0(V,V)$ is a chain representative for the cohomological unit. 
    
    We aim to define a degree 0 morphism $\gamma$ such that
    \begin{equation}\label{eqn-gamma}
        \mu^1_{Tw_{Ch}}(\gamma)+\mu^2_{Tw_{Ch}}(\alpha,ev')+\mu^2_{Tw_{Ch}}(ev',\phi\otimes\epsilon)=0.
    \end{equation}

    Rather than working directly in the morphism space of $Tw_{Ch}\mathcal{F}$, we use the identification (\ref{eqn-evaluation_map}) to identify the above maps with elements in $\hom_\Z(CF^*(V,L)[1],CF^*(V,L'))$.

    First, by Lemma \ref{lem:mu^2-with-ev}, the component $\mu^2_{Tw_{Ch}}(\alpha,ev')$ is identified with the map
    \begin{equation}
        (-1)^{\deg-1}\mu^2(\alpha,~)[1]:CF^*(V,L)[1]\to CF^*(V,L'),
    \end{equation}
    and similarly the component $\mu^2_{Tw_{Ch}}(ev',\phi\otimes\epsilon)$ is identified with the map
    \begin{equation}
        (-1)^{\deg}\mu^2(\alpha,\mu^2(~,\epsilon)):CF^*(V,L)[1]\to CF^*(V,L').
    \end{equation}

    Now we define $\gamma$ in terms of a homotopy. Since $\epsilon$ represents the identity element in cohomology, the map $\mu^2(~,\epsilon):CF^*(V,L')\to CF^*(V,L')$ is chain homotopic to the identity. Thus there exists a degree -1 map $h:CF^*(V,L)\to CF^*(V,L')$ such that
    \begin{equation}\label{eqn-cancel}
        \partial h+h\partial+\mu^2(\mu^2(\alpha,~),\epsilon)=\mu^2(\alpha,~).
    \end{equation}
    Define the map $\gamma$ in the commutative diagram (\ref{eqn:comm-diagram}) by
    \begin{equation}
        (-1)^{\deg} h+(-1)^{\deg}\mu^3(\alpha,~,\epsilon).
    \end{equation}
    Then combining (\ref{eqn-cancel}) and the $A_\infty$-relation
    \begin{equation}
        -\mu^3(\alpha,\mu^1(~),\epsilon)+\mu^1(\mu^3(\alpha,~,\epsilon))+\mu^2(\alpha,\mu^2(~,\epsilon))-\mu^2(\mu^2(\alpha,~),\epsilon)=0,
    \end{equation}
    we verify that (\ref{eqn-gamma}) holds for this choice of $\gamma$.
    
    Therefore, $(\phi\otimes\epsilon,\gamma,\alpha)$ defines a degree zero cocycle in $\hom^*_{Tw_{Ch}}(T_VL,T_VL')$. Since each component map $\phi\otimes\epsilon$ and $\alpha$ is an isomorphism element, this cocycle defines an isomorphism element.

    Now that we have identified a particular isomorphism element between $T_VL$ and $T_VL'$, we claim that it is in fact a simple isomorphism element. By Proposition \ref{prop:simple-notions-check-Lagrangian}, it is enough to check for each Lagrangian $K$ that the homotopy equivalence
    \begin{equation}
        \underline{\hom^*_{Tw_{Ch}}(T_VL,K)}\simeq\underline{\hom^*_{Tw_{Ch}}(T_VL',K)}
    \end{equation}
    induced by the module action of the isomorphism element is simple.

    By Proposition \ref{prop:Whitehead-torsion-filtration}, it suffices to check that the induced maps on each summand of the twisted complex are simple homotopy equivalences: i.e.
    \begin{align}
        \mu^2_{Tw_{Ch}}(\mu^2(\alpha,~)\otimes \epsilon,~~)&:\underline{\hom^*_{Tw_{Ch}}(K,CF^*(V,L)\otimes V)}\to \underline{\hom^*_{Tw_{Ch}}(K,CF^*(V,L')\otimes V)},\\
        \mu^2(\alpha,~)&:\underline{CF^*(K,L)}\to\underline{CF^*(K,L')}
    \end{align}
    are each simple homotopy equivalences.

    Since $\alpha$ is a simple isomorphism element, the second induced map
    \begin{equation}
        \mu^2(\alpha,~):\underline{CF^*(K,L)}\to\underline{CF^*(K,L')}
    \end{equation}
    is a simple homotopy equivalence, meaning its mapping cone is simply acyclic.

    Now we analyze the remaining component, which is the mapping cone of the composition of the following two homotopy equivalences:
    \begin{align}
        \mu^2(\alpha,~)\otimes\id&:CF^*(V,L)\otimes\underline{CF^*(K,V)}\to CF^*(V,L')\otimes\underline{CF^*(K,V)},\\
        \id\otimes\mu^2(~,\epsilon)&:CF^*(V,L')\otimes\underline{CF^*(K,V)}\to CF^*(K,L')\otimes\underline{CF^*(K,V)}.
    \end{align}
    Therefore, it suffices to show that both are simple. The enhanced mapping cone of the first map is
    \begin{equation}
        cone(\mu^2(\alpha,~))\otimes\underline{CF^*(K,V)}~,
    \end{equation}
    which is simply acyclic by Lemma \ref{lem:zero-torsion-general}, since the tensor product of any complex over $\Z\pi_1$ with an acyclic complex is always simply acyclic.

    For the second map, its enhanced mapping cone is
    \begin{equation} \label{eqn-TVL}
        CF^*(V,L')\otimes\underline{cone(\mu^2(~,\epsilon))},
    \end{equation}
    and since $\epsilon$ is a simple isomorphism element, $\underline{cone(\mu^2(~,\epsilon))}$ is simply acyclic. Therefore by Lemma \ref{lem:zero-torsion-tensor-complex}, (\ref{eqn-TVL}) is simply acyclic. It follows that both maps $\mu^2(\alpha,~)\otimes\id$ and $\id\otimes\mu^2(~,\epsilon)$ are simple homotopy equivalences, and hence their composition is as well. Thus, we conclude that the homotopy equivalence
    \begin{equation}
        \underline{\mu^2_{Tw_{Ch}}(K,T_VL)}\simeq\underline{\mu^2_{Tw_{Ch}}(K,T_VL')}
    \end{equation}
    induced by the module action of the isomorphism element is simple for any object $K$. This implies that $T_VL$ and $T_VL'$ are simply isomorphic.
\end{proof}

\subsection{Fukaya categories of Lefschetz fibrations} \label{ssec:dehn-twist-simple}

In this subsection, we prove that the Fukaya category of a Lefschetz fibration is simply generated by Lefschetz thimbles. Our setup throughout this section is an exact Lefschetz fibration $\pi:E\to\mathbb{H}$, equipped with an identification of the fiber over a specific basepoint $*\in\mathbb{H}$ with the exact symplectic manifold $(M,\omega_M=d\theta_M)$. This subsection is largely divided into two parts: in the first part, we prove that the Dehn twist exact triangle can be formulated as a simple isomorphism between the algebraic and geometric twists. The proof of simple generation will come in the second part.

To motivate the proof of the simplicity of the Dehn twist exact triangle, we first provide a quick review of Seidel's original argument in \cite{Seidel03}. Let $V\subset M$ be a framed exact Lagrangian sphere, and let $K,L\subset M$ be exact Lagrangians intersecting transversely with $V$, and with each other. The exact Lefschetz fibration we consider is constructed as $\pi:E\to\mathbb{H}$, with a single critical point at $i\in\mathbb{H}$, and the fiber over the basepoint $*\in\mathbb{H}$ identified with $M$. We also require that the vanishing cycle in the fiber over $*$ to be identified with the framed exact Lagrangian sphere $V$.

In the framework of \cite{Seidel08}, the spherical twist is realized as an $A_\infty$-module assigning to a Lagrangian $K$
\begin{equation}
    \mathcal{T}_V(L)(K)= CF^*(V,L)\otimes CF^*(K,V)[1]\oplus CF^*(K,L),
\end{equation}
with $A_\infty$-module operations we omit. To show that this $A_\infty$-module is quasi-isomorphic to the Yoneda module $\mathcal{Y}^r_{\tau_V(L)}$, we construct an $A_\infty$-module homomorphism
\begin{equation}
    t:\mathcal{T}_VL\to\mathcal{Y}^r_{\tau_VL}.
\end{equation}
By \cite[Lemma 5.3]{Seidel08}, such a morphism exists if one can find a degree zero cocycle $c\in CF^*(L,\tau_VL)$ and a degree -1 map $k:CF^*(V,L)\to CF^*(V,\tau_VL)$ satisfying
\begin{equation}
    \mu^1(c)=0,~\mu^1(k)+k(\mu^1)+\mu^2(c,~)=0.
\end{equation}
Such a pair $(c,k)$ can be constructed by counting pseudoholomorphic sections of a Lefschetz fibration with moving boundary conditions: we will return to this construction later in this section. Having defined the morphism $t$, we aim to show that its mapping cone is acyclic. Concretely, for any other Lagrangian $K$ transverse to $V$ and $L$, we verify that the cochain complex
\begin{equation} \label{eqn:chain-cpx-Dehn-twist}
     (CF^*(V,L)\otimes CF^*(K,V))[2]\oplus CF^*(K,L)[1]\oplus CF^*(K,\tau_VL)
\end{equation}
is acyclic. For later purposes, we will show that the above complex is acyclic for $\tau_VK$ instead of $K$. Under the identifications $CF^*(\tau_VK,V)\cong CF^*(K,V)$ and $CF^*(\tau_VK,\tau_VL)\cong CF^*(K,L)$, we may show that
\begin{equation}
    (CF^*(V,L)\otimes CF^*(K,V))[2]\oplus CF^*(\tau_VK,L)[1]\oplus CF^*(K,L)
\end{equation}
with the differential coming from the mapping cone of $t$
\begin{equation}\label{eqn:Dehn-trig-diff}
\begin{pmatrix}
    \mu^1 & &\\
    \mu^2 &\mu^1 &\\
    \mu^2(k(~),~)+\mu^3(c,~,~) &\mu^2(c,~) &\mu^1
\end{pmatrix}    
\end{equation}
is acyclic. On the level of generators, we have an identification
\begin{equation}
    K\cap\tau_VL=(K\cap L) \sqcup (K\cap V)\times (L\cap V).
\end{equation}
Moreover, we may define injective maps
\begin{align} \label{eqn:Lef-fib-identification-generators}
    &p: (K\cap V)\times(L\cap V) \to \tau_VK\cap L,\\ 
    &q: K\cap L\to \tau_VK\cap L,
\end{align}
such that $q$ is the inclusion map with respect to the above identification. These give an action on the generators, and thus maps $p:CF^*(V,L)\otimes CF^*(K,V)\to CF^*(\tau_VK,L)$ and $q:CF^*(\tau_VK,L)\to CF^*(K,L)$ with a possible sign for each generator. The proof of \cite{Seidel03} uses a careful analysis of energy estimates to find an action filtration such that the low-energy terms of the differential of the cochain complex (\ref{eqn:chain-cpx-Dehn-twist}) takes the form
\begin{equation}\label{eqn:low-energy-differential}
\begin{pmatrix}
    0 & &\\
    p &0 &\\
    0 &q &0 
\end{pmatrix}.    
\end{equation}
The precise statement is organized into the following lemma:
\begin{lemma} \label{lem:Dehn-twist-action-filter}
    Given $V,K,L$ as above, we may pick a model for the Dehn twist $\tau_V$ to be supported in a small enough Weinstein neighborhood such that there exists some $\epsilon>0$ for which the following holds:
\begin{enumerate}
    \item For $y\in\tau_VK\cap L$ and $x\in K\cap L$, either $y=q(x)$ and $\mathcal{A}(y)=\mathcal{A}(x)$, or $y\neq x$ and $\mathcal{A}(x)-\mathcal{A}(y)\not\in [0,3\epsilon)$.
    \item For $y\in\tau_VK\cap L$ and $(x_0,x_1)\in (K\cap V)\times(L\cap V)$, either $y=p(x_0,x_1)$ and $\mathcal{A}(y)-\mathcal{A}(x_0)-\mathcal{A}(x_1)\in[0,\epsilon)$ or $y\neq p(x_0,x_1)$ and $\mathcal{A}(y)-\mathcal{A}(x_0)-\mathcal{A}(x_1)\not\in[0,3\epsilon)$.
\end{enumerate}
\end{lemma}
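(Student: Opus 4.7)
The plan is to prove the lemma by choosing a concrete local model for the Dehn twist $\tau_V$ supported in a Weinstein neighborhood of $V$ of arbitrarily small radius, and then directly estimating the actions of all intersection points in terms of that radius. Specifically, I would fix a Weinstein embedding $\iota:(D^*_r V,\lambda_{can})\hookrightarrow (M,\theta_M)$ identifying $V$ with the zero section, and realize $\tau_V$ as the time-one flow of a Hamiltonian $H_r$ on $T^*V$ depending on $|p|$ only, of the form $H_r(q,p)=r^2 h(|p|/r)$ for a fixed profile function $h$ adapted to the normalized geodesic flow on the round sphere. This is the standard local model for the Dehn twist, and its Hofer norm together with the displacement of any point is bounded by $Cr$ for a constant $C$ depending only on $h$.

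With this model in hand, one analyzes the intersection points. Since $\tau_V=\id$ outside $D^*_r V$ and since the finitely many points of $K\cap L$ are compactly disjoint from $V$, for small enough $r$ every $x\in K\cap L$ remains an intersection point of $\tau_V K\cap L$ with the chosen primitive satisfying $f_{\tau_V K}=f_K$ in a neighborhood of $x$; this yields the map $q$ with exact action equality $\mathcal{A}(q(x))=\mathcal{A}(x)$. For the remaining intersection points inside $D^*_r V$, the transverse pieces of $K$ near each $x_0\in K\cap V$ are sent by $\tau_V$ to spiraling sheets that cross $V$ once along the geodesic sphere above $x_0$, hitting $L$ transversely near each $x_1\in L\cap V$ in exactly one point; this gives the bijection $p:(K\cap V)\times(L\cap V)\to \tau_V K\cap L\setminus q(K\cap L)$. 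A direct computation in the local model, using that $f_{\tau_V K}-f_K$ is the generating function of the Dehn twist and scales like $r^2$ times a profile depending on $h$, gives $\mathcal{A}(p(x_0,x_1))-\mathcal{A}(x_0)-\mathcal{A}(x_1)\in[0,Cr)$ with the lower bound coming from the sign conventions that make the Dehn twist action nonnegative on short orbits.

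To conclude, note that for any fixed $r$ the set of relevant actions is finite, so there is a minimum positive gap $\delta>0$ among all the action differences of the form $\mathcal{A}(x)-\mathcal{A}(y)$ for non-matched pairs $(x,y)\in (K\cap L)\times(\tau_V K\cap L)$ with $y\neq q(x)$, and $\mathcal{A}(y)-\mathcal{A}(x_0)-\mathcal{A}(x_1)$ for $(x_0,x_1)\in(K\cap V)\times(L\cap V)$ and $y\in\tau_V K\cap L$ with $y\neq p(x_0,x_1)$, restricted to those differences that are not \emph{a priori} forced into a small interval by the local model. Shrinking $r$ so that $Cr<\delta/4$ and setting $\epsilon=Cr$ gives $3\epsilon<\delta$, so all action differences for non-matched pairs automatically lie outside $[0,3\epsilon)$, as required.

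The main obstacle is the local model computation verifying that the intersection points in $D^*_r V$ are exactly parametrized by $(K\cap V)\times(L\cap V)$ and that the resulting action change lies in $[0,Cr)$. This is classical but requires a careful choice of primitives and an explicit calculation with the geodesic flow; it is carried out in detail in Seidel's original long exact sequence paper, and I would essentially import that analysis, adapting the constants to our conventions.
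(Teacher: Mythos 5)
The paper does not actually prove this lemma: it is stated as a packaging of Seidel's action estimates from the long exact sequence paper (the surrounding text cites \cite[Proposition 3.4]{Seidel03} and \cite[Lemma 3.8]{Seidel03}, and the only remark offered is that $\epsilon$ depends on the local model and on $V,K,L$). Your proposal reconstructs exactly that argument --- local model for $\tau_V$ supported in $D^*_rV$, the splitting of $\tau_VK\cap L$ into $q(K\cap L)$ with unchanged action and $p\bigl((K\cap V)\times(L\cap V)\bigr)$ with action defect in $[0,Cr)$, then shrink $r$ --- and you explicitly defer the hard local computation to Seidel, so in substance you are doing the same thing the paper does.

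Two points in your concluding step deserve care. First, there is a mild circularity: you define the gap $\delta$ from the actions at a \emph{fixed} $r$ and then shrink $r$, but the actions of the points $p(x_0,x_1)$ move as $r$ does. The clean version is to define $\delta$ from the \emph{limiting} quantities $\mathcal{A}(x)-\mathcal{A}(x')$ and $\mathcal{A}(x)-\mathcal{A}(x_0)-\mathcal{A}(x_1)$, which are independent of $r$, and then use that all actual differences are within $O(r)$ of these limits. Second, and more seriously, the argument silently assumes that none of the non-matched limiting differences lies in $[0,3\epsilon)$ --- in particular that distinct points of $K\cap L$ have distinct actions and that no coincidence $\mathcal{A}(x)=\mathcal{A}(x_0)+\mathcal{A}(x_1)$ occurs. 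If such a coincidence holds, no choice of $\epsilon>0$ and $r$ makes the statement true as written; one must first perturb $K$ or $L$ by a Hamiltonian isotopy to make the actions generic (which is harmless here, since the paper has already shown such isotopies preserve the relevant simple homotopy types). This genericity hypothesis is implicit in Seidel's setup and in the lemma as stated, but your proof should name it rather than bury it in the phrase ``restricted to those differences that are not a priori forced into a small interval.'' A last cosmetic point: the model Hamiltonian for the Dehn twist scales like $r$ rather than $r^2$ (the angle function must sweep from $\pi$ to $0$ as $|p|$ runs over $[0,r]$), but this does not affect the conclusion since either bound tends to $0$ with $r$.
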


This constant $\epsilon>0$ depends only on the local model for the Dehn twist, and the Lagrangian submanifolds $V,K,L$. In particular, once we have chosen an $\epsilon$ for some $V,K,L$, we have the freedom to choose a different model for the Dehn twist $\tau_V'$ that is supported in a smaller neighborhood of $V$ compared to $\tau_V$. Since these two different choices are related by a Hamiltonian isotopy, $\tau_VK$ and $\tau_V'K$ are simply isomorphic Lagrangians. So for our purposes, we may choose a Dehn twist that is supported in any small neighborhood of $V$ we want, and likewise we may take $\epsilon$ to be an arbitrarily small positive real number.

Combining \cite[Proposition 3.4]{Seidel03} and \cite[Lemma 3.8]{Seidel03}, we have the following lemma:

\begin{lemma}\label{lem:J-section-low-energy}
    The pseudoholomorphic sections with energy less than $\epsilon$ that contribute to the $\mu^2$ term in the differential are given by $(x_0,x_1)\mapsto p(x_0,x_1)$. The only pseudoholomorphic sections with energy less than $\epsilon$ that contribute to the $\mu^2(c,~)$ term are the constant sections.
\end{lemma}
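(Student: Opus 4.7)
The plan is to carry out the local analysis of low-energy pseudoholomorphic sections following \cite[Sections 3.1--3.2]{Seidel03}. The strategy rests on two pillars: a monotonicity argument that confines low-energy sections to a small neighborhood of the vanishing cycle, and an explicit enumeration of holomorphic sections in the local model. Since the model for $\tau_V$ may be chosen with arbitrarily small support (different choices are related by Hamiltonian isotopies, which preserve isomorphism class in the Fukaya category), we are free to shrink the neighborhood after fixing the geometry of $V$, $K$, $L$, and correspondingly $\epsilon$.

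For the first assertion, I would first localize to a standard neighborhood of the critical fiber, modeled on the quadratic singularity with $V$ as the vanishing cycle. A monotonicity estimate, applied to the total space of the Lefschetz fibration with the given boundary conditions on $K, \tau_V K, L$, implies that any pseudoholomorphic section whose image leaves this neighborhood carries energy at least some positive constant depending only on $V, K, L$ and the ambient fibration, but not on the specific choice of $\tau_V$. After shrinking the Dehn twist support so that $\epsilon$ lies below this constant, every low-energy section is confined to the local model. The explicit local computation of \cite[Proposition 3.4]{Seidel03} then produces, for each pair $(x_0, x_1) \in (K\cap V) \times (L \cap V)$, a unique rigid low-energy section asymptotic to $x_0$ and $x_1$ at the incoming strip-like ends and to $p(x_0, x_1) \in \tau_V K \cap L$ at the outgoing end, and shows that no other sections of energy below $\epsilon$ exist. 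This identifies the low-energy part of the $\mu^2$ contribution with the map $p$.

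The second assertion is more delicate because the cocycle $c \in CF^0(L, \tau_V L)$ is itself constructed by counting pseudoholomorphic sections of an auxiliary Lefschetz fibration with moving Lagrangian boundary conditions. Following \cite[Lemma 3.8]{Seidel03}, the relevant moduli space admits constant sections precisely over intersection points lying in $K \cap L \subset \tau_V K \cap L$, and these contribute with zero energy. Any nonconstant section contributing to $\mu^2(c,~)$ satisfies a strictly positive energy lower bound coming from the same monotonicity input as above. Thus for $\epsilon$ small enough only the constant sections contribute, and they realize the inclusion map $q$. The main technical obstacle throughout is the bookkeeping of signs and orientations so that the low-energy contributions match $p$ and $q$ on the nose rather than merely up to sign; this reduces to the orientation computations of \cite{Seidel03}, which transport verbatim since the local model near $V$ is unchanged in our setting.
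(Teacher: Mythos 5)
Your proposal is correct and follows essentially the same route as the paper, which proves this lemma simply by citing \cite[Proposition 3.4]{Seidel03} and \cite[Lemma 3.8]{Seidel03}; your localization-to-the-local-model and explicit-enumeration sketch is exactly the content of those results. The extra remarks on shrinking the support of $\tau_V$ and on orientations are consistent with how the surrounding text (Lemma \ref{lem:Dehn-twist-action-filter} and the proof of Theorem \ref{thm:Dehn-twist-exact-triangle-simple}) uses the lemma.
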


The proof ends by finding a homotopy between the composition of the two off-diagonal terms $\mu^2(c,\mu^2(~,~))$ and the zero map, and applying \cite[Lemma 2.32]{Seidel03}.

We now explain how the proof of the isomorphism between $\mathcal{T}_VL$ and $\mathcal{Y}^r_{\tau_VL}$ as $A_\infty$-modules can be adapted to show an isomorphism between $T_VL$ and $\tau_VL$ in $Tw_{Ch}\mathcal{F}(\pi)$.

In \cite[Section (17d)]{Seidel08}, a degree zero cocycle
\begin{equation}
    c\in CF^0(V,\tau_VL)
\end{equation}
is defined by counting holomorphic sections to a Lefschetz fibration with a moving boundary condition, as depicted in Figure \ref{fig:pseudohol-section-cocycle-c}. This is the cocycle $c$ that we have constructed in Equation (\ref{eqn:Dehn-trig-diff}).

\begin{figure}[h]
\centering
\begin{tikzpicture}

\draw  (5,0) arc [start angle=270, end angle=450, x radius=1.8, y radius=1.8];
\draw (2.5,0) -- (5,0);
\draw (2.5,3.6) -- (5,3.6);
\draw [dotted, thick] (5,1.8) -- (6.8,1.8);

\draw[->] (5,1) arc[start angle=-90, end angle=90, radius=0.9];

\draw (4.5,4) node {$\tau_VL$};
\draw (4.5,-0.5) node {$V$};
\draw (6.2,2.4) node {$\tau_V$};

\end{tikzpicture}
\caption{The pseudoholomorphic section counted for the definition of the cocycle $c$.}
\label{fig:pseudohol-section-cocycle-c}
\end{figure}
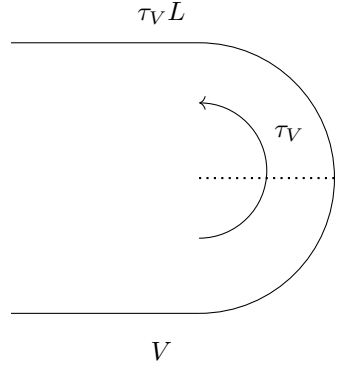

Also, there is a degree -1 map $k:CF^*(V,L)\to CF^*(V,\tau_VL)$ that satisfies for any $x\in CF^*(V,L)$,
\begin{equation}
    \mu^1(k(x))+k(\mu^1(x))+\mu^2(c,x)=0.
\end{equation}
This is constructed by counting a 1-parameter Lefschetz fibration interpolating between the two moving Lagrangian boundary conditions as in Figure \ref{fig:pseudohol-sections-k}. 

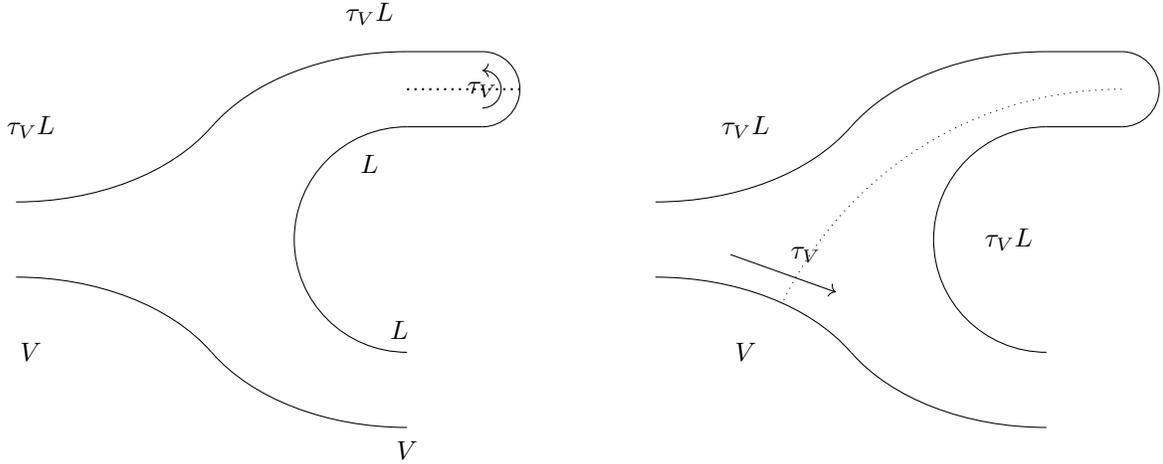
\begin{figure}[h]
\centering
\begin{tikzpicture}

\draw  (3.5,0) arc [start angle=90, end angle=270, x radius=1.5, y radius=1.5];
\draw  (3.5,-4) arc [start angle=-90, end angle=-150, x radius=3, y radius=2];
\draw  (0.9,-3) arc [start angle=30, end angle=90, x radius=3, y radius=2];
\draw  (3.5,1) arc [start angle=90, end angle=150, x radius=3, y radius=2];
\draw  (0.9,0) arc [start angle=-30, end angle=-90, x radius=3, y radius=2];
\draw  (4.5,0) arc [start angle=270, end angle=450, x radius=0.5, y radius=0.5];
\draw (3.5,0) -- (4.5,0);
\draw (3.5,1) -- (4.5,1);
\draw[dotted] (4.5,0.5) arc[start angle=90, end angle=154, radius=5];
\draw[->] (-0.7,-1.7) -- (0.7,-2.2);

\draw (0.3,-1.7) node {$\tau_V$};
\draw (3,-1.5) node {$\tau_VL$};
\draw (-0.5,-3) node {$V$};
\draw (-0.5,0) node {$\tau_VL$};

\draw  (-5,0) arc [start angle=90, end angle=270, x radius=1.5, y radius=1.5];
\draw[->] (-4,0.25) arc[start angle=-90, end angle=90, radius=0.25];
\draw  (-5,-4) arc [start angle=-90, end angle=-150, x radius=3, y radius=2];
\draw  (-7.6,-3) arc [start angle=30, end angle=90, x radius=3, y radius=2];
\draw  (-5,1) arc [start angle=90, end angle=150, x radius=3, y radius=2];
\draw  (-7.6,0) arc [start angle=-30, end angle=-90, x radius=3, y radius=2];
\draw  (-4,0) arc [start angle=270, end angle=450, x radius=0.5, y radius=0.5];
\draw (-5,0) -- (-4,0);
\draw (-5,1) -- (-4,1);
\draw [dotted, thick] (-5,0.5) -- (-3.5,0.5);

\draw (-5.1,-2.7) node {$L$};
\draw (-10,-3) node {$V$};
\draw (-5,-4.3) node {$V$};
\draw (-10,0) node {$\tau_VL$};
\draw (-5.5,1.5) node {$\tau_VL$};
\draw (-5.5,-0.5) node {$L$};
\draw (-4,0.5) node {$\tau_V$};

\end{tikzpicture}
\caption{The 1-parameter family of pseudoholomorphic sections counted to define $k$.}
\label{fig:pseudohol-sections-k}
\end{figure}

Since
\begin{align}
    \hom_{Tw_{Ch}}(CF^*(V,L)\otimes V,\tau_VL)&\cong \hom_\Z(CF^*(V,L),\Z)\otimes CF^*(V,\tau_VL)\\
    &\cong \hom_\Z(CF^*(V,L),CF^*(V,\tau_VL)),
\end{align}
we may regard $k$ as a degree -1 morphism in $ \hom_{Tw_{Ch}}(CF^*(V,L)\otimes V,\tau_VL)$. Thus, we may define a degree zero morphism
\begin{equation}
    (\kappa,c):T_VL\to \tau_VL
\end{equation}
in $Tw_{Ch}\mathcal{F}(\pi)$, where $\kappa=(-1)^{\deg-1}k$ is the map $k$ modified by a sign depending on the degree of the input. To check that this is a cocycle, recall from Section \ref{ssec:category-theory} that for $k\in\hom_\Z(CF^*(V,L),CF^*(V,\tau_VL))$, we have $\mu^1_{Tw_{Ch}}(k)=\mu^1\circ k+k\circ\mu^1$, since $k$ is of odd degree. Therefore the relations we have to check to ensure that $(\kappa,c)$ is a cocycle reduces to
\begin{equation}
    \mu^1(c)=0,~\mu^1(k)+k(\mu^1)+\mu^2(c,~)=0
\end{equation}
which are precisely the relations we required for $c$ and $k$. Now to show that this is an isomorphism, it is enough to check that its mapping cone is an acyclic object in $Tw_{Ch}\mathcal{F}(\pi)$. From Section \ref{ssec:category-theory}, it is enough to check this for single Lagrangians, so take an exact Lagrangian $K$: then the cochain complex
\begin{equation}
    \hom_{Tw_{Ch}}(K,CF^*(V,L)\otimes V[2]\oplus L[1]\oplus \tau_VL)
\end{equation}
is isomorphic to the cochain complex
\begin{equation}
    CF^*(V,L)\otimes CF^*(K,V)[2]\oplus CF^*(K,L)[1]\oplus CF^*(K,\tau_VL),
\end{equation}
and the differentials agree with (\ref{eqn:Dehn-trig-diff}) since $\mu^2_{Tw_{Ch}}(c,ev)=\mu^2(c,~)$. Therefore the same proof as above shows that for a specific choice of $\tau_V$, there exists an isomorphism
\begin{equation}\label{eqn:simple-iso-twch}
    (\kappa,c):T_VL\to \tau_VL
\end{equation}
where the pair $(\kappa,c)$ depends on the choice of $\tau_V$.

From now on, we will prove that the above isomorphism $T_VL\cong\tau_VL$ in $Tw_{Ch}\mathcal{F}(\pi)$ is a simple isomorphism. We first describe the choice of basis. For the intersection points of $\tau_VK\cap L$ that are identified with $K\cap L$, we choose the same lifts. For the intersection points of $\tau_VK\cap L$ that are identified with $(K\cap V)\times(L\cap V)$, we note that all these intersection points lie in a small Weinstein neighborhood $U$ of $V$. Because $U$ is simply connected, we may pick a sheet of the lift $\pi^{-1}(U)$ to the universal cover, and choose the lifts of $K\cap V$, $L\cap V$, and the points in $\tau_VK\cap L$ that lie in $U$ to be in this sheet.

\begin{thm} \label{thm:Dehn-twist-exact-triangle-simple}
    The isomorphism $T_VL\to\tau_VL$ constructed as in (\ref{eqn:simple-iso-twch}) is simple.
\end{thm}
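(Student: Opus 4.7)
The plan is to reduce to checking simple acyclicity of the mapping cone of the isomorphism element $(\kappa,c)$ against a single test Lagrangian $K$, and then to adapt Seidel's action filtration argument from \cite{Seidel03} to the based complex over $\Z\pi_1(X)$. By Proposition \ref{prop:simple-notions-check-Lagrangian}, it suffices to show that for every Lagrangian $K$ (after a small Hamiltonian perturbation to ensure transversality with $V$, $L$, and $\tau_VL$, justified by Proposition \ref{prop:Ham-isotopy-simple}), the enhanced mapping cone
\begin{equation}
    \underline{CF^*(V,L)}\otimes\underline{CF^*(K,V)}[2]\oplus \underline{CF^*(\tau_VK,L)}[1]\oplus \underline{CF^*(K,L)}
\end{equation}
with differential given by the $\Z\pi_1(X)$-lift of (\ref{eqn:Dehn-trig-diff}) is simply acyclic.

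The main step is to apply Lemma \ref{lem:Dehn-twist-action-filter} to define a finite length filtration by action on this based complex. On the associated graded, only the low-energy part of the differential survives, which by Lemma \ref{lem:J-section-low-energy} is given (up to sign) by the matrix in (\ref{eqn:low-energy-differential}) whose off-diagonal entries are the pairings $p\colon (K\cap V)\times(L\cap V)\to \tau_VK\cap L$ and $q\colon K\cap L\to \tau_VK\cap L$. The key point is that with the choice of lifts specified just before the theorem statement, namely matching lifts on the $K\cap L$ part of $\tau_VK\cap L$ and choosing lifts of $K\cap V$, $L\cap V$, and the $p$-part of $\tau_VK\cap L$ to lie in a common sheet of $\pi^{-1}(U)$ for a simply connected Weinstein neighborhood $U\supset V$, each entry of the low-energy differential is $\pm 1 \in \Z\pi_1(X)$ with trivial group part. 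For $q$ this is because the contributing pseudoholomorphic section is constant. For $p$ this is because the contributing section with energy less than $\epsilon$ has image inside $U$ (by a monotonicity argument as in \cite{Seidel03}), so its lift to $\widetilde{X}$ stays in the chosen sheet of $\pi^{-1}(U)$ and records the trivial deck transformation.

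Once this is established, each associated graded piece of the action filtration decomposes as a direct sum of elementary two-step based complexes of the form $[\Z\pi_1(X)\xrightarrow{\pm 1}\Z\pi_1(X)]$ indexed by $x\in K\cap L$ (coming from $q$) and by $(x_0,x_1)\in (K\cap V)\times(L\cap V)$ (coming from $p$). Each such complex is manifestly simply acyclic, since its Whitehead torsion is the class of the $1\times1$ matrix $\pm 1$, which is trivial in $Wh(\pi_1(X))$. Proposition \ref{prop:Whitehead-torsion-filtration} then implies that the total complex is simply acyclic, completing the proof.

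The main obstacle is the bookkeeping of $\pi_1(X)$ elements associated to the pseudoholomorphic sections in Lemma \ref{lem:J-section-low-energy}, in particular verifying that the low-energy sections contributing to $p$ lift to the chosen sheet of $\pi^{-1}(U)$. This relies on shrinking the support of the Dehn twist enough (using Proposition \ref{prop:algebraic-twist-Ham-isotopy} and the freedom in choosing $\epsilon$) so that all the relevant energy-$\epsilon$ sections are trapped inside the simply connected neighborhood $U$, at which point the triviality of the group element is automatic.
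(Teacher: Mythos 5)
Your proposal is correct and follows essentially the same route as the paper: reduce to single test Lagrangians, filter the enhanced mapping cone by action so that the graded pieces are the cones of $p$ and $q$, observe that the low-energy curves contributing to $q$ are constant and that those contributing to $p$ are trapped in the simply connected neighborhood $U$ by a monotonicity estimate (the paper uses the switching-boundary version from \cite{CEL} rather than the estimate in \cite{Seidel03}, after shrinking the Dehn twist support so that $3\epsilon$ is below the monotonicity threshold), and conclude with the filtration lemma. The only minor slips are notational: the first summand should be $CF^*(V,L)\otimes\underline{CF^*(\tau_VK,V)}$ with only the Floer factor enhanced, and the passage from the shrunk twist $\tau_V'$ back to the original $\tau_V$ uses the simplicity of compactly supported Hamiltonian isotopies together with Proposition \ref{prop:composition-simple}, not Proposition \ref{prop:algebraic-twist-Ham-isotopy}.
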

\begin{proof}
The proof will consist of two parts. First, we will show that for any exact Lagrangian $K$, there exists some Dehn twist $\tau_V'$ supported in a small neighborhood of $V$ such that the homotopy equivalence
\begin{equation}
    \mu^2_{Tw_{Ch}}(A',~):\underline{CF^*(K,T_VL)}\to\underline{CF^*(K,\tau_V'L)}
\end{equation}
is simple, where $A'$ is the isomorphism element given by the corresponding element $(k',c')$ for the Dehn twist $\tau_V'$. Assume that this is true, then since compactly supported Hamiltonian isotopies are simple, the induced homotopy equivalence
\begin{equation}
    \underline{CF^*(K,\tau_V'L)}\to\underline{CF^*(K,\tau_VL)}
\end{equation}
is simple. Then by Proposition \ref{prop:composition-simple}, it follows that the homotopy equivalence
\begin{equation}
    \mu^2_{Tw_{Ch}}(A,~):\underline{CF^*(K,T_VL)}\to\underline{CF^*(K,\tau_V'L)}
\end{equation}
is simple, where $A$ is the isomorphism element $(k,c)$ for the Dehn twist $\tau_V$. Therefore, the homotopy equivalence $\mu^2_{Tw}(A,~)$ is a simple homotopy equivalence for any exact Lagrangian $K$, and therefore it is left simple. Right simpleness can be proved similarly, and so we may conclude that the isomorphism $T_VL\to\tau_VL$ is simple.

Thus, we now only need to prove that we can choose some Dehn twist $\tau_V$ for each exact Lagrangian $K$ such that the based cochain complex
\begin{equation} \label{eqn-Dehn-twist-mapping-cone}
 CF^*(V,L)\otimes \underline{CF^*(\tau_VK,V)}[2]\oplus \underline{CF^*(\tau_VK,L)}[1]\oplus \underline{CF^*(K,L)}    
\end{equation}
is simply acyclic. By the multiplicativity of Whitehead torsion and the previous action filtration argument, it is enough to show that the graded pieces, which are mapping cones of the maps
\begin{align}
    &p: CF^*(V,L)\otimes\underline{CF^*(\tau_VK,V)} \to \underline{CF^*(\tau_VK,L)},\\ 
    &q: \underline{CF^*(\tau_VK,L)}\to\underline{CF^*(K,L)},
\end{align}
are simply acyclic.

Now we take a filtration of the cochain complex (\ref{eqn-Dehn-twist-mapping-cone}) such that each graded piece belongs to an action window less than $3\epsilon$. By Lemma \ref{lem:J-section-low-energy}, it follows that for $q$, all the pseudoholomorphic sections contributing to $q$ with energy smaller than $3\epsilon$ have constant image, and therefore contributes trivially to the $\pi_1(X)$-coefficient of the differential of the mapping cone. Therefore the mapping cone of $q$ is acyclic with trivial Whitehead torsion.

For $p$, we need to show that the pseudoholomorphic maps $u$ contributing to the product $\mu^2$ has contribution to the $\pi_1(X)$-term $g(u)=1$. See Figure \ref{fig:Dehn-twist} for an illustration. In view of the above choice of basis, this amounts to showing that the path in the image $\im(u)$ connecting $x_0$ to $p(x_0,x_1)$ as in Figure \ref{fig:Dehn-twist} can be homotoped to a path that lies inside of $U$.

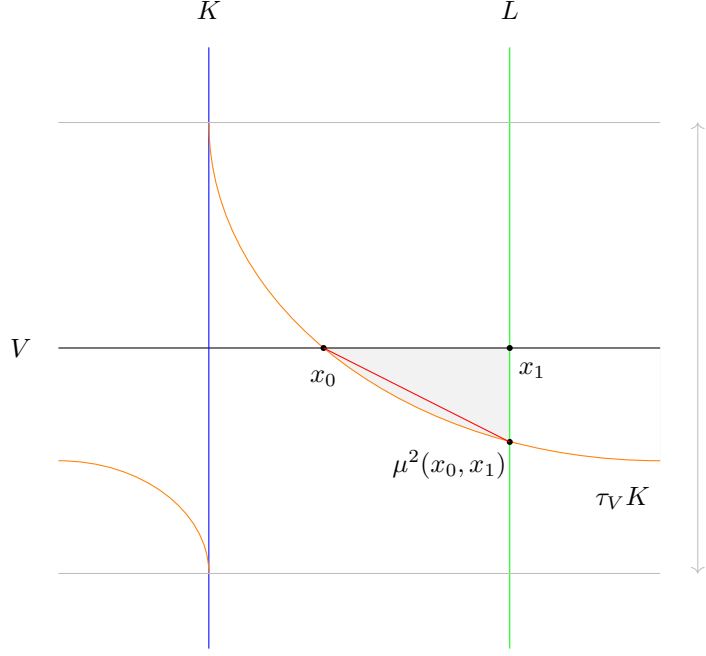
\begin{figure}[h]
\centering
\begin{tikzpicture}

\path [draw=none,fill=lightgray!20] (4,3) -- (-2,3) arc [start angle=180, end angle=270, x radius=6, y radius=4.5] (4,-1.5) -- (4,3);
\path [draw=none,fill=white] (4,3) -- (-2,3) -- (-2,0) -- (4,0) -- (4,3);
\path [draw=none,fill=white] (4,3) -- (2,3) -- (2,-3) -- (4,-3) -- (4,3);

\draw [draw=green] (2,-4) -- (2,4);
\draw (-4,0) -- (4,0);
\draw [draw=blue] (-2,-4) -- (-2,4);

\draw [draw=orange] (-2,3) arc [start angle=180, end angle=270, x radius=6, y radius=4.5];
\draw [draw=orange] (-4,-1.5) arc [start angle=90, end angle=0, x radius=2, y radius=1.5];

\draw [draw=lightgray] (-4,3) -- (4,3);
\draw [draw=lightgray] (-4,-3) -- (4,-3);
\draw[<->, draw=lightgray] (4.5, -3) -- (4.5, 3);

\draw (-4.5,0) node {$V$};
\draw (-2,4.5) node {$K$};
\draw (2,4.5) node {$L$};
\draw (3.5,-2) node {$\tau_VK$};

\draw [draw=none, fill=black] (-0.475,0) circle (0.04 and 0.04);
\draw [draw=none, fill=black] (2,0) circle (0.04 and 0.04);
\draw [draw=none, fill=black] (2,-1.25) circle (0.04 and 0.04);

\draw [draw=red] (-0.475,0) -- (2,-1.25);

\draw (-0.475,-0.4) node {$x_0$};
\draw (2.3,-0.3) node {$x_1$};
\draw (1.2,-1.55) node {$\mu^2(x_0,x_1)$};

\end{tikzpicture}
\caption{The Weinstein neighborhood of the vanishing cycle $V$, together with the Lagrangians $K,L$, and $\tau_V(K)$. The Dehn twist $\tau_V$ is supported in the region between the two gray lines. The shaded area represents the image of the holomorphic disc that contributes to the product $\mu^2:CF^*(V,L)\otimes\underline{CF^*(\tau_VK,V)}\to\underline{CF^*(\tau_VK,L)}$, and the red line depicts the contribution of this curve to the $\pi_1(X)$-coefficient of the complex. 
}
\label{fig:Dehn-twist}
\end{figure}

The idea is to apply the monotonicity lemma to portions of the image of pseudoholomorphic maps lying outside a fixed open neighborhood $U$ of the vanishing cycle $V$. Specifically, we use a version of the monotonicity lemma for pseudoholomorphic maps with switching Lagrangian boundary conditions, as developed in \cite{CEL} and recalled in Lemma \ref{thm:monotonicity-lemma-CEL}. Since we have previously shown that continuation maps induced from homotopies of almost complex structures are simple homotopy equivalences, we are free to pick a convenient regular almost complex structure suited to our analysis. We now describe such an almost complex structure.

For each intersection point $x\in K\cap L$, choose a small open neighborhood $Nbd(x)$, and fix an almost complex structure that is integrable in this neighborhood. Additionally, we also require a holomorphic chart identifying the pair $(T_xK,T_xL)\cong(\R^n,i\R^n)$. We extend this to an almost complex structure $J$ on $M\setminus U$. Now for the regular almost complex structure on $E$ defining the map $p$, we pick $\{J_z\}$ such that for each $z$, $J_z$ restricts to our chosen $J$ on each fiber. Since any pseudoholomorphic map $u$ that contributes to $\mu^2$ must pass through $U$, regularity of $\{J_z\}$ can be achieved by a perturbation supported in the open neighborhood $U$, keeping it fixed outside.

Now let $u$ be a pseudoholomorphic map with Lagrangian boundary conditions $(V,L,\tau_VK)$ contributing to $\mu^2$. If the image of $u$ lies entirely in $U$, then $g(u)=1$, since $U$ is simply connected. Otherwise, suppose that there exists a point $x\in\im(u)\setminus U$. Possibly after shrinking $U$, there exists an open ball $B(x,r)\subset M\setminus U$ with radius $r$ measured with respect to the Riemannian metric $g_J=\omega(~,J~)$. We fix this $r$ to be a bounded constant only depending on the size of $U$. Then for the preimage $\Omega$ of $B(x,r)$ under $u$, $u\vert_\Omega$ is a $J$-holomorphic map with boundary in the immersed Lagrangian $K\cup L$, and we may apply Lemma \ref{thm:monotonicity-lemma-CEL} to obtain a lower bound on the energy of $u$:
\begin{equation}
    E(u)\geq Cr^2
\end{equation}
for some constant $C$ depending on $g_J$, and the geometry of $K$ and $L$ outside of $U$. Since the model of the Dehn twist $\tau_V$ does not affect the geometry of $K$, $L$, and $\tau_VK$ outside of $U$, the constant $C$ is independent of the twist $\tau_V$.

We now choose the support of the Dehn twist small enough such that $3\epsilon<Cr^2$, where $\epsilon$ is the action filtration gap. Then by the above argument, any pseudoholomorphic map $u$ contributing to the map $p$, restricted to this action window, must be entirely contained in $U$, and thus has $g(u)=1$.

It follows that the graded pieces of the mapping cone complex (\ref{eqn-Dehn-twist-mapping-cone}) with respect to the action filtration are identified with the graded pieces of (\ref{eqn:chain-cpx-Dehn-twist}) tensored with $\otimes_\Z \Z\pi_1(X)$, and thus have trivial Whitehead torsion. Therefore by Proposition \ref{prop:Whitehead-torsion-filtration}, the total complex has trivial Whitehead torsion, and thus the equivalence $(k,c):T_VL\to\tau_VL$ is a right simple isomorphism. The left simple case is analogous.
\end{proof}

Having established the simplicity of the Dehn twist exact triangle, we now proceed to prove that the Fukaya category of a Lefschetz fibration is simply generated by its Lefschetz thimbles. For this purpose, we work with the version of the Fukaya category $\mathcal{F}(\pi)$ from Subsection \ref{ssec:Lef-fib}. 

In this Fukaya category, we allow non-compact admissible Lagrangians. However, the results in Subsection \ref{ssec:bimodule} only show that compactly supported Hamiltonian isotopies induce simple isomorphisms. Thus, we must first verify that the simple homotopy type of
\begin{equation}
    \underline{\hom_{\mathcal{F}(\pi)}(K,L)}=\underline{CF^*(\phi_H(K),L)}
\end{equation}
is independent of the choice of Hamiltonian isotopy $\phi_H$ induced from the perturbation datum (\ref{eqn:perturbation-datum}).

Suppose that $K_1=\phi_H(K)$, $K_2=\phi_{H'}(K)$ are two such Hamiltonian perturbations satisfying $\lambda_{K_1}>\lambda_{K_2}>\lambda_L$. We claim that there exists a simple homotopy equivalence
\begin{equation}
    \underline{CF^*(K_1,L)}\simeq\underline{CF^*(K_2,L)}.
\end{equation}
To prove this, observe that there exists a Hamiltonian isotopy $\psi\circ\rho:K_2\to K_1$, where:
\begin{enumerate}
    \item $\rho$ is supported near $\partial_\infty E$ and adjusts the value of $\lambda$ such that $\lambda_{\rho(K_2)}=\lambda_{K_1}$,
    \item $\psi$ is a compactly supported Hamiltonian isotopy from $\rho(K_2)$ to $K_1$.
\end{enumerate}
Since $\lambda_{K_2}>\lambda_L$, we may arrange $\rho$ such that it introduces no new intersections with $L$. Thus, we have an equality of cochain complexes
\begin{equation}
    \underline{CF^*(K_2,L)}=\underline{CF^*(\rho(K_2),L)}.
\end{equation}
Now we apply the compactly supported Hamiltonian isotopy $\psi$ to $\rho(K_2)$: from Proposition \ref{prop:Ham-isotopy-simple}, this induces a simple homotopy equivalence
\begin{equation}
    \underline{CF^*(\rho(K_2),L)}\simeq\underline{CF^*(K_1,L)}.
\end{equation}
Combining the above two equivalences, we conclude the desired simple homotopy equivalence
\begin{equation}
    \underline{CF^*(K_1,L)}\simeq\underline{CF^*(K_2,L)}.
\end{equation}

Unlike the compact Fukaya category, there is no immediate Poincaré duality statement that guarantees that left and right simple categorical notions are equivalent for $\mathcal{F}(\pi)$. So we prove the following proposition, which shows that even in this case, left and right simple notions indeed do agree.

\begin{proposition} \label{prop:left-right-agree}
    Let $\mathcal{K}\in Tw_{Ch}\mathcal{F}(\pi)$ be a left simply acyclic object. Then $\mathcal{K}$ is also right simply acyclic.
\end{proposition}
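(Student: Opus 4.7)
The plan is to adapt the automatic simplicity argument (Proposition \ref{prop:automaticsimplicity}) to the Lefschetz fibration setting, using the simplicity of the Dehn twist exact triangle (Theorem \ref{thm:Dehn-twist-exact-triangle-simple}) together with Seidel's generation of $\mathcal{F}(\pi)$ by Lefschetz thimbles. The key geometric input is that Lefschetz thimbles are diffeomorphic to $\R^n$, hence simply connected, so that the enhanced Floer complex against a thimble on one side collapses to a trivial base change over $\Z\pi_1(X)$.

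First, I reduce to ordinary acyclicity. Since $\mathcal{K}$ is left simply acyclic, $\underline{CF^*(\mathcal{K},L)}$ is acyclic as a complex of free $\Z\pi_1(X)$-modules for every object $L$, hence contractible. Tensoring over $\Z\pi_1(X)$ with $\Z$ via the augmentation preserves acyclicity for a contractible termwise-projective complex, giving that $CF^*(\mathcal{K},L)=\underline{CF^*(\mathcal{K},L)}\otimes_{\Z\pi_1(X)}\Z$ is acyclic for every $L$. Thus $\mathcal{K}\cong 0$ in the triangulated category $H^0 Tw_{Ch}\mathcal{F}(\pi)$, so $CF^*(L,\mathcal{K})$ is also acyclic for every Lagrangian $L$. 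By Proposition \ref{prop:simple-notions-check-Lagrangian}, it now suffices to upgrade this to simple acyclicity of $\underline{CF^*(L,\mathcal{K})}$ for each single Lagrangian $L$.

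Next, I invoke Seidel's generation theorem to produce an isomorphism $L\cong\mathcal{L}=\bigoplus_\beta C_\beta^*\otimes D_{i_\beta}$ in $Tw_{Ch}\mathcal{F}(\pi)$ with $D_{i_\beta}$ Lefschetz thimbles, constructed as a finite composition of Dehn twist exact triangles and compactly supported Hamiltonian isotopies. By Theorem \ref{thm:Dehn-twist-exact-triangle-simple} and Proposition \ref{prop:Ham-isotopy-simple} (together with its symmetric analog for variations of the first argument), each elementary step is a simple isomorphism, and by Proposition \ref{prop:composition-of-simple-equivalences} the composite $L\cong\mathcal{L}$ is a simple isomorphism, inducing a simple homotopy equivalence $\underline{CF^*(L,\mathcal{K})}\simeq \underline{CF^*(\mathcal{L},\mathcal{K})}$. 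The twisted complex structure on $\mathcal{L}$ then provides a filtration of $\underline{CF^*(\mathcal{L},\mathcal{K})}$ with graded pieces $\hom_\Z(C_\beta^*,\Z)\otimes \underline{CF^*(D_{i_\beta},\mathcal{K})}$. Because $D_{i_\beta}\cong\R^n$ is simply connected, choosing all intersection-point lifts in a single sheet of $D_{i_\beta}$ in the universal cover identifies $\underline{CF^*(D_{i_\beta},\mathcal{K})}$ with $CF^*(D_{i_\beta},\mathcal{K})\otimes_\Z\Z\pi_1(X)$ as based cochain complexes (exactly as in the proof of Proposition \ref{prop:automaticsimplicity}). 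Since the first factor is acyclic by the previous step, Lemma \ref{lem:zero-torsion} gives simple acyclicity of the base change, Lemma \ref{lem:zero-torsion-general} extends this under the further tensor with $\hom_\Z(C_\beta^*,\Z)$, and Proposition \ref{prop:Whitehead-torsion-filtration} propagates simple acyclicity through the filtration, completing the argument.

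The main obstacle is the verification that Seidel's generation construction yields an explicit \emph{simple} isomorphism $L\cong\mathcal{L}$, which requires realizing the inductive construction as a finite composition of Dehn twist moves and compactly supported Hamiltonian isotopies, each recognized as simple via Theorem \ref{thm:Dehn-twist-exact-triangle-simple} and Proposition \ref{prop:Ham-isotopy-simple}. This is essentially the bookkeeping required in the proof of Proposition \ref{prop:simple-gen-Lef-fib}; here, however, we only need the weaker consequence that the resulting isomorphism is simple for a single test pair $(L,\mathcal{K})$, rather than simultaneous simple generation of the entire Fukaya category.
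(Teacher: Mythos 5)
Your reduction to ordinary acyclicity (left simply acyclic $\Rightarrow$ $\mathcal{K}\cong 0$ in $H^0Tw_{Ch}\mathcal{F}(\pi)$ $\Rightarrow$ $CF^*(L,\mathcal{K})$ acyclic for all $L$) is correct, and your step 3 — filtering $\underline{CF^*(\mathcal{L},\mathcal{K})}$ by the twisted complex structure and using simple connectivity of the thimbles together with Lemmas \ref{lem:zero-torsion}, \ref{lem:zero-torsion-general} and Proposition \ref{prop:Whitehead-torsion-filtration} — is exactly the mechanism of Proposition \ref{prop:automaticsimplicity}. The problem is step 2, which you flag as ``bookkeeping'' but which is in fact where the entire content of the proposition lives. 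You need, for each admissible Lagrangian $L$, an isomorphism $L\cong\mathcal{L}$ onto a thimble-built twisted complex whose cone is simply acyclic \emph{against objects positioned on the other side of the wrapping} — in particular against the summands $K_\alpha$ of $\mathcal{K}$, whose values $\lambda_{K_\alpha}$ need not lie below $\lambda_L$. The branched-cover argument in the proof of Proposition \ref{prop:simple-gen-fuk-lef} only establishes simple acyclicity of $T_{B_1}\cdots T_{B_m}L$ against admissible Lagrangians $L'$ with $\lambda_{L'}<\lambda_L$; the extension to $\lambda_{L'}>\lambda_L$ is carried out there precisely ``by a similar argument to Proposition \ref{prop:left-right-agree}.'' So invoking simple generation (or any simple version of Seidel's generation) to prove this proposition is circular: in the paper's logical order, Proposition \ref{prop:left-right-agree} is an input to simple generation, not a consequence of it.

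The intended argument is much shorter and does not use generation at all. Writing $\underline{\hom_{Tw_{Ch}\mathcal{F}(\pi)}(L,\mathcal{K})}=\underline{CF^*(\phi_H(L),\mathcal{K})}$ with $\lambda_{\phi_H(L)}>\lambda_{K_\alpha}$ for all $\alpha$, one transfers the wrapping to the other factor: $\phi_{-H}(\mathcal{K})$ is simply isomorphic to $\mathcal{K}$, and $\underline{CF^*(\phi_{-H}(\mathcal{K}),L)}\cong\underline{CF^*(\mathcal{K},\phi_H(L))}$ is simply acyclic by the left hypothesis. Poincar\'e duality for the enhanced Floer complex (reversing strips, which dualizes the based complex over $\Z\pi_1(X)$ and preserves triviality of Whitehead torsion) then identifies this with $\underline{CF^*(\phi_H(L),\mathcal{K})}$ up to duality, giving right simple acyclicity. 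If you want to keep your route, you would have to supply the duality step anyway to make the generation isomorphism simple on the required side, at which point the detour through thimbles is unnecessary.
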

\begin{proof}
    Let $\mathcal{K}=\bigoplus_\alpha C^*_\alpha\otimes K_\alpha$ be a left simply acyclic object. To verify that $\mathcal{K}$ is right simply acyclic, it suffices to check that for every admissible Lagrangian $L$, the complex
    \begin{equation}
        \underline{\hom_{Tw_{Ch}\mathcal{F}(\pi)}(L,\mathcal{K})}
    \end{equation}
    is simply acyclic. By the definition of the morphisms in $\mathcal{F}(\pi)$, this complex is given by
    \begin{equation}
        \underline{CF^*(\phi_H(L),\mathcal{K})},
    \end{equation}
    where $\phi_H$ is a Hamiltonian isotopy chosen such that $\lambda_{\phi_H(L)}>\lambda_{K_\alpha}$ for any admissible Lagrangian $K_\alpha$ appearing in the twisted complex $\mathcal{K}$. 

    Since $\mathcal{K}$ is left simply acyclic and $\phi_{-H}(\mathcal{K})$ is simply isomorphic to $\mathcal{K}$, we know that
    \begin{equation}
        \underline{CF^*(\phi_{-H}(\mathcal{K}),L)}\cong \underline{CF^*(\mathcal{K},\phi_H(L))}
    \end{equation}
    is simply acyclic. Now by Poincaré duality, it follows that $ \underline{CF^*(\phi_H(L),\mathcal{K})}$ is also simply acyclic, and thus $\mathcal{K}$ is right simply acyclic.
\end{proof}

As a consequence, we conclude that left and right notions of simple acyclicity coincide for $\mathcal{F}(\pi)$. Therefore we may define an object $K\in\mathcal{F}(\pi)$ to be \emph{simply acyclic} if for every object $L\in\mathcal{F}(\pi)$, the complex
\begin{equation}
    \underline{\hom_{\mathcal{F}(\pi)}(K,L)}=\underline{CF^*(\phi_H(K),L)}
\end{equation}
is simply acyclic, for any choice of Hamiltonian perturbation $\phi_H$ used in the definition of the morphisms. Likewise, \emph{simple isomorphisms} are the morphisms whose mapping cones are simply acyclic.

Having defined simple generation for $\mathcal{F}(\pi)$, we now show that the Lefschetz thimbles simply generate $\mathcal{F}(\pi)$.

\begin{proposition} \label{prop:simple-gen-fuk-lef}
    The $A_\infty$-category $\mathcal{F}(\pi)$ is simply generated by the Lefschetz thimbles $B_\gamma$.
\end{proposition}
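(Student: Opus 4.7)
The plan is to execute the classical generation argument for Lefschetz thimbles from \cite{Seidel18} while tracking simplicity at every step. Given an admissible Lagrangian $L\in\mathcal{F}(\pi)$, I would fix a distinguished basis of vanishing paths $\gamma_1,\ldots,\gamma_k$ with associated thimbles $B_{\gamma_i}$ and vanishing cycles $V_i$ at the critical values $z_i$. Then I would progressively wrap $L$ so that $\lambda_L$ crosses the values $\mathrm{re}(z_i)$ one at a time. Each crossing changes $L$ by a Dehn twist around $V_i$, and Theorem \ref{thm:Dehn-twist-exact-triangle-simple} presents the resulting Lagrangian $\tau_{V_i}L$ as the algebraic twist $T_{B_{\gamma_i}}L$ via a simple isomorphism. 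After crossing all critical values, the fully wrapped Lagrangian $L^{(k)}$ has $\lambda_{L^{(k)}}$ minimal, and a standard argument shows that such a Lagrangian is zero in $\mathcal{F}(\pi)$. Unwinding the iterated exact triangles in reverse realizes $L$ as a twisted complex whose summands are the thimbles $B_{\gamma_i}$.

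To promote this to a simple isomorphism, I would verify simplicity at each stage. The Dehn twist triangle itself is simple by Theorem \ref{thm:Dehn-twist-exact-triangle-simple}. Intermediate identifications that adjust the Hamiltonian perturbation data within the definition of $\mathcal{F}(\pi)$ induce simple homotopy equivalences on morphism spaces, combining Proposition \ref{prop:Ham-isotopy-simple} with the reparametrization-at-infinity argument used in the proof of Proposition \ref{prop:left-right-agree}. Assembling the exact triangles into iterated algebraic twists preserves simple isomorphism by Proposition \ref{prop: algebraic-twist-simple-twisted-cpx}, and composing the resulting chain of simple isomorphisms stays simple by Proposition \ref{prop:composition-of-simple-equivalences} together with the multiplicativity of Whitehead torsion (Proposition \ref{prop:Whitehead-torsion-additive}). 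Since left and right simple notions agree in $\mathcal{F}(\pi)$ (Proposition \ref{prop:left-right-agree}) and simple acyclicity can be tested against single Lagrangians (Proposition \ref{prop:simple-notions-check-Lagrangian}), the induction is tractable.

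The hard part will be reconciling the formulation of Theorem \ref{thm:Dehn-twist-exact-triangle-simple}, phrased inside a Lefschetz fibration in terms of a framed Lagrangian sphere $V$ in the fiber, with the wrapping-based generation argument in $\mathcal{F}(\pi)$, which instead involves the thimble $B_{\gamma_i}$. I would need to argue that the cocycle $c$ and chain homotopy $k$ from \cite{Seidel08} admit lifts to $\mathcal{F}(\pi)$ coming from counts of pseudoholomorphic sections with $B_{\gamma_i}$ as a boundary condition, and that the action and monotonicity estimates underlying the proof of Theorem \ref{thm:Dehn-twist-exact-triangle-simple} remain valid after shrinking the support of the Dehn twist within a suitably chosen Weinstein neighborhood. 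Once this translation between $V$-twists in the fiber and $B_{\gamma_i}$-twists in the total space is established, the rest of the argument is a routine induction over the number of critical values.
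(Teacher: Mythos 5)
Your route is genuinely different from the paper's, and the difference is not cosmetic: the step you flag as ``the hard part'' is in fact the crux, and it is not covered by any result in the paper. Theorem \ref{thm:Dehn-twist-exact-triangle-simple} establishes a simple isomorphism $T_VL\simeq\tau_VL$ only for a \emph{closed} framed Lagrangian sphere $V$ in the fiber, where $T_VL$ is built from $CF^*(V,L)\otimes V$ and the whole proof (action filtration, monotonicity outside a compact Weinstein neighborhood $U$ of $V$, shrinking the support of $\tau_V$) leans on $V$ being compact. Your induction instead needs a simple version of the Picard--Lefschetz/wrapping exact triangle: that wrapping $L$ past the critical value $z_i$ produces an object simply isomorphic to the cone on $\hom_{\mathcal{F}(\pi)}(B_{\gamma_i},L)\otimes B_{\gamma_i}\to L$, a twist by the \emph{noncompact thimble}. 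That is a different statement with different section counts and different (noncompact) boundary conditions, and promoting it to a simple isomorphism would require redoing all the energy and monotonicity estimates in that setting. Writing ``I would need to argue that $c$ and $k$ admit lifts'' does not close this gap; it restates it. A secondary inaccuracy: the fully wrapped object $L^{(k)}$ is not ``zero in $\mathcal{F}(\pi)$''; the correct statement is that $\hom$ from the twisted object into any test Lagrangian $L'$ with $\lambda_{L'}<\lambda_L$ is acyclic, which is what actually has to be upgraded to simple acyclicity.

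The paper avoids your hard step entirely with a branched-cover trick (adapted from \cite{BaiSeidel}): it passes to a $4{:}1$ cover $E^{(4)}\to E$ branched over the reference fiber, in which each thimble $B_i$ lifts to a \emph{closed} Lagrangian sphere $S_i$, and lifts $L,L'$ to $N,N'$ supported in prescribed branches. The already-proved simple Dehn twist triangle for closed spheres, together with Proposition \ref{prop: algebraic-twist-simple-twisted-cpx} and Proposition \ref{prop:composition-simple}, then gives simple acyclicity of $\underline{CF^*(T_{S_1}\cdots T_{S_m}N,N')}$ because the geometric composite $\tau_{S_1}\cdots\tau_{S_m}N=\bar N$ is literally disjoint from $N'$; the branch bookkeeping identifies the relevant curve counts upstairs and downstairs, yielding simple acyclicity of $\underline{CF^*(T_{B_1}\cdots T_{B_m}L,L')}$. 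If you want to pursue your wrapping-based route, you must supply a standalone proof of the simple thimble exact triangle; otherwise you should adopt the branched-cover reduction, which is precisely the device that lets the already-established Theorem \ref{thm:Dehn-twist-exact-triangle-simple} do the work.
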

\begin{proof}

\begin{figure}[h]
\centering
\begin{tikzpicture}

\draw [fill=lightgray!10] plot [smooth cycle] coordinates {(1.3,-3.5) (1.9,-4.1) (3.0,-3.9) (3.1,-3.5)};

\draw [fill=lightgray!50] plot [smooth cycle] coordinates {(1.4,-2.5) (2.5,-1.8) (3.8,-3) (2.7,-3.3)};
\draw (2.7,-3.3) -- (2.7,-5);
\draw (1.9,-4.1) -- (1.9,-5);

\draw (3.4,-3) -- (3.4,-5);

\draw plot [smooth] coordinates {(4.5,-2.51) (4.5,-4.2) (5.0,-4.5) (5,-4.9)};
\draw (5,-4.9) -- (5,-5);

\draw plot [smooth] coordinates {(5.0,-3.01) (5.0,-4.2) (4.5,-4.5) (4.5,-4.9)};
\draw (4.5,-4.9) -- (4.5,-5);

\draw[dotted] (1,-5) -- (6,-5);
\draw (3.4,-3) node {$\times$};
\draw (4.5,-2.5) node {$\times$};
\draw (5.0,-3) node {$\times$};

\draw (1.0,-2.3) node {$L$};
\draw (1.0,-3.6) node {$L'$};
\draw (2.7,-5.3) node {$\lambda_{L}$};
\draw (1.9,-5.3) node {$\lambda_{L'}$};
\draw (5,-5.3) node {$\lambda_{B_1}$};
\draw (3.4,-5.3) node {$\lambda_{B_m}$};
\draw (3.8,-4.0) node {$B_m$};
\draw (5.4,-4.7) node {$B_1$};

\end{tikzpicture}
\caption{}
\label{fig:Lefschetz-fibration-generation}
\end{figure}    
    We adapt the argument from \cite{BaiSeidel} to show that the Lefschetz thimbles simply generate. First, pick a choice of Lefschetz thimbles $B_1,\cdots,B_m$ for each critical point of $\pi$ such that
    \begin{equation}
        \lambda_{B_1}>\cdots>\lambda_{B_m}.
    \end{equation}
    In particular, we require that all intersections between the Lefschetz thimbles occur in the neighborhood of $\partial E$ where the fibration is trivial. Also, pick an admissible Lagrangian $L$ such that $\lambda_{B_m}>\lambda_L$. Our claim is that for any other admissible Lagrangian $L'$ such that $\lambda_L>\lambda_{L'}$,
    \begin{equation}
        \underline{\hom_\mathcal{C}(T_{B_1}\cdots T_{B_m}(L),L')}
    \end{equation}
    is simply acyclic. Here, the morphism is taken in the directed subcategory $\mathcal{C}=\mathcal{F}^\rightarrow_{Tw}(B_1,\cdots,B_m,L,L')$. 

\begin{figure}[h]
\centering
\begin{tikzpicture}

\draw plot [smooth, tension=1] coordinates {(1.9,-4.1) (1.9,-5.0) (1.5,-5.3) (0,-4.5) (0,-2.5) (-2.5,0) (-4.5,0) (-5.3,-1.5) (-5,-1.9) (-4.1,-1.9)}; 
\draw [fill=white, draw=white] plot coordinates{(1.3,-4.1) (1.3,-5) (2.5,-5) (2.5,-4.1)};
\draw [fill=white, draw=white] plot coordinates{(-4.1,-1.3) (-5,-1.3) (-5,-2.5) (-4.1,-2.5)};

\draw plot [smooth, tension=1] coordinates {(5.0,-4.0) (5.0,-5.0) (5.5,-5.3) (5.5,0) (0,-0.5) (-2,0) (-6,3) (-6,5.5) (-5,6) (-4.5,5) (-4.5,3.5)};
\draw [fill=white, draw=white] plot coordinates{(4.5,-1.3) (5.0,-1.3) (5.0,-5) (4.5,-5)};
\draw [fill=white, draw=white] plot coordinates{(-4.1,3.3) (-5.0,3.3) (-5.0,5) (-4.1,5)};

\draw plot [smooth, tension=1] coordinates {(4.5,-4.0) (4.5,-5.0) (4.8,-5.8) (5.5,-5.8) (6,0) (0,0.5) (-4.5,2) (-5.5,3.5) (-5.8,5) (-5.4,5.5) (-5,5) (-5,3.5)};
\draw [fill=white, draw=white] plot coordinates{(4.2,-1.3) (5.1,-1.3) (5.1,-5) (4.2,-5)};
\draw [fill=white, draw=white] plot coordinates{(-4.1,3.3) (-5.1,3.3) (-5.1,5) (-4.1,5)};

\draw plot [smooth, tension=1] coordinates {(2.7,-5) (5,-6.6) (6.6,-6.6) (6.6,0) (5,1) (2,1) (0,3) (1,4) (5,4) (5.5,3.35) (5,2.7)};

\draw [fill=lightgray!10] plot [smooth cycle] coordinates {(1.3,-3.5) (1.9,-4.1) (3.0,-3.9) (3.1,-3.5)};
\draw [fill=lightgray!10] plot [smooth cycle] coordinates {(-1.3,3.5) (-1.9,4.1) (-3.0,3.9) (-3.1,3.5)};
\draw [fill=lightgray!10] plot [smooth cycle] coordinates {(3.5,1.3) (4.1,1.9) (3.9,3.0) (3.5,3.1)};
\draw [fill=lightgray!10] plot [smooth cycle] coordinates {(-3.5,-1.3) (-4.1,-1.9) (-3.9,-3.0) (-3.5,-3.1)};

\draw [fill=lightgray!50] plot [smooth cycle] coordinates {(1.4,-2.5) (2.5,-1.8) (3.8,-3) (2.7,-3.3)};
\draw [fill=lightgray!50] plot [smooth cycle] coordinates {(-1.4,2.5) (-2.5,1.8) (-3.8,3) (-2.7,3.3)};
\draw [fill=lightgray!50] plot [smooth cycle] coordinates {(2.5,1.4) (1.8,2.5) (3,3.8) (3.3,2.7)};
\draw [fill=lightgray!50] plot [smooth cycle] coordinates {(-2.5,-1.4) (-1.8,-2.5) (-3,-3.8) (-3.3,-2.7)};

\draw (2.7,-3.3) -- (2.7,-5);
\draw (-2.7,3.3) -- (-2.7,5);
\draw (3.3,2.7) -- (5,2.7);
\draw (-3.3,-2.7) -- (-5,-2.7);

\draw (1.9,-4.1) -- (1.9,-5);
\draw (-1.9,4.1) -- (-1.9,5);
\draw (4.1,1.9) -- (5,1.9);
\draw (-4.1,-1.9) -- (-5,-1.9);

\draw (3.4,-3) -- (3.4,-5);

\draw plot [smooth] coordinates {(4.5,-2.51) (4.5,-4.2) (5.0,-4.5) (5,-4.9)};
\draw (5,-4.9) -- (5,-5);
\draw (-5,3) -- (-5,5);

\draw plot [smooth] coordinates {(5.0,-3.01) (5.0,-4.2) (4.5,-4.5) (4.5,-4.9)};
\draw (4.5,-4.9) -- (4.5,-5);
\draw (-4.5,2.5) -- (-4.5,5);

\draw[dotted] (1,-5) -- (5.5,-5);
\draw[dotted] (5,1) -- (5,5.5);
\draw[dotted] (-1,5) -- (-5.5,5);
\draw[dotted] (-5,-1) -- (-5,-5.5);

\draw (3.4,-3) node {$\times$};
\draw (4.5,-2.5) node {$\times$};
\draw (5.0,-3) node {$\times$};
\draw (-4.5,2.5) node {$\times$};
\draw (-5.0,3) node {$\times$};

\draw[dotted, gray] (-5,0) -- (5,0);
\draw[dotted, gray] (0,-5) -- (0,5);

\draw (3.0,-5.7) node {$N$};
\draw (0,-5) node {$N'$};
\draw (0.5,0.7) node {$S_2$};
\draw (5.7,-1.6) node {$S_1$};

\end{tikzpicture}
\caption{The $4:1$ branched cover of the Lefschetz fibration. Note that the Lagrangian spheres $S_1$ and $S_2$ only intersect in one of the four quadrants.}
\label{fig:quadruple-cover}
\end{figure}
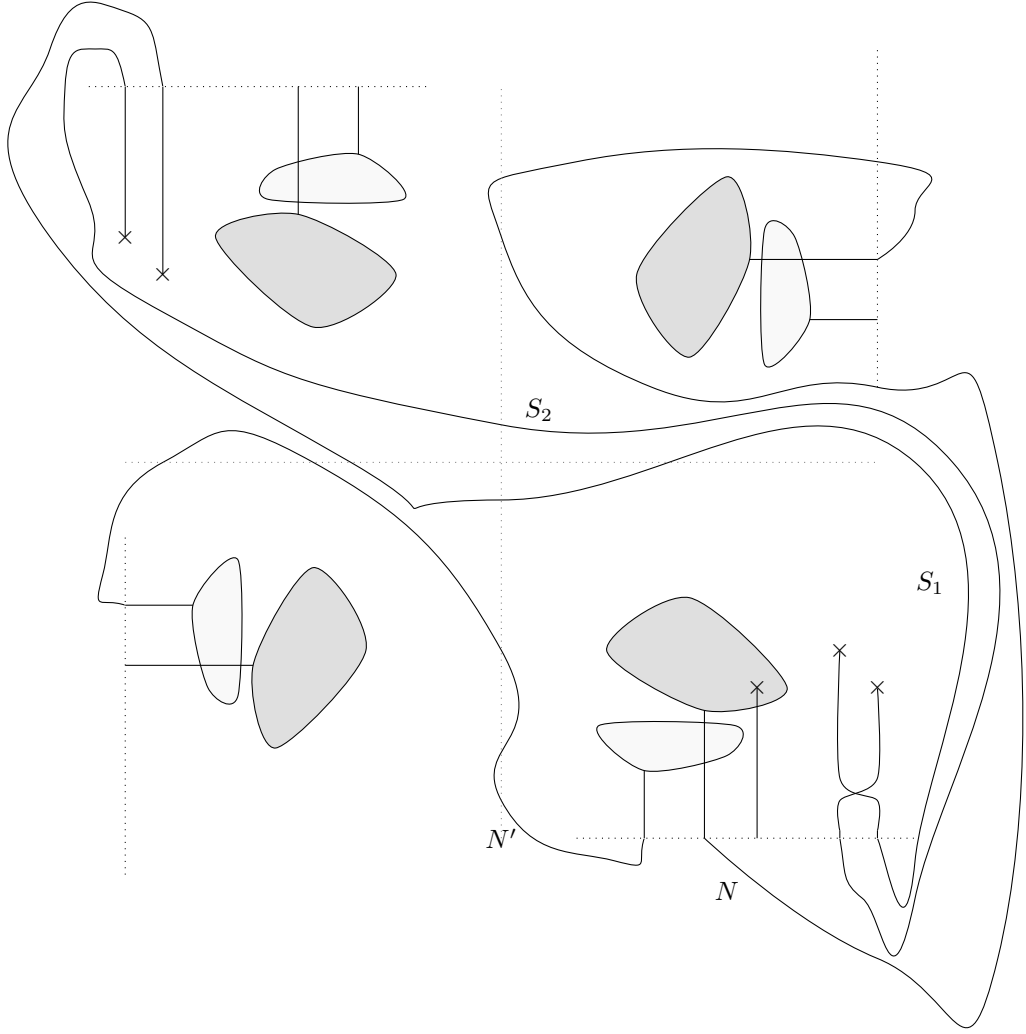

We construct a 4:1 branched cover $E^{(4)}\to E$, branched over the fiber above the basepoint $*\in\mathbb{H}$. For the Lefschetz thimbles, we take double covers such that each thimble $B_i$ lifts to a Lagrangian sphere $S_i$ in $E^{(4)}$. For the Lagrangian $L$, we choose a lift $N$ supported in the first and second branches of the cover, while for $L'$, we choose a lift $N'$ supported in the fourth and first branches. For the thimbles $B_i$, we take lifts supported in the first and third branch, arranged so that two lifts $S_i$ and $S_j$ intersect only in the first branch to ensure that $S_i\cap S_j$ is in $1:1$ correspondence to $B_i\cap B_j$. This can be ensured because we required $B_i$ and $B_j$ to only intersect near the boundary, where the fibration is trivial. These choice of lifts are illustrated in Figure \ref{fig:quadruple-cover}.

Now observe that the compositions of the Dehn twists along the Lagrangian spheres $S_i$ maps $N$ to $\Bar{N}$: a double cover of $L$ which is now supported in the second and third branch. Since $\Bar{N}$ and $N'$ are supported in different branches, it follows that $\Bar{N}\cap N'=\varnothing$, and therefore 
\begin{equation}
    \underline{CF^*(\Bar{N},N')}=0,
\end{equation}
which is simply acyclic. This implies that
\begin{equation}
   \underline{CF^*(\tau_{S_1}\cdots \tau_{S_m}N,N')}
\end{equation}
is simply acyclic as well. Our goal is now to show that the homotopy equivalence
\begin{equation}
   \underline{CF^*(\tau_{S_1}\cdots \tau_{S_m}N,N')}\simeq \underline{CF^*(T_{S_1}\cdots T_{S_m}N,N')}
\end{equation}
is simple. This follows from Proposition \ref{prop: algebraic-twist-simple-twisted-cpx}. First, the simplicity of the Dehn twist exact triangle shows that
\begin{equation}
    \tau_{S_m}N\simeq T_{S_m}N
\end{equation}
is a simple isomorphism. Assume by induction that for some $i$,
\begin{equation}
    \tau_{S_i}\cdots\tau_{S_m}N\simeq T_{S_i}\cdots T_{S_m}N
\end{equation}
is a simple isomorphism. Then because of the simplicity of the Dehn twist exact triangle,
\begin{equation}
    \tau_{S_{i-1}}(\tau_{S_i}\cdots\tau_{S_m}N)\simeq T_{S_{i-1}}(\tau_{S_i}\cdots \tau_{S_m}N)
\end{equation}
is a simple isomorphism. Since algebraic twists preserve simple isomorphisms (Proposition \ref{prop: algebraic-twist-simple-twisted-cpx}),
\begin{equation}
    T_{S_{i-1}}(\tau_{S_i}\cdots\tau_{S_m}N)\simeq T_{S_{i-1}}(T_{S_i}\cdots T_{S_m}N)
\end{equation}
is a simple isomorphism. Using that simple isomorphisms are preserved under composition (Proposition \ref{prop:composition-simple}), it follows inductively that $\underline{CF^*(T_{S_1}\cdots T_{S_m}N,N')}$ is simply acyclic.

As can be seen from Figure \ref{fig:quadruple-cover}, all pseudoholomorphic curves contributing to $CF^*(T_{S_1}\cdots T_{S_m}N,N')$ are entirely supported within the first branch. This implies that
\begin{equation}
    \underline{CF^*(T_{B_1}\cdots T_{B_m}L,L')}\simeq \underline{CF^*(T_{S_1}\cdots T_{S_m}N,N')},
\end{equation}
from which it follows that $T_{B_1}\cdots T_{B_m}L$ is simply acyclic when paired against any admissible Lagrangian $L'$ such that $\lambda_{L'}<\lambda_{L}$. By a similar argument to Proposition \ref{prop:left-right-agree}, this result extends to admissible Lagrangians $L'$ for which $\lambda_{L'}>\lambda_L$ as well. Therefore, any object $L$ is simply isomorphic to a twisted complex in $Tw_{Ch}\mathcal{F}(\pi)$ built out of the thimbles $B_i$, completing the proof of simple generation.
\end{proof}

Now that we have established that the Lefschetz thimbles (which are simply connected) simply generate the Fukaya category of the Lefschetz fibration $\mathcal{F}(\pi)$, we can invoke automatic simplicity to show that any two isomorphic closed Lagrangians in $\mathcal{F}(\pi)$ are simply isomorphic. Combining this with the result from \cite{GirouxPardon}, which shows the existence of a Lefschetz fibration on any Weinstein manifold, we arrive at our main theorem.

\begin{thm} \label{thm:Weinstein-equivalence-simple-reprise}
    Let $X$ be a Weinstein manifold with $c_1(X)=0$, and let $K$ and $L$ be two closed exact Maslov zero Lagrangians in $X$, equipped with brane structures so that they define objects in the compact Fukaya category $\mathcal{F}(X)$. If $K$ and $L$ define isomorphic objects in $\mathcal{F}(X)$, they are automatically simply isomorphic.
\end{thm}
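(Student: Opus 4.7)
The proof will proceed by packaging the compact Fukaya category $\mathcal{F}(X)$ inside the Fukaya category $\mathcal{F}(\pi)$ of a Lefschetz fibration, where we already have a good supply of simply connected simple generators, and then invoking the automatic simplicity lemma.

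First, using the result of Giroux--Pardon, I would deform $X$ through Weinstein homotopy to a Weinstein manifold (which I still call $X$) admitting a Lefschetz fibration $\pi \colon X \to \mathbb{C}$. Weinstein homotopies preserve the symplectomorphism type of $X$ up to exact symplectomorphism, and in particular preserve the quasi-equivalence class of $\mathcal{F}(X)$, so replacing $X$ with its deformation preserves the isomorphism class of $[K]$ and $[L]$ along with all their simple homotopy information. Since $K$ and $L$ are closed exact Maslov zero Spin Lagrangian branes, they are in particular admissible Lagrangians in the sense of Subsection \ref{ssec:Lef-fib} (the admissibility condition at infinity is vacuous for compactly supported Lagrangians), so they define objects of $\mathcal{F}(\pi)$, and the inclusion of the full subcategory of closed Lagrangians recovers $\mathcal{F}(X)$.

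The key input is Proposition \ref{prop:simple-gen-fuk-lef}, which shows that $\mathcal{F}(\pi)$ is simply generated by its collection of Lefschetz thimbles $\{B_\gamma\}$. Each thimble $B_\gamma$ is diffeomorphic to a half-space $\mathbb{R}^n$, hence is simply connected. This puts us in the situation of the automatic simplicity lemma (Proposition \ref{prop:automaticsimplicity}), which I would now apply to $\mathcal{F}(\pi)$: every isomorphism in $Tw_{Ch}\mathcal{F}(\pi)$ is automatically a simple isomorphism.

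To finish, I would transfer the conclusion from $\mathcal{F}(\pi)$ back to $\mathcal{F}(X)$. An isomorphism element $\alpha \in CF^0_{\mathcal{F}(X)}(K,L)$ also defines an isomorphism element in $CF^0_{\mathcal{F}(\pi)}(K,L)$, since on transversely intersecting closed Lagrangians the morphism spaces and $A_\infty$-operations of the two categories agree (the Hamiltonian perturbations used to wrap $K$ in $\mathcal{F}(\pi)$ can be chosen compactly supported, and Proposition \ref{prop:Ham-isotopy-simple} shows this does not affect the simple homotopy type of the enhanced Floer complex). By automatic simplicity, $\alpha$ is a simple isomorphism in $\mathcal{F}(\pi)$, meaning the mapping cone of $\mu^2_{Tw_{Ch}}(\alpha, \cdot)$ and of $\mu^2_{Tw_{Ch}}(\cdot, \alpha)$ are simply acyclic $A_\infty$-modules over $Tw_{Ch}\mathcal{F}(\pi)$. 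Restricting these module computations to closed Lagrangian test objects (which lie in $\mathcal{F}(X) \subset \mathcal{F}(\pi)$) yields exactly the simple acyclicity condition required for $\alpha$ to be a simple isomorphism in $\mathcal{F}(X)$.

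The main technical point to verify carefully is the compatibility of the bimodule $\underline{CF^*(K,L)}$ between the two categories: the Hamiltonian perturbations prescribed by the perturbation datum for $\mathcal{F}(\pi)$ move $K$ outside the compact region, but since $L$ is closed, all actual intersections and holomorphic strips remain compactly supported and match those computed in $\mathcal{F}(X)$. Once this identification is in place, the simple homotopy type on both sides agrees by Proposition \ref{prop:Ham-isotopy-simple}, and the transfer of simplicity from $\mathcal{F}(\pi)$ to $\mathcal{F}(X)$ is automatic. This completes the proof.
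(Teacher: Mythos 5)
Your proposal is correct and follows essentially the same route as the paper: deform $X$ via Giroux--Pardon to admit a Lefschetz fibration, invoke simple generation of $\mathcal{F}(\pi)$ by the (simply connected) Lefschetz thimbles together with the automatic simplicity lemma, and transfer the conclusion back to the compact Fukaya category. The one step you assert without justification --- that the Weinstein homotopy integrates to an exact symplectomorphism compatible with the closed Lagrangians and their Floer theory --- is exactly where the paper does its extra work, arguing via \cite[Proposition 11.8]{CieliebakEliashberg} and the quantitative transversality in \cite{GirouxPardon} that the deformation can be taken compactly supported.
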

\begin{proof}

    We first provide a brief summary of Giroux--Pardon's argument. By \cite[Theorem 1.10]{GirouxPardon}, there exists a Weinstein manifold $X'$ that is Weinstein homotopic to $X$, together with a Lefschetz fibration $\pi:X'\to\mathbb{H}$. The idea is to first choose a Weinstein homotopy to a Stein manifold $X''$ as in \cite{CieliebakEliashberg}, then choose a Stein homotopy from $X''$ to $X'$, another Stein structure on $X$ that admits a function $\pi:X'\to\C$ which equips $X'$ with the structure of a Lefschetz fibration. These deformations can be integrated to obtain an exact symplectomorphism $\phi:X\to X'$, and we will argue that $\phi$ can be chosen to be the identity map outside of a compact subset of $X$.

    Recall from \cite[Proposition 11.8]{CieliebakEliashberg} that for any Liouville homotopy $(V,\omega_s=d\lambda_s,X_s)_{s\in[0,1]}$ such that the union of the skeleton $\mathrm{Skel}(V,\lambda_s,X_s)$ is compact, one can find a compactly supported exact symplectomorphism $\phi_s:V\to V$ such that $\phi_s^*\lambda_s-\lambda_0$ is compactly supported and exact. Thus, the Weinstein deformation $X$ to $X''$ can be chosen to happen in a compact subset. For the second part, the proof of \cite[Theorem 1.5]{GirouxPardon} uses quantitative transversality results to find a holomorphic function with controlled behavior at infinity to construct the Stein Lefschetz fibration. In our case, where the Weinstein structure is standard at infinity, the Stein deformation can also be chosen to be compactly supported.

    Therefore, we may choose an exact symplectomorphism $\phi:X\to X'$ that is identity outside of a compact set, and a Lefschetz fibration $\pi:X'\to\C$. Since all critical values of $\pi$ lie in a compact subset of $\C$, we will consider this as a Lefschetz fibration $\pi:X'\to \mathbb{H}$ after postcomposing $\pi$ with a Möbius transformation. 

    The Lagrangians $K$ and $L$ are mapped by $\phi$ to closed exact Maslov zero Lagrangians $\phi(K),\phi(L)$ in $X'$, which inherit brane structures and hence define objects of the compact Fukaya category $\mathcal{F}(X')$. Since $\phi$ preserves holomorphic curve counts, $\phi(K)$ and $\phi(L)$ are isomorphic in both $\mathcal{F}(X')$ and $\mathcal{F}(\pi)$. By the simple generation of $\mathcal{F}(\pi)$ by Lefschetz thimbles and automatic simplicity, it follows that $\phi(K)$ and $\phi(L)$ are simply isomorphic. Finally since $\phi$ is an exact symplectomorphism, we conclude that $K$ and $L$ are simply isomorphic objects in $\mathcal{F}(X)$.
\end{proof}

\section{Applications} \label{sec:applications}

In the previous section, we proved that any two isomorphic objects in the compact Fukaya category of a Weinstein manifold $X$ with $c_1(X)=0$ are automatically simply isomorphic. In this section, we use this statement to prove the applications Theorem \ref{thm:auteq}, Theorem \ref{thm:lens-space}, and Theorem \ref{thm:simple-he-Lagrangian} as stated in the introduction.

\subsection{Cotangent bundles} \label{ssec:cotangent-bundle}

To demonstrate the strength of Theorem \ref{thm:Weinstein-equivalence-simple-reprise}, we provide a quick proof that the cotangent bundles of lens spaces are symplectomorphic if and only if the lens spaces themselves are diffeomorphic.

\begin{thm} \label{thm:cotangentbundle-reprise}
    Suppose that we have two lens spaces $L(p,q)$, $L(p,q')$. Then their cotangent bundles $T^*L(p,q)$ and $T^*L(p,q')$ are symplectomorphic if and only if $L(p,q)$ and $L(p,q')$ are diffeomorphic.
\end{thm}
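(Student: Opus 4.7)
The plan is to prove the nontrivial direction using the simple homotopy theorem for Lagrangians in cotangent bundles; the forward implication that diffeomorphic lens spaces have symplectomorphic cotangent bundles is immediate, since any diffeomorphism $f\colon L(p,q)\to L(p,q')$ lifts to a symplectomorphism of cotangent bundles via $(df^{-1})^\vee$.

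For the nontrivial direction, suppose $\phi\colon T^*L(p,q)\to T^*L(p,q')$ is a symplectomorphism, and consider the image $\phi(Z_q)$ of the zero section $Z_q=L(p,q)\subset T^*L(p,q)$. This is a closed Lagrangian submanifold of $T^*L(p,q')$ diffeomorphic to $L(p,q)$. I first verify that $\phi(Z_q)$ admits a brane structure: exactness of the restriction of the Liouville form holds because its cohomology class lives in $H^1(\phi(Z_q);\R)=H^1(L(p,q);\R)=0$, since $\pi_1(L(p,q))=\Z/p$ is torsion; the Maslov class vanishes because $H^1(\phi(Z_q);\Z)=0$; and a Spin structure exists because every orientable $3$-manifold is parallelizable. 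Moreover, since $\phi$ is a diffeomorphism and the inclusion $Z_q\hookrightarrow T^*L(p,q)$ is a homotopy equivalence, the inclusion $\phi(Z_q)\hookrightarrow T^*L(p,q')$ is also a homotopy equivalence, and in particular induces an isomorphism on fundamental groups.

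Next, I invoke the Abouzaid--Kragh theorem, recovered as a special case of Theorem \ref{thm:simple-he-Lagrangian} by taking the zero section $L(p,q')$ as the Lagrangian $L$ homotopy equivalent to the ambient manifold: the composition $\phi(Z_q)\hookrightarrow T^*L(p,q')\to L(p,q')$ with a homotopy inverse of the zero section is a simple homotopy equivalence. Composing with the diffeomorphism $L(p,q)\cong \phi(Z_q)$, which is itself a simple homotopy equivalence by Chapman's theorem, produces a simple homotopy equivalence $L(p,q)\sim L(p,q')$. The classification of lens spaces up to simple homotopy equivalence (Theorem \ref{thm:lens-space-classification}(2)) then forces the two lens spaces to be diffeomorphic.

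The main potential obstacle is verifying that $\phi(Z_q)$ is exact and carries the required brane data, since $\phi$ is only assumed to preserve $\omega$ and not the Liouville form itself. In this case, however, all of these conditions are forced upon us by the topology of lens spaces: the vanishing of $H^1(L(p,q);\R)$ and $H^1(L(p,q);\Z)$ handles exactness and the Maslov condition automatically, while parallelizability of oriented $3$-manifolds supplies the Spin structure. Once the brane is in place, the argument reduces to a direct application of the simple homotopy theorem together with Reidemeister's classification.
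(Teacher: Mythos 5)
Your forward direction, the brane-structure verifications (exactness and Maslov vanishing from $H^1(L(p,q);\R)=H^1(L(p,q);\Z)=0$, Spin from parallelizability of oriented $3$-manifolds), and the final appeal to the simple homotopy classification of lens spaces are all correct; the last step is a clean repackaging of the paper's explicit Reidemeister torsion comparison. However, there is a genuine gap at the point where you invoke Theorem \ref{thm:simple-he-Lagrangian}. That theorem has as an explicit \emph{hypothesis} that $K$ and $L$ admit brane structures making them \emph{isomorphic objects in the compact Fukaya category} $\mathcal{F}(X)$; it is not a statement about arbitrary closed exact Lagrangian branes whose inclusion induces a $\pi_1$-isomorphism. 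You verify that $\phi(Z_q)$ is a brane and that its inclusion is a homotopy equivalence, but you never establish that $\phi(Z_q)\cong L(p,q')$ in $\mathcal{F}(T^*L(p,q'))$. Consequently the Abouzaid--Kragh theorem is not literally ``recovered as a special case'' of Theorem \ref{thm:simple-he-Lagrangian}: the Floer-theoretic equivalence with the zero section is precisely the input that Abouzaid--Kragh had to prove and that Theorem \ref{thm:simple-he-Lagrangian} takes as given.

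The missing step is supplied by the nearby Lagrangian machinery: any closed exact Maslov-zero Lagrangian in a cotangent bundle, equipped with a suitable brane structure, is isomorphic to the zero section in the Fukaya category (the paper cites \cite[Lemma C.1]{Abouzaid12b} for exactly this, together with \cite[Theorem 1.2, Proposition E.1]{Kragh13} for the Maslov class in general; in your lens space setting the Maslov vanishing is automatic, but the categorical isomorphism is not). Once you add that citation, your argument closes up and is essentially the paper's proof, differing only in that you pass through Theorem \ref{thm:simple-he-Lagrangian} and Theorem \ref{thm:lens-space-classification}(2) rather than through Theorem \ref{thm:Weinstein-equivalence-simple-reprise}, Proposition \ref{prop:Morse=CW}, and the explicit evaluation of $(\xi^{mr}-1)(\xi^m-1)$ versus $(\xi^{r'}-1)(\xi-1)$.
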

\begin{proof}
Suppose that there exists a symplectomorphism $\phi$ from $T^*L(p,q)$ to $T^*L(p,q')$. Then the image of the zero section $K=\phi(L(p,q))$ is a closed exact Lagrangian submanifold of $T^*L(p,q')$, homeomorphic to $L(p,q)$. Since $H^1(L(p,q);\Z)=0$, any such Lagrangian has vanishing Maslov class. Thus by \cite[Lemma C.1]{Abouzaid12b}, we may equip $K$ with a brane structure such that $K$ is isomorphic to the zero section $N=L(p,q')$ in the Fukaya category $\mathcal{F}(T^*L(p,q'))$ of closed exact Lagrangians in $T^*L(p,q')$. 

Therefore by Theorem \ref{thm:Weinstein-equivalence-simple-reprise}, the isomorphism between $K$ and $N$ must also be simple. In particular, the homotopy equivalences induced by the module actions of the isomorphism elements
\begin{equation}\label{eqn:ssec-5.1}
    \underline{CF^*(K,K)}\simeq\underline{CF^*(K,N)}\simeq\underline{CF^*(N,N)}
\end{equation}
are simple homotopy equivalences. So by Proposition \ref{prop:R-torsion-simple-invariant}, the Reidemeister torsions $\Delta_{\rho'}(\underline{CF^*(K,K)})$ and $\Delta_{\rho}(\underline{CF^*(N,N)})$ must agree for any ring homomorphism $\rho:\Z[\pi_1(T^*L(p,q'))]\to\C$, when defined. We emphasize that $\rho':\pi_1(K)\to\C$ is induced from $\rho$ via the identification $\pi_1(L(p,q))\cong\pi_1(L(p,q'))$, which may not preserve the chosen generators in the identifications with $\Z/p$. Now by the simple homotopy equivalence of the Morse complex and the cellular complex (Proposition \ref{prop:Morse=CW}), we have simple homotopy equivalences
\begin{align}
    &\underline{CF^*(K,K)}\simeq \underline{C^*_{cell}(L(p,q))},\\
    &\underline{CF^*(N,N)}\simeq \underline{C^*_{cell}(L(p,q'))}.
\end{align}
Since the inclusions of $K$ and $N$ into $T^*L(p,q')$ both induce isomorphisms of fundamental groups, the enhanced cellular cochain complexes compute the Reidemeister torsions of $K$ and $N$ with respect to the two ring homomorphisms $\rho'$ and $\rho$. Combined with the simple homotopy equivalence (\ref{eqn:ssec-5.1}) above, we conclude that the $\rho'$-Reidemeister torsion of $K$, which is homeomorphic to $L(p,q)$, and $\rho$-Reidemeister torsion of $N$, which is homeomorphic to $L(p,q')$ must agree, when defined. 

As in the computation of Reidemeister torsion for lens spaces, choose the ring homomorphism $\rho:\Z\pi_1(T^*L(p,q'))\cong\Z[t]/(t^p-1)\to\C$ sending $t$ to $\xi=e^{2\pi i/p}$. To identify $\rho':\Z\pi_1(L(p,q))\to\C$, let $m$ be an integer such that the inclusion $K\xhookrightarrow{}T^*L(p,q')$ sends the generator $t$ of $\pi_1(L(p,q))\cong\Z/p$ to $t^m\in\pi_1(T^*L(p,q'))\cong\Z/p$. Then $\rho':\Z[t]/(t^p-1)\to\C$ maps $t$ to $\xi^m$. Thus, the Reidemeister torsion of $K$ may be computed as
\begin{equation}
    \Delta_{\rho'}(\underline{CF^*(K,K)})=(\xi^{mr}-1)(\xi^m-1)\in \C^\times/\{\pm\xi^k\}_{k\in\Z}
\end{equation}
for some integer $r$ such that $qr\equiv 1~(\mathrm{mod}~p)$. Likewise,
\begin{equation}
    \Delta_{\rho}(\underline{CF^*(N,N)})=(\xi^{r'}-1)(\xi-1)\in\C^\times/\{\pm\xi^k\}_{k\in\Z}
\end{equation}
for $qr'\equiv 1~(\mathrm{mod}~p)$. But now since
\begin{equation}
    (\xi^{mr}-1)(\xi^m-1)=\pm\xi^k(\xi^{r'}-1)(\xi-1)
\end{equation}
only holds for $r'=\pm r^{\pm 1}$ as seen in the classification of lens spaces, it follows that $L(p,q)$ and $L(p,q')$ are diffeomorphic.
\end{proof}

Theorem \ref{thm:cotangentbundle-reprise} holds for higher dimensional lens spaces as well, by the same argument.

\subsection{Weinstein 1-handle connect sums of cotangent bundles} \label{ssec:app-auteq}

In this subsection, we prove Theorem \ref{thm:auteq}. The theorem below is stated in a different form than Theorem \ref{thm:auteq}; we will explain why this version implies the original statement.

We recall that for two Liouville manifolds $X$, $Y$, the symbol $X\natural Y$ stands for the Liouville manifold constructed from the disjoint union $X\amalg Y$ by attaching a Weinstein 1-handle to connect the two disjoint components.

\begin{thm} \label{thm:auteq-reprise}
    Let $X$ be the Weinstein 1-handle connect sum $T^*L(7,1)\natural T^*L(7,2)$. For any exact symplectomorphism $\phi:X\to X$, the induced autoequivalence $\phi_*:\mathcal{W}(X)\to\mathcal{W}(X)$ maps any compact Lagrangian in the first summand with respect to the decomposition $\mathcal{W}(X)\cong\mathcal{W}(T^*L(7,1))\oplus\mathcal{W}(T^*L(7,2))$ to the $\mathcal{W}(T^*L(7,1))$ summand again.
\end{thm}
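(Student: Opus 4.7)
The plan is to show that the autoequivalence $\phi_*$ of $\mathcal{W}(X)$ preserves the direct sum decomposition $\mathcal{W}(X) \cong \mathcal{A} \oplus \mathcal{B}$ with $\mathcal{A} = \mathcal{W}(T^*L(7,1))$ and $\mathcal{B} = \mathcal{W}(T^*L(7,2))$. First I would establish this decomposition: by Corollary \ref{cor:handle-attaching-generators}, the cotangent fibers $F_1 \subset T^*L(7,1)$ and $F_2 \subset T^*L(7,2)$ generate $\mathcal{W}(X)$, and $HW^*(F_1, F_2) = 0$ since the two fibers can be wrapped within disjoint regions joined only by a subcritical $1$-handle. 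Each summand $\mathcal{W}(T^*L(7,i))$ is indecomposable as a triangulated category, being derived-equivalent to perfect modules over the connected DGA $C_{-*}(\Omega L(7,i))$, so any autoequivalence of $\mathcal{W}(X)$ must either preserve or swap the two summands. The theorem therefore reduces to ruling out the swap.

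To rule out the swap, I would argue by contradiction: assume $\phi_*$ swaps, so that $\phi_*(L_1) \in \mathcal{B}$, where $L_1 = L(7,1)$ denotes the zero section of $T^*L(7,1)$. The symplectomorphism $\phi$ provides an isomorphism $\phi(L_1) \cong L_1$ in the compact Fukaya category $\mathcal{F}(X)$, which by Theorem \ref{thm:Weinstein-equivalence-simple-reprise} is automatically simple. Combining Propositions \ref{prop:R-torsion-simple-invariant}, \ref{prop:R-torsions-agree}, and \ref{prop:Morse=CW}, the based $\Z\pi_1(X)$-cochain complexes $\underline{C^*_{cell}(L_1)}$ and $\underline{C^*_{cell}(\phi(L_1))}$ are simple homotopy equivalent. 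Hence for any ring homomorphism $\rho : \Z\pi_1(X) \to \C$, the tensored complexes $\underline{C^*_{cell}(L_1)} \otimes_\rho \C$ and $\underline{C^*_{cell}(\phi(L_1))} \otimes_\rho \C$ are simultaneously acyclic.

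The contradiction comes from choosing $\rho$ factoring through the abelianization $\pi_1(X)^{ab} = \Z/7 \oplus \Z/7$ with $\rho(\eta) = \zeta = e^{2\pi i/7}$ and $\rho(\nu) = 1$, where $\eta, \nu$ generate the two factors of $\pi_1(X) = \Z/7 * \Z/7$. The inclusion $\pi_1(L_1) \hookrightarrow \pi_1(X)$ sends the generator to $\eta$, so $\underline{C^*_{cell}(L_1)} \otimes_\rho \C$ is the acyclic complex $\C \xrightarrow{1-\zeta} \C \xrightarrow{0} \C \xrightarrow{1-\zeta} \C$. The assumption $\phi(L_1) \in \mathcal{B}$ will force $\pi_1(\phi(L_1)) \hookrightarrow \pi_1(X)$ to land in a conjugate of the second factor $\langle\nu\rangle$: this is seen by realizing $\phi(L_1)$ up to Fukaya isomorphism as a closed Lagrangian $K \subset T^*L(7,2)$, which by the Abouzaid--Kragh theorem is simple homotopy equivalent to $L(7,2)$, so $\pi_1(K)$ embeds into $\pi_1(X)$ as (a conjugate of) $\langle\nu\rangle$. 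Under $\rho$ this inclusion becomes trivial, giving the non-acyclic complex $\C \xrightarrow{0} \C \xrightarrow{7} \C \xrightarrow{0} \C$, which contradicts the acyclicity match and rules out the swap.

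The main obstacle will be the geometric realization step: producing a closed exact Lagrangian $K \subset T^*L(7,2)$ in the same $\mathcal{F}(X)$-isomorphism class as $\phi(L_1)$, so that the Abouzaid--Kragh classification may be invoked to constrain its $\pi_1$-inclusion. An alternative that bypasses this geometric input is to use the intersection pairing: for a closed Lagrangian $K$ in $X$, one has $\chi(HW^*(K, F_i)) = [K] \cdot [F_i]$, so $\phi(L_1) \in \mathcal{B}$ gives $[\phi(L_1)] \cdot [F_1] = 0$, constraining the class $[\phi(L_1)] \in H_3(X) \cong \Z \oplus \Z$. One could then relate this $H_3$-information to the $H_1$-image of the inclusion $\phi(L_1) \hookrightarrow X$ via the cellular structure on $X \simeq L(7,1) \vee L(7,2)$ (up to the $1$-handle), and thereby extract the $\pi_1$-constraint needed to complete the contradiction.
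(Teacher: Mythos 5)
The central step of your second paragraph --- that the symplectomorphism $\phi$ ``provides an isomorphism $\phi(L_1)\cong L_1$ in the compact Fukaya category'' --- is false, and the rest of the argument collapses without it. A symplectomorphism induces an autoequivalence sending $L_1$ to $\phi(L_1)$, hence an isomorphism of endomorphism algebras $HF^*(L_1,L_1)\cong HF^*(\phi(L_1),\phi(L_1))$, but it does not make $L_1$ and $\phi(L_1)$ isomorphic \emph{objects}; indeed, under your swap hypothesis they lie in orthogonal summands, so they cannot be isomorphic (their mutual $HW^*$ vanishes while $HW^0(L_1,L_1)\neq0$). Consequently there is no simple homotopy equivalence between $\underline{C^*_{cell}(L_1)}$ and $\underline{C^*_{cell}(\phi(L_1))}$ as based $\Z\pi_1(X)$-complexes: they are related only by the ring automorphism $\phi_*$ of $\Z\pi_1(X)$, so acyclicity of $\underline{C^*_{cell}(\phi(L_1))}\otimes_\rho\C$ is governed by $\rho\circ\phi_*$ rather than $\rho$. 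The ``contradiction'' in your third paragraph therefore only re-proves that $L_1\not\cong\phi(L_1)$ in $\mathcal{F}(X)$, refuting your false intermediate claim rather than the swap hypothesis.

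What the paper actually does is supply the missing Fukaya-categorical isomorphism through a classification result (Proposition \ref{prop:classification-Lagrangians}): every connected closed exact Maslov zero Spin Lagrangian brane in $X$ is isomorphic in $\mathcal{F}(X)$ to one of the two zero sections $Q_1$, $Q_2$ up to shift. This is exactly the ``geometric realization step'' you flag as the main obstacle, and it is the substantive part of the proof; it cannot be obtained from Abouzaid--Kragh applied inside $T^*L(7,2)$, since $\phi(L_1)$ need not be geometrically contained in that piece, nor from the $H_3$-intersection numbers, which say nothing about $\pi_1$. The paper proves it by passing to the $7$-fold cover $T^*S^3\natural X\natural\cdots$, classifying compact objects of $\mathcal{W}(T^*S^3)$ with nonnegatively supported endomorphism algebra via minimal $A_\infty$-modules over the based loop space algebra (Lemma \ref{lem:compact-objects-T^*S^n}), and descending. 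Once one knows $\phi(L_1)\cong Q_2[k]$, the torsion comparison is between $\phi(L_1)$ (diffeomorphic to $L(7,1)$, with $\pi_1$ injecting into $\pi_1(X)$ via $\phi_*$) and $Q_2=L(7,2)$, and the representation is chosen as $\rho(\eta)=\rho(\nu)=\zeta$ precisely so that the argument is insensitive to where $\phi_*$ sends $\langle\eta\rangle$: the torsion of any $L(7,1)$-type complex is a square $(1-\zeta^l)^2$, which never equals $(1-\zeta)(1-\zeta^4)$ up to $\pm\zeta^k$ (Proposition \ref{prop:R-torsion-7*7}). Your choice $\rho(\nu)=1$ would additionally require pinning down the conjugacy class of $\phi_*(\langle\eta\rangle)$, which neither of your proposed workarounds delivers.
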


    We now explain how Theorem \ref{thm:auteq} follows from the above statement. The zero section $L(7,1)$ generates the summand $H_3(T^*L(7,1))\subset H_3(X)$. The above theorem implies that for any exact symplectomorphism $\phi$, the image $\phi(L(7,1))$ has vanishing Floer cohomology with any cotangent fiber of $T^*L(7,2)$. Since the Euler characteristic of Lagrangian Floer cohomology agrees with the algebraic intersection number, it follows that $[\phi(L(7,1))]$ also lies in the $H_3(T^*L(7,1))$ summand of $H_3(X)$. Applying the same argument with a cotangent fiber of $T^*L(7,1)$ shows that the multiplicity must be $\pm1$, completing the proof.

    The key idea of the proof is in Proposition \ref{prop:classification-Lagrangians}, which classifies connected closed exact Maslov zero Spin Lagrangians in $X$. Assuming this proposition, let us take any such closed exact Lagrangian $L$ in $\mathcal{F}(T^*L(7,1))\subset\mathcal{W}(X)$. Then by Proposition \ref{prop:classification-Lagrangians}, its image $\phi(L)$ under the symplectomorphism must be isomorphic to either of the zero sections $L(7,1)$ or $L(7,2)$ in $\mathcal{W}(X)$, up to a possible shift in grading. For the sake of contradiction, suppose that $\phi(L)$ is isomorphic to $Q_2[k]$ for some integer $k$ recording the degree shift.
    
    By Proposition \ref{thm:Weinstein-equivalence-simple-reprise}, this isomorphism will be simple, and thus from Proposition \ref{prop:R-torsions-agree}, for any $\rho:\Z\pi_1(X)\to\C$, the Reidemeister torsions of $\underline{CF^*(\phi(L),\phi(L))}\otimes_{\rho_1}\C$ and $\underline{CF^*(Q_2,Q_2)}\otimes_{\rho_2}\C$ must agree when defined. Here $\rho_1$ and $\rho_2$ are the induced representations on the fundamental groups $\pi_1(\phi(L))$ and $\pi_1(Q_2)$.
    
    However, the computation in Proposition \ref{prop:R-torsion-7*7} shows that we can choose a map $\rho$ for which the two Reidemeister torsions above do not agree, yielding the desired contradiction.

    Before proving our key proposition, we introduce a lemma that classifies all compact objects in the wrapped Fukaya category $\mathcal{W}(T^*S^n)$ whose endomorphism algebras are cohomologically supported in nonnegative degrees and is free in the zeroth degree component, for all $n\geq2$.

\begin{lemma} \label{lem:compact-objects-T^*S^n}
    Let $L$ be an object in the wrapped Fukaya category $\mathcal{W}(T^*S^n)$ with $\Z$-coefficients for $n \geq 2$ such that $HW^0(L,L) \cong \Z^r$ for some $r\geq1$, and suppose that $HW^*(L,L)$ is supported in finitely many nonnegative degrees. Such an object $L$ can exist only if $r=s^2$ for some integer $s$, and in that case, $L$ is isomorphic to $s$ copies of the zero section $(S^n)^{\oplus s}$ in $Tw\mathcal{W}(T^*S^n)$, up to a possible uniform shift in grading.
\end{lemma}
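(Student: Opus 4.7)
The plan is to reduce the classification of compact objects in $\mathcal{W}(T^*S^n)$ satisfying the hypotheses to an algebraic question about perfect modules over the cohomology ring $HF^*(S^n,S^n)\cong H^*(S^n)\cong\Z[e]/(e^2)$ with $|e|=n$. This reduction is available because $S^n$ is simply connected for $n\geq 2$, so there are no local-system subtleties.

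First, I would argue that $L$ lies in the compact Fukaya category $\mathcal{F}(T^*S^n)\subset\mathcal{W}(T^*S^n)$, i.e., $L$ is equivalent to a twisted complex built from the zero section $S^n$. Abouzaid's generation theorem \cite{Abouzaid11} identifies $\mathcal{W}(T^*S^n)$ with the perfect module category over $A=HW^*(T^*_qS^n,T^*_qS^n)\cong H_{-*}(\Omega_q S^n)$ (Proposition \ref{prop:AS}), concentrated in non-positive degrees. Under the Koszul duality between $A$ and $A^!=H^*(S^n)$, the zero section corresponds to the augmentation module, and objects with bounded and nonnegative self-endomorphisms correspond to perfect $A^!$-modules. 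This allows representing $L$ as a twisted complex $\mathcal{L}=[S^n[d_1]\to\cdots\to S^n[d_k]]$ in $Tw\,\mathcal{F}(T^*S^n)$, with differentials drawn from $H^*(S^n)$.

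Second, I would argue that the Maurer--Cartan differentials in the twisted complex must vanish under the hypothesis. Since $HF^*(S^n[a],S^n[b])\cong\Z$ in degrees $a-b$ and $a-b+n$, a degree-$1$ differential $\delta_{ij}:S^n[d_i]\to S^n[d_j]$ requires either $d_i-d_j=1$ (with $\delta_{ij}\in\Z$) or $d_i-d_j=1-n$ (with $\delta_{ij}$ a multiple of $e$). Unit differentials produce contractible summands that can be removed by handle slides; a non-unit integer differential would give torsion contributions to $HW^0(L,L)$, contradicting $HW^0(L,L)\cong\Z^r$. A nontrivial $e$-valued differential would yield a subcomplex of the form $[S^n[1-n]\xrightarrow{e}S^n]$, closely analogous to a cotangent fiber, and a direct computation shows that its self-endomorphism complex contains a nonzero class in degree $1-n<0$ that survives the twisted-complex differential, contradicting the nonnegativity hypothesis. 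Hence all differentials vanish and $L\cong\bigoplus_i S^n[d_i]$.

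Third, once $L\cong\bigoplus_i S^n[d_i]$, the cross-terms $HF^*(S^n[d_j],S^n[d_i])$ are supported in degrees $d_j-d_i$ and $d_j-d_i+n$, so the nonnegativity hypothesis forces $d_i=d_j$ for all pairs, giving $L\cong(S^n[d])^{\oplus s}$ for a common shift $d$ and some $s\geq 1$. Then $HW^0(L,L)\cong M_s(\Z)\cong\Z^{s^2}$, so $r=s^2$. The principal obstacle is the first step: making the Koszul duality reduction rigorous over $\Z$-coefficients rather than a field. I would handle this by constructing $M=CW^*(T^*_qS^n,L)$ as a perfect $A$-module, using the finiteness and nonnegativity of $HW^*(L,L)$ to build an explicit finite filtration whose associated graded pieces are copies of the augmentation module $\Z$ corresponding to $S^n$, and then using the twisted-complex formalism of Section 3 to lift this filtration to an isomorphism $L\cong\mathcal{L}$ in $\mathcal{W}(T^*S^n)$.
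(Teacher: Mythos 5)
Your overall strategy -- represent $L$ by a twisted complex of shifted copies of the zero section and then use the degree constraints on $HF^*(S^n[a],S^n[b])$ to kill the differentials and equalize the shifts -- is the same as the paper's, which implements it via the homological perturbation lemma and the classification of minimal $A_\infty$-modules over the non-positively graded algebra $H_{-*}(\Omega_q S^n)$ (Lemmas \ref{lem:classifcation-minimal-modules} and \ref{lem:twisted-cpx-endo}) rather than via Koszul duality. The genuine gap is exactly the one you flag but do not resolve: the reduction over $\Z$. Your proposed fix (``build an explicit finite filtration whose graded pieces are copies of the augmentation module'') presupposes that $M=CW^*(T^*_qS^n,L)$ admits a minimal model over $\Z$, which requires knowing that $HW^*(T^*_qS^n,L)$ is a free $\Z$-module in each degree -- and that is not a hypothesis, it is something that must be proved. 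The paper's proof is structured precisely to supply this input: it first runs the entire classification over every field $k=\Q,\mathbb{F}_p$ (where the perturbation lemma applies unconditionally), concludes that $HW^*(T^*_qS^n,L;k)\cong k^{\oplus s}$ concentrated in a single degree for all $k$, and only then deduces from finite generation that $HW^*(T^*_qS^n,L)\cong\Z^{\oplus s}$ is free; combined with the freeness of $H_{-*}(\Omega_qS^n)$ (divided power algebras), this licenses the minimal-model argument over $\Z$. Without some version of this two-stage argument your first step does not go through.

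A secondary weak point is your treatment of non-unit integer entries in the Maurer--Cartan differential: for a two-term complex $[S^n[1]\xrightarrow{m}S^n]$ the appearance of $\Z/m$-torsion in $H^0\mathrm{End}$ is a direct computation, but for a general twisted complex with many terms you would need to rule out cancellation of these torsion contributions in the total endomorphism complex. Working over fields first, as the paper does, sidesteps this entirely since every nonzero scalar is then a unit and can be removed by a handle slide. The final steps of your argument (vanishing differentials force $L\cong\bigoplus_iS^n[d_i]$, nonnegativity forces $d_i=d_j$, and $HW^0\cong M_s(\Z)$ gives $r=s^2$) are correct and match the paper.
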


\begin{proof}
    We first show that $HW^*(N,L) \cong \Z^s$ for $N = T^*_x S^n$ a cotangent fiber. Let $k$ be a field, either $\Q$ or $\mathbb{F}_p$ for some prime $p$. By applying the homological perturbation lemma, we may assume that the $A_\infty$-algebra
    \begin{equation}
        \mathcal{A} = CW^*(N,N;k)
    \end{equation}
    and the $A_\infty$-module
    \begin{equation}
        \mathcal{M} = CW^*(N,L;k)
    \end{equation}
    are minimal. Then, by Lemma \ref{lem:classifcation-minimal-modules}, there exists an $A_\infty$-module
    \begin{equation}
        \mathcal{L} = \bigoplus_i k[i] \otimes M^i
    \end{equation}
    isomorphic to the Yoneda module of $L$, where each $M^i$ is uniquely determined by the $A_\infty$-module action of $\mathcal{A}^0\cong k$. Therefore we may identify each $M^i$ with the Yoneda module of the zero section $S^n$. Thus by Lemma \ref{lem:twisted-cpx-endo}, $\mathcal{L}$ is quasi-isomorphic to the direct sum of $t$ copies of the Yoneda module of $S^n$ (up to a uniform degree shift), for some $t$. Then it follows that $HW^0(L,L;k)\cong k^{t^2}$, so we conclude that $r$ must be a square $r=s^2$, and $t=s$. In particular, we obtain
    \begin{equation}
        HW^*(N,L;k) \cong (k[i])^{\oplus s}
    \end{equation}
    for some integer $i$, and for all $k = \Q, \mathbb{F}_p$.

    Since $CW^*(N,L)$ is finitely generated over $\Z$, $HW^*(N,L)$ is a finitely generated abelian group whose base change to every field $k$ is $s$-dimensional. Thus we conclude that
    \begin{equation}
        HW^*(N,L)\cong(\Z[i])^{\oplus s}
    \end{equation}
    for some shift $i$.

    Now we return to $\Z$-coefficients. By the computation of wrapped Floer cohomology of a cotangent fiber (Proposition \ref{prop:AS}), we have that
    \begin{equation}
        HW^*(N,N)\cong H_{-*}(\Omega_x S^n).
    \end{equation}
    By a Serre spectral sequence argument and an explicit identification of the cup product structure (see \cite[Proposition 3.22]{Hatcher} for details), the latter can be computed as
     \begin{equation} \label{eqn:loop space cohomology}
         H_{-*}(\Omega_x S^n)\cong \begin{cases}
             \Gamma_\Z[x], & \text{if $n$ odd}\\
            \Lambda_Z[x]\otimes \Gamma_\Z[y], & \text{if $n$ even}
         \end{cases}
     \end{equation}
    where $x$ has degree $n-1$, and $y$ has degree $2(n-1)$. Here $\Gamma_\Z[x]$ denotes the divided polynomial algebra, and $\Lambda[x]$ denotes the exterior algebra. Therefore the cohomology of the $A_\infty$-algebra
    \begin{equation}
        \mathcal{A}'=CW^*(N,N)
    \end{equation}
    is a free $\Z$-module in each degree, and so we may apply the homological perturbation lemma (Lemma \ref{lem:homological-perturbation}) and assume that $\mathcal{A}'$ is a minimal $A_\infty$-algebra. Since $HW^*(N,L)$ was shown to be a free $\Z$-module in each degree, we may again apply the homological perturbation lemma and assume that $HW^*(N,L)$ is a minimal $A_\infty$-module over $\mathcal{A}'$. By the same argument as before, it follows that $L$ is isomorphic to the direct sum of $s$ copies of the zero section $S^n$, up to a uniform shift in grading.
\end{proof}

We now begin the proof of our key proposition.

\begin{proposition} \label{prop:classification-Lagrangians}
    Let $K$ be a connected closed exact Maslov zero Spin Lagrangian submanifold in $X=T^*L(7,1)\natural T^*L(7,2)$. Then $K$ is isomorphic to one of the two zero sections $Q_1=L(7,1)$ or $Q_2=L(7,2)$ in the compact Fukaya category $\mathcal{F}(X)$, up to a possible shift in grading.
\end{proposition}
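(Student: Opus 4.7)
My plan combines a decomposition of the wrapped Fukaya category with the classification of compact objects in $\mathcal{W}(T^*S^n)$ provided by Lemma~\ref{lem:compact-objects-T^*S^n}. Since the Weinstein $1$-handle joining $T^*Q_1$ and $T^*Q_2$ to form $X$ is subcritical, Proposition~\ref{prop:subcrit-handle} yields an equivalence $\mathcal{W}(X)\simeq \mathcal{W}(T^*Q_1)\oplus\mathcal{W}(T^*Q_2)$. Since $\mathcal{F}(X)$ embeds fully faithfully into $\mathcal{W}(X)$ (wrapping acts trivially on compact Lagrangians), the object $K$ decomposes accordingly as $K\cong K_1\oplus K_2$ with $K_i\in\mathcal{W}(T^*Q_i)$. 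Connectedness of $K$ gives $HF^0(K,K)\cong H^0(K;\Z)=\Z$, which has no nontrivial idempotents, so exactly one of $K_1,K_2$ is nonzero; without loss of generality $K\cong K_1\in\mathcal{F}(T^*Q_1)$.

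To identify $K_1$ with $Q_1=L(7,1)$, I would pass to the $\Z/7$-covering $\pi\colon T^*S^3\to T^*L(7,1)$ induced from $S^3\to L(7,1)$. By Abouzaid--Kragh \cite{AbouzaidKragh}, $K$ is homotopy equivalent to $L(7,1)$, so the induced map $\pi_1(K)\to\pi_1(T^*L(7,1))\cong\Z/7$ is an isomorphism and the preimage $\tilde K:=\pi^{-1}(K)$ is a connected closed exact Maslov zero Spin Lagrangian in $T^*S^3$. Its self-endomorphisms satisfy $HF^0(\tilde K,\tilde K)=\Z$ and $HF^*(\tilde K,\tilde K)$ is supported in finitely many nonnegative degrees, so Lemma~\ref{lem:compact-objects-T^*S^n} applies to yield an isomorphism $\tilde K\cong S^3[d]$ in $\mathcal{F}(T^*S^3)$ for some integer shift $d$.

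The main obstacle is to descend this isomorphism equivariantly to $T^*L(7,1)$, producing $K\cong Q_1[d]$. The expected argument is that the deck $\Z/7$-action preserves both $\tilde K$ and the zero section $S^3$, and that the isomorphism $\tilde K\cong S^3[d]$ can be chosen compatibly with this action up to a unit (using that the relevant degree-zero endomorphism groups are free of rank one by connectedness). A pushforward along $\pi$, or equivalently matching rank-one local systems on $L(7,1)$ consistent with the common homotopy type of $K$ and $Q_1$, will then identify $K$ with $Q_1[d]$ in $\mathcal{F}(T^*L(7,1))$. The same argument run on the other summand covers the case $K\in\mathcal{F}(T^*Q_2)$.
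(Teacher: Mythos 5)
Your overall outline (split $\mathcal{W}(X)$ via Proposition \ref{prop:subcrit-handle}, pass to a $7{:}1$ cover, apply Lemma \ref{lem:compact-objects-T^*S^n}, descend) matches the paper's, but two steps have genuine gaps. First, you invoke Abouzaid--Kragh to conclude that $K$ is homotopy equivalent to $L(7,1)$ and that $\pi_1(K)\to\pi_1(T^*L(7,1))$ is an isomorphism. This is not available: $K$ is a Lagrangian submanifold of $X=T^*L(7,1)\natural T^*L(7,2)$, not of a cotangent bundle, and it is only \emph{isomorphic in the Fukaya category} to an object of the $\mathcal{W}(T^*Q_1)$ summand --- it need not be geometrically contained in $T^*L(7,1)$, and the proposition makes no hypothesis on $\pi_1(K)$ (note also $\pi_1(X)\cong\Z/7*\Z/7$, not $\Z/7$). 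Consequently your cover must be a cover of $X$ itself --- the paper uses $\tilde X_1=T^*S^3\natural T^*L(7,2)\natural\cdots\natural T^*L(7,2)$ --- and the connectedness of $\pi^{-1}(K)$ cannot be assumed. The paper derives it instead from Lemma \ref{lem:compact-objects-T^*S^n}: $\pi^{-1}(K)$ has either $1$ or $7$ components, so $HW^0(\pi^{-1}(K),\pi^{-1}(K))$ is $\Z$ or $\Z^7$, and since the lemma forces the rank to be a perfect square, the $\Z^7$ case is excluded. This square-rank trick is the key idea your proposal is missing.

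Second, your descent step is too vague to carry the argument. ``Choosing the isomorphism $\tilde K\cong S^3[d]$ compatibly with the deck action and pushing forward'' is not a tool developed in the paper, and making Fukaya-categorical isomorphisms equivariant along covers is nontrivial. The paper's actual mechanism is concrete: unique lifting of holomorphic strips gives $HW^*(N_1,K)\cong HW^*(\tilde N_1,\pi^{-1}(K))\cong\Z$ for the cotangent fiber $N_1$; then the homological perturbation lemma and the classification of minimal $A_\infty$-modules over $CW^*(N_1,N_1)$ (whose degree-zero part is $\Z[\Z/7]$) identify $K$ with a rank-one $\Z$-local system on the zero section, which is necessarily trivial because $\mathrm{Hom}(\Z/7,\{\pm1\})$ is trivial. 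You would need to supply an argument of comparable precision for the descent to go through.
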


\begin{proof}
    By Proposition \ref{prop:subcrit-handle}, the wrapped Fukaya category of $X$ splits into a direct sum
    \begin{equation}
        \mathcal{W}(X)\cong \mathcal{W}(T^*L(7,1))\oplus\mathcal{W}(T^*L(7,2))
    \end{equation}
    up to quasi-equivalence. Thus, $K$ is isomorphic in $\mathcal{W}(X)$ to either an object in $\mathcal{W}(T^*L(7,1))$ or an object in $\mathcal{W}(T^*L(7,2))$, whose endomorphism algebra is cohomologically supported in nonnegative degrees.

    Suppose that $K$ is isomorphic to an object $P$ in $\mathcal{W}(T^*L(7,1))$: then for the cotangent fiber $N_2$ of $T^*L(7,2)$, we have that $HW^*(K,N_2)$ is zero. In this case, we will show that $K$ is actually isomorphic to the zero section $Q_1=L(7,1)$ in $\mathcal{W}(X)$, again up to a possible grading shift. Consider the 7:1 cover $\pi:\Tilde{X_1}\to X$ defined by taking the 7:1 cover $T^*S^3\to T^*L(7,1)$ and attaching 7 copies of $T^*L(7,2)$:
    \begin{equation}
        \tilde{X}_1=T^*S^3\natural T^*L(7,2)\natural\cdots\natural T^*L(7,2).
    \end{equation}
    Let $N_1$ be a cotangent fiber of $T^*L(7,1)$ at a basepoint $x\in L(7,1)$, and let $\tilde{N}_1$ be the cotangent fiber of $T^*S^3\subset\tilde{X_1}$ at some lift $\tilde{x}\in S^3$ of $x$. 
    
    We may consider the preimage $\pi^{-1}(K)$ as an object of $\mathcal{W}(\tilde{X_1})$. By Proposition \ref{prop:subcrit-handle} again, the wrapped Fukaya category of $\tilde{X}_1$ splits into a direct sum
    \begin{equation}
        \mathcal{W}(\tilde{X}_1)\cong \mathcal{W}(T^*S^3)\oplus\mathcal{W}(T^*L(7,2))^7
    \end{equation}
    up to quasi-equivalence, and since the object $\pi^{-1}(K)$ is orthogonal to any of the cotangent fibers generating the $\mathcal{W}(T^*L(7,2))$ components, we conclude that $\pi^{-1}(K)$ is isomorphic to an object $\tilde{P}$ in $\mathcal{W}(T^*S^3)$.

    We claim that $\tilde{P}$ is isomorphic to the zero section $S^3$ in $\mathcal{W}(T^*S^3)$, up to a shift. Since $K$ is connected and $\pi:\tilde{X}\to X$ is a 7:1 cover, the preimage $\pi^{-1}(K)$ is a closed Lagrangian with either one or seven connected components. It follows that $HW^*(\pi^{-1}(K),\pi^{-1}(K))$ is supported in finitely many nonnegative degrees, and $HW^0(\pi^{-1}(K),\pi^{-1}(K))$ is isomorphic to either $\Z$ or $\Z^7$. Since $\tilde{P}$ is isomorphic to $\pi^{-1}(K)$ in $\mathcal{W}(\tilde{X})$, the same holds for $\tilde{P}$. By Lemma \ref{lem:compact-objects-T^*S^n}, $HW^0(\tilde{P},\tilde{P})$ cannot have rank 7 as a free $\Z$-module, so it follows that $\pi^{-1}(K)$ must be connected. Therefore by Lemma \ref{lem:compact-objects-T^*S^n} again, we conclude that $\tilde{P}$ is isomorphic to the zero section $S^3$, up to a shift.
     
    In particular, we conclude that there is an isomorphism
    \begin{equation}
        HW^*(\tilde{N}_1,\pi^{-1}(K))\cong \Z[j],
    \end{equation}
    where the $j$ stands for a possible degree shift.
    
    Now we look at $CW^*(N_1,K)$. Since any holomorphic curve that contributes to the differential of $CW^*(N_1,K)$ lifts uniquely to a holomorphic curve that contributes to the differential of $CW^*(\tilde{N}_1,\pi^{-1}(K))$ and vice versa a holomorphic curve that contributes to the differential of $CW^*(\tilde{N},\pi^{-1}(K))$ projects down, we have that
    \begin{equation}
        HW^*(N_1,K)\cong HW^*(\tilde{N}_1,\pi^{-1}(K))\cong \Z.
    \end{equation}
    So in particular the cohomology of the $A_\infty$-module $CW^*(N_1,K)$ is a free $\Z$-module. As for the cohomology of the fiber $CW^*(N_1,N_1)$, a loop space cohomology computation as in (\ref{eqn:loop space cohomology}) shows that
    \begin{equation} \label{eqn:loop space cohomology lens}
         H_{-*}(\Omega_x L(7,1))\cong \Gamma_\Z[x]\otimes_\Z \Z[\Z/7]
     \end{equation}
    where $x$ has degree $2$, and $\Gamma_\Z[x]$ denotes the divided polynomial algebra.

    Therefore the cohomology of both $CW^*(N_1,N_1)$ and $CW^*(N_1,K)$ consists of free $\Z$-modules in each degree, so we may apply the homological perturbation lemma and assume that these are minimal $A_\infty$-algebras and modules.

    By  Lemma \ref{lem:classifcation-minimal-modules}, the Yoneda module of $K$ is isomorphic to a twisted complex built from $\Z$-modules equipped with an action of $\mathcal{A}^0$. Moreover, since the cohomology of the $A_\infty$-module $CW^*(N_1,K)$ is supported in a single degree, Lemma \ref{lem:degree-filtration} shows that this is determined by the cohomological module $HW^*(N_1,K)$ over $\mathcal{A}^0=CW^0(N,N)\cong\Z\pi_1(L(7,1))$. Therefore, the Yoneda module of $K$ is isomorphic to a Yoneda module of a rank 1 local system over the zero section $L(7,1)$. But since $\pi_1(L(7,1))\cong\Z/7$, all rank 1 $\Z$-local systems on $L(7,1)$ are trivial. Therefore we conclude that $K$ is isomorphic to $Q_1$, again possibly up to a shift in grading.
\end{proof}

We are now ready to prove Theorem \ref{thm:auteq-reprise}.
\begin{proof}
    For the sake of contradiction, suppose that there exists an exact symplectomorphism $\phi:X\to X$, and an object $L\in\mathcal{F}(T^*L(7,1))$ such that $\phi(L)\in\mathcal{W}(T^*L(7,2))$, with respect to the decomposition
    \begin{equation}
        \mathcal{W}(X)\cong\mathcal{W}(T^*L(7,1))\oplus\mathcal{W}(T^*L(7,2)).
    \end{equation}
    By Proposition \ref{prop:classification-Lagrangians}, $L$ is isomorphic to $Q_1[j]$ and $\phi(L)$ is isomorphic to $Q_2[k]$ in $\mathcal{F}(X)$, for some integers $j$, $k$ which record the degree shift.

   Since $X$ is Weinstein, Theorem \ref{thm:Weinstein-equivalence-simple-reprise} implies that $\phi(L)$ and $Q_2[k]$ are simply isomorphic. Thus $\phi(Q_1)[j]$ and $Q_2[k]$ are also simply isomorphic. In particular, by Proposition \ref{prop:R-torsions-agree}, the Reidemeister torsions 
    \begin{equation}\label{eqn:R-torsions-agree}
        \Delta_{\rho_1}(\underline{CF^*(Q_1,Q_1)})=\Delta_{\rho_2}(\underline{CF^*(Q_2,Q_2)})\in\C^\times/\pm\im{\rho}
    \end{equation}
    agree for all ring homomorphisms $\rho:\Z\pi_1(X)\to\C$, when defined. Again, $\rho_1$ and $\rho_2$ are the induced representations on $\pi_1(\phi(Q_1))$ and $\pi_1(Q_2)$ from $\rho$.

    Now we recall the computation in Proposition \ref{prop:R-torsion-7*7}. There we defined a ring homomorphism $\rho$ such that the above equation (\ref{eqn:R-torsions-agree}) does not hold, which is a contradiction. Thus we conclude that no such exact symplectomorphism $\phi$ can exist.
\end{proof}

The same argument also shows the following general theorem:

\begin{thm}
    Any connected closed exact Maslov zero Spin Lagrangian submanifold in a Weinstein 1-handle connect sum of two cotangent bundles of lens spaces must have the simple homotopy type of one of the zero sections. For example, there does not exist a Lagrangian diffeomorphic to $L(17,4)$ inside the Weinstein 1-handle connect sum $T^*L(17,1)\natural T^*L(17,2)$.
\end{thm}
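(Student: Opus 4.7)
The plan is to extend the strategy from the proof of Theorem~\ref{thm:auteq-reprise}. Let $X = T^*L(p_1,q_1) \natural T^*L(p_2,q_2)$ and let $L$ be any connected closed exact Maslov zero Spin Lagrangian in $X$. The first step is to generalize Proposition~\ref{prop:classification-Lagrangians} to arbitrary lens spaces, showing that $L$ is isomorphic in $\mathcal{F}(X)$ to one of the zero sections $Q_i = L(p_i, q_i)$, up to a grading shift. The argument carries over essentially verbatim: by Proposition~\ref{prop:subcrit-handle} the wrapped Fukaya category splits, so $L$ is supported in one factor $\mathcal{W}(T^*L(p_i, q_i))$; passing to the $p_i$-fold cover $T^*S^3 \natural T^*L(p_{3-i}, q_{3-i})^{\natural p_i}$ and applying Lemma~\ref{lem:compact-objects-T^*S^n}, the lift $\pi^{-1}(L)$ is forced to be a single copy of $S^3$; and the minimal $A_\infty$-module classification then identifies $L$ with a rank-one $\Z$-local system on $Q_i$, which is trivial when $p_i$ is odd.

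Next, by Theorem~\ref{thm:Weinstein-equivalence-simple-reprise}, the resulting isomorphism $L \cong Q_i[k]$ is automatically a simple isomorphism. Combined with Proposition~\ref{prop:Morse=CW}, this gives a simple homotopy equivalence of the enhanced cellular cochain complexes $\underline{C^*_{cell}(L)} \simeq \underline{C^*_{cell}(Q_i)}$ over $\Z\pi_1(X)$, which is the content of the assertion that $L$ has the same simple homotopy type as $Q_i$ in this enhanced, $\pi_1(X)$-equivariant sense. Since $\pi_1(Q_i)$ injects into $\pi_1(X) \cong \Z/p_1 * \Z/p_2$ as a free factor, the Reidemeister torsion of $\underline{C^*_{cell}(Q_i)}$ with respect to any representation $\rho:\pi_1(X)\to\C$ reduces to the classical Reidemeister torsion of $Q_i$ with respect to the restricted representation.

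For the example, any oriented 3-manifold is parallelizable hence Spin, and closed exact Lagrangians in Weinstein manifolds have vanishing Maslov class by \cite{Kragh13}, so a hypothetical Lagrangian $L$ diffeomorphic to $L(17,4)$ satisfies our hypotheses. By the previous steps, $L$ would be simply isomorphic in $\mathcal{F}(X)$ to $Q_i = L(17, q_i)$ for some $i\in\{1,2\}$. The image of $\pi_1(L) \cong \Z/17$ in $\pi_1(X) \cong \Z/17 * \Z/17$ lies in a conjugate of one of the two free factors, since torsion elements of a free product of finite groups are conjugate into the factors. Choosing $\rho:\Z\pi_1(X)\to\C$ sending both generators to $\zeta=e^{2\pi i/17}$ and applying Proposition~\ref{prop:R-torsions-agree}, the Reidemeister torsion of $L$, which takes the form $(1-\zeta^a)(1-\zeta^{13a})$ for some $a\in(\Z/17)^\times$ (with $13 \equiv 4^{-1} \pmod{17}$), must equal the Reidemeister torsion of $Q_i$ with the standard representation in $\C^\times/\pm\zeta^k$. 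No value of $a$ makes this hold by Corollary~\ref{cor:lens-spaces-homotopic-not-simply}, which gives the desired contradiction.

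The main obstacle is the generalization of Proposition~\ref{prop:classification-Lagrangians}: the original proof uses that $p=7$ is not a perfect square (so that the rank-square constraint in Lemma~\ref{lem:compact-objects-T^*S^n} forces $\pi^{-1}(L)$ to be connected) and that $p=7$ is odd (so that rank-one $\Z$-local systems on $Q_i$ are trivial). For general lens spaces, these conditions may fail and require separate treatment, though neither issue arises in the specific example $p_i = 17$, so the example follows cleanly from the above strategy.
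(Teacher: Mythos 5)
Your proposal is correct and follows essentially the same route as the paper, which itself offers no separate argument for this theorem beyond asserting that the proof of Theorem \ref{thm:auteq-reprise} (via Proposition \ref{prop:classification-Lagrangians}, automatic simplicity, and the Reidemeister torsion comparison) carries over; your treatment of the $L(17,4)$ example, including the computation $(1-\zeta^{13a})(1-\zeta^a)\neq\pm\zeta^k(1-\zeta^{r_i})(1-\zeta)$, is accurate. The two caveats you flag for the fully general statement --- the rank-square constraint in Lemma \ref{lem:compact-objects-T^*S^n} failing to force connectedness of the preimage when the relevant cover degree is a perfect square, and the existence of nontrivial rank-one $\Z$-local systems when $p$ is even --- are genuine gaps that the paper glosses over, so your honesty there is a point in the proposal's favor rather than a defect.
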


\begin{remark}
    The above theorem implies that the autoequivalence of $\mathcal{W}(X)$ for $X=T^*L(7,1)\natural T^*L(7,2)$ swapping the two cotangent fibers cannot be realized by any underlying exact symplectomorphism of $X$.
\end{remark}

We conclude this subsection by clarifying the nature of the obstruction in Theorem \ref{thm:auteq-reprise}. 

First, recall that an \emph{end structure at infinity} on an open manifold $M$ consists of an open subset $U\subset M$ with compact complement, together with a diffeomorphism $U\cong \partial U\times[0,\infty)$. Cotangent bundles carry canonical end structures at infinity given by the complement of their disc bundles. Since lens spaces are parallelizable, their cosphere bundles are trivial. Thus we may identify the cosphere bundles of $T^*L(7,1)$ and $T^*L(7,2)$ with $L(7,1)\times S^2$ and $L(7,2)\times S^2$, respectively.

Although Milnor \cite{MilnorLens} shows that $T^*L(7,1)$ and $T^*L(7,2)$ are diffeomorphic as open manifolds, this diffeomorphism cannot preserve the canonical ends coming from the disk bundles. Indeed, the corresponding cosphere bundles are $L(7,1)\times S^2$ and $L(7,2)\times S^2$, which are distinguished by Reidemeister torsion. The obstruction for a diffeomorphism between $T^*L(7,1)$ and $T^*L(7,2)$ preserving the prescribed end structures is therefore topological, and is independent of the symplectic geometry of the cotangent bundles.

Milnor's construction reflects this point explicitly: it uses an $h$-cobordism $W$ between $L(7,1)\times S^2$ and $L(7,2)\times S^2$ with nontrivial Whitehead torsion, and builds the open diffeomorphism by stacking $W$ and $-W$ infinitely many times. Thus the diffeomorphism necessarily changes the end structure at infinity.

However, the underlying diffeomorphism of an exact symplectomorphism need not preserve any prescribed end structure. Therefore the obstruction to an exact symplectomorphism between $T^*L(7,1)$ and $T^*L(7,2)$ observed by Abouzaid--Kragh \cite{AbouzaidKragh} is genuinely symplectic, rather than a consequence of smooth or PL topology.

We show that the same phenomenon occurs for Theorem \ref{thm:auteq-reprise}: although exact symplectomorphisms cannot interchange the two summands of $H_3(X)$, there exists a self-diffeomorphism of the open manifold $X=T^*L(7,1)\natural T^*L(7,2)$ that swaps the two generators of $H_3(X)$.

\begin{lemma}\label{lem:swap-diffeo}
    There exists a diffeomorphism $\phi:X\to X$ such that the induced map
    \begin{equation}
        \phi_*:H_3(X)\to H_3(X)
    \end{equation}
    swaps the two summands of the decomposition $H_3(X)\cong H_3(L(7,1))\oplus H_3(L(7,2))$.
\end{lemma}
\begin{proof}
    As a caution, we first point out an incorrect approach. One might attempt to construct the map $\phi$ by a diffeomorphism $\psi:L(7,1)\times S^2\setminus D^5\to L(7,2)\times S^2\setminus D^5$. Since every orientation-preserving diffeomorphism of $S^4$ extends across $D^5$, the existence of such a $\psi$ would imply that $L(7,1)\times S^2\cong L(7,2)\times S^2$. This is impossible, since these manifolds are distinguished by Reidemeister torsion. Thus the desired diffeomorphism cannot be obtained by identifying the standard ends.

    Our construction is laid out in two steps. First, choose a proper ray at infinity $\gamma:[0,\infty)\to T^*L(7,1)$ in the standard end, together with a framed tubular neighborhood of $\gamma$. Milnor's diffeomorphism
    \begin{equation}
        \Phi:T^*L(7,1)\cong T^*L(7,2)
    \end{equation}
    defines the corresponding proper ray at infinity $\Phi(\gamma)$ with a framed neighborhood in $T^*L(7,2)$.

    Define an open manifold $X'$ by attaching a standard open $1$-handle  $D^1\times D^5$ at infinity between the framed neighborhoods of $\gamma\subset T^*L(7,1)$ and $\Phi(\gamma)\subset T^*L(7,2)$. Then $\Phi$ extends to a self-diffeomorphism $\phi':X'\to X'$ defined as follows: on the cotangent bundles, define
    \begin{equation}
        \phi'\vert_{T^*L(7,1)}=\Phi,\quad \phi'\vert_{T^*L(7,2)}=\Phi^{-1}.
    \end{equation}
    These maps exchange the two attaching rays. On the 1-handle, use the reflection of $D^1$ together with the antipodal map of $D^5$ to define an orientation-preserving diffeomorphism. With the induced identification from the framings, these data define an orientation-preserving diffeomorphism $\phi'$ of $X'$ which swaps the two summands of $H_3(X')\cong H_3(T^*L(7,1))\oplus H_3(T^*L(7,2))$.
    
    It remains to show that $X'$ and $X$ are diffeomorphic. By \cite[Theorem 1.1]{CalcutGompf}, in dimension at least $4$, the diffeomorphism type of a 1-handle attachment at infinity is independent of the choice of attaching rays, provided the relevant ends satisfy the Mittag--Leffler condition. In our case this condition holds because the ends are topologically collared (see the discussion after \cite[Definition 4.2]{CalcutGompf}, and also \cite[Proposition 4.3]{CalcutGompf}). Therefore the open manifold $X'$, constructed from a 1-handle attachment at infinity using the rays $\gamma\subset T^*L(7,1)$ and $\Phi(\gamma)\subset T^*L(7,2)$, is diffeomorphic to the original connect sum $X$, obtained by using standard rays in the two ends.

    Transporting $\phi'$ across such a diffeomorphism $X'\cong X$ gives the desired self-diffeomorphism $\phi:X\to X$ which exchanges the two summands of $H_3(X)$.
\end{proof}

Together with Theorem \ref{thm:auteq-reprise}, this shows that Whitehead torsion provides an obstruction to realizing elements of the smooth mapping class group as exact symplectomorphisms.

\subsection{Simple homotopy equivalence of closed Lagrangians} \label{ssec:app-simply-homotopic}

We prove our main application Theorem \ref{thm:simple-he-Lagrangian} in this subsection. For the reader's convenience, we repeat the statement.

\begin{thm} \label{thm:simple-he-Lagrangians-reprise}
    Let $X$ be a Weinstein manifold with $c_1(X)=0$, and let $L$ be a connected closed exact Lagrangian brane such that the inclusion $L\xhookrightarrow{}X$ is a homotopy equivalence. Then, for any Lagrangian brane $K$ isomorphic to $L$ in the compact Fukaya category $\mathcal{F}(X)$ such that $\pi_1(K)\to\pi_1(X)$ is an isomorphism, the inclusion $K\xhookrightarrow{}X$ is also a homotopy equivalence, and the composition with any homotopy inverse of $L\xhookrightarrow{}X$
    \begin{equation}
        K\xhookrightarrow{}X\to L
    \end{equation}
    is a simple homotopy equivalence.
\end{thm}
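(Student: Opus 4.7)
The plan is to reduce the theorem to comparing the algebraic simple homotopy equivalence coming from the Fukaya category with the one induced on cellular cochains by the smooth map $f \coloneqq r\circ i_K$, where $i_K\colon K\xhookrightarrow{}X$ is the inclusion and $r\colon X\to L$ is a homotopy inverse of $L\xhookrightarrow{}X$. First, apply Theorem \ref{thm:Weinstein-equivalence-simple-reprise} to upgrade the given isomorphism $K\cong L$ in $\mathcal{F}(X)$ to a simple isomorphism; combining the resulting simple chain homotopy equivalences between the enhanced Floer complexes with the Morse-theoretic identifications from Proposition \ref{prop:Morse=CW} yields a zig-zag
\begin{equation}
    \underline{C^*_{cell}(K)} \simeq \underline{CF^*(K,K)} \simeq \underline{CF^*(K,L)} \simeq \underline{CF^*(L,L)} \simeq \underline{C^*_{cell}(L)}
\end{equation}
of simple homotopy equivalences of based cochain complexes over $\Z\pi_1(X)$. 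Since both $\pi_1(K)\to\pi_1(X)$ and $\pi_1(L)\to\pi_1(X)$ are isomorphisms (the latter because $L\xhookrightarrow{}X$ is a homotopy equivalence), the enhanced cellular complexes $\underline{C^*_{cell}(K)}, \underline{C^*_{cell}(L)}$ can be identified with the ordinary cellular cochain complexes of the universal covers $\widetilde{K}, \widetilde{L}$, regarded as based complexes of free $\Z\pi_1(X)$-modules.

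Next I would verify that $f\colon K\to L$ is a homotopy equivalence. The induced map on $\pi_1$ is a composite of isomorphisms, so by Whitehead's theorem it suffices to show that the lifted map $\widetilde{f}\colon\widetilde{K}\to\widetilde{L}$ between simply connected CW complexes is an integral homology isomorphism. The first step already gives an abstract isomorphism $H^*(\widetilde{K};\Z)\cong H^*(\widetilde{L};\Z)$, but promoting this to an isomorphism \emph{induced by} $\widetilde{f}$ requires identifying the cellular pullback $f^*\colon C^*_{cell}(\widetilde{L})\to C^*_{cell}(\widetilde{K})$ with the chain homotopy equivalence arising from the Fukaya-categorical simple isomorphism. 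Such an identification should proceed via a PSS-type construction, or via an action-filtration argument on $\underline{CF^*(K,L)}$ in the spirit of Proposition \ref{prop:Floer-to-Morse-C2}: one perturbs $L$ to the graph of a cellular approximation of $f$ inside a Weinstein tubular neighborhood and counts low-energy holomorphic strips with boundary on $K$ and the perturbed Lagrangian.

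Finally, once $f$ is known to be a homotopy equivalence, its Whitehead torsion $\tau(f)\in Wh(\pi_1(L))$ is, by Definition \ref{def:torsion-CW-cpx}, computed from the mapping cone of $f^*\colon \underline{C^*_{cell}(L)}\to\underline{C^*_{cell}(K)}$. Once the chain-level comparison of the previous paragraph is in place, $f^*$ is chain homotopic to the Fukaya-categorical simple homotopy equivalence produced in the first step; Proposition \ref{prop:homotopy-invariance-torsion} then forces $\tau(f)=0$, so $f$ is a simple homotopy equivalence. The main obstacle is therefore the chain-level comparison invoked in paragraph two, namely producing a chain homotopy between $f^*$ and the Fukaya-categorical simple equivalence; this is the Weinstein-manifold analogue of the open-string PSS argument used in \cite{AbouzaidKragh} for cotangent bundles.
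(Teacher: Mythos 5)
Your overall architecture matches the paper's proof: invoke Theorem \ref{thm:Weinstein-equivalence-simple-reprise} to make the Floer-theoretic isomorphism simple, transport it to cellular complexes via Proposition \ref{prop:Morse=CW}, upgrade $f=r\circ i_K$ to a homotopy equivalence by Hurewicz--Whitehead, and conclude $\tau(f)=0$ by homotopy invariance of torsion. You also correctly identify the one step that carries all the content: showing that the geometric map $f^*$ on cellular cochains is chain homotopic to the Fukaya-categorical simple equivalence. But that step is left as a gap, and the mechanism you sketch for it would not work. You propose to ``perturb $L$ to the graph of a cellular approximation of $f$ inside a Weinstein tubular neighborhood'' and run a low-energy strip count in the spirit of Proposition \ref{prop:Floer-to-Morse-C2}. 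That proposition applies only to a Lagrangian and a $C^2$-small graph over it inside a single Weinstein neighborhood; here $K$ and $L$ are a priori unrelated closed exact Lagrangians in $X$ (not Hamiltonian isotopic, not contained in a common cotangent neighborhood), and $f$ is a purely homotopy-theoretic composite $K\hookrightarrow X\to L$, not something realized by a Lagrangian isotopy. There is no ``graph of $f$'' to perturb $L$ to, so the comparison cannot be localized to a tubular neighborhood of $K$.

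What the paper does instead is route the comparison through the ambient manifold. Lemma \ref{lem:Morse-function} produces a Morse function $H$ on $X$ adapted to $L$ so that $\underline{CM_*(L;h)}$ includes as a subcomplex of $\underline{CM_*(X;H)}$, and this inclusion is chain homotopic to the cellular map induced by $L\hookrightarrow X$ (after collapsing the cells of $\partial X_0$). One then defines $\psi_L:\underline{CF_*(L,L)}\to\underline{CM_*(X;H)}$ and, using a second adapted Morse function and the simple equivalence between the two ambient Morse complexes, $\psi_K:\underline{CF_*(K,K)}\to\underline{CM_*(X;H)}$. The crux is Proposition \ref{prop:L-M-torsion-agree}: $\psi_K$ is chain homotopic to $\psi_L\circ\mu^2(\,\cdot\,,\beta)\circ\mu^2(\alpha,\,\cdot\,)$, proved by analyzing a $1$-parameter moduli space of pearly trajectories (holomorphic discs with boundary on $K$ and $L$ interleaved with gradient flow lines of $H$ in $X$), following \cite[Proposition 5.2]{AbouzaidKragh}. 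This is the ``Weinstein analogue of the Abouzaid--Kragh argument'' you allude to, but it is a global moduli-space construction in $X$, not a local graph perturbation; without it (or an equivalent PSS-type comparison, spelled out), your proof does not close.
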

\begin{proof}
    Before laying out the full details, we begin with a brief outline of the argument. For this proof only, we temporarily switch to homological grading for $\underline{CF_*(K,L)}$ and cellular chain complexes to match the conventions used in \cite{AbouzaidKragh}.
    
    The first step is to note that the Whitehead torsion of the inclusion of a closed exact Lagrangian can be computed using Morse chain complexes. By adapting the argument of \cite[Lemma 5.1]{AbouzaidKragh} to Weinstein manifolds, we show that for any closed exact Lagrangian brane $K$, the chain map
    \begin{equation}
        \iota_*:\underline{C^{cell}_*(K)}\to\underline{C^{cell}_*(X)}
    \end{equation}
    induced by the inclusion $K\xhookrightarrow{}X$ is homotopic to the inclusion 
    \begin{equation}
        j_*:\underline{CM_*(K)}\xhookrightarrow{}\underline{CM_*(X)}
    \end{equation}
    of enhanced Morse complexes. In particular, $\iota_*$ is a homotopy equivalence if and only if $j_*$ is. These Morse complexes are constructed with respect to a suitably chosen Morse function in $X$, which we construct in Lemma \ref{lem:Morse-function}. Then using the simple homotopy equivalence $\underline{CF_*(K,K)}\simeq\underline{CM_*(K)}$, we define a map
    \begin{equation}
        \psi_K:\underline{CF_*(K,K)}\to \underline{CM_*(X)},
    \end{equation}
    and similarly a map $\psi_L:\underline{CF_*(L,L)}\to\underline{CM_*(X)}$. Since the inclusion $L\xhookrightarrow{}X$ is a homotopy equivalence, $\psi_L$ is also a homotopy equivalence.

    In the second step, we show that there is a chain homotopy equivalence
    \begin{equation}
        \psi_L^{-1}\circ\psi_K\simeq\mu^2(~,\beta)\circ\mu^2(\alpha,~):\underline{CF_*(K,K)}\to \underline{CF_*(L,L)}
    \end{equation}
    where $\alpha\in CF^0(K,L)$, $\beta\in CF^0(L,K)$ are isomorphism elements. Since $K$ and $L$ are connected, such isomorphism elements are unique up to sign (see the discussion after Definition \ref{def:equivalence}). Thus, Theorem \ref{thm:Weinstein-equivalence-simple-reprise} implies that these particular elements $\alpha$ and $\beta$ define simple isomorphisms. It follows that the maps $\mu^2(\alpha,~)$ and $\mu^2(~,\beta)$ are simple homotopy equivalences. Since $\psi_L$ is a homotopy equivalence, we conclude that $\psi_K$ is a homotopy equivalence as well. Therefore, we conclude that the inclusion $i:K\xhookrightarrow{}X$ induces an isomorphism on homology groups.

    Since we assume that $i:K\to X$ induces an isomorphism on fundamental groups, there exists a lift $\tilde{i}:\tilde{K}\to\tilde{X}$ to the universal covers. The spaces $\tilde{K}$ and $\tilde{X}$ are simply connected, and the lift $\tilde{i}$ induces an isomorphism between their homology groups. Therefore by the relative Hurewicz theorem, $\tilde{i}$ induces an isomorphism on all homotopy groups
    \begin{equation}
        \tilde{i}_*:\pi_n(\tilde{K})\cong\pi_n(\tilde{X})
    \end{equation}
    for $n\geq 2$. Since $i:K\to X$ also induces an isomorphism on the fundamental groups, it follows that $i_*:\pi_n(K)\to\pi_n(X)$ is an isomorphism for all $n\geq1$. By Whitehead's theorem \cite[Theorem 1]{Whitehead}, we conclude that the inclusion $K\xhookrightarrow{}X$ is a homotopy equivalence.
    
    Moreover, the Whitehead torsion of the induced map $\varphi$ of $K\xhookrightarrow{}X\to L$ on enhanced cellular complexes equals the Whitehead torsion of $\psi_L^{-1}\circ\psi_K$, which is chain homotopic to a simple homotopy equivalence. Since chain homotopic homotopy equivalences have the same Whitehead torsion, $\varphi$ is a simple homotopy equivalence.

    Finally, we appeal to Proposition \ref{prop:CW_cpx_he_simple}, which states that a continuous cellular map between CW complexes is a simple homotopy equivalence if and only if its induced map on the enhanced cellular complexes has trivial Whitehead torsion. Therefore we conclude that the map $K\xhookrightarrow{}X\to L$ is a simple homotopy equivalence.

    With the above outline in place, we now provide the details of the proof. Our first task is to construct a Morse function $H$ on $X$ which restricts to a Morse function $h$ on $L$, such that the chain map
    \begin{equation}
        \underline{C_*^{cell}(L)}\xhookrightarrow{}\underline{C^{cell}_*(X)}
    \end{equation}
    induced by the inclusion $L\xhookrightarrow{} X$ is homotopic to the chain map
    \begin{equation}
        \underline{CM_*(L;h)}\to\underline{CM^*(X;H)},
    \end{equation}
    given by an inclusion of chain complexes.

    The existence of such a Morse function $H$ is guaranteed by the following lemma:
    \begin{lemma} \label{lem:Morse-function}
        Let $N$ be a closed exact Lagrangian in a Liouville manifold $X$. Pick a sufficiently large Liouville subdomain $X_0$ of $X$ that contains $N$, and suppose that $X\setminus X_0$ is a cylindrical neighborhood of $X$ at infinity. Then there exists a Morse-Smale function $H:X\to\R$ and a metric $g$ on $X$ such that the following conditions hold:
    \begin{enumerate}
        \item $h=H\vert_N$ is a Morse function, and the gradient flow of $H$ starting at a point in $N$ stays in $N$.
        \item The gradient flow of $H$ in a point near $N$ repels away from $N$.
        \item The gradient flow of $H$ points outward to $\partial X_0$, and $H$ restricted to $\partial X_0$ is also a Morse function.
    \end{enumerate}
    \end{lemma}
    \begin{proof}
    Since $X$ is Liouville, there exists a Morse-Smale function $f$ such that the gradient flow of $f$ flows outward along $\partial X_0$. We may also assume that $f$ is Morse when restricted to the boundary of $X_0$. Now pick some Riemannian metric $g$ on $X$ for which $N$ is totally geodesic and $g$ is cylindrical at infinity. Now define
    \begin{equation}
        F(x)=h(\pi(x))+d_N(x)^2,
    \end{equation}
    where $h$ is a $C^2$-small, positive Morse function on $N$, $d_N$ is the distance to $N$ measured in the metric $g$, and $\pi(x)$ is the point in $N$ with least distance to $x$ with respect to $g$. By smoothly interpolating between $F$ and $f$, we obtain a Morse function $H$ that satisfies the conditions, possibly after a $C^1$-small perturbation.
    \end{proof}

    Returning to our setting, apply the above Lemma to construct a Morse function $H:X\to\R$ whose restriction $h=H\vert_L:L\to\R$ is Morse. Because the descending manifolds of $H$ starting in $L$ stay in $L$, we obtain an inclusion of chain complexes
    \begin{equation} \label{eqn:5.3:L-to-X}
        j_*:\underline{CM_*(L;h)}\xhookrightarrow{}\underline{CM_*(X_0;H)}\simeq\underline{CM_*(X;H)},
    \end{equation}
    where the metric $g$ is chosen as in Lemma \ref{lem:Morse-function}.

    Now the argument of \cite[Lemma 5.1]{AbouzaidKragh} applies: if we define $X_s$ to be the union of the descending manifolds of $(X,H,g)$, the inclusion $X_s\xhookrightarrow{}X_0$ is a composition of elementary collapses which collapses the cells of $\partial X_0$. Thus it is a simple homotopy equivalence, and therefore the inclusion $\underline{CM_*(X_s)}\xhookrightarrow{}\underline{CM_*(X_0)}$ is a simple homotopy equivalence. Moreover, since the inclusion $L\xhookrightarrow{}X_s$ is cellular, we can conclude that the map $j_*$ is chain homotopic to $\iota_*$, the induced map by the inclusion $L\xhookrightarrow{}X$. Thus, if (\ref{eqn:5.3:L-to-X}) is a (simple) homotopy equivalence, then so is $L\xhookrightarrow{}X$.

    Now if we define $\underline{CF^*(L,L)}$ from the Floer data chosen in the definition of the Fukaya category $\mathcal{F}(X)$, we have a simple homotopy equivalence $\underline{CF_*(L,L)}\simeq\underline{CM_*(L;h)}$. Composing with the chain map (\ref{eqn:5.3:L-to-X}), we define a chain map
    \begin{equation}
        \psi_L:\underline{CF_*(L,L)}\to \underline{CM_*(X;H)}.
    \end{equation}
    Similarly we may apply Lemma \ref{lem:Morse-function} to the Lagrangian $K$, and obtain another Morse function $H'$ on $X$. Since we have shown that both $\underline{CM_*(X,H,g)}$ and $\underline{CM_*(X,H',g')}$ are simple homotopy equivalent to $\underline{C_*^{cell}(X)}$, we may compose with this simple homotopy equivalence between the two Morse complexes and define a map
    \begin{equation}\label{eqn:5.3-Q-to-X}
        \psi_K:\underline{CF_*(K,K)}\to\underline{CM_*(X,H)}.
    \end{equation}

    We now compare $\psi_L$ and $\psi_K$. The result we need is proven in \cite[Proposition 5.2]{AbouzaidKragh}: we recall the statement in the following proposition.

    \begin{proposition} \label{prop:L-M-torsion-agree}
        The two maps below are chain homotopic:
    \begin{align}
        \psi_K&:\underline{CF_*(K,K)} \to \underline{CM_*(X;H)},\\
        \psi_L\circ\mu^2(~,\beta)\circ\mu^2(\alpha,~)&:\underline{CF_*(K,K)}\to\underline{CF_*(K,L)}\to\underline{CF_*(L,L)}\to\underline{CM_*(X;H)},
    \end{align}
    where $\alpha\in CF^0(K,L)$ and $\beta\in CF^0(L,K)$ are isomorphism elements.
    \end{proposition}
    The proof is given by counting a 1-parameter moduli space of pearly trajectories, as depicted in Figure \ref{fig:disc-config}. The construction of a Morse function as in Lemma \ref{lem:Morse-function} is necessary for this argument.
    
    By composing with a homotopy inverse of $\psi_L$ on both sides, we obtain the desired chain homotopy equivalence
    \begin{equation}
        \psi_L^{-1}\circ\psi_K\simeq\mu^2(~,\beta)\circ\mu^2(\alpha,~).
    \end{equation}
    Since $K$ and $L$ are connected, the isomorphism elements $\alpha$ and $\beta$ are unique up to sign (see the discussion after Definition \ref{def:equivalence}), and Theorem \ref{thm:Weinstein-equivalence-simple-reprise} implies that these particular choices of $\alpha$ and $\beta$ are simple isomorphism elements. Thus, the right-hand side is a simple homotopy equivalence, and thus $\psi_L^{-1}\circ\psi_K$ is as well. Therefore from the argument mentioned in the beginning of the proof, we conclude that $K\xhookrightarrow{}X\to L$ is a simple homotopy equivalence.
\end{proof}

\begin{figure}[p]
\centering
\begin{tikzpicture}

\draw (2,-2) -- (2,-1.7);
\draw (2,-1.55) node {$\alpha$};
\draw (0,-4) -- (0,-4.3);
\draw (0,-4.55) node {$\beta$};
\draw (0,-3) circle (1 and 1);
\draw (2,-3) circle (1 and 1);
\draw [postaction={decorate, decoration={markings, mark=at position 0.5 with {\arrow{<}}}}] (3,-3) -- (5,-3);
\draw [postaction={decorate, decoration={markings, mark=at position 0.5 with {\arrow{>}}}}] (-1,-3) arc [start angle=270, end angle=210, x radius=2, y radius=1.5];
\draw [postaction={decorate, decoration={markings, mark=at position 0.5 with {\arrow{>}}}}](-2.74,-2.25) arc [start angle=330, end angle=270, x radius=2, y radius=1.5];
\draw (-4.45,-3) arc [start angle=0, end angle=360, x radius=1, y radius=1];
\draw [postaction={decorate, decoration={markings, mark=at position 0.5 with {\arrow{>}}}}] (-6.45,-3) -- (-7.95,-3);
\draw [thick, dotted] (-6.45,-3) -- (-5.45,-3);
\draw (1,-4) node {$K$};
\draw (1,-2) node {$L$};
\draw (-1,-4) node {$L$};

\draw (4,-2.55) node {$K$};
\draw (4,-3.45) node {$\nabla h'$};
\draw (-2.75,-3.05) node {$L$};
\draw (-3.6,-2.45) node {$\nabla h$};
\draw (-1.9,-2.45) node {$\nabla h$};
\draw (-7.2,-3.45) node {$\nabla H$};
\draw (-7.15,-2.6) node {$X$};
\draw (-5.5,-1.75) node {$L$};

\draw (2,-5) -- (2,-4.7);
\draw (2,-4.55) node {$\alpha$};
\draw (0,-7) -- (0,-7.3);
\draw (0,-7.55) node {$\beta$};
\draw (0,-6) circle (1 and 1);
\draw (2,-6) circle (1 and 1);
\draw [postaction={decorate, decoration={markings, mark=at position 0.5 with {\arrow{<}}}}] (3,-6) -- (5,-6);
\draw [postaction={decorate, decoration={markings, mark=at position 0.5 with {\arrow{>}}}}] (-1,-6) -- (-4.45,-6);
\draw (-4.45,-6) arc [start angle=0, end angle=360, x radius=1, y radius=1];
\draw [postaction={decorate, decoration={markings, mark=at position 0.5 with {\arrow{>}}}}] (-6.45,-6) -- (-7.95,-6);
\draw [thick, dotted] (-6.45,-6) -- (-5.45,-6);

\draw (0.5,-8) -- (0.5,-7.7);
\draw (0.5,-7.55) node {$\alpha$};
\draw (-1.5,-10) -- (-1.5,-10.3);
\draw (-1.5,-10.55) node {$\beta$};
\draw (-1.5,-9) circle (1 and 1);
\draw (0.5,-9) circle (1 and 1);
\draw [postaction={decorate, decoration={markings, mark=at position 0.5 with {\arrow{<}}}}] (1.5,-9) -- (3.5,-9);
\draw (-2.5,-9) arc [start angle=0, end angle=360, x radius=1, y radius=1];
\draw [postaction={decorate, decoration={markings, mark=at position 0.5 with {\arrow{>}}}}] (-4.5,-9) -- (-6,-9);
\draw [thick, dotted] (-4.5,-9) -- (-3.5,-9);
\draw (2.5,-9.45) node {$\nabla h$};
\draw (-5.2,-9.45) node {$\nabla H$};
\draw (2.45,-8.6) node {$K$};
\draw (-5.2,-8.6) node {$X$};
\draw (-0.5,-10) node {$K$};
\draw (-2.5,-10) node {$L$};
\draw (-1.5,-7.75) node {$L$};

\draw (-1.5,-11.5) -- (-1.5,-11.2);
\draw (-1.5,-11.05) node {$\alpha$};
\draw (-1.5,-14.5) -- (-1.5,-14.8);
\draw (-1.5,-15.05) node {$\beta$};
\draw (-1.5,-13) circle (1.5 and 1.5);
\draw [postaction={decorate, decoration={markings, mark=at position 0.5 with {\arrow{<}}}}] (0,-13) -- (3.5,-13);
\draw [postaction={decorate, decoration={markings, mark=at position 0.5 with {\arrow{>}}}}] (-3,-13) -- (-6,-13);
\draw [thick, dotted] (-3,-13) -- (-1.5,-13);

\draw (-1.5,-17) circle (1.5 and 1.5);
\draw [postaction={decorate, decoration={markings, mark=at position 0.5 with {\arrow{<}}}}] (0,-17) -- (3.5,-17);
\draw [postaction={decorate, decoration={markings, mark=at position 0.5 with {\arrow{>}}}}] (-3,-17) -- (-6,-17);
\draw [thick, dotted] (-3,-17) -- (-1.5,-17);
\draw (0,-19) circle (1 and 1);
\draw (-4.45,-17.5) node {$\nabla H$};
\draw (1.7,-17.5) node {$\nabla h'$};
\draw (-4.45,-16.5) node {$X$};
\draw (1.7,-16.5) node {$K$};
\draw (0,-20.4) node {$L$};
\draw (0.84,-19.5) -- (1.05,-19.7);
\draw (-0.84,-19.5) -- (-1.05,-19.7);
\draw (1.2,-19.85) node {$\alpha$};
\draw (-1.24,-19.85) node {$\beta$};
\draw (1.24,-19) node {$K$};
\draw (-1.24,-19) node {$K$};
\draw (-3,-16) node {$K$};
\draw (-0,-16) node {$K$};

\end{tikzpicture}
\caption{The 1-parameter family of holomorphic maps that are counted in the proof of Proposition \ref{prop:L-M-torsion-agree}. The 1-parameter family of pseudoholomorphic curves counted in the second picture breaks into the first and third pictures, and the 1-parameter family in the fourth picture breaks into the third and fifth pictures.}
\label{fig:disc-config}
\end{figure}
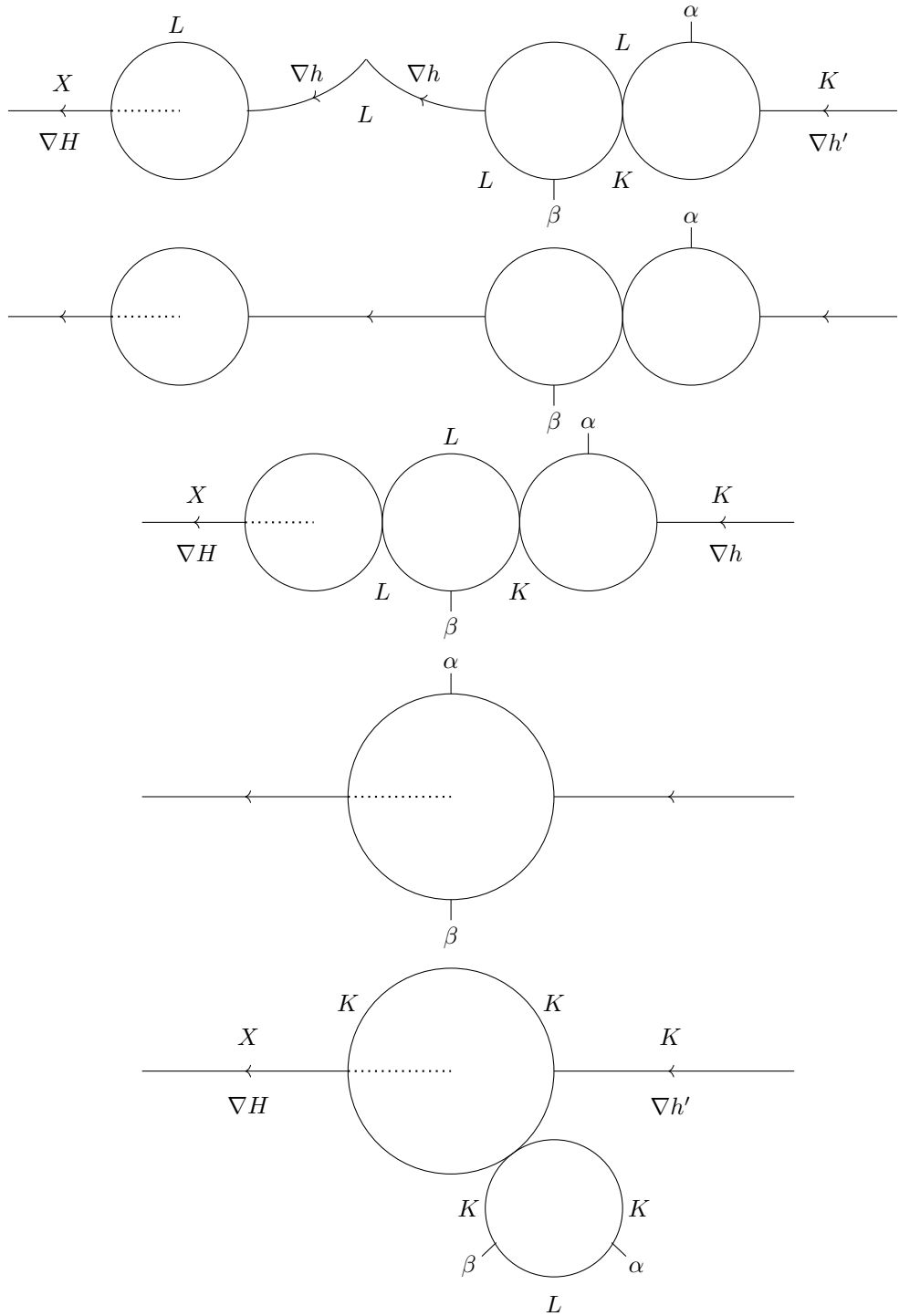

With the above theorem in hand, we now proceed to prove the applications stated in the introduction. First, we show the following proposition:

\begin{proposition} \label{prop:classify-lagrangians-wein1}
    Let $X = M \natural N$ be the Weinstein 1-handle connect sum of two Liouville manifolds $M$ and $N$, where $\pi_1(M) \neq 1$, $\pi_1(N) = 1$, and both $M$ and $N$ satisfy $c_1 = 0$. Then any connected closed exact Lagrangian brane $L \subset X$ whose inclusion induces an isomorphism $\pi_1(L) \cong \pi_1(X)$ is isomorphic, in the wrapped Fukaya category $\mathcal{W}(X)$, to an object in the $\mathcal{W}(M)$ summand under the quasi-equivalence
    \begin{equation}
        \mathcal{W}(X) \simeq \mathcal{W}(M) \oplus \mathcal{W}(N).
    \end{equation}
\end{proposition}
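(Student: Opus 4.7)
The strategy is to decompose $L$ according to the splitting of the wrapped Fukaya category, and then to use the $\pi_1(X)$-enhanced bimodule $\underline{CF^*(L,L)}$ as a discriminating invariant to rule out $L$ lying in the $\mathcal{W}(N)$ summand.

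First, by Proposition \ref{prop:subcrit-handle}, the Weinstein 1-handle connect sum induces a quasi-equivalence $\mathcal{W}(X) \simeq \mathcal{W}(M) \oplus \mathcal{W}(N)$ of $A_\infty$-categories, so $L$ decomposes as $L \simeq L_M \oplus L_N$ with $L_M \in \mathcal{W}(M)$ and $L_N \in \mathcal{W}(N)$. Since the two summands are orthogonal, $HW^0(L,L) \cong HW^0(L_M, L_M) \oplus HW^0(L_N, L_N)$, while $L$ being closed and connected gives $HW^0(L,L) \cong HF^0(L,L) \cong H^0(L) \cong \Z$. Because the endomorphism ring of any nonzero object contains at least the identity element, exactly one of the two summands $L_M$, $L_N$ can be nonzero, and hence $L$ is isomorphic in $\mathcal{W}(X)$ either to an object of $\mathcal{W}(M)$ or of $\mathcal{W}(N)$.

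The remaining task is to rule out the case $L \simeq L_N$. For this, I will compare the enhanced bimodules $\underline{CF^*(L,L)}$ and $\underline{CF^*(L_N,L_N)}$ as based cochain complexes over $\Z\pi_1(X)$: the module actions of the isomorphism elements from Section \ref{ssec:bimodule}, extended to twisted complexes in $Tw_{Ch}\mathcal{W}(X)$, produce a quasi-isomorphism between them, hence a $\Z\pi_1(X)$-module isomorphism on $H^0$. Since the inclusion $L \hookrightarrow X$ induces an isomorphism on $\pi_1$, Proposition \ref{prop:Morse=CW} identifies $\underline{CF^*(L,L)}$ with the enhanced cellular cochain complex of the universal cover $\tilde L$, so $H^0$ of the left-hand side is $H^0(\tilde L;\Z) \cong \Z$ with trivial $\pi_1(X)$-action. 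On the other hand, since $N$ is simply connected, the preimage of $N$ in the universal cover $\tilde X$ is a disjoint union of copies indexed by $\pi_1(X) = \pi_1(M)$, and every holomorphic polygon contributing to the structure maps of $\underline{CF^*(L_N,L_N)}$ stays inside a single such copy; choosing all lifts in one copy identifies $\underline{CF^*(L_N,L_N)} \cong CF^*(L_N,L_N) \otimes_\Z \Z\pi_1(X)$ as based cochain complexes with trivially base-changed differential, so $H^0$ of the right-hand side is the regular representation $\Z\pi_1(X)$ (using $HW^0(L_N,L_N) \cong \Z$). Since $\pi_1(M) \neq 1$, the trivial $\Z\pi_1(X)$-module $\Z$ cannot be isomorphic to $\Z\pi_1(X)$, producing the desired contradiction.

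The main obstacle is to make rigorous the isomorphism-invariance of $\underline{CF^*(-,-)}$ for isomorphisms in the wrapped setting between a closed Lagrangian and a twisted complex of possibly non-compact Lagrangians. This should follow from the general functoriality of $A_\infty$-bimodules once the constructions of Subsection \ref{ssec:bimodule} are set up for $Tw_{Ch}\mathcal{W}(X)$, but it requires care with the wrapping data at infinity so that the relevant moduli spaces of holomorphic curves remain compact and the module action of the isomorphism element is well-defined on the enhanced Yoneda module.
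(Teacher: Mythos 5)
Your first step (using Proposition \ref{prop:subcrit-handle} to split $L \simeq L_M \oplus L_N$ and then using $HW^0(L,L) \cong H^0(L) \cong \Z$ together with the idempotent decomposition of the unit to force one summand to vanish) is correct and is the implicit dichotomy the paper also relies on. Your discriminating invariant is also morally the right one: both you and the paper distinguish $L$ from objects of $\mathcal{W}(N)$ by twisting by the regular representation of $\pi_1(X)$ and comparing ranks in degree zero, namely $\mathrm{rk}\, H^0(\widetilde{L};\Z) = 1$ against $\lvert\pi_1(X)\rvert$ for anything supported in the simply connected piece.

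The genuine issue is the implementation, and it is exactly the obstacle you flag at the end. The enhanced bimodule $\underline{CF^*(-,-)}$ is constructed in Section \ref{ssec:bimodule} only for chain-level proper categories ($\mathcal{F}(X)$ and $\mathcal{F}(\pi)$), and its isomorphism-invariance is proved there via action-filtration and monotonicity arguments for compactly supported Hamiltonian isotopies. Your argument needs this invariance for an isomorphism in $\mathcal{W}(X)$ between a closed Lagrangian and a twisted complex of non-compact objects, where wrapping at infinity (possibly through the handle region) means neither the finiteness nor the energy-filtration arguments carry over verbatim; moreover your claim that all polygons contributing to $\underline{CF^*(L_N,L_N)}$ stay in a single sheet of $p^{-1}(N)$ requires an argument about boundary lifts in the wrapped setting, not just simple connectivity of $N$. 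The paper sidesteps all of this by working instead with the category $\mathcal{S}(X)$ of Lagrangians with local systems and the $A_\infty$-functor $\Phi(K) = (K, \mathcal{L}\vert_K)$, where $\mathcal{L}$ is the local system on $X$ given by the regular representation of $\pi_1(X)$: a functor automatically carries the isomorphism $L \cong L'$ to an isomorphism $\Phi(L)\cong\Phi(L')$, the computation $HF^0(L,\Phi(L))\cong H^0(\widetilde{L};\Z)\cong\Z$ follows from Proposition \ref{prop:same-lag-loc-sys} for the closed Lagrangian $L$, and $\mathcal{L}$ restricts trivially to $N$ because $\pi_1(N)=1$, giving rank $\lvert\pi_1(X)\rvert$ on the other side. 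So your contradiction is the correct one, but to make your route rigorous you would either have to develop $\underline{CF^*(-,-)}$ for the wrapped category or, more efficiently, replace it with the local-system functor as the paper does.
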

\begin{proof}
    We claim that $L$, as an object in $\mathcal{W}(X)$, lies in the summand corresponding to $\mathcal{W}(M)$ under the quasi-equivalence
    \begin{equation}
        \mathcal{W}(X)\cong\mathcal{W}(M)\oplus\mathcal{W}(N).
    \end{equation}
    For the sake of contradiction, assume that $L$ is isomorphic to an object $L'$ in the $\mathcal{W}(N)$ component.

    We now consider the wrapped Fukaya category $\mathcal{S}(X)$ of Lagrangians equipped with (possibly infinite-dimensional) local systems, with morphisms defined as in Subsection \ref{ssec:locsys}. Let $\mathcal{L}$ be the local system on $X$ corresponding to the regular representation $\Z\pi_1(X)$. Since $N$ is simply-connected, the local system $\mathcal{L}$ restricted to the objects in $\mathcal{W}(N)$ is trivial. On the other hand, since $\pi_1(L)\to\pi_1(X)$ is an isomorphism, the restriction $\mathcal{L}\vert_L$ corresponds to the regular representation of $\pi_1(L)$.

    Now consider the $A_\infty$-functor $\Phi:\mathcal{W}(X)\to\mathcal{S}(X)$ which sends an object $K$ to $(K,\mathcal{L}\vert_K)$. Since $L$ and $L'$ are isomorphic in $\mathcal{W}(X)$, their images $\Phi(L)$ and $\Phi(L')$ are also isomorphic in $\mathcal{S}(X)$. In particular, there is an isomorphism
    \begin{equation}
        CF^*(L,\Phi(L))\cong CF^*(L',\Phi(L')).
    \end{equation}

    Since $L'$ is a Lagrangian submanifold of $N$ which is simply-connected, $\mathcal{L}\vert_{L'}$ is trivial, and thus the object $\Phi(L')$ can be identified as $L'$ equipped with the trivial local system of rank $\lvert\pi_1(X)\rvert$. Therefore, the zeroth degree cohomology $HF^0(L',\Phi(L'))$ has rank $\lvert\pi_1(X)\rvert$. On the other hand, since $\Phi(L)$ is the Lagrangian $L$ equipped with the local system corresponding to the regular representation, we have an isomorphism
    \begin{equation}
        CF^*(L,\Phi(L))\cong C_{sing}^*(L;\mathcal{L}\vert_L)\cong C^*(\Tilde{L};\Z)
    \end{equation}
    for the universal cover $\Tilde{L}$ of $L$. Since $L$ is connected, $HF^0(L,\Phi(L))$ has rank 1, which is a contradiction. Therefore we conclude that $L$ is isomorphic to an object in $\mathcal{W}(M)$.
\end{proof}
\begin{remark}
    If one prefers to restrict to using only finite-dimensional representations, one can attempt to choose a finite-dimensional irreducible nontrivial representation instead of the regular representation. Since the local system corresponding to this representation will admit no sections, one can obtain the same conclusion again by comparing $H^0$. However, we remark that there exist groups (such as Thompson groups) for which there are no nontrivial finite-dimensional representations.
\end{remark}

Since 3-dimensional lens spaces are completely classified up to diffeomorphism by their simple homotopy type, we can finish our proof of Theorem \ref{thm:lens-space}:

\begin{thm}\label{thm:lens-space-reprise}
    Let $X$ be a simply-connected Weinstein manifold of dimension 6 with $c_1(TX)=0$, and let $Q=L(p,q)$ be a 3-dimensional lens space. Then any connected closed exact Maslov zero Spin Lagrangian submanifold $L$ in $M=T^*Q\natural X$ for which the inclusion $\pi_1(L)\xhookrightarrow{}\pi_1(M)$ is an isomorphism must be diffeomorphic to $L(p,q)$.
\end{thm}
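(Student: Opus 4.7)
The plan is to reduce the statement to Theorem~\ref{thm:Weinstein-equivalence-simple-reprise} together with the Reidemeister-torsion classification of three-dimensional lens spaces, in three main steps.

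First, I would argue that $L$ is isomorphic to the zero section $Q$ in $\mathcal{F}(M)$, closely following the proof of Proposition~\ref{prop:classification-Lagrangians}. Since $\pi_1(X) = 1$ and $\pi_1(T^*Q) \cong \Z/p \neq 1$, Proposition~\ref{prop:classify-lagrangians-wein1} places $L$ in the $\mathcal{W}(T^*Q)$ summand of $\mathcal{W}(M) \simeq \mathcal{W}(T^*Q) \oplus \mathcal{W}(X)$. Passing to the $p$-fold universal cover $\pi\colon\widetilde{M} \to M$, which (since $X$ is simply connected) is built from $T^*S^3$ by attaching $p$ copies of $X$ by subcritical $1$-handles, the hypothesis $\pi_1(L) \cong \pi_1(M)$ forces $\pi^{-1}(L)$ to be connected, and the analogous orthogonality argument places $\pi^{-1}(L)$ in the $\mathcal{W}(T^*S^3)$ summand of $\mathcal{W}(\widetilde M)$. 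Lemma~\ref{lem:compact-objects-T^*S^n} then identifies $\pi^{-1}(L)$ with the zero section $S^3$ up to a grading shift, so $HW^*(N, L) \cong HW^*(\widetilde N, \pi^{-1}(L)) \cong \Z[j]$ for $N$ a cotangent fiber of $T^*Q$; the homological perturbation step from Proposition~\ref{prop:classification-Lagrangians} then realizes $L$ as $Q$ decorated by some rank-$1$ $\Z$-local system. Since $\mathrm{Hom}(\Z/p, \Z^\times)$ is trivial for $p$ odd (and for $p$ even the orientation character is excluded using the Spin hypothesis on $L$), the local system is trivial and $L \cong Q$ in $\mathcal{F}(M)$.

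Next, Theorem~\ref{thm:Weinstein-equivalence-simple-reprise} promotes this to a simple isomorphism, so the module actions of the simple isomorphism elements give simple homotopy equivalences
\begin{equation*}
    \underline{CF^*(L,L)} \simeq \underline{CF^*(L,Q)} \simeq \underline{CF^*(Q,Q)}
\end{equation*}
of based cochain complexes over $\Z\pi_1(M) = \Z[\Z/p]$. Proposition~\ref{prop:Morse=CW} converts this into a simple homotopy equivalence of enhanced cellular cochain complexes $\underline{C^*_{\mathrm{cell}}(L)} \simeq \underline{C^*_{\mathrm{cell}}(Q)}$. Since $\pi_1(L) \to \pi_1(M)$ and $\pi_1(Q) \to \pi_1(M)$ are both isomorphisms, these complexes compute the cellular cochains of the universal covers of $L$ and $Q$, and consequently for every ring homomorphism $\rho\colon\Z[\Z/p] \to \C$ the Reidemeister torsions $\Delta_\rho(L)$ and $\Delta_\rho(L(p,q))$ agree.

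Finally, I would invoke the elliptization theorem (Perelman) to conclude that $L$, being a closed orientable three-manifold with finite cyclic fundamental group $\Z/p$, is a spherical space form and hence a lens space $L(p, q')$ for some $q'$ coprime to $p$. The computation of Subsection~\ref{ssec:lens-spaces} then yields $(1 - \zeta^{r'})(1 - \zeta) = (1 - \zeta^r)(1 - \zeta)$ in $\C^\times/\{\pm \zeta^k\}$, with $q r \equiv q' r' \equiv 1 \pmod p$ and $\zeta = e^{2\pi i/p}$, forcing $q' \equiv \pm q^{\pm 1} \pmod p$; Theorem~\ref{thm:lens-space-classification} then produces a diffeomorphism $L \cong L(p, q)$. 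The main obstacle is the first step, where one must carefully reproduce the homological-perturbation identification of Proposition~\ref{prop:classification-Lagrangians} in the present setting and, for $p$ even, exclude the nontrivial rank-$1$ orientation local system on $Q$; everything downstream is a routine application of the framework of Sections~2--4 together with the (external) elliptization theorem and the classical Reidemeister classification of lens spaces.
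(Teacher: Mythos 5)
Your proposal follows essentially the same route as the paper's proof: place $L$ in the $\mathcal{W}(T^*Q)$ summand via Proposition \ref{prop:classify-lagrangians-wein1}, pass to the universal cover and apply Lemma \ref{lem:compact-objects-T^*S^n} to identify $L$ with the zero section (up to shift and a rank-one local system), invoke Theorem \ref{thm:Weinstein-equivalence-simple-reprise} to match Reidemeister torsions, and finish with the Poincar\'e/elliptization input and the classical classification of lens spaces. The one place you go beyond the paper is in flagging the nontrivial rank-one $\Z$-local system on $Q$ when $p$ is even --- the paper's appeal to the $p=7$ argument of Proposition \ref{prop:classification-Lagrangians} quietly uses that $\mathrm{Hom}(\Z/p,\{\pm1\})$ is trivial, which fails for even $p$ --- though your proposed exclusion of the orientation character via the Spin hypothesis on $L$ is not obviously sufficient and would need to be justified.
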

\begin{proof}
    We take the universal cover $\tilde{M}$ of $M$, given by
    \begin{equation}
        \tilde{M}=T^*S^3\natural X\natural\cdots\natural X.
    \end{equation}
    Since $\pi_1(L)\to\pi_1(M)$ is an isomorphism, the preimage $\tilde{L}$ in $\tilde{M}$ is a connected closed Lagrangian.

    Because $X$ is simply-connected, Proposition \ref{prop:classify-lagrangians-wein1} implies that $L$, equipped with any brane structure, is isomorphic to a compact object in the wrapped Fukaya category $\mathcal{W}(T^*L(p,q))$, and hence is orthogonal to the $\mathcal{W}(X)$ summand in the direct sum decomposition up to quasi-equivalence
    \begin{equation}
        \mathcal{W}(M)\cong\mathcal{W}(T^*L(p,q))\oplus\mathcal
        W(X).
    \end{equation}
    In particular, the preimage $\tilde{L}$ is orthogonal to all the $\mathcal{W}(X)$ summands in the direct sum decomposition
    \begin{equation}
        \mathcal{W}(\tilde{M})\cong\mathcal{W}(T^*S^3)\oplus\mathcal
        W(X)^{\oplus p}
    \end{equation}
    up to quasi-equivalence. It follows that $\tilde{L}$ is isomorphic to an object in the $\mathcal{W}(T^*S^3)$ summand of $\mathcal{W}(\tilde{M})$ whose endomorphism algebra is cohomologically supported in nonnegative degrees.

    By Lemma \ref{lem:compact-objects-T^*S^n}, we conclude that $\tilde{L}$ is isomorphic to the zero section $S^3$ in $\mathcal{W}(\tilde{M})$, up to a possible degree shift. Applying the same argument as Theorem \ref{thm:auteq-reprise}, we conclude that $L$ is isomorphic to the zero section $Q=L(p,q)$ in $\mathcal{W}(M)$, again up to a degree shift. By Theorem \ref{thm:Weinstein-equivalence-simple-reprise}, $L$ and $Q$ are simply isomorphic objects, and therefore their enhanced cellular cochain complexes $\underline{C^*_{cell}(L)}$ and $\underline{C^*_{cell}(Q)}$ are simple homotopy equivalent. 

    It remains to prove that $L$ is diffeomorphic to $Q$. We first forget the $\pi_1$-action on the enhanced cellular cochain complexes for both $L$ and $Q$. This yields the standard cellular cochain complexes of their universal covers, which are isomorphic. In particular, by the Hurewicz and Whitehead theorems, we obtain a homotopy equivalence between the universal cover of $L$ and $S^3$, which in fact must be a diffeomorphism due to Perelman's proof of the smooth Poincaré conjecture in dimension 3. Thus $L$ is a quotient of $S^3$ by a free $\Z/p$-action, and hence is diffeomorphic to a lens space $L(p,q')$ for some $q'$.

    By our earlier computation of Reidemeister torsions for lens spaces, if the enhanced cochain complexes $\underline{C^*_{cell}(L(p,q))}$ and $\underline{C^*_{cell}(L(p,q'))}$ are simple homotopy equivalent, then $L(p,q)$ and $L(p,q')$ must be diffeomorphic. We conclude that $L$ is diffeomorphic to $Q$, as claimed.
\end{proof}

The existence of exotic spheres in higher dimensions, along with the presence of fake lens spaces in dimensions $\geq 5$, poses a fundamental obstruction to extending the above argument to higher-dimensional lens spaces. In particular, while higher-dimensional lens spaces (defined as quotients of $S^{2n-1}\subset\C^n$ by the action of a cyclic subgroup of $\U(n)$) are completely classified up to diffeomorphism by their Reidemeister torsions, there also exist cyclic quotients of $S^{2n-1}$ arising from non-linear group actions. These ``fake lens spaces'' can be simple homotopy equivalent to genuine lens spaces, but are not necessarily diffeomorphic.

We now conclude the section by revisiting the case of exotic Weinstein balls. Let $\Sigma^{2n}$ be an exotic Weinstein ball, constructed as the total space of a Lefschetz fibration as in \cite{McLean, AbouzaidSeidelLefschetz}. For any non-simply-connected Spin manifold $Q$, define $X$ to be the Weinstein 1-handle connect sum $T^*Q\natural \Sigma$.

\begin{thm} \label{thm:exotic-ball-reprise}
    Let $L$ be any connected closed exact Maslov zero Spin Lagrangian submanifold in $X = T^*Q \natural \Sigma$ such that the inclusion $L \hookrightarrow X$ induces an isomorphism on fundamental groups. Then $L$ is simple homotopy equivalent to $Q$.
\end{thm}

\begin{proof}
    Since the inclusion $Q \hookrightarrow X$ is a homotopy equivalence, Theorem \ref{thm:simple-he-Lagrangians-reprise} reduces the claim to showing that $L$ and $Q$ admit brane structures for which they define isomorphic objects in the Fukaya category $\mathcal{F}(X)$.

    By Proposition \ref{prop:classify-lagrangians-wein1}, the Lagrangian $L$, equipped with any brane structure, is isomorphic to some object $K$ in the wrapped Fukaya category $\mathcal{W}(T^*Q)$. Let $N$ be a cotangent fiber of $T^*Q$. Since $N$ split-generates $\mathcal{W}(T^*Q)$, it suffices to show that the Yoneda module $CF^*(K,N)$ is isomorphic to the Yoneda module of the zero section $Q$, equipped with a suitable $\Z$-local system.

    Set $\mathcal{A}=CW^*(N,N)$ and $\mathcal{M}=CF^*(K,N)$. Following the argument of \cite[Lemma C.1]{Abouzaid12b}, reduction to coefficients in $\Q$ and in $\mathbb{F}_p$ for all primes $p$ shows that $HF^*(K,N)$ is isomorphic to $\Z$, up to a degree shift. In particular, $HF^*(K,N)$ is free over $\Z$, and therefore the homotopy transfer theorem equips it with an $A_\infty$-module structure over $\mathcal{A}$.

    Because this cohomology is concentrated in a single degree, only $\mathcal{A}^0=\Z[\pi_1(Q)]$ can act nontrivially. Hence the resulting $A_\infty$-module is precisely the data of a $\Z$-local system on $Q$.

    We therefore conclude that $\mathcal{M}$ is isomorphic to the Yoneda module of the zero section $Q$ equipped with an appropriate $\Z$-local system. It follows that $L$ may be endowed with a brane structure for which it is isomorphic to $Q$ in $\mathcal{F}(X)$, completing the proof.
\end{proof}


\begin{appendices}
\section{Monotonicity Lemmas}

A standard estimate for the energy of a pseudoholomorphic curve is given by the monotonicity lemma proved in \cite{Sikorav}. In this appendix, we recall two versions of the monotonicity lemma from \cite{Sikorav}, and generalizations proved in \cite{CEL} and \cite{AbouzaidKragh}. 

Let $(X,\omega)$ be a symplectic manifold, and $J$ an almost complex structure. We define such a triple $(X,\omega,J)$ to be \emph{almost Kähler} if the almost complex structure $J$ is $\omega$-compatible. There is a canonical choice of Riemannian metric for an almost Kähler triple $(X,\omega,J)$ given by
\begin{equation}
    g_J(v_1,v_2)=\omega(v_1,Jv_2).
\end{equation}
Because $J$ is $\omega$-compatible, we may define the \emph{energy} of a $J$-holomorphic curve $u:S\to X$ to be
\begin{equation}
    E(u)=\int_S u^*\omega.    
\end{equation}
We also assume that $J$ has regularity $\mathcal{C}^r$ for some $r\geq 1$, and the $J$-holomorphic maps that appear later have regularity $\mathcal{C}^{r+1}$. The key assumption for the monotonicity lemma is a \emph{bounded geometry condition} on the triple $(X,\omega,J)$, which we define below.

\begin{defn} \label{def:bounded-geometry}
    Let $(X,\omega,J)$ be an almost Kähler triple. We define this triple to have \emph{bounded geometry} if the associated Riemannian metric $g_J$ has bounded curvature, and a lower bound on the injectivity radius.
\end{defn}

Any Liouville manifold equipped with a cylindrical compatible almost complex structure satisfies these conditions. We now state the monotonicity lemma.

\begin{lemma}[{\cite[Proposition 4.3.1(ii)]{Sikorav}}] \label{thm:motonocity-lemma}
    Suppose that $(X,\omega,J)$ is an almost Kähler triple with bounded geometry, and let $r_0$ be the global lower bound for the injectivity radius. Then there exists some constant $C>0$ such that the following holds: Consider a Riemann surface with boundary $(S,j)$ and a non-constant $J$-holomorphic map $u:S\to X$. If there exists some $r\leq r_0$ such that
    \begin{equation}
        u(S)\subset B(x,r),~u(\partial S)\subset\partial B(x,r)
    \end{equation}
    for some $x\in u(S)$, then there is a lower bound for the energy of $u$ given by
    \begin{equation}
        E(u)\geq Cr^2.
    \end{equation}
\end{lemma}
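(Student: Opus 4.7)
The plan is to follow the classical monotonicity argument of Gromov and Sikorav, which adapts the analogous result for minimal surfaces to the pseudoholomorphic setting. The fundamental observation is that since $J$ is $\omega$-compatible, the energy of $u$ equals the area of its image counted with multiplicity, and $J$-holomorphic curves are minimal with respect to $g_J$.

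First I would reduce to working on a geodesic ball around $x$ by pulling back via the exponential map $\exp_x \colon B(0,r) \subset T_xX \to B(x,r)$. The bounded geometry assumption ensures that in these coordinates the pullback metric and symplectic form agree with their flat Euclidean values at the origin, and that they differ from the Euclidean data by terms whose size on $B(0,r_0)$ is controlled by universal constants depending only on the curvature bounds and $r_0$. Using the radial homotopy operator, one constructs a primitive $\lambda$ of $\omega$ on $B(x,r_0)$ satisfying a pointwise bound of the form
\[
|\lambda(y)| \leq \tfrac{1}{2}\,\mathrm{dist}(y,x)\bigl(1 + C_1\,\mathrm{dist}(y,x)\bigr),
\]
obtained by combining the explicit radial primitive $\tfrac{1}{2}\sum(x_i\,dy_i - y_i\,dx_i)$ of the constant symplectic form at $x$ with a correction term bounded by $C_1|y|^2$.

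Next, define the area function $a(\rho) := \int_{u^{-1}(B(x,\rho))} u^*\omega$. Since $u(\partial S) \subset \partial B(x,r)$, for every $\rho < r$ the boundary $\partial u^{-1}(B(x,\rho))$ lies entirely in $u^{-1}(\partial B(x,\rho))$. Applying Stokes' theorem gives
\[
a(\rho) = \int_{\partial u^{-1}(B(x,\rho))} u^*\lambda \leq \tfrac{\rho}{2}(1+C_1\rho)\,L(\rho),
\]
where $L(\rho)$ is the length of the boundary curve in the pullback metric, which coincides with the image length (with multiplicity) because $u$ is $J$-holomorphic. The coarea formula identifies $L(\rho) = a'(\rho)$ for almost every $\rho$, yielding the differential inequality $a(\rho) \leq \tfrac{\rho}{2}(1+C_1\rho)\,a'(\rho)$. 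Integrating this inequality from a small positive $\rho_0$ up to $r$ and passing to the limit $\rho_0 \to 0^+$ gives $a(r) \geq C r^2$ where $C$ depends only on $C_1$, $r_0$, and the density of $u$ at $x$.

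The main technical obstacle is establishing the initial condition $\liminf_{\rho \to 0^+} a(\rho)/\rho^2 \geq \pi$, which is needed to extract a lower bound independent of the particular map $u$. This requires the local structure theorem for $J$-holomorphic maps at an interior image point: in suitable local coordinates near $x$, the map $u$ admits an expansion of the form $z \mapsto z^k + O(|z|^{k+1})$ for some integer $k \geq 1$, from which the asymptotic area estimate $a(\rho) \sim k\pi\rho^2$ follows. This local structure is supplied by the similarity principle for Cauchy--Riemann operators with $\mathcal{C}^r$ coefficients, which applies in the regularity assumed for $J$ and $u$.
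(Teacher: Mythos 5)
The paper does not actually prove this lemma: it is imported verbatim from Sikorav \cite[Proposition 4.3.1(ii)]{Sikorav}, so there is no internal argument to compare against. Your reconstruction is the standard length--area proof and is correct in outline, and it is essentially the argument in the cited source: a primitive $\lambda$ of $\omega$ on the geodesic ball with $|\lambda(y)|\le\tfrac12\,\mathrm{dist}(y,x)(1+C_1\,\mathrm{dist}(y,x))$, Stokes applied to $a(\rho)=\int_{u^{-1}(B(x,\rho))}u^*\omega$, the coarea bound relating $a'(\rho)$ to the level-set length, the resulting differential inequality, and the initial density $\liminf_{\rho\to0^+}a(\rho)/\rho^2\ge\pi$ from the interior local normal form $z\mapsto z^k+O(|z|^{k+1})$.

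Three small points to tighten. First, the coarea step gives an inequality rather than the identity you assert: with $f=\mathrm{dist}(u(\cdot),x)$ one has $a'(\rho)=\int_{f=\rho}|\nabla f|^{-1}\,ds\ge L(\rho)$ because $|\nabla f|\le1$; fortunately this is the direction your chain of estimates needs. Second, before applying Stokes you should invoke Sard's theorem so that $u^{-1}(B(x,\rho))$ has smooth boundary for almost every $\rho$, which suffices since $a$ is monotone. Third, the hypotheses $x\in u(S)$ and $u(\partial S)\subset\partial B(x,r)$ with $r>0$ force every preimage of $x$ to be an interior point of $S$, which is exactly what licenses the interior local structure theorem you appeal to for the initial condition; it is worth saying this, since otherwise the density estimate at a boundary preimage would require a separate argument.
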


There is a version of the monotonicity lemma in \cite{Sikorav} for holomorphic curves with Lagrangian boundary conditions. Let $L$ be a properly embedded Lagrangian submanifold of $X$, which we assume to be either compact or cylindrical. Such Lagrangians satisfy the geometric conditions imposed in \cite[Definition 4.7.1]{Sikorav}.

\begin{lemma}[{\cite[Proposition 4.7.2(ii)]{Sikorav}}] \label{thm:monotonicity-lemma-lag-bdry}
    Suppose that $(X,\omega,J)$ is an almost Kähler triple with bounded geometry, and let $r_0$ be the lower bound for the injectivity radius. We assume that $L$ is either a compact or cylindrical Lagrangian. Then there exists some constant $C_L>0$ depending on $L$ such that the following holds: Consider a Riemann surface with boundary $(S,j)$ and a non-constant $J$-holomorphic map $u:S\to X$. If there exists some $r\leq r_0$ such that
    \begin{equation}
        (u(S),u(\partial S))\subset (B(x,r),\partial B(x,r)\cup L)
    \end{equation}
    for some $x\in u(S)$, then there is a lower bound for the energy of $u$ given by
    \begin{equation}
        E(u)\geq C_Lr^2.
    \end{equation}
\end{lemma}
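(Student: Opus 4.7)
The plan is to adapt the classical monotonicity argument of \cite{Sikorav} for closed $J$-holomorphic curves to the setting where part of the boundary $\partial S$ is allowed to lie on the Lagrangian $L$. The two ingredients are (i) a primitive of $\omega$ near $L$ that vanishes on $L$, supplied by the Weinstein neighborhood theorem, and (ii) an isoperimetric-type estimate that converts a bound on the free boundary length into a bound on the area.

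First I would pick, for the Lagrangian $L$ under consideration, a Weinstein tubular neighborhood $U \supset L$ together with a primitive $\lambda$ of $\omega$ on $U$ such that $\lambda\vert_L = 0$. Because $L$ is compact or cylindrical, the size of this neighborhood is bounded below uniformly, and the $C^0$-norm of $\lambda$ on a ball of radius $t$ centered at a point $x \in L$ satisfies $\|\lambda\|_{C^0(B(x,t))} \le C_1 t$ for a constant $C_1$ depending on $L$ and $g_J$. Outside a smaller tubular neighborhood of $L$ we can apply the standard monotonicity lemma (Lemma~\ref{thm:motonocity-lemma}) directly, so the real content is when $x$ lies close to $L$.

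Next, for $t \in (0, r]$ define the partial area
\begin{equation}
    A(t) = \int_{u^{-1}(B(x,t))} u^*\omega.
\end{equation}
By Stokes' theorem applied to the compact surface $u^{-1}(B(x,t))$, using that $\lambda$ vanishes on $L$ and that the remaining boundary component lies in $\partial B(x,t)$, we obtain
\begin{equation}
    A(t) = \int_{u^{-1}(\partial B(x,t)) \smallsetminus L} u^*\lambda \;\le\; C_1 t \cdot \ell(t),
\end{equation}
where $\ell(t)$ is the length of the portion of $\partial(u^{-1}(B(x,t)))$ mapping into $\partial B(x,t)$. On the other hand, the co-area formula applied to the function $\rho \mapsto d_{g_J}(u(\cdot), x)$ gives $A'(t) \ge \ell(t)$ for almost every $t$. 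Combining these yields the differential inequality $A(t) \le C_1 t \cdot A'(t)$ on the set where $A(t) > 0$, which after integration from a small positive starting value up to $r$ produces the lower bound $A(r) \ge C_L r^2$ for a constant $C_L$ depending only on $L$ and the bounded-geometry constants of $(X,\omega,J)$. Non-triviality of $u$ ensures that $A(t) > 0$ for all sufficiently small $t > 0$ (after replacing $x$ by a regular value if needed).

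The main obstacle is the justification of the co-area inequality $A'(t) \ge \ell(t)$ in the presence of the Lagrangian boundary, since a portion of the relevant level set $\{d_{g_J}(u, x) = t\}$ may meet $\partial S \cap L$ tangentially. The cleanest way around this is the Schwarz-type doubling trick: using the Weinstein neighborhood, reflect $u$ across $L$ to extend it to a $J$-holomorphic map on the doubled domain (with a reflected, $C^{r}$-regular almost complex structure), and apply the closed-boundary monotonicity lemma to the doubled map. The cost of reflection is absorbed into the constant $C_L$ via bounds on the second fundamental form of $L$ in $U$. I expect this reflection/doubling step to be the main technical hurdle; once it is set up, the differential-inequality argument proceeds exactly as in \cite[Proposition~4.3.1]{Sikorav}.
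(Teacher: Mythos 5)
The paper does not actually prove this lemma --- it is quoted from Sikorav --- so I am measuring your proposal against the standard argument there. Your skeleton (a primitive $\lambda$ of $\omega$ vanishing on $L$, Stokes' theorem on $u^{-1}(B(x,t))$, the coarea inequality $A'(t)\geq\ell(t)$, a differential inequality) is the right one. But the step where you integrate $A(t)\leq C_1\,t\,A'(t)$ to get $A(r)\geq C_L r^2$ is wrong as written: that inequality integrates to $A(r)\geq A(t_0)\left(r/t_0\right)^{1/C_1}$, and since the only uniform input as $t_0\to 0$ is $A(t_0)\gtrsim t_0^2$, you obtain nothing unless $C_1\leq\tfrac12$. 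To reach the quadratic bound you must either (i) use the sharp linear constant $|\lambda|\leq\tfrac{t}{2}(1+O(t))$ on $\partial B(x,t)$ --- available because the radial primitive $\tfrac12\iota_E\omega_0$ in a Darboux--Weinstein chart centered at a point of $L$ both vanishes on $L$ and has optimal growth --- together with the short-distance asymptotics of $A(t)/t^2$; or (ii) replace the linear estimate by the quadratic isoperimetric inequality $A(t)\leq c\,\ell(t)^2$ for arcs with endpoints on $L$ (which follows from $|\lambda(y)|\leq C\,d(y,L)$ and $d(y,L)\leq\ell(t)$ along the free boundary), since combined with $A'\geq\ell$ this gives $(\sqrt{A})'\geq 1/(2\sqrt{c})$ and hence $A(r)\geq r^2/(4c)$ for \emph{any} value of $c$. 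Route (ii) is the robust one and is what the cited proof does; with an unspecified $C_1$ your differential inequality is too weak.

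Separately, the ``main technical hurdle'' you identify is not a hurdle, and your proposed fix does not work. The coarea inequality $A'(t)\geq\ell(t)$ holds for surfaces with boundary exactly as in the closed case: apply the coarea formula to $\rho\circ u$, and Sard's theorem guarantees that for almost every $t$ the level set is a compact $1$-manifold with boundary on $\partial S$; tangential meetings with $\partial S$ occur only for a null set of radii. On the other hand, the Schwarz reflection you propose fails for a general compatible $J$: the reflected structure $-d\sigma\circ J\circ d\sigma$ does not agree with $J$ along $L$ unless $J$ anti-commutes with the reflection there, so the doubled map is not pseudoholomorphic for any single continuous almost complex structure --- this is precisely why the Lagrangian-boundary monotonicity lemma is proved directly rather than by doubling. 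Finally, a minor point: when $0<d(x,L)<r$, interior monotonicity on $B(x,d(x,L))$ only yields $E(u)\geq C\,d(x,L)^2$, so the intermediate regime requires recentering at a nearby point of $L$.
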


Cieliebak, Ekholm, and Latschev prove a generalization of the above statement to the case where more than one Lagrangian boundary component is allowed. To be precise, they allow the Lagrangian $L$ to be an immersed Lagrangian with clean self-intersection along a compact submanifold $Z$. For our setting, it is enough to restrict to the case where $L$ is the union of two transversely intersecting Lagrangians $L_0,L_1$, and $Z$ is the union of the intersections $L_0\cap L_1$, which is a finite disjoint union of points. We also impose the following two conditions on $J$:
\begin{enumerate}
    \item $J$ is integrable in a neighborhood of $Z$.
    \item Near each point $x\in Z$, there exists a neighborhood $U_x$ with holomorphic coordinates $U_x\cong\C^n$ such that there is an identification
    \begin{equation*}
        L_0\cong\R^n,~L_1\cong i\R^n.
    \end{equation*}
\end{enumerate}
We further assume that there exists some subset $Y\subset X$ be a subset for which any holomorphic curve with boundary in $Y$ must be entirely contained in $Y$. Since we assume that $X$ is Liouville, such a $Y$ always exists.

\begin{lemma}[{\cite[Lemma 3.4]{CEL}}] \label{thm:monotonicity-lemma-CEL}
    Suppose that $(X,\omega,J)$ is an almost Kähler triple with bounded geometry, and let $r_0$ be the lower bound for the injectivity radius. Let $L$ be an immersed Lagrangian as above, and assume that $J$ satisfies the conditions (1), (2) above. Then there exists some constant $\epsilon_L, C'_L>0$ depending on $L$ such that the following holds: Consider a Riemann surface with boundary $(S,j)$ and a nonconstant $J$-holomorphic map $u:(S,\partial S)\to (X,L)$ passing through a point $x\in Y$ such that $u(S)\cap B(x,r)$ is compact for some $r<\epsilon_L$. Then there is a lower bound for the energy of $u$ given by
    \begin{equation}
        E(u)\geq C'_Lr^2.
    \end{equation}
\end{lemma}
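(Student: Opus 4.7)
The plan is to reduce to the setting of the standard Lagrangian monotonicity (Lemma \ref{thm:monotonicity-lemma-lag-bdry}) by separating the behavior of $u$ near and away from the self-intersection locus $Z$, and using Schwarz reflection in the local holomorphic model provided by the compatibility assumptions on $J$. Since $Z$ is compact (a finite set of points, in our setting), I would first choose $\epsilon_L > 0$ small enough that for every point $x \in X$ either (i) $B(x, 2\epsilon_L) \cap Z = \varnothing$, so that $L$ is a smooth embedded Lagrangian inside this ball (namely one of $L_0$ or $L_1$, since $L_0$ and $L_1$ only meet along $Z$), or (ii) the ball $B(x, 2\epsilon_L)$ is contained in one of the holomorphic coordinate charts $U_{z_0} \cong \C^n$ around a point $z_0 \in Z$, in which $L_0 \cong \R^n$ and $L_1 \cong i\R^n$.

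In case (i), I would apply Lemma \ref{thm:monotonicity-lemma-lag-bdry} directly, since the restriction $u|_{u^{-1}(B(x,r))}$ satisfies its hypotheses for a smooth Lagrangian. This yields $E(u) \geq C r^2$ for some constant depending on the Lagrangians $L_0, L_1$ and the ambient geometry. For case (ii), the main step is a doubling construction: in the coordinate chart, both $\R^n$ and $i\R^n$ are preserved setwise by complex conjugation $z \mapsto \bar z$. Setting $\Omega = u^{-1}(B(x,r))$ and $\Omega^*$ equal to $\Omega$ with conjugate complex structure, the map
\begin{equation}
    \hat{u} : \hat{\Omega} = \Omega \cup_{\partial_L \Omega} \Omega^* \to \C^n,
    \qquad
    \hat{u}|_{\Omega} = u,
    \quad
    \hat{u}|_{\Omega^*}(p) = \overline{u(p)}
\end{equation}
is continuous along the glued Lagrangian boundary by the Schwarz reflection principle and extends to a $J$-holomorphic map on $\hat \Omega$ since the gluing boundary is mapped into the fixed set of conjugation. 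The only remaining boundary of $\hat \Omega$ lies over the doubled sphere $\partial B(x,r) \cup \partial \overline{B(x,r)}$, and we have $E(\hat u) = 2 E(u)$.

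The image of $\hat u$ is contained in $B(x,r) \cup \overline{B(x,r)} \subset B(x_0, r + \mathrm{dist}(x, \R^n \cup i\R^n))$, so one must choose $r$ small enough relative to $\mathrm{dist}(x, L_0 \cup L_1)$ that this enlarged ball still lies inside the injectivity radius neighborhood. Subdividing (ii) further: if $x$ is comparatively close to $L_0 \cup L_1$, I recenter the analysis at the nearest point $x' \in L_0 \cup L_1$ and apply the doubling to a ball centered there, whereas if $x$ is far from $L_0 \cup L_1$ compared to $r$, the ball $B(x,r)$ is already disjoint from $L$ and Lemma \ref{thm:motonocity-lemma} applies to $u$ without any boundary condition. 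Either way, one applies the closed/single-Lagrangian monotonicity to $\hat u$ to conclude $E(\hat u) \geq C' r^2$, and hence $E(u) \geq (C'/2)\, r^2$.

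The main obstacle is the geometric bookkeeping in case (ii): the doubled image need not sit inside a single ball of radius $r$, so the monotonicity constant one extracts will depend on both the size of $r$ relative to the ambient injectivity radius and on how $x$ is positioned with respect to $L_0 \cup L_1$ and the chart $U_{z_0}$. Handling this requires fixing $\epsilon_L$ small enough that the chart covers a definite neighborhood of each $z_0 \in Z$ and the doubled ball $B(x,r) \cup \overline{B(x,r)}$ fits inside the chart with room to spare; the integrability of $J$ near $Z$ ensures the doubled map $\hat u$ is genuinely $J$-holomorphic rather than merely approximately so, which is essential for Schwarz reflection to produce a smooth object we can feed into the classical monotonicity estimate.
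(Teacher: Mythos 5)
There is a genuine gap in case (ii), which is precisely the case this lemma exists to handle. Your doubling construction glues $\Omega$ to $\Omega^*$ along the \emph{entire} Lagrangian boundary $\partial_L\Omega$ and asserts that this boundary is mapped into the fixed-point set of $z\mapsto\bar z$. But the fixed-point set of conjugation in $\C^n$ is $\R^n$ alone; the other branch $i\R^n$ is only preserved \emph{setwise} ($\overline{it}=-it$), not pointwise. On any boundary arc that $u$ maps to $i\R^n\setminus\{0\}$ one has $u(p)\neq\overline{u(p)}$, so the map $\hat u$ you define is not even continuous across those arcs, and the Schwarz reflection principle does not apply. Since a curve contributing nontrivially near a point of $Z$ will in general have boundary on both branches and switch between them at corner points, the reflection does not dispose of the switching behavior --- it only handles the $\R^n$ portion. (Your case (i), where the ball misses $Z$ and Lemma \ref{thm:monotonicity-lemma-lag-bdry} applies to a single embedded branch, is fine, as is the recentering bookkeeping.)

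The argument the paper sketches, following \cite{CEL}, avoids this by composing $u$ in the local chart with the holomorphic map $(z_1,\dots,z_n)\mapsto(z_1^2,\dots,z_n^2)$, which sends \emph{both} $\R^n$ and $i\R^n$ into $\R^n$. The composite is again holomorphic, now with boundary on the single smooth Lagrangian $\R^n$, and the classical monotonicity estimate of Lemma \ref{thm:monotonicity-lemma-lag-bdry} applies after controlling the distortion of energy and distance under the squaring map (which is the technical heart of the CEL argument, since its derivative degenerates at the corner). A reflection-type argument can be salvaged --- reflect only across the arcs mapping to $\R^n$, obtaining a curve with boundary solely on $i\R^n$ --- but then the doubled map is a priori only continuous at the former switch points and one must invoke a removable-singularity/regularity argument there, and one must also check that the reflected piece has the same energy (conjugation need not be anti-symplectic for the given $\omega$ in the chart, since conditions (1)--(2) constrain $J$ but not $\omega$). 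As written, your proof does not close either of these issues.
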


The sketch of the proof is to use a local model $(\R^n,i\R^n)$ for the Lagrangian boundaries, and the map $(z_1,\cdots,z_n)\to(z_1^2,\cdots,z_n^2)$ to reduce the situation to a holomorphic map with boundary on $\R^n\subset\C^n$. 

Abouzaid and Kragh show an application of the monotonicity lemma that shows that under some assumptions, the image of a pseudoholomorphic strip is contractible. Their setup is as follows.

Let $K$ and $L$ be two Lagrangian submanifolds of a symplectic manifold $X$ together with a almost complex structure $J$. Let $x\in K\cap L$ be an isolated intersection point, and denote by $U_x$ an unspecified open neighborhood of $x$. Although it is not mentioned in the paper, one must also assume that $(X,J)$ satisfies the bounded geometry condition to apply the monotonicity lemma.

\begin{lemma}[{\cite[Lemma A.1]{AbouzaidKragh}}] \label{lem:monotonicity-AK}
    For every sufficiently small contractible neighborhood $U_x$ of $x$, there exists a constant $\delta>0$ such that for any $J$-holomorphic curve $u:D^2\to X$ satisfying the following two conditions
    \begin{enumerate}
        \item $u(1)\in U_x$, $u(-1)\not\in U_x$,
        \item the upper half of the boundary $\partial D^2$ is mapped to a $\delta$-neighborhood of $K$, and the lower half of the boundary $\partial D^2$ is mapped to a $\delta$-neighborhood of $L$,
    \end{enumerate}
    the energy $E(u)=\int u^*\omega$ satisfies the following lower bound
\begin{equation}
    E(u)>\delta.
\end{equation}
\end{lemma}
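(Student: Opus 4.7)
The plan is to apply the monotonicity lemma (in the form of Lemma \ref{thm:monotonicity-lemma-CEL}) to a ball of fixed size on the image of $u$, extracting a quadratic lower bound on the energy of $u$ that is independent of $\delta$, and then to choose $\delta$ smaller than this bound.

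First, I would fix geometric data depending only on $U_x$. Since $x$ is an isolated intersection point of $K$ and $L$, shrink $U_x$ so that $\overline{U_x}\cap(K\cap L)=\{x\}$, and so that on a slightly larger ball $B(x,2R)$ the pair $(K,L)$ admits the standard normal form $(K,L)\cong(\R^n,i\R^n)$ in suitable holomorphic coordinates, as required by the hypotheses of Lemma \ref{thm:monotonicity-lemma-CEL}. Here $R$ denotes the radius of $U_x$. Let $\epsilon_L,C'_L>0$ be the CEL constants for the immersed Lagrangian $K\cup L$, and fix once and for all a radius $r<\min(R/4,\epsilon_L)$ depending only on $U_x$.

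Second, because $u(1)\in U_x$ while $u(-1)\notin U_x$, a straight-line path in $D^2$ from $1$ to $-1$ maps under $u$ to a connected arc joining a point inside $U_x$ to a point outside, and so must cross the sphere $\partial B(x,R/2)$ at some interior parameter. Pick such a point $p\in u(D^2)\cap\partial B(x,R/2)$; then $\overline{B(p,r)}$ lies in the normal-form chart. I would then apply Lemma \ref{thm:monotonicity-lemma-CEL} — or more precisely a mild generalization allowing the boundary of $u$ to lie in a $\delta$-tubular neighborhood of $K\cup L$ rather than on $K\cup L$ itself — to the restriction of $u$ to $u^{-1}(B(p,r))$, obtaining
\begin{equation*}
    E(u)\geq E\bigl(u|_{u^{-1}(B(p,r))}\bigr)\geq C r^2
\end{equation*}
for some constant $C>0$ close to $C'_L$. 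Choosing $\delta<\min(r/2,\,Cr^2)$, a quantity depending only on $U_x$, then yields $E(u)>\delta$ and completes the argument.

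The main obstacle will be justifying the monotonicity estimate when the boundary of $u$ lies only in a $\delta$-tubular neighborhood of $K\cup L$ rather than on $K\cup L$. The CEL proof proceeds via the branched double cover $(z_1,\dots,z_n)\mapsto(z_1^2,\dots,z_n^2)$ in the normal-form chart, which converts a curve with boundary on the union $\R^n\cup i\R^n$ into one with boundary on the single totally real subspace $\R^n$, after which Lemma \ref{thm:monotonicity-lemma-lag-bdry} applies. My plan is to run the same argument in our setting: the lifted curve has boundary inside a small tubular neighborhood of $\R^n$, and a mild adjustment of Sikorav's Lagrangian monotonicity argument — exploiting the Weinstein neighborhood theorem and the totally real nature of $\R^n$ — still produces a quadratic energy lower bound at the cost of a slightly diminished constant $C$. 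Once this technical step is verified, the conclusion follows as above.
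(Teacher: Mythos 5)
First, a remark on scope: the paper does not prove this lemma at all --- it is imported verbatim from \cite[Lemma A.1]{AbouzaidKragh} --- so there is no in-paper argument to compare yours against, and I am assessing your proposal on its own terms.

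The overall shape of your argument (locate a point $p$ of the image at a definite distance from $x$ inside a controlled chart, extract a $\delta$-independent energy quantum there, then shrink $\delta$ below that quantum) is reasonable, but the step you flag as the ``main obstacle'' is not a mild generalization, and in the form you need it, it is false. The statement you rely on --- that a $J$-holomorphic curve through $p$ with boundary on $\partial B(p,r)\cup N_\delta(K\cup L)$, where $N_\delta$ denotes the $\delta$-neighborhood, has energy at least $Cr^2$ with $C$ independent of $\delta$ --- fails: a holomorphic disc of radius $\delta/2$ centred at a point of $K$ violates it outright, and even after one rules this out using the fact that the restricted curve must reach $\partial B(p,r)$ (a point you do not argue, though it follows from $u(-1)\notin B(p,r)$ and connectedness of $u(D^2)$), a thin holomorphic strip with image $[0,r]\times[0,\delta]$ hugging $K$ satisfies all the local hypotheses and has energy only $r\delta\ll Cr^2$. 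The failure is visible in the isoperimetric step of Sikorav's proof: boundary arcs lying in $N_\delta(K)$ rather than on $K$ contribute an error of order $\delta$ times their (uncontrolled) length to the area estimate. Consequently your final step ``choose $\delta<\min(r/2,Cr^2)$'' collapses, since no $\delta$-independent constant $C$ exists at a generic point of $u(D^2)\cap\partial B(x,R/2)$ --- the curve may simply hug $K$ there. A correct proof must exploit the global structure of the hypotheses: the two halves of $\partial D^2$ land in neighborhoods of two \emph{different} Lagrangians that are uniformly separated on an annulus around $x$ (note that $u(1)\in N_\delta(K)\cap N_\delta(L)\cap U_x$ forces $u(1)$ to lie near $x$ for small $\delta$), so the curve is forced either to pass through a point at definite distance from $K\cup L$, where interior monotonicity (Lemma \ref{thm:motonocity-lemma}) gives a $\delta$-independent bound, or to be controlled by single-Lagrangian monotonicity (Lemma \ref{thm:monotonicity-lemma-lag-bdry}) in a regime where the neighborhood issue can actually be handled. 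As a smaller, fixable point, $u(1)\in U_x$ does not by itself imply the image crosses $\partial B(x,R/2)$; it only crosses spheres of radius between $d(x,u(1))$ and $R$.
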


\section{Classifying representatives for twisted complexes} \label{app:generation}

In this appendix, we collect some homological algebra statements and proofs used in Subsection \ref{ssec:app-auteq}. The exposition will mostly follow \cite[Appendix A]{Abouzaid12b}, and \cite[Section 4]{AbouzaidSmith}. The base ring we consider will be $k$, a field. Alternatively, we may take the base ring to be $\Z$, but then the homological perturbation lemma (Lemma \ref{lem:homological-perturbation}) needs an additional condition.

\begin{defn}
    We define an $A_\infty$-algebra $\mathcal{A}$ to be \emph{minimal} if the differential $\mu^1_\mathcal{A}$ vanishes. We define an $A_\infty$-module $\mathcal{M}$ over $\mathcal{A}$ to be \emph{minimal} if the structure map $\mu_\mathcal{M}^0$ vanishes.
\end{defn}
The following \emph{homological perturbation lemma} states that for any $A_\infty$-algebra and $A_\infty$-module, one can find a quasi-isomorphic $A_\infty$-algebra and $A_\infty$-module that is minimal.

\begin{lemma}[{\cite[Lemma A.1]{Abouzaid12b}}] \label{lem:homological-perturbation}
    Let $\mathcal{A}$ be an $A_\infty$-algebra, and let $\mathcal{M}$ be an $A_\infty$-module over $\mathcal{A}$. Then there exists an $A_\infty$-structure on the cohomological algebra $H^*\mathcal{A}$ such that $\mathcal{A}\to H^*\mathcal{A}$ is a quasi-isomorphism of $A_\infty$-algebras. Similarly, there exists an $A_\infty$-module structure on $H^*\mathcal{M}$ that is quasi-isomorphic to $\mathcal{M}$.
\end{lemma}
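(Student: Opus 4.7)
The plan is to prove this as Kadeishvili's transfer theorem applied in both the algebra and module settings. Since the base ring is a field $k$, every short exact sequence of cochain complexes splits $k$-linearly. Applied to $0 \to B^*\mathcal{A} \to Z^*\mathcal{A} \to H^*\mathcal{A} \to 0$ and $0 \to Z^*\mathcal{A} \to \mathcal{A} \to B^{*+1}\mathcal{A} \to 0$, this yields $k$-linear maps $\iota \colon H^*\mathcal{A} \to \mathcal{A}$ and $\pi \colon \mathcal{A} \to H^*\mathcal{A}$ with $\pi\iota = \mathrm{id}$, together with a chain homotopy $h \colon \mathcal{A} \to \mathcal{A}[-1]$ satisfying $\iota\pi - \mathrm{id}_\mathcal{A} = \mu^1_\mathcal{A} h + h \mu^1_\mathcal{A}$, and after a standard adjustment one may arrange the side conditions $h\iota = 0$, $\pi h = 0$, and $h^2 = 0$.

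Next, I would define the higher products $\mu^k_{H^*\mathcal{A}}$ for $k \geq 2$ by the sum over rooted planar trees $T$ with $k$ leaves, where each leaf is decorated by $\iota$, each internal vertex of valence $j+1$ by $\mu^j_\mathcal{A}$, each internal edge by $h$, and the root by $\pi$. The verification that $\{\mu^k_{H^*\mathcal{A}}\}$ defines an $A_\infty$-structure is standard: when one expands the $A_\infty$-relations into tree sums and uses $\mu^1_\mathcal{A} h + h \mu^1_\mathcal{A} = \iota\pi - \mathrm{id}$ on each internal edge, together with the side conditions, all contributions except those corresponding to the desired quadratic relations cancel telescopically along the boundary strata of the associahedra. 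Simultaneously one constructs the $A_\infty$-quasi-isomorphism $F \colon H^*\mathcal{A} \to \mathcal{A}$ whose first component is $\iota$ and whose higher components are defined by the analogous tree sums with $h$ placed at the root instead of $\pi$; that $F$ is a quasi-isomorphism follows since $H^*F = \mathrm{id}$.

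The module statement is handled by the same machinery. I would pick a $k$-linear splitting $\iota_\mathcal{M}, \pi_\mathcal{M}$ and a homotopy $h_\mathcal{M}$ with analogous side conditions for the underlying cochain complex of $\mathcal{M}$, then transfer the module structure by the same tree formula: rooted planar trees with one distinguished leaf (the module input) decorated by $\iota_\mathcal{M}$, ordinary leaves decorated by $\iota$, internal vertices on the module branch decorated by the $A_\infty$-bimodule operations $\mu^{k|1|0}_\mathcal{M}$, other vertices by $\mu^j_\mathcal{A}$, internal edges on the module branch by $h_\mathcal{M}$, other internal edges by $h$, and the root by $\pi_\mathcal{M}$. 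A quasi-isomorphism of modules is defined analogously by putting $h_\mathcal{M}$ at the root.

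The only real obstacle is combinatorial bookkeeping with Koszul signs in the tree formulas, which I would handle by adopting Seidel's sign convention from \cite[Chapter 1]{Seidel08} so that the verification of the $A_\infty$-relations from tree boundaries is literally the one in Kadeishvili's original treatment; no new ideas are required beyond those.
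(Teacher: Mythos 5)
The paper does not prove this lemma; it is quoted verbatim from \cite[Lemma A.1]{Abouzaid12b}, with the explicit formulas deferred to \cite[Section 12]{Berglund} and \cite[Section 6.4]{KontsevichSoibelman}. Your tree-sum homotopy transfer argument, with the $k$-linear splitting, the side conditions on $h$, and the analogous decorated trees for the module structure, is exactly the standard proof given in those references and is correct. The only cosmetic remark is that you build the quasi-isomorphism $H^*\mathcal{A}\to\mathcal{A}$ whereas the statement writes the arrow $\mathcal{A}\to H^*\mathcal{A}$; since $A_\infty$ quasi-isomorphisms admit homotopy inverses (or, alternatively, since the transfer package also produces the projection-type morphism with linear term $\pi$), this is immaterial.
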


If the base ring is $\Z$, the homological perturbation lemma holds when each degree component of the cohomology $H^*\mathcal{A}$, $H^*\mathcal{M}$ is a free $\Z$-module (\cite[Theorem 1]{Petersen}). Explicit formulae can be found in (\cite[Section 12]{Berglund}) and \cite[Section 6.4]{KontsevichSoibelman}.

Now consider the case when $\mathcal{A}$ is a minimal $A_\infty$-algebra that is supported on degrees less or equal to $0$. For example, this is satisfied when $\mathcal{A}$ is the wrapped Floer cohomology ring of a cotangent fiber, or direct sums of such algebras:
\begin{equation}
    \mathcal{A}=HW^*(T^*_{q_1}Q_1,T^*_{q_1}Q_1)\oplus HW^*(T^*_{q_2}Q_2,T^*_{q_2}Q_2).
\end{equation}
This follows from the isomorphism between wrapped Floer cohomology of a cotangent fiber and the homology of the based loop space (Proposition \ref{prop:AS}), as seen in the isomorphism below
\begin{equation}
        HW^*(T^*_qQ,T^*_qQ)\cong H_{-*}(\Omega Q).
\end{equation}

For a minimal $A_\infty$-algebras that are supported in non-positive degrees,
each degree component of minimal $A_\infty$-modules are its $A_\infty$-submodules:

\begin{lemma}[{\cite[Lemma 4.6]{AbouzaidSmith}}] \label{lem:degree-filtration}
    Suppose that $\mathcal{A}$ is a minimal $A_\infty$-algebra that is supported in non-positive degrees, and let $\mathcal{M}$ be an $A_\infty$-module over $\mathcal{A}$. Then each degree component is an $A_\infty$-submodule of $\mathcal{M}$ over $\mathcal{A}$, and is determined by the $A_\infty$-module structure over $\mathcal{A}^0$, up to quasi-equivalence.
\end{lemma}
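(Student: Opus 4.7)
The plan is to reduce to a minimal representative for $\mathcal{M}$ and then exploit the grading condition on $\mathcal{A}$ to filter $\mathcal{M}$ by cohomological degree. By Lemma~\ref{lem:homological-perturbation}, I would first replace $\mathcal{M}$ by a quasi-equivalent minimal $A_\infty$-module, so from now on $\mu^{1|0}_\mathcal{M} = 0$. Then for $m \in \mathcal{M}^j$ and $a_i \in \mathcal{A}^{k_i}$ with $k_i \leq 0$, the $A_\infty$-module operation $\mu^{1|d}(m, a_d, \ldots, a_1)$ lies in degree $j + \sum_i k_i + (1 - d)$, which is $\leq j$ for every $d \geq 1$ (and $d = 0$ is excluded by minimality). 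In particular the increasing filtration $F_{\leq j}\mathcal{M} := \bigoplus_{i \leq j}\mathcal{M}^i$ is preserved by every $A_\infty$-operation, so each $F_{\leq j}\mathcal{M}$ is an $A_\infty$-submodule of $\mathcal{M}$ over $\mathcal{A}$.

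Next I would identify the associated graded pieces $F_{\leq j}\mathcal{M}/F_{\leq j-1}\mathcal{M} \cong \mathcal{M}^j$. Since these are concentrated in a single degree, only those operations with degree shift exactly zero survive on the quotient. The degree estimate above forces $d = 1$ and $k_1 = 0$, so the only surviving action is $\mu^{1|1}(-, a^0)$ for $a^0 \in \mathcal{A}^0$. Since $\mathcal{A}^0$ inherits the structure of an honest associative algebra, because its higher products $\mu^d$ restricted to $(\mathcal{A}^0)^{\otimes d}$ land in $\mathcal{A}^{2-d}$, which vanishes for $d \geq 3$ by the non-positive grading assumption, and $\mu^1_\mathcal{A} = 0$ by minimality, each graded piece $\mathcal{M}^j$ is canonically an ordinary $\mathcal{A}^0$-module. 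This establishes the first half of the claim.

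For the second half, I would show that the quasi-equivalence class of $\mathcal{M}$ is determined by the collection $\{\mathcal{M}^j\}$ together with its $A_\infty$-module structure over $\mathcal{A}^0$. Given another minimal $\mathcal{A}$-module $\mathcal{N}$ with matching associated graded data, I would construct a quasi-isomorphism $f : \mathcal{M} \to \mathcal{N}$ order by order along the filtration $F_{\leq j}$. The leading term $f^{1|0}$ is prescribed on each graded piece by the given $\mathcal{A}^0$-module isomorphism, and the higher components $f^{1|d}$ for $d \geq 1$ are constructed inductively by solving the $A_\infty$-module homomorphism equations modulo deeper filtration levels.

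The main obstacle will be the obstruction-theoretic step in the third paragraph: ensuring that the defects in the $A_\infty$-module homomorphism equations at each stage are coboundaries. These defects live in pre-homomorphism complexes between filtration quotients, where all contributing operations are strictly degree-lowering; the relevant obstruction groups vanish by a bar-complex spectral sequence which collapses onto the $\mathcal{A}^0$-level data, providing the inductive step both for building $f$ and for building the $A_\infty$-homotopy witnessing its quasi-equivalence. This is the standard obstruction-theoretic argument for minimal $A_\infty$-modules over non-positively graded minimal $A_\infty$-algebras, following the strategy in \cite[Section 4]{AbouzaidSmith}.
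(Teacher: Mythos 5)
Your first two paragraphs are essentially correct and, importantly, they already contain the entire proof of the lemma. The degree count showing that $\mu^{1|d}$ lands in degree $\leq j$, with equality only when $d=1$ and the algebra input has degree $0$, establishes both clauses at once: the degree filtration is preserved by all operations, and a piece concentrated in a single degree admits no module operations beyond $\mu^{1|1}(-,a^0)$ for $a^0\in\mathcal{A}^0$, so its $A_\infty$-module structure over $\mathcal{A}$ literally \emph{is} its $\mathcal{A}^0$-module structure. Two small corrections there: associativity of $\mathcal{A}^0$ and of its action follows from minimality of $\mathcal{A}$ and $\mathcal{M}$, which kills the homotopy terms in the relevant $A_\infty$-relations, not from vanishing of $\mathcal{A}^{2-d}$ for $d\geq 3$ --- that group sits in negative degree, where $\mathcal{A}$ is allowed to be nonzero. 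Also, the degree components are subquotients of the filtration rather than literal submodules; your formulation via $F_{\leq j}$ is the correct reading of the (loosely phrased) statement.

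The genuine gap is in the second half. You read ``is determined by the $A_\infty$-module structure over $\mathcal{A}^0$'' as asserting that the whole module $\mathcal{M}$ is determined up to quasi-equivalence by its associated graded $\{\mathcal{M}^j\}$ as $\mathcal{A}^0$-modules. The subject of that clause is still ``each degree component,'' and the global statement you set out to prove is false: the twisted complex of Lemma \ref{lem:classifcation-minimal-modules} carries connecting maps $\delta_{ij}$ of degree $>1$ that are genuine extension data invisible to the graded pieces. Concretely, for $\mathcal{A}=\mathcal{A}^0=k[\Z/p]$ in characteristic $p$ (minimal and concentrated in degree $0$), minimal modules with graded pieces $k$ in degrees $0$ and $n$ are classified by a class in $\mathrm{Ext}^{n+1}_{k[\Z/p]}(k,k)\neq 0$, so they are not all quasi-isomorphic. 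Accordingly, the obstruction groups in your order-by-order construction do not vanish, and no bar-complex spectral sequence collapsing onto $\mathcal{A}^0$-level data will rescue this; the rings $\mathcal{A}^0$ arising here are group rings of typically infinite homological dimension. Fortunately none of this is needed: the actual second clause follows immediately from your paragraph-two degree count, and the paper only ever applies the lemma to modules whose cohomology sits in a single degree.
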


It is possible to classify minimal $A_\infty$-modules $\mathcal{M}$ over a minimal $A_\infty$-algebra $\mathcal{A}$ supported in non-positive degrees, under finiteness assumptions:

\begin{lemma}[{\cite[Lemma 4.7]{AbouzaidSmith}}] \label{lem:classifcation-minimal-modules}
    Suppose that $\mathcal{A}$ is a minimal $A_\infty$-algebra that is supported in non-positive degrees. Let $\mathcal{M}$ be a minimal $A_\infty$-module of finite rank over $\mathcal{A}$, cohomologically supported in finitely many degrees. Then there is a twisted complex
    \begin{equation}
        \mathcal{L}=(\bigoplus_i k[i]\otimes M^{-i},\delta_{ij})
    \end{equation}
    equivalent to $\mathcal{M}$, where each $M^{-i}$ is an $\mathcal{A}^0$-module, and $\deg(\delta_{ij})>1$.
\end{lemma}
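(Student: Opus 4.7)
The plan is to use the internal $\Z$-grading of $\mathcal{M}$ itself as a filtration, and to package the higher $A_\infty$-operations of $\mathcal{M}$ into the data of a twisted complex whose underlying objects are the graded components of $\mathcal{M}$, viewed as $\mathcal{A}^0$-modules. Since $\mathcal{M}$ is minimal and cohomologically supported in finitely many degrees, $H^*\mathcal{M} = \mathcal{M}$ decomposes as a finite direct sum $\mathcal{M} = \bigoplus_i M^{-i}$ of its internal-degree pieces. The operation $\mu^2_\mathcal{M}(-,a)$ for $a \in \mathcal{A}^0$ has cohomological degree zero and preserves internal degree, so it endows each $M^{-i}$ with the structure of an $\mathcal{A}^0$-module. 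These $\mathcal{A}^0$-modules, appropriately shifted as $k[i] \otimes M^{-i}$, will serve as the constituent objects of the desired twisted complex $\mathcal{L}$.

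To construct the differentials $\delta_{ij}$, I would extract from the remaining $A_\infty$-module operations those contributions that strictly decrease the internal degree. Since $\mathcal{A}$ is supported in non-positive degrees, the operation $\mu^n_\mathcal{M}(-,a_1,\ldots,a_{n-1})$ has cohomological degree $(2-n) + \sum_l \deg(a_l) \leq 0$, with strict inequality whenever $n \geq 3$ or at least one $a_l \in \mathcal{A}^{<0}$. Reorganizing each such operation as an $\mathcal{A}^0$-linear map decorated by the tensor factors lying in $\mathcal{A}^{<0}$ yields the required differentials $\delta_{ij}\colon k[j] \otimes M^{-j} \to k[i] \otimes M^{-i}$ for $i < j$; these are strictly lower-triangular with respect to the filtration by internal degree and, when viewed as maps between the unshifted $\mathcal{A}^0$-modules $M^{-j}$ and $M^{-i}$, have degree strictly greater than $1$.

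The main technical work will be to verify that the assembled collection $\delta = \{\delta_{ij}\}$ satisfies the Maurer-Cartan equation in the appropriate twisted complex category, and that the resulting object $\mathcal{L}$ is equivalent to $\mathcal{M}$ as an $A_\infty$-module over $\mathcal{A}$. Both assertions should follow from organizing the $A_\infty$-module relations for $\mathcal{M}$ by the internal-degree drop of their output: for each fixed drop, the sum of compositions $\mu^n(-,\ldots,\mu^l(-,\ldots),\ldots)$ producing that drop coincides with the corresponding piece of the Maurer-Cartan equation for $\delta$. The equivalence $\mathcal{L} \simeq \mathcal{M}$ can then be exhibited by the identity on the underlying graded $k$-module. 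The main obstacle I anticipate is the combinatorial bookkeeping---tracking signs, arities, and reindexing simultaneously across all internal-degree strata---though the finite-rank and finite-support hypotheses ensure that all relevant sums are finite and that the twisted complex $\mathcal{L}$ has only finitely many nonzero constituent objects.
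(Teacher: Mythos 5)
The paper does not prove this lemma itself — it is quoted directly from Abouzaid--Smith, and your argument is essentially the proof given there (and implicit in the preceding Lemma \ref{lem:degree-filtration}): filter $\mathcal{M}=H^*\mathcal{M}$ by internal degree, use the degree count $2-n+\sum_l\deg(a_l)\leq 0$ to see that only $\mu^2(-,\mathcal{A}^0)$ survives on the associated graded, making each piece an $\mathcal{A}^0$-module, and package the strictly degree-lowering components of the $\mu^n_\mathcal{M}$ as the twisted-complex differential, whose Maurer--Cartan equation is the degree-graded form of the $A_\infty$-module relations. The only point to phrase carefully in a full write-up is that each $\delta_{ij}$ is an $A_\infty$-module pre-homomorphism (with components for every number of $\mathcal{A}$-inputs), not merely an $\mathcal{A}^0$-linear map, but this is exactly what your ``decorated by the tensor factors in $\mathcal{A}^{<0}$'' accomplishes.
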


In particular, we may set $\mathcal{A}$ to be $\mathcal{A}=HW^*(T^*_{q_1}Q_1,T^*_{q_1}Q_1)\oplus HW^*(T^*_{q_2}Q_2,T^*_{q_2}Q_2)$ as above. Then we have that
\begin{equation}
    \mathcal{A}^0\cong H_0(\Omega Q_1;k)\oplus H_0(\Omega Q_2;k),
\end{equation}
and so $\mathcal{A}^0$-modules can be regarded as the $k$-representations of $\pi_1(Q_1)$ and $\pi_1(Q_2)$, which are local systems over $Q_1$ and $Q_2$.

Another lemma that will be useful for us is \cite[Lemma A.4]{Abouzaid12b}. It proves that any twisted complex $\mathcal{L}$ built from shifted copies of the same object $Q$ whose endomorphism algebra is supported in non-positive degrees can be shown to be actually supported in a single degree. One can trace the origins of the argument back to a spectral sequence argument in \cite{FukayaSeidelSmith}.

\begin{lemma} \label{lem:twisted-cpx-endo}
    Suppose that $\mathcal{P}$ is a minimal $A_\infty$-algebra such that $\mathrm{End}^*(\mathcal{P})$ is supported in non-negative degrees. Let $\mathcal{L}$ be a twisted complex of $\mathcal{P}$-modules of finite rank.
    \begin{enumerate}
        \item If $\mathrm{End}^*(\mathcal{L})$ is supported in non-negative degrees, then there exists a free $k$-module $V$ such that there is an isomorphism
        \begin{equation*}
            \mathcal{L}\cong V[i]\otimes\mathcal{P}
        \end{equation*}
        for some integer $i$ accounting for the degree shift.
        \item Moreover, if $H^0\mathrm{End}(\mathcal{L})$ is a free $k$-module of rank $1$, then $\mathcal{L}$ is equivalent to a single copy of $\mathcal{P}$, possibly up to some shift.
    \end{enumerate}
\end{lemma}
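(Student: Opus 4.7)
The plan is to write $\mathcal{L}$ as an explicit twisted complex of shifted copies of $\mathcal{P}$ and use the positivity of $\mathrm{End}^*(\mathcal{P})$ to show that only a single shift can appear. Write
\begin{equation}
    \mathcal{L} = \bigl( \bigoplus_a V_a[s_a] \otimes \mathcal{P}, \, \delta \bigr),
\end{equation}
with $V_a$ finite-rank free $k$-modules and $s_a \in \Z$. The endomorphism complex decomposes as
\begin{equation}
    \mathrm{End}^*(\mathcal{L}) = \bigoplus_{a,b} \mathrm{Hom}_k(V_a, V_b) \otimes \mathrm{End}^{*+s_b-s_a}(\mathcal{P}),
\end{equation}
with differential induced purely by $\delta$ since $\mathcal{P}$ is minimal. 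Because $\delta$ has degree one and $\mathrm{End}^*(\mathcal{P})$ vanishes in negative degrees, each component $\delta_{ab}$ can only be nonzero when $s_b \geq s_a - 1$.

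The core of the argument for part (1) is to show that if $s_{\min} < s_{\max}$ are the extremal shifts appearing in $\mathcal{L}$, then $\mathrm{End}^*(\mathcal{L})$ has nontrivial cohomology in degree $-(s_{\max}-s_{\min})$, contradicting non-negativity. The crucial observation is that the chain group $\mathrm{End}^{-(s_{\max}-s_{\min})}(\mathcal{L})$ is concentrated entirely in the $(V_{\min}, V_{\max})$ component
\begin{equation}
    C \coloneqq \mathrm{Hom}_k(V_{\min}, V_{\max}) \otimes \mathrm{End}^0(\mathcal{P}),
\end{equation}
which is nonzero since $\mathrm{End}^0(\mathcal{P})$ contains the unit; meanwhile, the chain group in degree $-(s_{\max}-s_{\min})-1$ vanishes identically, as any contribution would require $s_b - s_a \geq s_{\max} - s_{\min} + 1$, which is impossible given $s_a \geq s_{\min}$ and $s_b \leq s_{\max}$. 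Hence any cocycle in $C$ automatically survives to a nonzero class in cohomology.

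It then remains to exhibit a nonzero cocycle in $C$. Starting from $\phi \otimes \mathrm{id}_\mathcal{P}$ for a nonzero $\phi \in \mathrm{Hom}_k(V_{\min}, V_{\max})$, I would inductively modify the element by correction terms located in strictly smaller shift-gaps to cancel the $\mu^1_{Tw}$-obstructions living in $\mathrm{End}^{-(s_{\max}-s_{\min})+1}(\mathcal{L})$, using cohomological unitality of $\mathcal{P}$ to absorb residual contributions at each stage. Since shift-gaps are bounded below and each step strictly decreases the maximum shift-gap of the obstruction, the process terminates. With all shifts forced to coincide with a common $i$, the residual $\delta \in \mathrm{End}_k(V_0) \otimes \mathrm{End}^1(\mathcal{P})$ (where $V_0 = \bigoplus_a V_a$) can be cancelled via the homological perturbation lemma (Lemma \ref{lem:homological-perturbation}), yielding $\mathcal{L} \cong V[i] \otimes \mathcal{P}$ for some (possibly smaller) free $k$-module $V$.

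For part (2), the identification $H^0\mathrm{End}(V[i] \otimes \mathcal{P}) = \mathrm{End}_k(V) \otimes H^0\mathrm{End}(\mathcal{P})$, together with the fact that $H^0\mathrm{End}(\mathcal{P})$ contains the unit and so has $k$-rank at least one, forces both tensor factors to be rank one under the rank-one hypothesis on $H^0\mathrm{End}(\mathcal{L})$; hence $V \cong k$ and $\mathcal{L} \cong \mathcal{P}[i]$. The main obstacle will be the inductive cocycle construction: one must carefully track how $\delta$ interacts with the higher $A_\infty$-operations of $\mathcal{P}$ at each step, and verify that cohomological unitality provides enough flexibility to cancel every obstruction without leaving the allowed range of shift-gaps.
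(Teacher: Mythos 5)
The paper itself disposes of part (1) by citing \cite[Lemma A.4]{Abouzaid12b} and only writes out the rank comparison for part (2); your part (2) agrees with that and is fine. Your attempted direct proof of part (1), however, has a genuine gap at its central step, namely the production of a nonzero cocycle in degree $-d$, where $d=s_{\max}-s_{\min}$.

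You correctly note that $\mathrm{End}^{-d}(\mathcal{L})$ is concentrated in the components $\mathrm{Hom}_k(V_a,V_b)\otimes\mathrm{End}^0(\mathcal{P})$ with $s_a=s_{\min}$, $s_b=s_{\max}$, and that there are no coboundaries from degree $-d-1$. But this observation defeats your own correction scheme: a ``correction term located in a strictly smaller shift-gap'' of the same total degree $-d$ would have to lie in $\mathrm{End}^{<0}(\mathcal{P})=0$, so no corrections exist, and you must show outright that $\mu^1_{Tw}$ has nontrivial kernel on this corner. That is false in general. Take $\mathcal{L}=\mathrm{cone}(e_{\mathcal{P}}\colon\mathcal{P}\to\mathcal{P})=[\mathcal{P}[1]\to\mathcal{P}]$: it is a twisted complex with two distinct shifts, its degree $-1$ chain group $\mathrm{End}^0(\mathcal{P})$ is nonzero, yet $H^*\mathrm{End}(\mathcal{L})=0$, so the class you claim in degree $-(s_{\max}-s_{\min})$ does not exist (the lemma itself is not violated, since this $\mathcal{L}$ is isomorphic to $V\otimes\mathcal{P}$ with $V=0$). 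The missing idea is a preliminary reduction: the shift-gap-one components of $\delta$ lie in $\hom_k(V_a,V_b)\otimes\mathrm{End}^0(\mathcal{P})$ and assemble into an honest differential $\partial$ on the underlying complex of $k$-modules $\bigoplus_a V_a[s_a]$ (the Maurer--Cartan equation forces $\partial^2=0$ because $\mathrm{End}^{<0}(\mathcal{P})=0$); one first replaces this underlying complex by its cohomology via Gaussian elimination, which changes $\mathcal{L}$ only by an isomorphism of twisted complexes. Only after this reduction does the extremal corner class survive whenever two distinct shifts remain --- this is the filtration/spectral-sequence argument of \cite{FukayaSeidelSmith} and \cite[Lemma A.4]{Abouzaid12b}, which the paper simply invokes. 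A secondary point: once all shifts coincide, the residual $\delta\in\mathrm{End}_k(V)\otimes\mathrm{End}^1(\mathcal{P})$ is a Maurer--Cartan element, not a differential on a module, so Lemma \ref{lem:homological-perturbation} does not ``cancel'' it; removing it also requires the argument from the cited lemma. Either cite that lemma as the paper does, or insert the Gaussian elimination step before running your corner-class argument.
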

\begin{proof}
    The first part is exactly the content of \cite[Lemma A.4]{Abouzaid12b}. For the second part, note that
    \begin{equation}
        H^0\mathrm{End}(\mathcal{L},\mathcal{L})\cong\hom_k(V,V)\otimes_k H^0\mathrm{End}(\mathcal{P}).
    \end{equation}
    By comparing the ranks of both sides, one can show that $V$ has rank $1$.
\end{proof}

To clarify a possible point of confusion, this lemma will apply to the case where the $A_\infty$-algebra $\mathcal{P}$ is $HF^*(Q,Q)$ for some compact generating Lagrangian $Q$, while the previous lemmas apply to the case where the $A_\infty$-algebra $\mathcal{A}$ is $HW^*(T^*_qQ,T^*_qQ)$. In plain English, Lemma \ref{lem:twisted-cpx-endo} says that any twisted complex made out of a single compact connected Lagrangian $L$ that represents another compact connected Lagrangian $K$ is actually a single copy of $L$, up to shifts.
\end{appendices}

\end{document}